\numberwithin{equation}{section}
\numberwithin{equation}{subsection}
\renewcommand*{\theequation}{%
  \ifnum\value{subsection}=0 %
    \thesection
  \else
    \thesubsection
  \fi
  .\arabic{equation}%
}
\newtheorem{thm}{Theorem}[section]
\newtheorem{lem}{Lemma}[section]
\newtheorem{prop}{Proposition}[section]
\title[Quadratic Dirichlet $L$-functions of prime conductor]{Dirichlet $L$-functions of quadratic characters of prime conductor at the central point}
\author{Siegfred Baluyot}
\address{Department of Mathematics \\
 University of Illinois at Urbana-Champaign \\
 1409 West Green Street, Urbana, IL 61801}
\email{\href{mailto:sbaluyot@illinois.edu}{sbaluyot@illinois.edu}}
\author{Kyle Pratt}
\email{{\href{mailto:kpratt4@illinois.edu}{kpratt4@illinois.edu}},{\href{mailto:kvpratt@gmail.com}{kvpratt@gmail.com}}}
\subjclass[2010]{11M20, 11N36, 11R42. \\ \indent \textit{Keywords and phrases}: central point, mollifier, moments, non-vanishing, primes, quadratic Dirichlet character, Selberg sieve}
\begin{document}
\date{}

\maketitle

\begin{abstract}
We prove that more than nine percent of the central values $L(\frac{1}{2},\chi_p)$ are non-zero, where $p\equiv 1 \pmod{8}$ ranges over primes and $\chi_p$ is the real primitive Dirichlet character of conductor $p$. Previously, it was not known whether a positive proportion of these central values are non-zero. As a by-product, we obtain the order of magnitude of the second moment of $L(\frac{1}{2},\chi_p)$, and conditionally we obtain the order of magnitude of the third moment. Assuming the Generalized Riemann Hypothesis, we show that our lower bound for the second moment is asymptotically sharp.
\end{abstract}

\tableofcontents

\section{Introduction and results}

The values of $L$-functions at special points on the complex plane are of great interest. At the fixed point of the functional equation, called the central point, the question of non-vanishing is particularly important. For instance, the well-known Birch and Swinnerton-Dyer conjecture~\cite{Wiles} relates the order of vanishing of certain $L$-functions at the central point to the arithmetic of elliptic curves. Katz and Sarnak~\cite{KS99} discuss several examples of families of $L$-functions and describe how the zeros close to $s=\frac{1}{2}$ give evidence of some underlying symmetry group for each of these families. They suggest that understanding these symmetries may in turn lead to finding a natural spectral interpretation of the zeros of the $L$-functions. The analysis of each family they discuss leads to a \textit{Density Conjecture} that, if true, would imply that almost all $L$-functions in the family do not vanish at the central point. Iwaniec and Sarnak~\cite{iwaniecsarnak} show that the non-vanishing of $L$-functions associated with holomorphic cusp forms is closely related to the Landau-Siegel zero problem. Thus the question of non-vanishing at the central point is connected to many deep arithmetical problems.

A considerable amount of research has been done towards answering this question for families of Dirichlet $L$-functions. Chowla conjectured that $L(\frac{1}{2},\chi) \neq 0$ for $\chi$ a primitive quadratic Dirichlet character \cite[p. 82, problem 3]{Chow}. It has since become a sort of folklore conjecture that $L(\frac{1}{2},\chi)\neq 0$ for all primitive Dirichlet characters $\chi$. One family that has attracted a lot of attention is the family of $L(s,\chi)$ with $\chi$ varying over primitive characters modulo a fixed conductor. This family is widely believed to have a unitary symmetry type, as in the philosophy of Katz and Sarnak. Balasubramanian and Murty~\cite{BalasubramanianMurty} were the first to prove that a (small) positive proportion of this family does not vanish at the central point. They used the celebrated technique of mollified moments, a method that has been highly useful in other contexts (see, for example,~\cite{BL14,CGG,Sel42}). Iwaniec and Sarnak \cite{IS99} developed a simpler, stronger method and improved this proportion to $\frac{1}{3}$. The approach of Iwaniec and Sarnak has since become standard in the study of non-vanishing of $L$-functions at the central point. Bui \cite{Bui12} and Khan and Ngo \cite{KN16} introduced new ideas and further improved the lower bound $\frac{1}{3}$. The second author \cite{Pratt} has shown that more than fifty percent of the central values are non-vanishing when one additionally averages over the conductors. For further interesting research on this and other families of $L$-functions, see \cite{BM11,DK18,Khan10,Khan12,KDN16, KM00, KMV1, KMV2, MV00,   OnoSkinner}.

The family of $L(s,\chi)$ with $\chi$ varying over all real primitive characters has also been extensively studied. This family is of particular significance because it seems to be of symplectic rather than unitary symmetry. Thus we encounter new phenomena not seen in the unitary case. For $d$ a fundamental discriminant, set $\chi_d(\cdot) = \left(\frac{d}{\cdot} \right)$, the Kronecker symbol. Then $\chi_d$ is a real primitive character with conductor $|d|$. The hypothetical positivity of central values $L(\frac{1}{2},\chi_d)$ has implications for the class number of imaginary quadratic fields \cite[p. 514]{IK}. Jutila~\cite{Jut81} initiated the study of non-vanishing at the central point for this family and proved that $L(\frac{1}{2},\chi_d) \neq 0$ for infinitely many fundamental discriminants $d$. His methods show that $\gg X/\log X$ of the quadratic characters $\chi_d$ with $|d|\leq X$ have $L(\frac{1}{2},\chi_d) \neq 0$. \"{O}zl\"{u}k and Snyder~\cite{OS93} examined the low-lying zeros of this family, and found the first evidence of its symplectic behavior. Assuming the Generalized Riemann Hypothesis (GRH), they showed that more than $\frac{15}{16}$ of the central values $L(\frac{1}{2},\chi_d)$ are non-zero~\cite{OS99}. Katz and Sarnak independently obtained the same result in unpublished work (see~\cite{KS99,Sou00}).

Soundararajan~\cite{Sou00} made a breakthrough when he proved unconditionally that more than $\frac{7}{8}$ of the central values $L(\frac{1}{2},\chi_d)$ with $d\equiv 0$ (mod $8$) are non-zero. The biggest difficulty lies in analyzing the contribution of the ``off-diagonal'' terms in the evaluation of a mollified second moment. Soundararajan discovered that there is, in fact, a main contribution arising from these off-diagonal terms. (See Section~\ref{sec: outline of pos prop thm} for more discussion.)

The case of real primitive characters with prime conductor is more difficult still. Jutila~\cite{Jut81} initiated the study of $L(\frac{1}{2},\chi_p)$, where $p$ is a prime. His methods yield that $\gg X/(\log X)^3$ of the primes $p \leq X$ satisfy $L(\frac{1}{2},\chi_p) \neq 0$. The difficulty in studying this family is that its moments involve sums over primes, and thus are more complicated to investigate. In fact, Jutila only evaluated the first moment of this family. As far as the authors are aware, no asymptotic evaluation of the second moment has appeared in the literature. However,  Andrade and Keating \cite{AK12} asymptotically evaluated the second moment of an analogous family over function fields. Andrade and the first author~\cite{AB} have continued the study of the family of $L(\frac{1}{2},\chi_p)$, showing that it is likely governed by a symplectic law. Conditionally on GRH, they prove that more than 75\% of primes $p \leq X$ satisfy $L(\frac{1}{2},\chi_p) \neq 0$.

We prove an unconditional positive proportion result for the central values $L(\frac{1}{2},\chi_p)$. In fact, we prove that more than nine percent of these central values are non-zero.

\begin{thm}\label{thm: pos prop}
There exists an absolute, effective constant $X_0$ such that if $X \geq X_0$ then
\begin{align*}
\sum_{\substack{p \leq X \\ p \equiv 1 \, (\textup{mod }8) \\ L(\frac{1}{2},\chi_p)\neq 0}}1 &\geq .0964 \sum_{\substack{p \leq X \\ p \equiv 1 \, (\textup{mod }8)}} 1.
\end{align*}
\end{thm}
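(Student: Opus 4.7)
The strategy is the mollified first- and second-moment method. Let $\mathcal{P}^\ast = \{p \le X : p \equiv 1 \pmod 8\}$ and set
$$M(\chi_p) = \sum_{n \le X^\theta} \frac{\lambda(n)\chi_p(n)}{\sqrt n},$$
where $\lambda(n)$ is supported on squarefree $n$ and $\theta \in (0,1)$ is a parameter to be optimized. Cauchy--Schwarz yields
$$\Big(\sum_{p \in \mathcal{P}^\ast} L(\tfrac12,\chi_p) M(\chi_p)\Big)^{\!2} \ \le\ \#\{ p \in \mathcal{P}^\ast : L(\tfrac12,\chi_p)\neq 0\} \cdot \sum_{p \in \mathcal{P}^\ast} L(\tfrac12,\chi_p)^2 M(\chi_p)^2,$$
so the task reduces to (i) evaluating the first mollified moment asymptotically and (ii) obtaining an upper bound of the correct order of magnitude for the second mollified moment.

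For (i) I would apply an approximate functional equation for $L(\tfrac12,\chi_p)$, interchange summations, and reduce to sums of the form $\sum_{p \in \mathcal P^\ast}\chi_p(k)$ for $k$ of polynomially bounded size. The diagonal $k=\square$ gives the main term. Non-squares are handled in the manner of Jutila's evaluation of the first moment: quadratic reciprocity exchanges $\chi_p(k)$ with $\chi_k(\pm p)$ (up to sign), and the resulting sum over primes in a fixed arithmetic progression is controlled by a Siegel--Walfisz-type estimate (or, equivalently, Poisson summation). Extended to the mollified setting, this yields the first-moment asymptotic provided $\theta$ is not too large.

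The main obstacle is (ii), because even the unmollified second moment of $\{L(\tfrac12,\chi_p)\}_p$ is unknown asymptotically. I would sieve: choose Selberg weights $\rho(d)$ supported on squarefree $d \le R$ with $\rho(1)=1$, so that the Selberg majorant gives
$$\sum_{p \in \mathcal{P}^\ast} L(\tfrac12,\chi_p)^2 M(\chi_p)^2 \ \le \sum_{\substack{n \le X\\ n\equiv 1\,(8)\\ \mu^2(n)=1}} \Big(\sum_{d \mid n} \rho(d)\Big)^{\!2} L(\tfrac12,\chi_n)^2\, M(\chi_n)^2 \ +\ O\!\big(R\, X^{o(1)}\big),$$
with the small-prime contribution absorbed in the error. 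Restricting to squarefree $n \equiv 1 \pmod 8$ ensures that $\chi_n$ is the real primitive character of conductor $n$, placing the right-hand side inside Soundararajan's family. I would then evaluate the majorant via the approximate functional equation for $L(\tfrac12,\chi_n)^2$, Poisson summation on $n$ in the style of~\cite{Sou00} (with the off-diagonal main term retained), and arithmetic bookkeeping of the combined weights $\rho\ast\rho$ and $\lambda\ast\lambda$. The outcome is an upper bound of the expected order of magnitude with an explicit leading constant depending on $\lambda$ and $\rho$.

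With both moment estimates in hand, the Cauchy--Schwarz ratio becomes an explicit functional of $\lambda$, $\rho$, $\theta$, and $R$. The remaining step is a variational optimization of $(\lambda,\rho)$ in the spirit of Selberg and Iwaniec--Sarnak, taking $X^\theta$ and $R$ as large as the moment estimates allow. The inherent loss in replacing the indicator of primes by a sieve majorant is what prevents the method from matching the $\tfrac13$ proportion available in the fixed-conductor setting; numerical optimization of the resulting ratio yields the constant $0.0964$.
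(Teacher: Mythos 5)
Your central insight---applying a Selberg-type upper-bound sieve to replace the indicator of primes by a nonnegative majorant, thereby reducing the second mollified moment over primes $p\equiv 1\pmod 8$ to a sieve-weighted second moment over square-free $n\equiv 1\pmod 8$ in Soundararajan's family---is precisely the paper's core idea, and your Cauchy--Schwarz reduction and choice of mollifier structure agree with the paper's. You also correctly identify that the key arithmetic is bookkeeping the combined weights $\rho\ast\rho$ and $\lambda\ast\lambda$ (the ``double mollification''), and that a variational optimization of both produces the constant.

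There is, however, a genuine gap in your treatment of the off-diagonal first moment. After the approximate functional equation and quadratic reciprocity, the moduli $q$ for which you must control $\sum_{p}(\log p)\Phi(p/X)\chi_q(p)$ run up to about $M\sqrt X=X^{1/2+\theta}$. This is far outside the Siegel--Walfisz range $q\ll(\log X)^A$, so a pointwise Siegel--Walfisz-type estimate for each fixed $q$ does not suffice, even for arbitrarily small $\theta>0$ (the unmollified $k$-sum already produces moduli of size $X^{1/2+\varepsilon}$). You must exploit the averaging over $q$; the paper does so in three regimes---small $q$ via the explicit formula plus Page's theorem, medium $q$ via Jutila's zero-density estimate, and large $q$ via Vaughan's identity and Heath-Brown's quadratic large sieve. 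More critically, the theorem asserts an \emph{effective} $X_0$. Invoking Siegel--Walfisz would only yield an ineffective constant, because of the possible Siegel zero. The paper avoids this by using Page's theorem, which gives that at most one real primitive character with modulus up to $\exp(2\varpi\sqrt{\log X})$ has an exceptional real zero, and then bounds that single character's contribution by hand via the bound $\beta_1<1-c/(\sqrt{q^*}(\log q^*)^2)$. Without such a device, the effectivity claimed in the statement is lost.
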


The proof of Theorem \ref{thm: pos prop} proceeds via the mollification method, which we discuss briefly in Section \ref{sec: outline of pos prop thm} below. Our methods build on those of Jutila \cite{Jut81} and Soundararajan \cite{Sou00}. As in the work of Soundararajan, the main difficulty lies in evaluating the contribution of certain off-diagonal terms. The difference now is that we are summing over primes instead of over square-free integers, and so we cannot directly use his approach. A key idea in the proof of Theorem \ref{thm: pos prop} is the use of upper bound sieves to turn intractable sums over primes into manageable sums over integers. The use of sieves in studying central values of $L$-functions has also appeared in some other contexts (see \cite{HL97}, also \cite[p. 1035]{RS15}).

The tools developed for the proof of Theorem \ref{thm: pos prop} allow us to obtain the order of magnitude of the second moment of $L(\frac{1}{2},\chi_p)$.

\begin{thm}\label{thm: 2nd moment upper bound}
Let $\mathfrak{c}$ be the positive constant
\begin{align*}
\mathfrak{c} &:= \left(144 \zeta(2) \left(1 - \frac{1}{\sqrt{2}}\right)^2\right)^{-1}=.0492\ldots \, .
\end{align*}
For large $X$ we have
\begin{align*}
(\mathfrak{c} - o(1)) \frac{X}{4}(\log X)^3 \leq \sum_{\substack{p \leq X \\ p \equiv 1 \, (\textup{mod }8)}} (\log p) L\left( \tfrac{1}{2},\chi_p\right)^2 \leq (4\mathfrak{c} + o(1)) \frac{X}{4} (\log X)^3.
\end{align*}
\end{thm}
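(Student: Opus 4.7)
The plan is to deploy the machinery developed for Theorem~\ref{thm: pos prop}. Both bounds rest on the interaction between an approximate functional equation for $L(\tfrac{1}{2},\chi_p)^2$ and an upper-bound Selberg sieve that converts the sum over primes into a sum of second moments of quadratic $L$-functions over integers; the latter is then handled via Soundararajan's~\cite{Sou00} analysis of $\sum L(\tfrac{1}{2},\chi_{8d})^2$, suitably adapted to the progression $p\equiv 1\pmod 8$.

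For the upper bound, I would start from an approximate functional equation
\[
L(\tfrac{1}{2},\chi_p)^2 \;=\; 2\sum_{n\geq 1}\frac{d(n)\chi_p(n)}{\sqrt n}\,V(n/p)
\]
with a smooth, rapidly decreasing $V$, and then majorize the prime indicator by Selberg weights,
\[
(\log p)\,\mathbbm{1}_{\{p\text{ prime}\}} \;\leq\; (\log X)\Bigl(\sum_{\substack{d\mid n\\ d\leq R}}\rho_d\Bigr)^{\!2},\qquad R=X^{\theta},
\]
with $\rho_d$ supported on odd squarefree $d$, $\rho_1=1$. Interchanging summations reduces the task to estimating
\[
\sum_{\substack{n\leq X,\ n\equiv 1\,(\mathrm{mod}\,8)\\ [d_1,d_2]\mid n}}(\log n)\,L(\tfrac{1}{2},\chi_n)^2
\]
uniformly in $d_1,d_2$. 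Following Soundararajan's treatment, the leading behavior of this inner sum is a constant multiple of $X(\log X)^3$ times a multiplicative function of $[d_1,d_2]$, and solving the resulting Selberg extremal problem (restricted to odd moduli, which produces the factor $1-1/\sqrt 2$, with $144\,\zeta(2)$ appearing from the main-term singular series) yields the upper constant $4\mathfrak c$. The factor of $4$ is the familiar parity loss of the Selberg sieve for primes.

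For the lower bound, I would apply Cauchy--Schwarz with a Dirichlet polynomial mollifier $M(\chi_p)=\sum_{n\leq y}a(n)\chi_p(n)/\sqrt n$:
\[
\sum_{\substack{p\leq X\\ p\equiv 1\,(\mathrm{mod}\,8)}}(\log p)\,L(\tfrac{1}{2},\chi_p)^2 \;\geq\; \frac{\bigl(\sum(\log p)\,L(\tfrac{1}{2},\chi_p)\,M(\chi_p)\bigr)^{2}}{\sum(\log p)\,M(\chi_p)^{2}}.
\]
The numerator is a mollified first moment, evaluable by Jutila's~\cite{Jut81} first-moment techniques combined with the same Selberg-sieve-plus-AFE strategy used for the upper bound; the denominator is a mean square of a Dirichlet polynomial against primes, likewise accessible after sieving. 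Optimizing the coefficients $a(n)$ then recovers the lower constant $\mathfrak c$.

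The chief technical obstacle in both directions is the uniform evaluation of the sieved inner second moment, where the characters run over $n\equiv 1\pmod 8$ with the extra divisibility $[d_1,d_2]\mid n$. Soundararajan's discovery that the square values of the AFE variable contribute a genuine off-diagonal main term must be reproduced in this more constrained setting, and the Poisson summation and attendant error analysis must be carried out with enough uniformity in $[d_1,d_2]\leq X^{2\theta}$ to allow summation of the sieve weights without overwhelming the main term.
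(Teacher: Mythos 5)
Your upper-bound proposal aligns closely with the paper's: both majorize the prime indicator by an upper-bound sieve, apply the approximate functional equation, and adapt Soundararajan's off-diagonal analysis to the sieved sum over $n\equiv 1\pmod 8$, with the factor of $4$ arising because the sieve level is capped at $R\leq X^{1/4-\varepsilon}$ so that $\log X/\log R\to 4$. The paper's sieve weights $\lambda_d$ are a Brun--Selberg hybrid rather than a pure Selberg square, and the paper reuses the full Section~\ref{sec:mollified second moment} Poisson-summation machinery with $M(p)=1$ rather than re-estimating the inner moment for each $[d_1,d_2]$, but these are bookkeeping differences: the essential computation is the same.

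Your lower-bound sketch has a genuine gap. You propose Cauchy--Schwarz with a Dirichlet polynomial $M(\chi_p)$ and suggest the denominator $\sum(\log p)M(\chi_p)^2$ is ``likewise accessible after sieving.'' But a sieve gives an upper bound carrying the same factor-of-$4$ parity loss as in the first part, so the resulting lower bound for $M_2$ would be a factor of $4$ weaker than claimed, and no optimization over coefficients can repair that. The paper instead takes $A(p)=A_{1-\varepsilon_0}(p)$, a slightly shortened truncation of the approximate functional equation for $L(\tfrac12,\chi_p)$ --- with coefficients essentially $1$, not $\mu$-like --- and evaluates both the numerator $M_{1-\varepsilon_0,1}$ and the denominator $M_{1-\varepsilon_0,1-\varepsilon_0}$ \emph{asymptotically}, with no sieve at all. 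The off-diagonal terms are killed by Lemma~\ref{lem:gen char sum over primes}, which packages the Siegel--Walfisz, zero-density, and Vaughan/large-sieve estimates from the first-moment analysis and delivers square-root cancellation on average for conductors up to $X^{1-\delta}$; this is exactly why the truncation must stop strictly below length $X^{1/2}$. Both sums come out equal to $\tfrac12(\mathfrak c+O(\varepsilon_0))\tfrac X4(\log X)^3$, so the Cauchy--Schwarz quotient recovers the full constant $\mathfrak c$ after letting $\varepsilon_0\to 0$ and summing dyadically. Your mention of Jutila's first-moment techniques for the numerator is pointing at the right tool, but the denominator needs the identical asymptotic treatment, not a sieve bound, and there is no coefficient optimization to perform beyond choosing the truncation point.
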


One would rather have an upper bound in Theorem \ref{thm: 2nd moment upper bound} that asymptotically matches the lower bound, but this seems difficult to prove unconditionally. By adapting a method of Soundararajan and Young \cite{SY10} we are able, however, to prove such an asymptotic formula on GRH.

\begin{thm}\label{thm: 2nd moment GRH}
Let $\mathfrak{c}$ be as in Theorem \ref{thm: 2nd moment upper bound}. Assume the Riemann Hypothesis for $\zeta(s)$ and for all Dirichlet $L$-functions $L(s,\chi_p)$ with $p \equiv 1 \pmod{8}$. Then
\begin{align*}
\sum_{\substack{p \leq X \\ p \equiv 1 \, (\textup{mod }8)}} (\log p) L \left( \tfrac{1}{2},\chi_p \right)^2 = \mathfrak{c} \frac{X}{4}(\log X)^3 + O(X (\log X)^{11/4}).
\end{align*}
\end{thm}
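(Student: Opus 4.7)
The plan is to adapt the method of Soundararajan and Young \cite{SY10} for the second moment of quadratic twists. One begins with the approximate functional equation
\begin{equation*}
L(\tfrac{1}{2},\chi_p)^2 = 2\sum_{n=1}^{\infty}\frac{d(n)\chi_p(n)}{\sqrt{n}}\,\omega\!\left(\frac{n}{p}\right),
\end{equation*}
where $\omega$ is a smooth cutoff of natural length $p$. Inserting this into the weighted prime sum and interchanging the order of summation gives
\begin{equation*}
\sum_{\substack{p\leq X \\ p\equiv 1\bmod 8}}(\log p)L(\tfrac{1}{2},\chi_p)^2 = 2\sum_{n=1}^{\infty}\frac{d(n)}{\sqrt{n}}\sum_{\substack{p\leq X \\ p\equiv 1\bmod 8}}(\log p)\chi_p(n)\omega(n/p).
\end{equation*}
Writing $n=km^2$ with $k$ squarefree splits the problem into the diagonal $k=1$ and the off-diagonal $k>1$.

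The diagonal $k=1$ yields the main term. Since $\chi_p(m^2)=1$ whenever $p\nmid m$, the prime sum $\sum_{p}(\log p)\omega(m^2/p)$ is evaluated via Siegel--Walfisz for the fixed modulus $8$. Summing the result against $d(m^2)/m$ and using the identity $\sum_m d(m^2)m^{-w}=\zeta(w)^3/\zeta(2w)$, a standard contour shift picks up the triple pole at $w=1$ and produces $\mathfrak{c}\cdot(X/4)(\log X)^3$ plus lower-order terms. The local factor at the prime $2$ contributes $(1-1/\sqrt{2})^2$ (via the condition $p\equiv 1\pmod{8}$, which forces $\chi_p(2)=1$), while the squarefree restriction supplies the $\zeta(2)^{-1}$, so that the constant is precisely $\mathfrak{c}$.

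The central task is to bound the off-diagonal contribution from $k>1$ by $O(X(\log X)^{11/4})$. By quadratic reciprocity combined with $p\equiv 1\pmod 4$, one has $\chi_p(k)=\chi_{k^\ast}(p)$ for an appropriate $k^\ast=\pm k$, so the inner sum becomes $\sum_p(\log p)\chi_{k^\ast}(p)\omega(km^2/p)$. The assumed GRH, applied through the explicit formula to the relevant quadratic Dirichlet $L$-functions, yields square-root cancellation in each such prime sum; partial summation then produces a bound of size $X^{1/2}(\log X)^{O(1)}$ per pair $(k,m)$. The decay of $\omega$ forces $km^2\ll p(\log p)^{A}$, which restricts the effective ranges of $k$ and $m$. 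Summing the individual explicit-formula bounds over these ranges, while carefully accounting for the divisor-type weights $d(km^2)$, should produce the target error with the exponent $11/4$.

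The principal obstacle is this off-diagonal estimate. In the proof of Theorem~\ref{thm: 2nd moment upper bound} a sieve is used to detect the condition ``$p$ prime'', which introduces the factor of $4$ separating the upper bound from the conjectural main term; to upgrade to an asymptotic one must bypass the sieve entirely and exploit cancellation in the prime sum directly. The delicate point is that the cutoff $\omega(km^2/p)$ couples the variables $k,m,p$, so the explicit-formula bounds must be obtained uniformly in $k$ and $m$, and the logarithmic powers arising from the divisor sums and the explicit-formula error terms must be tracked carefully to match the claimed exponent $11/4$.
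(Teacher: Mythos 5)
Your plan has a fatal gap in the off-diagonal estimate, and the route the paper takes is genuinely different.

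In the full approximate functional equation, the smooth weight $\omega(n/p)$ decays only once $n \gg p$, so the sum over $n = km^2$ effectively runs up to $X^{1+\varepsilon}$. Applying the GRH bound $\sum_{p\le X}\chi_{k^*}(p)\log p \ll X^{1/2}(\log X)^2$ to each off-diagonal pair $(k,m)$ and then summing absolutely over $km^2 \ll X^{1+\varepsilon}$ with weights $d(km^2)/\sqrt{km^2}$ yields
\begin{equation*}
\ll X^{1/2}(\log X)^2 \sum_{km^2 \ll X^{1+\varepsilon}} \frac{d(km^2)}{\sqrt{km^2}} \ \ll\  X^{1+\varepsilon/2}(\log X)^{O(1)},
\end{equation*}
which is not $O(X(\log X)^{11/4})$; it is larger than the main term $\mathfrak{c}(X/4)(\log X)^3$ itself. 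There is no amount of bookkeeping of divisor weights or logarithms that salvages this: the loss is a full power $X^{\varepsilon/2}$, not a logarithmic one. This is exactly why the paper does \emph{not} work with the full approximate functional equation plus term-by-term GRH.

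What the paper actually does is split $L(\tfrac12,\chi_p) = A_{1-\eta}(p) + B(p)$ with $\eta = 100\log\log X/\log X$, so that $A_{1-\eta}$ is an approximate-functional-equation truncation of length $\sim X^{(1-\eta)/2}$ per factor. For $A_{1-\eta}(p)^2$ the off-diagonal has $mn \ll X^{1-\eta}$, and now the term-by-term GRH bound (Lemma~\ref{lem:char sum bound on GRH}) gives a total off-diagonal contribution $\ll X^{1-\eta/2}(\log X)^{O(1)} = X(\log X)^{-50+O(1)}$, which is acceptable; the diagonal reproduces the main term $\mathfrak{c}\tfrac{X}{8}(\log X)^3$ to within $O(X(\log X)^{2+\varepsilon})$. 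The tail $B(p)$ is then \emph{not} treated via the explicit formula: it is written as a contour integral of $L(\tfrac12+it,\chi_p)$ against a kernel of size $O(\log\log X)$, and $\sum_p(\log p)\Phi(p/X)|B(p)|^2$ is bounded via the Soundararajan--Young machinery (Propositions~\ref{prop:GRH moments} and \ref{prop:large values}), i.e.\ large-deviation estimates for $\log|L(\tfrac12+z_1,\chi_p)L(\tfrac12+z_2,\chi_p)|$ obtained through upper bounds for moments and a sieve-based mean-value lemma for Dirichlet polynomials over primes. The exponent $11/4$ in the error term arises from optimizing the parameter $\alpha$ in the two-regime split $|t_i|\lessgtr(\log X)^{-\alpha}$, not from any explicit-formula bound. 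You correctly diagnose that the factor of $4$ must be removed by bypassing the sieve, but the mechanism for doing so is this large-values/moment argument for the tail, not a direct off-diagonal estimate.
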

After we completed this paper, Maksym Radziwi\l\l\ informed us about work in progress with Julio Andrade, Roger Heath-Brown, Xiannan Li, and K. Soundararajan in which they derive an unconditional asymptotic formula for the second moment of $L(\frac{1}{2},\chi_p)$. Their approach similarly introduces sieve weights, and they also observed that this idea could lead to a non-vanishing result.

Our methods further yield the order of magnitude of the third moment of $L(\frac{1}{2},\chi_p)$, assuming that the central values $L(\frac{1}{2},\chi_n)$ are non-negative for certain fundamental discriminants $n$. This non-negativity hypothesis follows, of course, from GRH.

\begin{thm}\label{thm: third moment}
Assume that for all positive square-free integers $n$ with $n \equiv 1 \pmod{8}$ it holds that $L(\frac{1}{2},\chi_n) \geq 0$. Then for large $X$
\begin{align*}
\sum_{\substack{p \leq X \\ p \equiv 1 \, (\textup{mod }8)}} (\log p) L\left( \tfrac{1}{2},\chi_p\right)^3 \asymp X(\log X)^6.
\end{align*}
\end{thm}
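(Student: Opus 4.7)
Both bounds make essential use of the non-negativity hypothesis, but in opposite directions. For the lower bound, I would follow the Rudnick--Soundararajan Hölder approach: introduce a short Dirichlet polynomial $A(\chi_p) = \sum_{n \leq Y} a_n \chi_p(n)/\sqrt{n}$ with $Y = X^\theta$ for a small $\theta > 0$, where the $a_n$ form a multiplicative function (e.g.\ a truncated half-integer divisor function $d_{1/2}(n)$, so that $A^2$ morally approximates $L(\tfrac{1}{2},\chi_p)$). Hölder's inequality in the form
\[
\sum_{p}(\log p)\,L(\tfrac{1}{2},\chi_p)\,A(\chi_p)^4 \leq \Bigl(\sum_{p}(\log p)\,L(\tfrac{1}{2},\chi_p)^3\Bigr)^{1/3}\Bigl(\sum_{p}(\log p)\,A(\chi_p)^6\Bigr)^{2/3},
\]
which is valid because $L\geq 0$ and $A^4, A^6\geq 0$, yields
\[
\sum_{p}(\log p)\,L(\tfrac{1}{2},\chi_p)^3 \geq \frac{\bigl(\sum_{p}(\log p)\,L(\tfrac{1}{2},\chi_p)\,A(\chi_p)^4\bigr)^3}{\bigl(\sum_{p}(\log p)\,A(\chi_p)^6\bigr)^2}.
\]
The numerator is a twisted first moment of $L$ against a Dirichlet polynomial of length $Y^4$, which can be evaluated by Jutila's method combined with the sieve-based machinery from the proof of Theorem~\ref{thm: pos prop}. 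The denominator is a sixth moment of a short Dirichlet polynomial, which reduces via orthogonality of $\chi_p$ over primes to a standard mean value of multiplicative sums. With the Rudnick--Soundararajan choice of $a_n$ and $\theta$ small but positive, both quantities attain the order predicted by random matrix theory, and the ratio is $\gg X(\log X)^6$.

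For the upper bound, I would use non-negativity to pass from primes to square-free integers via a Selberg $\Lambda^2$-sieve. Choose a sieve level $D$ and a sifting parameter $z$, with standard real weights $(\lambda_d)_{d \leq D}$ satisfying $\lambda_1 = 1$ and $\mathbf{1}[n \text{ prime},\,n>D] \leq \bigl(\sum_{d|n,\, d \leq D}\lambda_d\bigr)^2$ whenever $n$ is coprime to all primes $\leq z$. Handling small primes and the $(\log p)$-weight separately, and using $L(\tfrac{1}{2},\chi_n)^3\geq 0$,
\[
\sum_{\substack{p\leq X\\ p\equiv 1 \,(\textup{mod }8)}}(\log p)L(\tfrac{1}{2},\chi_p)^3 \ll (\log X)\sum_{d_1,d_2\leq D}\lambda_{d_1}\lambda_{d_2}\!\!\sum_{\substack{n\leq X,\,\textup{sqfr}\\n\equiv 1 \,(\textup{mod }8)\\ [d_1,d_2]\mid n}}\!\!L(\tfrac{1}{2},\chi_n)^3 + \textup{l.o.t.}
\]
Adapting Soundararajan's third-moment analysis from~\cite{Sou00} (developed there for the family $d\equiv 0\pmod 8$) to the residue class $n\equiv 1\pmod 8$ with the divisibility condition $[d_1,d_2]\mid n$, each inner sum should contribute $\asymp (X/[d_1,d_2])(\log X)^6$. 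The standard Selberg identity $\sum_{d_1,d_2}\lambda_{d_1}\lambda_{d_2}/[d_1,d_2]\asymp 1/\log X$ then cancels the outer $\log X$, yielding the desired upper bound $\ll X(\log X)^6$.

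The main obstacle lies in the upper bound, specifically in obtaining error terms in the twisted third moment over squarefree $n$ with $[d_1,d_2]\mid n$ that are uniform enough in $[d_1,d_2]$ to survive summation over the Selberg weights of level $D$. This requires careful uniform control of the off-diagonal contributions in Soundararajan's approach, analogous to but more intricate than the off-diagonal analysis that drives the proof of Theorem~\ref{thm: pos prop}: one must propagate the divisibility condition through the approximate functional equation and the Poisson summation step, and produce power-saving error terms uniformly in the fixed divisor. The lower bound, by contrast, is a relatively routine Hölder application given that the requisite twisted first-moment asymptotics are already within the paper's methodology.
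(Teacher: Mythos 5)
Your proposal coincides with the paper's proof in its essentials: the lower bound via H\"older against $R(p)^4$ and $R(p)^6$ with $R(p)=\sum_{n\leq X^{1/500}} d_{1/2}(n)\chi_p(n)/\sqrt{n}$, and the upper bound via an upper-bound sieve followed by the Poisson-summation machinery of Section~\ref{sec:mollified second moment}, with Lemma~\ref{lem:gen char sum over primes} dispatching all off-diagonal terms. The one organizational difference bears on the obstacle you flag for the upper bound: rather than producing, for each fixed $[d_1,d_2]$, an asymptotic for the twisted third moment over $n$ with $[d_1,d_2]\mid n$ and only then summing over the Selberg weights, the paper keeps the sieve weights attached throughout the whole calculation. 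After writing $\mu^2(n)=N_Y(n)+R_Y(n)$, changing variables $n=[\alpha^2,d]m$, and applying Poisson in $m$, the resulting $d$-sums are evaluated directly by Lemmas~\ref{sieve} and~\ref{sievewithsum}, which absorb the $d$-dependence into Euler products and deliver the factor $1/\log R$ simultaneously with the main term; the $d_3$-analogue of the $k\neq 0$ analysis then goes through exactly as in Section~\ref{sec:mollified second moment}, with the small simplification that only upper bounds are needed. This interleaving of the sieve and Poisson steps is precisely what makes the uniformity-in-$[d_1,d_2]$ difficulty you anticipate never appear as a separate issue.
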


Throughout this paper, we work exclusively with $p \equiv 1 \pmod{8}$ for convenience, but our methods are not specific to this residue class. With some modifications one could state similar results for other residue classes modulo 8. See the end of Section~\ref{sec: outline of pos prop thm} for more details.

Our work indicates that Soundararajan's lower bound \cite{Sou00} for the proportion of non-vanishing for fundamental discrimimants $d \equiv 0 \pmod{8}$ also holds for the case of fundamental discriminants $d \equiv 1\pmod{8}$. Proving this involves re-doing the calculations in Section~\ref{sec:mollified second moment}, but without applying an upper bound sieve. To complete the proof, one would also need a first moment calculation. We omit the details and instead refer the reader to \cite[Section~4]{Sou00}.

It is natural to ask about the limitations of our method, and how much we can increase the lower bound in Theorem~\ref{thm: pos prop}. If we assume that we can use arbitrarily long mollifiers~\cite{Farmer}, then we obtain a higher percentage of non-vanishing. However, in view of the parity problem of sieve theory \cite{FI09}, we could not reach a proportion greater than $\frac{1}{2}$ via our method. On the other hand, by a different method~\cite{AB}, the Density Conjecture of Katz and Sarnak would imply that 100\% of the central values $L(\tfrac{1}{2},\chi_p)$ are nonzero.

The outline of the rest of the paper is as follows. In Section~\ref{sec:notation} we establish some notation and conventions that hold throughout this work. Section~\ref{sec: outline of pos prop thm} outlines the basic strategy for the proof of Theorem~\ref{thm: pos prop}. In Sections~\ref{sec:lemmas} and \ref{sec:sieves} we state a number of important technical results which are used in the proofs of our theorems. The proof of Theorem~\ref{thm: pos prop} is spread across Sections~\ref{sec:mollified first moment}, \ref{sec:mollified second moment}, and \ref{sec:finish proof of pos prop thm}. In Section~\ref{sec:mollified first moment} and its subsections we study the mollified first moment problem. The very long Section~\ref{sec:mollified second moment} and its subsections handle the mollified second moment. We choose our mollifier and finish the proof of Theorem~\ref{thm: pos prop} in Section~\ref{sec:finish proof of pos prop thm}. We prove Theorems~\ref{thm: 2nd moment upper bound} and \ref{thm: 2nd moment GRH} in Section~\ref{sec:second moment theorems}, and we prove Theorem~\ref{thm: third moment} in Section~\ref{sec:third moment}.

\section{Notation and conventions}\label{sec:notation}

We define $\chi_n(\cdot)=\left(\frac{n}{\cdot}\right)$, the Kronecker symbol, for all nonzero integers $n$, even if $n$ is not a fundamental discriminant. Note that this means $\chi_n$ has conductor $|n|$ only when $n$ is a fundamental discriminant. We write $S(Q)$ for the set of all real primitive characters $\chi$ with conductor $\leq Q$. For an integer $n$, we write $n = \square$ or $n \neq \square$ according to whether or not $n$ is a perfect square.

We let $\varepsilon > 0$ denote an arbitrarily small constant whose value may vary from one line to the next. When $\varepsilon$ is present, in some fashion, in an inequality or error term, we allow implied constants to depend on $\varepsilon$ without necessarily indicating this in the notation. At times we indicate the dependence of implied constants on other quantities by use of subscripts: for example, $Y \ll_A Z$.

Throughout this paper, we denote by $\Phi(x)$ a smooth function, compactly supported in $[\frac{1}{2},1]$, which satisfies $\Phi(x) = 1$ for $x \in [\frac{1}{2} + \frac{1}{\log X},1 - \frac{1}{\log X}]$ and $\Phi^{(j)}(x) \ll_j (\log X)^j$ for all $j \geq 0$. We could state our results for arbitrary smooth functions supported in $[\frac{1}{2},1]$, but we avoid this in an attempt to achieve some simplicity.

We write $e(x) = e^{2\pi i x}$. For $g$ a compactly supported smooth function, we define the Fourier transform $\hat{g}(y)$ of $g$ by
\begin{align*}
\hat{g}(y) = \int_\mathbb{R} g(x) e(-xy) dx.
\end{align*}
At times, however, we find it convenient to use a slightly different normalization of the Fourier transform (see Lemma \ref{Selbergsieve}).

We define the Mellin transform $g^{\dagger}(s)$ of $g$ by
\begin{align*}
g^{\dagger}(s) = \int_0^\infty g(x) x^{s-1} dx.
\end{align*}
It is also helpful to define a modified Mellin transform $\check{g}(w)$ by
\begin{align*}
\check{g}(w) = \int_0^\infty g(x) x^w dx.
\end{align*}
Observe that $\check{g}(w) = g^\dagger(1+w)$. Lastly, for a complex number $s$, we define
\begin{align*}
g_s(t) = g(t) t^{s/2}.
\end{align*}

Note that
\begin{align*}
\hat{\Phi}(0) = \Phi^{\dagger}(1) = \check{\Phi}(0)= \frac{1}{2} + O\left( \frac{1}{\log X}\right).
\end{align*}

The letter $p$ always denotes a prime number. We write $\varphi$ for the Euler phi function, and $d_k$ for the $k$-fold divisor function. If $a$ and $b$ are integers we write $[a,b]$ for their least common multiple and $(a,b)$ for their greatest common divisor. It will always be clear from context whether $[a,b]$, say, denotes a least common multiple or a real interval.

Given coprime integers $a$ and $q$, we write $\overline{a}\pmod{q}$ for the multiplicative inverse of $a$ modulo $q$.

\section{Outline of the proof of Theorem \ref{thm: pos prop}}\label{sec: outline of pos prop thm}

The proof of Theorem \ref{thm: pos prop} proceeds through the mollification method. The method was introduced by Bohr and Landau \cite{BL14}, but later greatly refined in the hands of Selberg \cite{Sel42}. The idea is to introduce a Dirichlet polynomial $M(p)$, known as a mollifier, which dampens the occasional wild behavior of the central values $L(\frac{1}{2},\chi_p)$. We study the first and second moments
\begin{equation}\label{eq: defn moment sums S1 and S2}
\begin{split}
S_1 &:= \sum_{\substack{p \equiv 1 \, (\text{mod }8)}} (\log p)\Phi \left( \frac{p}{X}\right) L\left(\tfrac{1}{2},\chi_p\right) M(p), \\
S_2 &:= \sum_{\substack{p \equiv 1 \, (\text{mod }8)}} (\log p)\Phi \left( \frac{p}{X}\right) L\left(\tfrac{1}{2},\chi_p\right)^2 M(p)^2.
\end{split}
\end{equation}
If the mollifier is chosen well then $S_1 \gg X$ and $S_2 \ll X$. By the Cauchy-Schwarz inequality we have
\begin{align}\label{eq: Cauchy Schwarz first second moment inequality}
\sum_{\substack{p \equiv 1 \, (\text{mod }8) \\ L(\frac{1}{2},\chi_p) \neq 0}} (\log p)\Phi \left( \frac{p}{X}\right) \geq \frac{S_1^2}{S_2},
\end{align}
and this implies that a positive proportion of $L(\frac{1}{2},\chi_p)$ are non-zero. 

Our mollifier takes the form
\begin{align}\label{eq: defn of mollifier}
M(p) := \sum_{\substack{m \leq M \\ m \text{ odd}}} \frac{b_m}{\sqrt{m}} \chi_p(m),
\end{align}
for some coefficients $b_m$ we describe shortly. Here we set 
\begin{align}\label{eq:outline section, defn of M, length of mollifier}
M = X^\theta, \ \ \ \ \ \ \ \ \ \ \ \theta \in \left(0 ,\tfrac{1}{2}\right) \ \  \text{ fixed}.
\end{align}
The larger one can take $\theta$, the better proportion of non-vanishing one can achieve.

The coefficients $b_m$ are a smoothed version of the M\"obius function $\mu(m)$. Specifically, we choose 
\begin{align}\label{eq: defn of mollifier coeffs bm}
b_m = \mu(m) H \left(\frac{\log m}{\log M} \right),
\end{align}
where $H(t)$ is smooth function compactly supported in $[-1,1]$ which we choose in Section \ref{sec:finish proof of pos prop thm}. It will be convenient in a number of places that $b_m$ is supported on square-free integers.

We outline our strategy for estimating $S_1$ and $S_2$. We simplify the presentation here in comparison to the actual proofs. The sum $S_1$ is by far the simpler of the two, so we start here (see Section \ref{sec:mollified first moment}). Using an approximate functional equation for the central value $L(\frac{1}{2},\chi_p)$ (Lemma \ref{lem: approx func eq}), we write $S_1$ as
\begin{align*}
S_1 &\approx \sum_{m \leq M} \frac{b_m}{\sqrt{m}} \sum_{k \leq X^{1/2+\varepsilon}} \frac{1}{\sqrt{k}} \sum_{\substack{p \equiv 1 \, (\text{mod }8)}} (\log p)\Phi \left( \frac{p}{X}\right)\chi_p(mk).
\end{align*}
The main term arises from the ``diagonal'' terms $mk = \square$. The character values $\chi_p(mk)$ are then all equal to one, and we simply use the prime number theorem in arithmetic progressions modulo eight to handle the sum on $p$. The sum on $k$ contributes a logarithmic factor, but this logarithmic loss is canceled out by a logarithmic gain coming from a cancellation in the mollifier coefficients. This yields the main term for $S_1$, which is of size $\asymp X$ (Proposition \ref{prop: asymptotic for S1}).

The ``off-diagonal'' terms $mk \neq \square$ contribute only to the error term. After some manipulations the off-diagonal terms are essentially of the form
\begin{align*}
\mathcal{E} := \sum_{\substack{q \leq MX^{1/2+\varepsilon} \\ q \neq \square}} \frac{\alpha(q)}{q^{\frac{1}{2}}} \sum_{\substack{p}} (\log p)\Phi \left( \frac{p}{X}\right)\chi_q(p),
\end{align*}
where $\alpha(q)$ is some function satisfying $|\alpha(q)| \ll_\varepsilon q^\varepsilon$. We assume here for simplicity that all of the characters $\chi_q$ are primitive characters. We bound the character sum over primes in $\mathcal{E}$ in three different ways, depending on the size of $q$. These three regimes  correspond to small, medium, and large values of $q$. Some of the arguments are similar to those of Jutila \cite{Jut81}.

In the regime of small $q$ we appeal to the prime number theorem in arithmetic progressions with error term. The sum on primes $p$ is small, except in the case where one of the characters $\chi_{q^*}$ is exceptional: that is, the associated $L$-function $L(s,\chi_{q^*})$ has a real zero $\beta_*$ very close to $s=1$. Siegel's theorem gives $q^* \geq c(B) (\log X)^B$ with $B>0$ arbitrarily large. This would immediately dispatch any exceptional characters, but unfortunately the constant $c(B)$ is not effectively computable. To get an effective estimate we use Page's theorem, which states that at most one such exceptional character $\chi_{q^*}$ exists. We then study carefully the contribution of this one exceptional character and show it is acceptably small.

In regimes of medium and large $q$, we take advantage of the averaging over $q$ present in $\mathcal{E}$. We bound $\mathcal{E}$ in terms of instances of
\begin{align*}
\mathcal{E}(Q) := Q^{-\frac{1}{2}+\varepsilon} \sum_{\substack{Q/2 < q \leq Q \\ q \neq \square}} \left|\sum_{p} (\log p)\Phi \left( \frac{p}{X}\right) \chi_q(p) \right|,
\end{align*}
where $Q$ is of moderate size, or is large. 

When $Q$ is medium-sized, we use the explicit formula to bound $\mathcal{E}(Q)$ by sums over zeros of the $L$-functions $L(s,\chi_q)$. We then use zero-density estimates.

We are left with the task of bounding $\mathcal{E}(Q)$ when $Q$ is large, which means $Q$ is larger than $X^\delta$ for some small, fixed $\delta > 0$. Rather than treating the sum on primes analytically, as we did when $Q$ was small or medium-sized, we treat the sum on primes combinatorially. We use Vaughan's identity to write the character sum over the primes as a linear combination of linear and bilinear sums. The linear sums are handled easily with the P\'olya-Vinogradov inequality. We bound the bilinear sums by appealing to a large sieve inequality for real characters due to Heath-Brown (Lemma \ref{lem: estimates for character sums}).

We now describe our plan of attack for $S_2$ (see Section \ref{sec:mollified second moment}). Recall that
\begin{align*}
S_2 &= \sum_{\substack{p \equiv 1 \, (\text{mod }8)}} (\log p)\Phi \left( \frac{p}{X}\right) L\left(\tfrac{1}{2},\chi_p\right)^2 M(p)^2.
\end{align*}
As we see from Theorem \ref{thm: 2nd moment GRH}, we only barely obtain an asymptotic formula for the second moment
\begin{align*}
\sum_{\substack{p \leq X \\ p \equiv 1 \, (\text{mod }8)}} (\log p) L\left(\tfrac{1}{2},\chi_p\right)^2
\end{align*}
under the assumption of the Generalized Riemann Hypothesis. Thus, it might seem doubtful that one can say anything useful about $S_2$, since the central value $L(\frac{1}{2},\chi_p)^2$ is further twisted by the square of a Dirichlet polynomial. The key idea is that we do not need an asymptotic formula for $S_2$, but only an upper bound of the right order of magnitude (with a good constant). We therefore avail ourselves of sieve methods (see Section \ref{sec:sieves}). By positivity we have
\begin{align*}
S_2 &\leq (\log X) \sum_{\substack{n \equiv 1 \, (\text{mod }8)}} \mu^2(n) \Phi \left( \frac{n}{X}\right) \left(\sum_{d \mid n} \lambda_d \right) L \left( \tfrac{1}{2},\chi_n \right)^2 M(n)^2,
\end{align*}
where
\begin{align*}
\sum_{d \mid n} \lambda_d
\end{align*}
is an upper bound sieve supported on coefficients with $d \leq D$. Since we are now working with ordinary integers instead of prime numbers, the analysis for $S_2$ becomes similar to the second moment problem considered in \cite{Sou00} (see \cite[Section 5]{Sou00}).

We begin by writing
\begin{equation}\label{mu2approx}
\mu^2(n) = N_Y(n) + R_Y(n),
\end{equation}
where
\begin{equation}\label{MYRY}
N_Y(n) := \sum_{\substack{\ell^2 \mid n \\ \ell \leq Y}} \mu(\ell), \ \ \ \ \ R_Y(n) := \sum_{\substack{\ell^2 \mid n \\ \ell > Y}} \mu(\ell),
\end{equation}
and $Y$ is a small power of $X$. The sum
\begin{align*}
\sum_{\substack{n \equiv 1 \, (\text{mod }8)}} \Phi \left( \frac{n}{X}\right) R_Y(n) \left(\sum_{d \mid n} \lambda_d \right) L \left( \tfrac{1}{2},\chi_n \right)^2 M(n)^2
\end{align*}
is an error term, and is shown to be small in a straightforward fashion by applying moment estimates for $L(\frac{1}{2},\chi_n)$ due to Heath-Brown (Lemma \ref{lem: moment estimates}).

The main task is therefore to asymptotically evaluate the sum
\begin{align*}
\sum_{\substack{n \equiv 1 \, (\text{mod }8)}} \Phi \left( \frac{n}{X}\right) N_Y(n) \left(\sum_{d \mid n} \lambda_d \right) L \left( \tfrac{1}{2},\chi_n \right)^2 M(n)^2.
\end{align*}
We use an approximate functional equation to represent the central values $L \left( \frac{1}{2},\chi_n \right)^2$ and arrive at expressions of the form
\begin{align*}
\sum_{\ell \leq Y} \mu(\ell)\sum_{d \leq D} \lambda_d \mathop{\sum \sum}_{m_1,m_2 \leq M} \frac{b_{m_1} b_{m_2}}{\sqrt{m_1m_2}} \sum_{\nu=1}^\infty \frac{d(\nu)}{\sqrt{\nu}} \sum_{\substack{n \equiv 1 \, (\text{mod }8) \\ d \mid n \\ \ell^2 \mid n}} \left(\frac{m_1m_2\nu}{n} \right) \Phi \left( \frac{n}{X}\right) \omega \left( \frac{\nu}{n}\right),
\end{align*}
where $\omega(x)$ is some rapidly decaying smooth function that satisfies $\omega(x) \approx 1$ for small $x$. We then change variables $n = m [d,\ell^2]$.

We use Poisson summation to transform the sum on $m$ into a sum basically of the form
\begin{align*}
\sum_{k \in \mathbb{Z}} \left(\frac{[d,\ell^2]k}{m_1m_2\nu} \right) e \left(\frac{k \overline{[d,\ell^2]m_1m_2\nu}}{8} \right) \hat{F}_\nu \left( \frac{kX}{[d,\ell^2]m_1m_2\nu} \right),
\end{align*}
for some smooth function $F_\nu$. The zero frequency $k = 0$ gives rise to a main term. Since $(\frac{0}{h})= 1$ or 0 depending on whether $h$ is a square, the $k=0$ contribution represents the expected ``diagonal'' contribution from $m_1m_2\nu = \square$. There is an additional, off-diagonal, main term which arises, essentially, from the terms with $[d,\ell^2]k = \square$. We adapt here the delicate off-diagonal analysis of \cite{Sou00}. The situation is complicated by the presence of the additive character $e(\cdot)$, which is not present in \cite{Sou00}. The additive character necessitates a division of the integers $k$ into residue classes modulo 8. We then use Fourier expansion to write the additive character as a linear combination of multiplicative characters. After many calculations the off-diagonal main term arises as a sum of complex line integrals. When we combine the various pieces the integrand becomes an even function, exhibiting a symmetry which none of the pieces separately possessed. This fact proves to be very convenient in the final steps of the main term analysis.

One intriguing feature of the main term in $S_2$ is a kind of ``double mollification''. We must account for the savings coming from the mollifier $M(n)$, but must also account for the savings coming from the sieve weights $\lambda_d$, which act as a sort of mollifier on the natural numbers. It is crucial that we get savings in both places, and therefore our sieve process must be very precise. We find that a variation on the ideas of Selberg (see e.g. \cite[Section 6.5]{IK}) is sufficient.

At length we arrive at an upper bound $S_{2,U}$, say, for $S_2$ of size $S_{2,U}\ll X$. We make an optimal choice of the function $H(x)$ in Section \ref{sec:finish proof of pos prop thm} to maximize the ratio $S_1^2/S_{2,U}$. The resulting mollifier is not the optimal mollifier, but it gives results that are asymptotically equivalent to those attained with the optimal mollifier. This yields Theorem \ref{thm: pos prop}.

To treat other residue classes of $p \pmod{8}$, we make the following changes. First, we change the definition of $\chi_p(\cdot )$ to $\left( \frac{(-1)^{\mathfrak{a}}p}{\cdot}\right)$, where $\mathfrak{a}=0$ if $p\equiv 1\pmod{4}$ and $\mathfrak{a}=1$ if $p\equiv 3\pmod{4}$. Thus $\chi_p$ is still a primitive character of conductor $p$. Second, we use a variant of the approximate functional equation (Lemma~\ref{lem: approx func eq}) with $\omega_j$, defined in \eqref{eq: defn of omega j}, replaced by
$$
\frac{1}{2\pi i} \int_{(c)} \frac{\Gamma \left( \frac{s}{2} + \frac{1+2\mathfrak{a}}{4}\right)^j}{\Gamma \left( \frac{1+2\mathfrak{a}}{4}\right)^j} \left(1 - \frac{\chi_p(2)}{2^{\frac{1}{2}-s}}\right)^j \xi^{-s} W(s) \frac{ds}{s}.
$$
The function $W(s)$ here is $16\left(s^2-\tfrac{1}{4}\right)^2$. Its purpose is to cancel potential poles at $s=\frac{1}{2}$ in the analysis.

\section{Lemmata}\label{sec:lemmas}

We represent the central values of $L$-functions by using an approximate functional equation. We first investigate some properties of the smooth functions which appear in our approximate functional equations. For $j = 1,2$ and $c > 0$, define
\begin{align}\label{eq: defn of omega j}
\omega_j(\xi) &= \frac{1}{2\pi i} \int_{(c)} \frac{\Gamma \left( \frac{s}{2} + \frac{1}{4}\right)^j}{\Gamma \left( \frac{1}{4}\right)^j} \left(1 - \frac{1}{2^{\frac{1}{2}-s}}\right)^j \xi^{-s} \frac{ds}{s}.
\end{align}

\begin{lem}\label{lem: properties of omega j}
Let $j = 1,2$. The function $\omega_j(\xi)$ is real-valued and smooth on $(0,\infty)$. If $\xi > 0$ we have
\begin{align*}
\omega_j(\xi) = \left(1 - \frac{1}{\sqrt{2}}\right)^j + O_\varepsilon(\xi^{\frac{1}{2}-\varepsilon}).
\end{align*}
For any fixed integer $\nu \geq 0$ and $\xi \geq 4\nu+10$, we have
\begin{align*}
\omega_j^{(\nu)}(\xi) \ll (\xi/2)^{\nu+3} \exp\left(-\frac{1}{4}\xi^{\frac{2}{j}} \right) \ll_\nu \exp\left(-\frac{1}{8}\xi^{\frac{2}{j}} \right).
\end{align*}
\end{lem}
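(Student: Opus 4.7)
The plan is to treat the three claims in turn via contour manipulations of the Mellin--Barnes representation.

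\textbf{Real-valuedness and smoothness.} I would first verify that the substitution $s \mapsto \bar s$ sends the integrand to its complex conjugate (using $\overline{\Gamma(z)} = \Gamma(\bar z)$ and the analogous symmetries of $2^{s-1/2}$, $\xi^{-s}$, and $1/s$) while reversing the orientation of the contour, so that $\omega_j(\xi) = \overline{\omega_j(\xi)}$. Smoothness on $(0,\infty)$ then follows from differentiation under the integral sign, which is legitimized by the exponential decay of $|\Gamma(\sigma/2+1/4+it/2)|$ in $|t|$ supplied by Stirling's formula, uniform on compact ranges of $\sigma$.

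\textbf{Asymptotic as $\xi \to 0^+$.} The plan is to shift the contour from $\operatorname{Re}(s)=c$ to $\operatorname{Re}(s) = -\tfrac12+\varepsilon$. In this strip $\Gamma(s/2+1/4)^j$ is holomorphic (its first pole is at $s=-1/2$), the factor $(1 - 2^{s-1/2})^j$ is entire (vanishing, not blowing up, at $s=1/2$), and $\xi^{-s}$ is entire; hence the only singularity crossed is the simple pole of $1/s$ at $s=0$, whose residue is $\left(1 - 1/\sqrt{2}\right)^j$. On the new line $|\xi^{-s}| = \xi^{1/2-\varepsilon}$ and Stirling renders the $t$-integral absolutely convergent, producing the error $O_\varepsilon(\xi^{1/2-\varepsilon})$.

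\textbf{Rapid decay and derivatives.} Differentiating $\nu$ times under the integral yields
\[
\omega_j^{(\nu)}(\xi) = \frac{(-1)^\nu}{2\pi i}\int_{(c)} \frac{P_\nu(s)}{s}\,\frac{\Gamma(s/2+1/4)^j}{\Gamma(1/4)^j}\,\bigl(1 - 2^{s-1/2}\bigr)^j \xi^{-s-\nu}\,ds,
\]
where $P_\nu(s) = s(s+1)\cdots(s+\nu-1)$. For $\xi \ge 4\nu+10$ I would then push the contour to $\operatorname{Re}(s)=\sigma$ for a large $\sigma$, crossing no poles since every pole of the integrand lies in $\operatorname{Re}(s)\le 0$. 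Stirling's bound applied to $\Gamma(s/2+1/4)^j$ controls the integrand on the shifted line by roughly $\sigma^\nu (\sigma/2)^{j\sigma/2} e^{-j\sigma/2}\xi^{-\sigma-\nu}$ times a convergent $t$-integral. Optimizing in $\sigma$ (the correct choice turns out to be $\sigma \asymp \xi^{2/j}$) produces the bound $(\xi/2)^{\nu+3}\exp(-\xi^{2/j}/4)$; the hypothesis $\xi \ge 4\nu+10$ keeps the optimal $\sigma$ well away from the poles at $\operatorname{Re}(s)\le 0$ and makes the bookkeeping of polynomial prefactors legitimate. The weaker bound $\ll_\nu \exp(-\xi^{2/j}/8)$ then follows by absorbing the factor $(\xi/2)^{\nu+3}$ into the exponential.

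\textbf{Main obstacle.} The principal technical challenge is calibrating the final contour shift so that Stirling's estimate yields precisely the constant $1/4$ in $\exp(-\xi^{2/j}/4)$. This requires careful tracking of the $2^{j\sigma/2}$ factor in $\Gamma(s/2+1/4)^j$ and of the polynomial $P_\nu(s)/s$, but it is bookkeeping rather than conceptual difficulty; the first two claims are standard Mellin--Barnes residue calculations.
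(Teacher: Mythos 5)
Your proposal is correct and follows essentially the same route as the paper's proof: conjugate symmetry for real-valuedness, a contour shift to $\operatorname{Re}(s)=-\tfrac12+\varepsilon$ picking up the residue at $s=0$ for the small-$\xi$ asymptotic, and differentiating under the integral followed by pushing the contour far right and optimizing $\sigma\asymp\xi^{2/j}$ via Stirling for the decay estimate. The paper's specific choice is $c=\tfrac12\xi^{2/j}-2\nu-3$, which matches your $\sigma\asymp\xi^{2/j}$, and it controls the gamma factors with the elementary bound $|\Gamma(x+iy)|\le\Gamma(x)$ plus the functional equation before invoking Stirling, rather than invoking Stirling directly on the line; this is a minor bookkeeping difference, not a different method.
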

\begin{proof}
The proof is similar to \cite[Lemma 2.1]{Sou00}, but we give details for completeness. The function $\omega_j(s)$ is real-valued because the change of variable $\text{Im}(s) \rightarrow -\text{Im}(s)$ shows that $\omega_j$ is equal to its complex conjugate. Moreover, uniform convergence for $\xi$ in compact subintervals of $(0,\infty)$ shows that $\omega_j$ is smooth.

To prove the first estimate of the lemma, move the line of integration in the definition of $\omega_j(\xi)$ to $c = -\frac{1}{2}+\varepsilon$. The pole at $s=0$ contributes $\left(1 - \frac{1}{\sqrt{2}}\right)^j$, and the new integral is $O_\varepsilon(\xi^{\frac{1}{2}-\varepsilon})$.

Let us turn to the last estimate of the lemma. We may suppose $\xi^{\frac{2}{j}} \geq 4\nu + 10$. By differentiation under the integral sign we find
\begin{align*}
\omega_j^{(\nu)}(\xi) = \frac{(-1)^\nu}{2\pi i}\int_{(c)}\frac{\Gamma \left( \frac{s}{2} + \frac{1}{4}\right)^j}{\Gamma \left( \frac{1}{4}\right)^j} \left(1 - \frac{1}{2^{\frac{1}{2}-s}}\right)^j s (s+1) \cdots (s+\nu - 1) \xi^{-s-\nu} \frac{ds}{s}.
\end{align*}
Recall that $|\Gamma(x+iy)| \leq \Gamma(x)$ for $x \geq 1$ and $z \Gamma(z) = \Gamma(z+1)$. Thus, for $c \geq 2$ we obtain
\begin{align*}
|\omega_j^{(\nu)}(\xi)| &\ll \Gamma \left( \frac{c}{2} + \frac{5}{4} + \nu\right)^j \left(1 + \frac{2^c}{\sqrt{2}}\right)^j \xi^{-c-v} \int_{(c)} \frac{1}{|s| |\frac{s}{2} + \frac{1}{4}+\nu|} \prod_{k=0}^{\nu-1} \frac{|s+k|}{|\frac{s}{2}+\frac{1}{4} + k|} |ds| \\
&\ll\Gamma \left( \frac{c}{2} + \frac{5}{4} + \nu\right)^j \left( \frac{2^j}{\xi}\right)^c \left(\frac{2}{\xi} \right)^\nu c^{-1},
\end{align*}
where the implied constants are absolute. By Stirling's formula this is
\begin{align*}
\ll \left(\frac{c+2\nu + 3}{2e} \right)^{\frac{j}{2}(c+2\nu + 3)}\left(\frac{2^j}{\xi} \right)^c \left( \frac{2}{\xi}\right)^\nu.
\end{align*}
We choose $c = \frac{1}{2} \xi^{\frac{2}{j}} - 2\nu - 3$, which we note is $> 2$. Thus, the quantity in question is
\begin{align*}
\ll \left(\frac{\xi}{2}\right)^{\nu+3} \exp\left( - \frac{1}{4} \xi^{\frac{2}{j}}\right),
\end{align*}
as desired.
\end{proof}

We will find it technically convenient to use an approximate functional equation in which the variable of summation is restricted to odd integers.

\begin{lem}\label{lem: approx func eq}
Let $n \equiv 1 \pmod{8}$ be square-free and satisfy $n > 1$. Let $\chi_n(\cdot) = \left(\frac{n}{\cdot}\right)$ denote the real primitive character of conductor $n$. Then for $j = 1,2$ we have
\begin{align*}
L\left( \tfrac{1}{2},\chi_n\right)^j = \frac{2}{\left(1 - \frac{1}{\sqrt{2}}\right)^{2j}} \sum_{\substack{\nu = 1 \\ \nu \textup{ odd}}}^\infty \frac{\chi_n(\nu) d_j(\nu)}{\sqrt{\nu}} \omega_j \left(\nu \left(\frac{\pi}{n} \right)^{j/2} \right) =: \mathcal{D}_j(n).
\end{align*}
\end{lem}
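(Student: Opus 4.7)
The plan is to apply the standard contour-integral method to the completed $L$-function, choosing a smoothing factor $G(s)$ that simultaneously symmetrizes the integrand and restricts the resulting Dirichlet series to odd $\nu$.

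First, I would record the setup. Since $n>1$ is a positive square-free integer with $n\equiv 1\pmod 4$, it is a fundamental discriminant, so $\chi_n$ is a real primitive even character of conductor $n$. The completed $L$-function $\Lambda(s,\chi_n):=(n/\pi)^{s/2}\Gamma(s/2)L(s,\chi_n)$ is therefore entire and satisfies the functional equation $\Lambda(s,\chi_n)=\Lambda(1-s,\chi_n)$. The hypothesis $n\equiv 1\pmod 8$ gives $\chi_n(2)=1$, which is precisely what will let us restrict the Dirichlet series to odd integers.

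Next, I would introduce the even entire test function
\[
G(s):=\frac{1}{(1-1/\sqrt 2)^{2j}}\left(1-\frac{1}{2^{1/2+s}}\right)^j\left(1-\frac{1}{2^{1/2-s}}\right)^j,
\]
which satisfies $G(0)=1$ and $G(-s)=G(s)$, and consider
\[
\mathcal{I}:=\frac{1}{2\pi i}\int_{(2)}\Lambda(\tfrac12+s,\chi_n)^j\,G(s)\,\frac{ds}{s}.
\]
Shifting the contour to $\operatorname{Re}(s)=-2$ picks up only the residue $\Lambda(1/2,\chi_n)^j$ at $s=0$; the shift is justified by exponential decay of the Gamma factors in $\Lambda$, polynomial growth of $G$ on vertical lines, and standard convexity bounds for $L(1/2+s,\chi_n)$. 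On the new line I would change variables $s\mapsto -s$ and apply $\Lambda(\tfrac12-s,\chi_n)=\Lambda(\tfrac12+s,\chi_n)$ together with $G(-s)=G(s)$ to identify the shifted integral with $-\mathcal{I}$. Hence $\Lambda(1/2,\chi_n)^j=2\mathcal{I}$, and dividing through by $(n/\pi)^{j/4}\Gamma(1/4)^j$ yields
\[
L(\tfrac12,\chi_n)^j=2\cdot\frac{1}{2\pi i}\int_{(2)}\frac{\Gamma(s/2+1/4)^j}{\Gamma(1/4)^j}\left(\frac{n}{\pi}\right)^{js/2}L(\tfrac12+s,\chi_n)^j\,G(s)\,\frac{ds}{s}.
\]

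Finally, I would absorb one copy of $(1-2^{-(1/2+s)})^j$ from $G(s)$ into the Dirichlet series, using the Euler product identity
\[
L(\tfrac12+s,\chi_n)^j\left(1-\frac{\chi_n(2)}{2^{1/2+s}}\right)^j=\sum_{\substack{\nu=1\\ \nu\text{ odd}}}^{\infty}\frac{\chi_n(\nu)\,d_j(\nu)}{\nu^{1/2+s}},
\]
which is absolutely convergent on $\operatorname{Re}(s)=2$. Interchanging this sum with the contour integral and noting that $(n/\pi)^{js/2}\nu^{-s}=(\nu(\pi/n)^{j/2})^{-s}$, the inner integral is exactly the Mellin--Barnes expression defining $\omega_j(\nu(\pi/n)^{j/2})$ in (4.1). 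Collecting the surviving prefactor $(1-1/\sqrt 2)^{-2j}$ produces the claimed identity for $\mathcal{D}_j(n)$. The only real subtlety is the design of $G(s)$: it must be simultaneously even (to exploit the functional equation via the contour-shift trick), normalized at $s=0$ (to make the residue clean), and factored so that one half of it combines with $L(\tfrac12+s,\chi_n)^j$ to produce a sum over odd $\nu$; the form above is engineered to satisfy all three conditions at once, and once it is in hand the remaining analytic steps are entirely routine.
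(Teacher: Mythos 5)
Your proposal is correct and follows the same approach as the paper: both establish $\Lambda(\tfrac12,\chi_n)^j = 2\mathcal{I}$ via the contour-shift/functional-equation trick with an even test function and then expand the $L$-function into a Dirichlet series over odd $\nu$ using $\chi_n(2)=1$. The only difference is purely organizational — the paper starts from the claimed sum $\mathcal{D}_j(n)$ and derives the contour integral, while you start from the contour integral and derive the sum — and the two are equivalent.
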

\begin{proof}
The proof follows along standard lines (e.g. \cite[Theorem 5.3]{IK}), but we give a proof since our situation is slightly different.

Let $\Lambda(z,\chi_n) = \left(\frac{n}{\pi}\right)^{z/2} \Gamma\left( \frac{z}{2}\right) L(z,\chi_n)$. Since $n \equiv 1 \pmod{4}$ we have $\chi_n(-1) = 1$, and therefore we have the functional equation (see \cite[Proposition 2.2.24]{Coh}, \cite[Chapter 9]{Dav})
\begin{align*}
\Lambda(z,\chi_n) = \Lambda(1-z,\chi_n).
\end{align*}
Recall also that $\Lambda(z,\chi_n)$ is entire because $\chi_n$ is primitive.

Now consider the sum
\begin{align*}
I := \sum_{\nu \text{ odd}} \frac{\chi_n(\nu) d_j(\nu)}{\sqrt{\nu}} \omega_j \left(\nu \left(\frac{\pi}{n} \right)^{j/2} \right).
\end{align*}
We use the definition of $\omega_j$ and interchange the order of summation and integration. Since $\chi_n(2) = 1$ we have
\begin{align*}
I &= \frac{1}{2\pi i}\int_{(c)} \frac{\Gamma \left( \frac{s}{2} + \frac{1}{4}\right)^j}{\Gamma \left( \frac{1}{4}\right)^j} \left(1 - \frac{1}{2^{\frac{1}{2}-s}}\right)^j\left(1 - \frac{1}{2^{\frac{1}{2}+s}}\right)^j \left(\frac{n}{\pi} \right)^{js/2} L\left( \frac{1}{2}+s,\chi_n\right)^j \frac{ds}{s} \\
&= \frac{1}{2\pi i} \int_{(c)} \frac{(\frac{n}{\pi})^{-j/4}}{\Gamma \left( \frac{1}{4}\right)^j}\left(1 - \frac{1}{2^{\frac{1}{2}-s}}\right)^j\left(1 - \frac{1}{2^{\frac{1}{2}+s}}\right)^j \Lambda \left( \frac{1}{2}+s,\chi_n\right)^j \frac{ds}{s}.
\end{align*}
We move the line of integration to $\text{Re}(s) = -c$, picking up a contribution from the simple pole at $s=0$:
\begin{align*}
I &= \frac{(\frac{n}{\pi})^{-j/4}}{\Gamma \left( \frac{1}{4}\right)^j}\left(1 - \frac{1}{\sqrt{2}}\right)^{2j} \Lambda\left( \frac{1}{2},\chi_n\right)^j \\ 
&+ \frac{1}{2\pi i} \int_{(-c)} \frac{(\frac{n}{\pi})^{-j/4}}{\Gamma \left( \frac{1}{4}\right)^j}\left(1 - \frac{1}{2^{\frac{1}{2}-s}}\right)^j\left(1 - \frac{1}{2^{\frac{1}{2}+s}}\right)^j \Lambda \left( \frac{1}{2}+s,\chi_n\right)^j \frac{ds}{s}.
\end{align*}
In this latter integral we change variables $s \rightarrow -s$ and then apply the functional equation $\Lambda\left(\frac{1}{2}-s,\chi_n\right)=\Lambda\left(\frac{1}{2}+s,\chi_n\right)$ to obtain
\begin{align*}
\frac{(\frac{n}{\pi})^{-j/4}}{\Gamma \left( \frac{1}{4}\right)^j}\left(1 - \frac{1}{\sqrt{2}}\right)^{2j} \Lambda\left( \frac{1}{2},\chi_n\right)^j &= 2I = 2\sum_{\nu \text{ odd}} \frac{\chi_n(\nu) d_j(\nu)}{\sqrt{\nu}} \omega_j \left(\nu \left(\frac{\pi}{n} \right)^{j/2} \right).
\end{align*}
We then rearrange to obtain the desired conclusion.
\end{proof}

We frequently encounter exponential sums which are analogous to Gauss sums. Given an odd integer $n$, we define for all integers $k$
\begin{equation}\label{eq: defn of Gk}
G_k(n) = \left(\frac{1-i}{2} + \left( \frac{-1}{n}\right) \frac{1+i}{2} \right)\sum_{a(\text{mod } n)} \left( \frac{a}{n} \right) e \left( \frac{ak}{n}\right)
\end{equation}
and
\begin{equation}\label{Gaussdef}
\tau_k(n) =\sum_{a(\text{mod } n)} \left( \frac{a}{n} \right) e \left( \frac{ak}{n}\right) = \left(\frac{1+i}{2} + \left( \frac{-1}{n}\right) \frac{1-i}{2} \right) G_k(n).
\end{equation}
We require knowledge of $G_k(n)$ for all $n$.

\begin{lem}\label{lem: properties of Gkn}
(i) (Multiplicativity) Suppose $m$ and $n$ are coprime odd integers. Then $G_k(mn) = G_k(m) G_k(n)$. \\
\noindent (ii) Suppose $p^\alpha$ is the largest power of $p$ dividing $k$. (If $k=0$ set $\alpha = \infty$.) Then for $\beta \geq 1$
\begin{align*}
G_k(p^\beta) = 
\begin{cases}
0 \ \ \ \ \ \ \ \ \ \ &\text{if } \beta \leq\alpha \text{ is odd}, \\
\varphi(p^\beta) &\text{if } \beta \leq \alpha \text{ is even}, \\
-p^\alpha &\text{if } \beta=\alpha+1 \text{ is even}, \\
(\frac{kp^{-\alpha}}{p}) p^\alpha\sqrt{p} &\text{if } \beta = \alpha+1 \text{ is odd}, \\
0 &\text{if } \beta \geq \alpha + 2.
\end{cases}
\end{align*}
\end{lem}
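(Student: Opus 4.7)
The plan is to prove both parts by first working with the Gauss-like sum $\tau_k(n)$ defined in \eqref{Gaussdef} and then converting back to $G_k(n)$ via the factor
\[
\epsilon_n := \tfrac{1+i}{2} + \left(\tfrac{-1}{n}\right)\tfrac{1-i}{2},
\]
which equals $1$ or $i$ according as $n\equiv 1$ or $3 \pmod 4$, together with the identity $\epsilon_n \left(\tfrac{1-i}{2}+\left(\tfrac{-1}{n}\right)\tfrac{1+i}{2}\right)=1$. This reduces everything to elementary facts about $\tau_k(n)$.

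For part (i), fix coprime odd $m,n$ and use CRT to write $a \equiv a_1 n + a_2 m \pmod{mn}$ with $a_1$ mod $m$, $a_2$ mod $n$. Then
\[
\left(\tfrac{a}{mn}\right) = \left(\tfrac{a_1 n}{m}\right)\left(\tfrac{a_2 m}{n}\right) = \left(\tfrac{n}{m}\right)\left(\tfrac{m}{n}\right)\left(\tfrac{a_1}{m}\right)\left(\tfrac{a_2}{n}\right),
\]
while the exponential splits as $e(ak/(mn)) = e(a_1k/m)e(a_2k/n)$. Hence $\tau_k(mn) = \left(\tfrac{n}{m}\right)\left(\tfrac{m}{n}\right)\tau_k(m)\tau_k(n)$. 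Quadratic reciprocity gives $\left(\tfrac{n}{m}\right)\left(\tfrac{m}{n}\right) = (-1)^{(m-1)(n-1)/4}$, and a case analysis on $m,n \pmod 4$ verifies that this sign is exactly the ratio $\epsilon_{mn}/(\epsilon_m \epsilon_n)$. Dividing through by $\epsilon_{mn}$ yields $G_k(mn) = G_k(m)G_k(n)$.

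For part (ii), write $k = p^\alpha k'$ with $(k',p)=1$ (understanding $\alpha = \infty$ if $k=0$) and split into cases. When $\beta \leq \alpha$ we have $k/p^\beta \in \mathbb Z$, so $\tau_k(p^\beta) = \sum_{a \bmod p^\beta}\left(\tfrac{a}{p}\right)^\beta$; this is $\varphi(p^\beta)$ for $\beta$ even and $0$ for $\beta$ odd by the standard vanishing of the Legendre-symbol sum over a full residue system coprime to $p$. When $\beta = \alpha+1$, the exponential collapses to $e(ak'/p)$, depending only on $a\bmod p$; grouping the $p^{\beta-1}$ lifts of each residue class gives $\tau_k(p^\beta) = p^{\beta-1}\sum_{b=1}^{p-1} e(bk'/p) = -p^\alpha$ in the even-$\beta$ case, and $p^{\beta-1}\left(\tfrac{k'}{p}\right)g(p) = p^\alpha\sqrt{p}\left(\tfrac{k'}{p}\right)\epsilon_p$ in the odd-$\beta$ case via the classical Gauss-sum evaluation. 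In both cases one checks $\epsilon_{p^\beta}$ matches (note $p^\beta \equiv 1 \pmod 4$ for $\beta$ even, $\equiv p \pmod 4$ for $\beta$ odd), so dividing by $\epsilon_{p^\beta}$ produces the stated values of $G_k(p^\beta)$.

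The main step, and the one requiring the cleanest argument, is $\beta \geq \alpha+2$. Here apply the translation $a \mapsto a + p^{\beta-\alpha-1}$, which is a bijection of $\mathbb Z/p^\beta\mathbb Z$. Since $\beta-\alpha-1 \geq 1$, the shifted argument is congruent to $a$ modulo $p$, so $\left(\tfrac{a+p^{\beta-\alpha-1}}{p^\beta}\right) = \left(\tfrac{a}{p^\beta}\right)$, while
\[
e\!\left(\tfrac{(a+p^{\beta-\alpha-1})k}{p^\beta}\right) = e\!\left(\tfrac{ak}{p^\beta}\right)e\!\left(\tfrac{k'}{p}\right).
\]
Therefore $\tau_k(p^\beta) = e(k'/p)\,\tau_k(p^\beta)$, and since $(k',p)=1$ makes $e(k'/p)\neq 1$, we conclude $\tau_k(p^\beta) = 0$ and hence $G_k(p^\beta)=0$. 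This shift trick is the crux: it encodes in one line the vanishing that would otherwise require an inductive splitting argument.
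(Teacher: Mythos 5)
Your proof is correct and complete; the paper simply cites \cite[Lemma 2.3]{Sou00} without argument, and your proof follows the same standard route as Soundararajan's original (CRT plus reciprocity for multiplicativity, grouping by residue class modulo $p$ together with the classical Gauss sum evaluation for $\beta\leq\alpha+1$, and the translation $a\mapsto a+p^{\beta-\alpha-1}$ for the vanishing when $\beta\geq\alpha+2$). The verification that the reciprocity sign $(-1)^{(m-1)(n-1)/4}$ equals $\epsilon_{mn}/(\epsilon_m\epsilon_n)$ and that $\epsilon_{p^\beta}$ matches in each parity case is exactly what is needed to pass from $\tau_k$ to $G_k$, so no gaps.
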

\begin{proof}
This is \cite[Lemma 2.3]{Sou00}.
\end{proof}

The following two results are useful for bounding various character sums that arise. Both results are corollaries of a large sieve inequality for quadratic characters developed by Heath-Brown \cite{HeaB95}.

\begin{lem}\label{lem: estimates for character sums}
Let $N$ and $Q$ be positive integers, and let $a_1,\ldots,a_N$ be arbitrary complex numbers. Then
\begin{align*}
\sum_{\chi \in S(Q)} \left| \sum_{n \leq N} a_n \chi(n) \right|^2 \ll_\varepsilon (QN)^\varepsilon (Q+N) \sum_{n_1n_2 = \square} |a_{n_1}a_{n_2}|,
\end{align*}
for any $\varepsilon > 0$. Let $M$ be a positive integer, and for each $|m| \leq M$ write $4m = m_1m_2^2$, where $m_1$ is a fundamental discriminant, and $m_2$ is positive. Suppose the sequence $a_n$ satisfies $|a_n| \ll n^\varepsilon$. Then
\begin{align*}
\sum_{|m| \leq M} \frac{1}{m_2} \left|\sum_{n \leq N} a_n \left( \frac{m}{n}\right) \right|^2 \ll (MN)^\varepsilon N (M+N).
\end{align*}
\end{lem}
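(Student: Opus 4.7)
The plan is to derive both bounds from Heath-Brown's large sieve inequality for real characters in \cite{HeaB95}. Heath-Brown's main theorem (in its cleanest form) asserts that for any complex coefficients $a_n$,
\begin{equation*}
\sum_{d}^{\flat} \Bigl|\sum_{n}^{\flat} a_n \Bigl(\tfrac{d}{n}\Bigr)\Bigr|^2 \ll_\varepsilon (DN)^\varepsilon (D+N) \sum_{n}^{\flat} |a_n|^2,
\end{equation*}
where $\flat$ restricts the sums to odd square-free integers with $|d|\le D$ and $n\le N$ respectively. Removing the square-free restriction on $n$ (write $n=s\alpha^2$ with $s$ square-free, so that $(d/n)=(d/s)\,\mathbf{1}_{(d,\alpha)=1}$) replaces $\sum^\flat |a_n|^2$ by the diagonal $\sum_{n_1 n_2=\square}|a_{n_1} a_{n_2}|$ appearing in the present lemma, since two distinct $n$'s give the same character value modulo their shared square-free part.

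For the first inequality, identify each $\chi\in S(Q)$ with the unique fundamental discriminant $d$ with $|d|\le Q$ such that $\chi=\chi_d$. Since $\chi(n)=(d/n)$, the displayed bound above (after removing the $n$-squarefree restriction as indicated) yields the stated estimate directly.

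For the second inequality, I would parametrize each nonzero $m$ with $|m|\le M$ via its fundamental-discriminant decomposition $4m=m_1 m_2^2$. The multiplicativity of the Kronecker symbol in its upper entry gives
\begin{equation*}
\Bigl(\tfrac{m}{n}\Bigr)=\Bigl(\tfrac{m_1}{n}\Bigr)\mathbf{1}_{(n,m_2)=1}
\end{equation*}
for $n$ odd, with a controlled modification at even $n$. Fixing $m_2$, the first inequality applied to the coefficients $a_n\mathbf{1}_{(n,m_2)=1}$ — with $m_1$ ranging over fundamental discriminants of modulus $\le 4M/m_2^2$ — bounds the inner sum by $(MN)^\varepsilon(M/m_2^2+N)\sum_{n_1 n_2=\square}|a_{n_1}a_{n_2}|$. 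The hypothesis $|a_n|\ll n^\varepsilon$ combined with the standard count $\#\{n_1,n_2\le N:n_1 n_2=\square\}\ll N^{1+\varepsilon}$ (from writing $n_i=s\alpha_i^2$ with $s$ square-free and $\alpha_i\le\sqrt{N/s}$) bounds this diagonal by $N^{1+\varepsilon}$. Summing on $m_2$ with the weight $1/m_2$ and truncating at $m_2\ll\sqrt{M}$ yields $\ll(MN)^\varepsilon N(M+N)$, since $\sum_{m_2}m_2^{-3}\ll 1$ and $\sum_{m_2\le\sqrt{M}}m_2^{-1}\ll\log M$.

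The main technical obstacle is the bookkeeping in reducing $\bigl(\tfrac{m}{n}\bigr)$ to $\bigl(\tfrac{m_1}{n}\bigr)$: one must handle the sign of $m$, the contribution of $m=0$ (which is at most $(\sum_{n\le N,\,n=\square}|a_n|)^2\ll N^{1+\varepsilon}$ and is absorbed into the stated bound), and the parity constraints linking $m_1$ and $m_2$ according to whether the fundamental discriminant $m_1$ is odd or divisible by $4$. Once these details are sorted, the estimate follows from the summation described above.
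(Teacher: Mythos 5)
Your proposal is correct and follows essentially the same route as the paper's source. The paper proves this lemma by citation to Soundararajan's Lemma 2.4 in \cite{Sou00}, and Soundararajan in turn deduces both inequalities from Heath-Brown's large sieve for real characters (\cite{HeaB95}), exactly as you do: pass from the square-free version of the large sieve to the $\sum_{n_1n_2=\square}|a_{n_1}a_{n_2}|$ diagonal by grouping $n$ according to its square-free kernel, identify $S(Q)$ with fundamental discriminants, and for the second bound decompose $4m=m_1m_2^2$, reduce $(\frac{m}{n})$ to $(\frac{m_1}{n})$ up to the coprimality condition $(n,m_2)=1$, and sum the resulting $(M/m_2^2+N)N^{1+\varepsilon}$ bound over $m_2\ll\sqrt{M}$ with weight $1/m_2$. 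Your reduction identity $(\frac{m}{n})=(\frac{m_1}{n})\mathbf{1}_{(n,m_2)=1}$ for odd $n$ checks out in both the odd-discriminant and $4\mid m_1$ cases, and the final summation over $m_2$ closes correctly since $\sum m_2^{-3}\ll 1$ and $\sum_{m_2\le 2\sqrt M}m_2^{-1}\ll\log M$ is absorbed into $(MN)^\varepsilon$. The only loose end is the treatment of $m=0$: the Kronecker symbol $(\frac{0}{n})$ vanishes for $n>1$, so the $m=0$ term contributes only $|a_1|^2$ (not $(\sum_{n=\square}|a_n|)^2$), and in any case $m=0$ has no decomposition $4m=m_1m_2^2$ with $m_1$ a fundamental discriminant, so it should simply be excluded from the sum — a cosmetic point that does not affect the argument.
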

\begin{proof}
This is \cite[Lemma 2.4]{Sou00}.
\end{proof}

\begin{lem}\label{lem: moment estimates}
Suppose $\sigma + it$ is a complex number with $\sigma \geq \frac{1}{2}$. Then
\begin{align*}
\sum_{\chi \in S(Q)} \left|L(\sigma+it,\chi) \right|^4 \ll Q^{1+\varepsilon} (1+|t|)^{1+\varepsilon}
\end{align*}
and
\begin{align*}
\sum_{\chi \in S(Q)} \left|L(\sigma+it,\chi) \right|^2 \ll Q^{1+\varepsilon} (1+|t|)^{\frac{1}{2}+\varepsilon}.
\end{align*}
\end{lem}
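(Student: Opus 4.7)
The plan is to establish the fourth moment bound first, using an approximate functional equation for $L(s,\chi)^2$ combined with the first estimate of Lemma~\ref{lem: estimates for character sums}; the second moment bound then follows immediately by Cauchy--Schwarz and the crude count $|S(Q)| \ll Q$.

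For the fourth moment I would write, by the usual contour-shifting argument applied to $\Lambda(s,\chi)^2$,
$$L(\sigma+it,\chi)^2 = \sum_{n=1}^\infty \frac{d(n)\chi(n)}{n^{\sigma+it}} V_1\!\left(\frac{n}{N}\right) + \varepsilon_\chi(t) \sum_{n=1}^\infty \frac{d(n)\chi(n)}{n^{1-\sigma-it}} V_2\!\left(\frac{n}{N}\right),$$
with $N \asymp Q(1+|t|)$ (matching the fact that the analytic conductor of $L(s,\chi)^2$ has size $(q(1+|t|))^2$), smooth weights $V_1, V_2$ of rapid decay at infinity, and $|\varepsilon_\chi(t)| = 1$. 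This effectively truncates each Dirichlet series at length $N$, with a tail contribution that is negligible even after summing over $\chi \in S(Q)$.

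Next I would apply the first estimate of Lemma~\ref{lem: estimates for character sums} to each piece with $a_n = d(n)\, n^{-\sigma-it} V_j(n/N)$. This yields
$$\sum_{\chi \in S(Q)} \Bigl| \sum_{n \leq N} a_n \chi(n) \Bigr|^2 \ll (QN)^\varepsilon (Q+N) \sum_{\substack{n_1, n_2 \leq N \\ n_1 n_2 = \square}} \frac{d(n_1)\, d(n_2)}{(n_1 n_2)^\sigma}.$$
Parametrizing pairs with $n_1 n_2 = \square$ as $n_1 = a s^2$, $n_2 = a t^2$ with $\gcd(s,t) = 1$, the diagonal sum in the worst case $\sigma = \tfrac{1}{2}$ is bounded by
$$N^\varepsilon \sum_{a \leq N} \frac{1}{a} \sum_{s \leq \sqrt{N/a}} \frac{1}{s} \sum_{t \leq \sqrt{N/a}} \frac{1}{t} \ll (\log N)^{O(1)} \ll N^\varepsilon,$$
and is even smaller for $\sigma > \tfrac{1}{2}$. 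Combining this with $Q + N \ll Q(1+|t|)$ produces the stated fourth moment bound
$$\sum_{\chi \in S(Q)} |L(\sigma+it,\chi)|^4 \ll Q^{1+\varepsilon} (1+|t|)^{1+\varepsilon}.$$

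For the second moment, since $|S(Q)| \ll Q$, Cauchy--Schwarz immediately gives
$$\sum_{\chi \in S(Q)} |L(\sigma+it,\chi)|^2 \leq |S(Q)|^{1/2} \Bigl( \sum_{\chi \in S(Q)} |L(\sigma+it,\chi)|^4 \Bigr)^{1/2} \ll Q^{1+\varepsilon}(1+|t|)^{1/2+\varepsilon}.$$
The main potential obstacle is arranging the approximate functional equation with uniformity in both $\chi$ and $t$ and confirming that the effective cutoff at $N \asymp Q(1+|t|)$ absorbs all tails, but this is a standard technical exercise; the essential nontrivial input, Heath-Brown's quadratic large sieve, is already packaged as Lemma~\ref{lem: estimates for character sums}.
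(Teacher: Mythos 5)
Your proposal is correct in substance, and it reconstructs what Soundararajan does in his Lemma 2.5 (which the paper cites without reproving): approximate functional equation for $L(s,\chi)^2$, truncation at roughly the square root of the analytic conductor, then Heath-Brown's quadratic large sieve (the first bound of Lemma~\ref{lem: estimates for character sums}) applied to the resulting Dirichlet polynomial with coefficients $a_n \ll d(n)n^{-\sigma}$. Your diagonal computation via $n_1 = a s^2$, $n_2 = a t^2$, $(s,t)=1$ with $a=\gcd(n_1,n_2)$ is right, and $Q+N\ll Q(1+|t|)$ gives the stated fourth-moment bound. The Cauchy--Schwarz deduction of the second moment from the fourth (using $|S(Q)| \ll Q$) is a legitimate shortcut and yields exactly the claimed exponent $(1+|t|)^{1/2+\varepsilon}$.

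One point you wave past: your approximate functional equation has weights $V_j(n/N)$ with $N = Q(1+|t|)$ fixed, but the natural smooth truncation length is $q(1+|t|)$, which varies with the conductor $q \leq Q$ of $\chi$. Since Heath-Brown's inequality requires the $a_n$ to be independent of $\chi$, you cannot apply it with a $q$-dependent weight as written. The standard fix is either a dyadic decomposition of $q$ (so that $q \asymp Q_0$ within each block, the cutoff is essentially uniform, and $(Q_0 + Q_0(1+|t|))$ summed over $Q_0 = 2^j \leq Q$ still gives $Q(1+|t|)$) or a Mellin-transform of the weight, writing $V_j(n\pi/(q(1+|t|))) = \frac{1}{2\pi i}\int \widehat{V_j}(w)\bigl(q(1+|t|)/\pi n\bigr)^w\,dw$ and moving $q^w$ outside the $n$-sum; either route makes the application of the large sieve rigorous. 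With that technical repair in place, the argument is complete and matches the cited source.
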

\begin{proof}
This is \cite[Lemma 2.5]{Sou00}.
\end{proof}

\section{Sieve estimates}\label{sec:sieves}

Our main sieve will be a variant of the Selberg sieve (see \cite[Chapter 7]{FI10}). To lessen the volume of calculations, we also use Brun's pure sieve \cite[Chapter 6]{FI10} as a preliminary sieve to handle small prime factors. We set
\begin{align}\label{eq:sieve section, defn of z0}
z_0 := \exp((\log X)^{1/3})
\end{align}
and
\begin{align}\label{eq:sieve section, defn of R}
R := X^\vartheta, \ \ \ \ \ \ \ \ \ \ \ \vartheta \in \left(0 ,\tfrac{1}{2}\right) \ \  \text{ fixed}.
\end{align}

Given a set $\mathcal{A}$ of integers we write $\mathbf{1}_\mathcal{A}(n)$ for the indicator function of this set. For $y > 2$ we define
\begin{align*}
P(y) = \prod_{p \leq y} p.
\end{align*}
Then, for $n \asymp X$, our basic sieve inequality is
\begin{equation}\label{basicsieveinequality}
\mathbf{1}_{\{n:n\text{ prime}\}} \leq \mathbf{1}_{\{n:(n,P(z_0))=1\}}\mathbf{1}_{\{n:(n,P(R)/P(z_0))=1\}},
\end{equation}

We write $\omega(n)$ for the number of distinct prime factors of $n$. To bound the first factor on the right-hand side of \eqref{basicsieveinequality}, we use Brun's upper bound sieve condition (see \cite[(6.1)]{FI10})
\begin{equation}\label{Brun}
\mathbf{1}_{\{n:(n,P(z_0))=1\}}(n)\leq \sum_{\substack{b|(n,P(z_0)) \\ \omega(b)\leq 2r_0 }} \mu(b),
\end{equation}
where 
\begin{align*}
r_0 := \lfloor (\log X)^{1/3} \rfloor.
\end{align*}
We use an ``analytic'' Selberg sieve (e.g. \cite{Poly14}) for the second factor of \eqref{basicsieveinequality}. We introduce a smooth, non-negative function $G(t)$ which is supported on the interval $[-1,1]$. We further require $G(t)$ to satisfy $|G(t)| \ll 1, |G^{(j)}(t)| \ll_j (\log \log X)^{j-1}$ for $j$ a positive integer, and on the interval $[0,1]$ we require $G(t) = 1-t$ for $t \leq 1 - (\log \log X)^{-1}$. Then
\begin{align}\label{Selberg}
\mathbf{1}_{\{n:(n,P(R)/P(z_0))=1\}}(n) &\leq \Bigg(\sum_{\substack{d \mid n \\ (d,P(z_0))=1}} \mu(d) G \left( \frac{\log d}{\log R}\right) \Bigg)^2 \\
&=\mathop{\sum\sum}_{\substack{j,k\leq R \\ [j,k]|n \\ (jk,P(z_0))=1 }} \mu(j)\mu(k)G\left( \frac{\log j}{\log R}\right)G\left( \frac{\log k}{\log R}\right). \nonumber
\end{align}
We mention also that the properties of $G$ imply
\begin{align}\label{eq:upper bound on int of G' squared}
\int_0^\infty G'(t)^2 dt = 1 + O \left( \frac{1}{\log \log X}\right)= 1+o(1).
\end{align}
Note that the fundamental theorem of calculus and Cauchy-Schwarz yield the lower bound
\begin{align*}
\int_0^\infty G'(t)^2 dt \geq 1.
\end{align*}

From \eqref{basicsieveinequality}, \eqref{Brun}, and \eqref{Selberg}, we arrive at the upper bound sieve condition
\begin{equation}\label{sieveinequality}
\mathbf{1}_{\{n:n\text{ prime}\}}(n) \leq \sum_{d|n} \lambda_d,
\end{equation}
where the coefficients $\lambda_d$ are defined by
\begin{equation}\label{lambda}
\lambda_d= \sum_{\substack{b|P(z_0) \\ \omega(b)\leq 2r_0 }} \mathop{\sum\sum}_{\substack{m,n\leq R \\ b[m,n]=d \\ (mn,P(z_0))=1 }} \mu(b)\mu(m)\mu(n)G\left( \frac{\log m}{\log R}\right)G\left( \frac{\log n}{\log R}\right).
\end{equation}
If $b|P(z_0)$ and $\omega(b)\leq 2r_0$, then $b\leq z_0^{2r_0}=\exp(2(\log X)^{2/3})$. Hence $\lambda_d\neq 0$ only for $d\leq D$, where
\begin{equation}\label{Ddef}
D=R^2 \exp(2(\log X)^{2/3}) \ll_{\varepsilon} R^2X^{\varepsilon}.
\end{equation}

In our evaluation of sums involving the sieve coefficients \eqref{lambda} we use the following version of the fundamental lemma of sieve theory (see also \cite[Section 6.5]{FI10}).
\begin{lem}\label{fundamentallemma}
Let $0 <\delta < 1$ be a fixed constant, $r$ a positive integer with $r \asymp (\log X)^{\delta}$, and $z_0$ as in \eqref{eq:sieve section, defn of z0}. Suppose that $g$ is a multiplicative function such that $|g(p)|\ll 1$ uniformly for all primes $p$. Then
\begin{equation*}
\sum_{\substack{b \mid P(z_0) \\ \omega(b)\leq r \\ (b,\ell)=1}}\frac{\mu(b)}{b}g(b) = \prod_{\substack{p\leq z_0\\ p\nmid \ell}}\Bigg( 1-\frac{g(p)}{p}\Bigg) +O\Big( \exp(-r\log \log r )\Big)
\end{equation*}
uniformly for all positive integers $\ell$.
\end{lem}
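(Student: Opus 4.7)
The plan is to compare the truncated sum on the left-hand side with its natural untruncated version, which factors exactly as the claimed Euler product, and then control the tail by a Rankin-type argument.

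First, I would remove the constraint $\omega(b) \leq r$. Since $b \mid P(z_0)$ forces $b$ to be squarefree, $\mu$ is multiplicative on the support, and the unrestricted sum factors as an Euler product:
\begin{equation*}
\sum_{\substack{b \mid P(z_0) \\ (b,\ell)=1}} \frac{\mu(b)}{b} g(b) \;=\; \prod_{\substack{p \leq z_0 \\ p \nmid \ell}} \left( 1 - \frac{g(p)}{p}\right).
\end{equation*}
Subtracting this identity from the statement reduces the lemma to proving the tail estimate
\begin{equation*}
\sum_{\substack{b \mid P(z_0) \\ \omega(b) > r \\ (b,\ell)=1}} \frac{|\mu(b) g(b)|}{b} \;\ll\; \exp(-r \log \log r),
\end{equation*}
uniformly in $\ell$.

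Next, I would apply the Rankin trick. For any parameter $\lambda > 1$, the condition $\omega(b) > r$ implies $\lambda^{\omega(b) - r} \geq 1$, so the tail sum is at most
\begin{equation*}
\lambda^{-r} \sum_{b \mid P(z_0)} \lambda^{\omega(b)} \frac{|g(b)|}{b} \;=\; \lambda^{-r} \prod_{p \leq z_0} \left( 1 + \frac{\lambda |g(p)|}{p}\right),
\end{equation*}
where dropping $(b,\ell)=1$ only enlarges the sum. Using $|g(p)| \ll 1$ together with Mertens's theorem, and the fact that $\log \log z_0 = \tfrac{1}{3}\log \log X + O(1)$, the product is bounded by
\begin{equation*}
\exp\!\left( \lambda \sum_{p \leq z_0} \frac{|g(p)|}{p}\right) \;\leq\; \exp(C \lambda \log \log X)
\end{equation*}
for some absolute constant $C$. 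Hence the tail sum is at most $\exp(-r \log \lambda + C \lambda \log \log X)$.

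Finally, I would optimize by choosing $\lambda = \log r$. Since $r \asymp (\log X)^{\delta}$ with $\delta$ fixed, we have $\log r \asymp \log \log X$, so the ``loss'' term satisfies $C\lambda \log \log X \ll (\log \log X)^{2}$, while the ``gain'' term $r \log \lambda = r \log \log r \asymp (\log X)^{\delta} \log \log \log X$ dominates it by an unbounded factor. Therefore
\begin{equation*}
-r \log \lambda + C\lambda \log \log X \;\leq\; -r \log \log r + o(r \log \log r),
\end{equation*}
which gives the required bound $\exp(-r\log\log r)$.

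The computation is essentially routine; the only point that deserves care is the optimization of $\lambda$, which must exploit the particular relation $\log r \asymp \log \log z_0$ coming from the assumptions $r \asymp (\log X)^{\delta}$ and $z_0 = \exp((\log X)^{1/3})$. I do not anticipate a genuine obstacle. The uniformity in $\ell$ is automatic since the coprimality condition only shrinks the Euler product and the tail sum.
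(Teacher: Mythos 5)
Your decomposition — complete the sum to get the exact Euler product, then bound the tail over $b \mid P(z_0)$ with $\omega(b) > r$ by Rankin's trick — is exactly the paper's strategy; the paper simply cites \cite[\S 6.3]{IK} for the tail estimate rather than writing out the Rankin argument.

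There is, however, a concrete flaw in the final optimization. With $\lambda = \log r$ the exponent is $-r\log\log r + C(\log r)(\log\log X)$, and since $\log r \asymp \log\log X$ the loss term is $\asymp (\log\log X)^2$, which is indeed $o(r\log\log r)$. But the conclusion you draw, that the exponent is $-r\log\log r + o(r\log\log r)$ and hence the tail is $O(\exp(-r\log\log r))$, does not follow: $\exp\bigl(-(1-o(1))r\log\log r\bigr)$ exceeds $\exp(-r\log\log r)$ by the unbounded factor $\exp\bigl(o(r\log\log r)\bigr)$. To close the gap you need the gain term $r\log\lambda$ to beat $r\log\log r$ by a margin, not merely to dominate the loss. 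Taking $\lambda = (\log r)^2$ already suffices: then $r\log\lambda = 2r\log\log r$ while the loss is $\asymp (\log\log X)^3 = o(r\log\log r)$, so the exponent is $\leq -\tfrac{3}{2}r\log\log r$ for large $X$. Better still, the optimal choice $\lambda \asymp r/\log\log X$ yields $\exp\bigl(-(1+o(1))r\log r\bigr)$, which is the bound the paper quotes from Iwaniec--Kowalski and which comfortably implies $O(\exp(-r\log\log r))$ since $\log r \gg \log\log r$.
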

\begin{proof}
The proof is standard. Complete the sum on the left-hand side by adding to it all the terms with $\omega(b)>r$, dropping by positivity the condition $(b,\ell)=1$. The error introduced in doing so is $\ll \exp(-(1+o(1))r\log r)\ll \exp(-r \log \log r)$ (e.g. \cite[\S 6.3]{IK}). The completed sum is equal to the Euler product on the right-hand side.
\end{proof}

The basic tool in our application of the Selberg sieve is the following lemma.
\begin{lem}\label{Selbergsieve}
Let $z_0=\exp((\log X)^{1/3})$. Let $G$ be as above. Suppose $h$ is a function such that $|h(p)|\ll_\varepsilon p^{-\varepsilon}$ uniformly for all primes $p$. Let $A > 0$ be a fixed real number. Then there exists a function $E_0(X)$, which depends only on $X,G$, and $\vartheta$ (see \eqref{eq:sieve section, defn of R}) with $E_0(X) \rightarrow 0$ as $X \rightarrow \infty$, such that
\begin{equation}\label{Selbergsieve2}
\begin{split}
\mathop{\sum\sum}_{\substack{m,n \leq R \\ (mn, \ell P(z_0))=1}}
&  \frac{\mu(m)\mu(n)}{[m,n]} \ G\left( \frac{\log m}{\log R}\right)G\left( \frac{\log n}{\log R}\right)\prod_{p|mn}\Big( 1+h(p)\Big)\\
& = \frac{1+E_0(X)}{\log R}\prod_{p\leq z_0}\left( 1-\frac{1}{p}\right)^{-1} + \ O_{\varepsilon,A}\left( \frac{1}{(\log R)^{A}}\right),
\end{split}
\end{equation}
uniformly for $\ell \ll X^{O(1)}$.
\end{lem}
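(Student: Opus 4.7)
The plan is to convert the smooth Selberg-type sum into a double contour integral via a Mellin transform of $G$, factor the resulting Dirichlet series through $\zeta$-functions, and then extract the main term as a residue. Since $G$ is smooth and compactly supported in $[-1,1]$, its bilateral Laplace transform $\tilde G(s)=\int_{-\infty}^{\infty} G(x)e^{-sx}\,dx$ is entire with polynomial decay along vertical lines. Mellin inversion gives
\[
G(\log m/\log R) = \frac{\log R}{2\pi i}\int_{(\alpha)} \tilde G(-w\log R)\,m^{-w}\,dw
\]
for any real $\alpha$; substituting into the left-hand side and swapping the order of summation and integration (taking $\alpha_1=\alpha_2=\kappa/\log R$ with $\kappa>0$ small) expresses the lemma's sum as
\[
\frac{(\log R)^2}{(2\pi i)^2}\iint \tilde G(-w_1\log R)\tilde G(-w_2\log R)F(w_1,w_2;\ell)\,dw_1\,dw_2,
\]
where
\[
F(w_1,w_2;\ell)=\sum_{\substack{m,n\text{ squarefree}\\(mn,\ell P(z_0))=1}}\frac{\mu(m)\mu(n)\prod_{p|mn}(1+h(p))}{[m,n]\,m^{w_1}n^{w_2}}
\]
admits an Euler product whose local factor at $p\nmid\ell P(z_0)$ equals $A_p(w_1,w_2)=1-(1+h(p))(p^{-1-w_1}+p^{-1-w_2})+(1+h(p))p^{-1-w_1-w_2}$ and whose local factor at $p\mid\ell P(z_0)$ is trivial.

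The next move is to separate the pole structure via
\[
F(w_1,w_2;\ell)=\frac{\zeta(1+w_1+w_2)}{\zeta(1+w_1)\zeta(1+w_2)}\,\tilde E(w_1,w_2;\ell),
\]
where $\tilde E$ is holomorphic on $|\mathrm{Re}(w_i)|<\delta$ for some $\delta>0$ thanks to $|h(p)|\ll p^{-\varepsilon}$. A direct computation of the Euler product at $(0,0)$ gives $\tilde E(0,0;\ell)=(1+o(1))\prod_{p\leq z_0}(1-1/p)^{-1}$ uniformly in $\ell\ll X^{O(1)}$: the tail over primes $p>z_0$ not dividing $\ell$ contributes $1+O(z_0^{-\varepsilon})$, while the at most $O(\log X)$ primes $p\mid\ell$ with $p>z_0$ each contribute $1+O(1/z_0)$. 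Now shift the $w_1$-contour from $\mathrm{Re}(w_1)=\kappa/\log R$ leftward, crossing the simple pole of $\zeta(1+w_1+w_2)$ at $w_1=-w_2$. After the rescaling $z=w_2\log R$, the residue integral equals
\[
\frac{\log R}{2\pi i}\int \tilde G(z)\tilde G(-z)\frac{\tilde E(-z/\log R,z/\log R;\ell)}{\zeta(1-z/\log R)\zeta(1+z/\log R)}\,dz.
\]
The Laurent expansion $1/(\zeta(1+u)\zeta(1-u))=-u^2+O(u^4)$, the integration-by-parts identity $z\tilde G(z)=\widetilde{G'}(z)$, and Plancherel on the imaginary axis
\[
\frac{1}{2\pi i}\int_{(0)}\widetilde{G'}(z)\widetilde{G'}(-z)\,dz=\int_{-\infty}^\infty G'(x)^2\,dx=1+o(1)
\]
collapse this residue to $(1+o(1))\tilde E(0,0;\ell)/\log R$, reproducing the claimed main term.

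The remaining task --- and the main obstacle --- is to bound the shifted double integral by $O_A((\log R)^{-A})$, uniformly in $\ell$, for any fixed $A>0$. A crude estimate on the new contour (using $|\tilde E|\ll\log z_0$, $|\zeta(1+w_1+w_2)|\ll\log R$ near the pole, and $|1/\zeta(1+w_i)|\asymp|w_i|\asymp 1/\log R$) yields only $O(\log z_0/\log R)$, which is comparable to the main term. To achieve arbitrary power savings I would shift both contours further into the zero-free region of $\zeta$, invoking the rapid decay $|\tilde G(c+it)|\ll_A(\log\log X)^{A-1}e^{|c|}(1+|t|)^{-A}$ from the derivative bounds on $G$, and iteratively peel off lower-order terms from the Laurent expansion --- each absorbed into the $1+E_0(X)$ prefactor of the main term. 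Uniformity in $\ell$ is automatic, since the $\ell$-dependence of $\tilde E$ enters only through the finite Euler product over primes $p\mid\ell$ with $p>z_0$.
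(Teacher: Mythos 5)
Your main-term computation is sound: the factorization of the Dirichlet series through $\zeta(1+w_1+w_2)/\zeta(1+w_1)\zeta(1+w_2)$, the residue at $w_1=-w_2$, the rescaling $z=w_2\log R$, the Laurent expansion $1/(\zeta(1+u)\zeta(1-u))=-u^2+O(u^4)$, and the Plancherel identity giving $\int_0^\infty G'(t)^2\,dt=1+o(1)$ all match what the main term should produce, and your evaluation of $\tilde E(0,0;\ell)$ correctly recovers $\prod_{p\le z_0}(1-1/p)^{-1}$ up to $1+o(1)$.

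The genuine gap is exactly where you flag it: bounding the shifted double integral by $O_A((\log R)^{-A})$. Your proposed remedy --- ``shift both contours further into the zero-free region of $\zeta$'' --- cannot work as stated. Since $G$ is compactly supported on $[-1,1]$, its bilateral Laplace transform satisfies $|\tilde G(s)|\asymp e^{|\mathrm{Re}(s)|}$ in the horizontal direction; the rapid decay in $\mathrm{Im}(s)$ that you invoke does not tame the exponential growth in $\mathrm{Re}(s)$. In your parametrization the factor $\tilde G(-w_1\log R)$ picks up $e^{|\mathrm{Re}(w_1)|\log R}$, so the moment $\mathrm{Re}(w_1)$ moves by more than $O(1/\log R)$ the integrand blows up faster than any polynomial in $\log R$ you could hope to save. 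But shifting only by $O(1/\log R)$ gives exactly the $O(\log z_0/\log R)$ bound you compute, which does not beat the main term. Zero-free regions of $\zeta$ are thus unusable here: they would require $\mathrm{Re}(w_1)\ll-1/\log\log R$, which is far outside the admissible range. Your sketch of ``iteratively peeling off lower-order Laurent terms'' also does not rescue this, because the error you are trying to beat is not coming from Laurent truncation but from the unshiftable contour.

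The paper avoids the difficulty entirely by never shifting the real part of the integration variable. It fixes $\mathrm{Re}(w)=1/\log R$ from the start (via the Fourier inversion $G(t)=\int g(z)e^{-t(1+iz)}\,dz$, so $w=(1+iz)/\log R$ with $z$ real), and obtains the $O_A((\log R)^{-A})$ error solely by truncating the real variable to $|z_i|\le\sqrt{\log R}$, which costs an arbitrary power of $\log R$ thanks to $g(z)\ll_A(\log\log X/(1+|z|))^A$. After truncation everything --- the zeta quotient and the $\ell,z_0$-dependent Euler factors --- is expanded in Laurent/power series around a point at distance $1/\log R$ from the pole, and the lower-order corrections (which are only $o(1)$ relatively, not power-saving) are absorbed into the prefactor $1+E_0(X)$ rather than into the $(\log R)^{-A}$ term. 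Note also that the paper strips the $\prod_{p\mid mn}(1+h(p))$ factor and the coprimality condition $(mn,\ell)=1$ at the outset using $p\mid mn\Rightarrow p>z_0$, whereas you carry them into the Euler product; that is a harmless cosmetic difference, but the contour-shift strategy is not salvageable without a different mechanism for the error term.
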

\begin{proof}
Let $\mathcal{S}$ denote the left-hand side of \eqref{Selbergsieve2}. If $m,n\leq R$ and $(mn,P(z_0))=1$, then $\omega(mn) \ll \log R$, and each prime dividing $mn$ is larger than $z_0$. Thus
$$
\prod_{p|mn}\Big( 1+h(p)\Big)=1+O_\varepsilon\Bigg( \frac{\log R}{z_0^{\varepsilon}}\Bigg),
$$
and so
\begin{equation}\label{Selbergsieve3}
\mathcal{S}=\mathop{\sum\sum}_{\substack{m,n \leq R \\ (mn, \ell P(z_0))=1}} \frac{\mu(m)\mu(n)}{[m,n]} \ G\left( \frac{\log m}{\log R}\right)G\left( \frac{\log n}{\log R}\right) + O\Bigg( \frac{(\log R)^4}{z_0^{\varepsilon}}\Bigg).
\end{equation}
We may ignore the condition $(mn,\ell)=1$ in \eqref{Selbergsieve3} because
$$
\mathop{\sum\sum}_{\substack{m,n \leq R \\ (mn, P(z_0))=1 \\ (mn,\ell)>1 }} \frac{1}{[m,n]} \leq \mathop{\sum\sum}_{\substack{m,n \leq R \\ (mn, P(z_0))=1 }} \frac{1}{[m,n]}\sum_{\substack{p|\ell \\ p|mn}} 1 \ll (\log R)^3 \sum_{\substack{p|\ell \\ p>z_0}} \frac{1}{p} \ll \frac{(\log\ell)(\log R)^3}{z_0}.
$$
We next insert the Fourier inversion formula
\begin{equation}\label{Fourierinversion1}
G(t) \ = \ \int_{-\infty}^{\infty} g(z) e^{-t(1+iz)}\,dz
\end{equation}
into \eqref{Selbergsieve3}, where
\begin{equation}\label{Fourierinversion2}
g(z) \ = \ \int_{-\infty}^{\infty} e^t G(t) e^{izt}\,dt.
\end{equation}
We then interchange the order of summation and integration and write the sum as an Euler product to deduce that
\begin{equation}\label{Selbergsieve4}
\mathcal{S}=\int_{-\infty}^{\infty}\int_{-\infty}^{\infty} g(z_1)g(z_2)\prod_{p>z_0} \left(1-\frac{1}{p^{1+\frac{1+iz_1}{\log R}}} -\frac{1}{p^{1+\frac{1+iz_2}{\log R}}} + \frac{1}{p^{1+\frac{2+iz_1+iz_2}{\log R}}} \right)\,dz_1dz_2+ O\Bigg( \frac{(\log R)^4}{z_0^{\varepsilon}}\Bigg).
\end{equation}
By integrating \eqref{Fourierinversion2} by parts repeatedly we see
\begin{align*}
g(z) \ll_A \left(\frac{\log\log X}{1+|z|}\right)^A,
\end{align*}
and we have the trivial bound
\begin{align*}
\prod_{p>z_0} \left(1-\frac{1}{p^{1+\frac{1+iz_1}{\log R}}} -\frac{1}{p^{1+\frac{1+iz_2}{\log R}}} + \frac{1}{p^{1+\frac{2+iz_1+iz_2}{\log R}}} \right) \ll (\log R)^{O(1)}.
\end{align*}
Therefore, we may truncate the double integral in \eqref{Selbergsieve4} to the region $|z_1|,|z_2|\leq \sqrt{\log R}$, with an error of size $O_A((\log R)^{-A})$. After doing so, we multiply and divide the integrand by Euler products of zeta-functions to arrive at
\begin{equation}\label{Selbergsieve5}
\begin{split}
\mathcal{S}=
& \mathop{\int \int}_{|z_i| \leq \sqrt{\log R}} g(z_1)g(z_2)\frac{\zeta\left(1+\frac{2+iz_1+iz_2}{\log R}\right)}{\zeta\left(1+\frac{1+iz_1}{\log R}\right) \zeta\left(1+\frac{1+iz_2}{\log R}\right)} \\
& \times \prod_{p\leq z_0} \frac{\left(1-\frac{1}{p^{1+\frac{2+iz_1+iz_2}{\log R}}} \right)}{\left(1-\frac{1}{p^{1+\frac{1+iz_1}{\log R}}} \right)\left(1-\frac{1}{p^{1+\frac{1+iz_2}{\log R}}} \right)} \prod_{p>z_0} \Bigg(1 + O \Bigg(\frac{1}{p^2}\Bigg) \Bigg)  \,dz_1dz_2+ O\Bigg( \frac{1}{(\log R)^{A}}\Bigg).
\end{split}
\end{equation}
The product over primes $p>z_0$ in \eqref{Selbergsieve5} is $1+O(1/z_0)$. To estimate the product over $p\leq z_0$, observe that if $|s|\ll \sqrt{\log R}$, then
\begin{equation*}
\sum_{p\leq z_0}\frac{1}{p-1}\left( 1-p^{-s}\right) \ll \sum_{p\leq z_0}\frac{|s| \log p}{p} \ll  |s|\log z_0 \ll  \frac{(\log X)^{1/3}}{(\log R)^{1/2}},
\end{equation*}
which implies that
\begin{equation*}
\begin{split}
\prod_{p\leq z_0} \left( 1-\frac{1}{p^{1+s}}\right) 
& = \exp\left( \sum_{p\leq z_0}\log\left( 1+\frac{1}{p-1}\left( 1-p^{-s}\right)\right)\right)\prod_{p\leq z_0}\left( 1-\frac{1}{p}\right)\\
& = \left( 1+O\left( \frac{(\log X)^{1/3}}{(\log R)^{1/2}}\right)\right)\prod_{p\leq z_0}\left( 1-\frac{1}{p}\right). 
\end{split}
\end{equation*}
We may also expand each zeta-function in \eqref{Selbergsieve5} into its Laurent series. With these approximations, we deduce from \eqref{Selbergsieve5} that
\begin{equation*}
\begin{split}
\mathcal{S}= \frac{1}{\log R}\prod_{p\leq z_0} \Bigg( 1-\frac{1}{p}\Bigg)^{-1}\mathop{\int\int}_{|z_i| \leq \sqrt{\log R}} g(z_1)g(z_2)
& \frac{(1+iz_1)(1+iz_2)}{2+iz_1+iz_2}  \left(1 + E(X,\vartheta,z_1,z_2) \right) \,dz_1dz_2\\
& + O\Big( (\log R)^{-A}\Big),
\end{split}
\end{equation*}
uniformly for $\log \ell\ll \log X$. Here $E(X,\vartheta,z_1,z_2)$ tends to zero as $X \rightarrow \infty$. By the rapid decay of $g(z)$, we may extend the range of integration to $\mathbb{R}^2$ without affecting our bound for the error term. By differentiating \eqref{Fourierinversion1} under the integral sign and Fubini's theorem, we find
\begin{align}\label{eq:sieve section, integral of G prime squared}
\mathop{\int \int}_{\mathbb{R}^2} g(z_1)g(z_2) \frac{(1+iz_1)(1+iz_2)}{2+iz_1+iz_2} dz_2 dz_1 = \int_0^\infty G'(t)^2 dt.
\end{align}
The lemma now follows from \eqref{eq:sieve section, integral of G prime squared} and \eqref{eq:upper bound on int of G' squared}.
\end{proof}

\begin{lem}\label{sieve}
Let $\lambda_d$ and $D$ be as defined in \eqref{lambda} and \eqref{Ddef}, respectively. Suppose that $g$ is a multiplicative function such that $g(p)=1+O(p^{-\varepsilon})$ for all primes $p$. Then with $E_0(X)$ as in Lemma \ref{Selbergsieve} we have
\begin{equation*}
\begin{split}
\sum_{\substack{d\leq D\\ (d,\ell)=1}}\frac{\lambda_d}{d} g(d) = \frac{1+E_0(X)}{\log R}\prod_{\substack{p\leq z_0\\ p\nmid \ell}}\Bigg( 1-\frac{g(p)}{p}\Bigg)\prod_{p\leq z_0}\left( 1-\frac{1}{p}\right)^{-1} + O_\varepsilon\left( \frac{1}{(\log R)^{2018}}\right),
\end{split}
\end{equation*}
uniformly in $\ell \ll X^{O(1)}$.
\end{lem}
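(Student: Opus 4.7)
\textbf{Proof plan for Lemma \ref{sieve}.} The approach is to substitute the definition \eqref{lambda} of $\lambda_d$ into the target sum, exchange orders of summation, and then factor the resulting expression into a product of the $b$-sum treated by Lemma \ref{fundamentallemma} and the $(m,n)$-sum treated by Lemma \ref{Selbergsieve}.

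First, I substitute \eqref{lambda} and write $d=b[m,n]$ to get
\begin{equation*}
\sum_{\substack{d\leq D \\ (d,\ell)=1}}\frac{\lambda_d}{d}g(d)
= \sum_{\substack{b \mid P(z_0) \\ \omega(b)\leq 2r_0}} \mu(b) \mathop{\sum\sum}_{\substack{m,n\leq R \\ (mn,P(z_0))=1 \\ (b[m,n],\ell)=1}} \frac{\mu(m)\mu(n)\,g(b[m,n])}{b[m,n]} G\!\left(\tfrac{\log m}{\log R}\right)G\!\left(\tfrac{\log n}{\log R}\right).
\end{equation*}
The key coprimality observation is that $b$ is composed only of primes $\leq z_0$ while $m,n$ are composed only of primes $>z_0$, so $(b,[m,n])=1$; by multiplicativity of $g$ this gives $g(b[m,n])=g(b)g([m,n])$. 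Similarly $(b[m,n],\ell)=1$ factors as $(b,\ell)=1$ together with $(mn,\ell)=1$, so the double sum separates completely:
\begin{equation*}
\sum_{\substack{d\leq D \\ (d,\ell)=1}}\frac{\lambda_d}{d}g(d)
= \left(\sum_{\substack{b \mid P(z_0) \\ \omega(b)\leq 2r_0 \\ (b,\ell)=1}} \frac{\mu(b)g(b)}{b}\right)\cdot \mathcal{T},
\end{equation*}
where $\mathcal{T}$ is the $(m,n)$-double sum. Since $\mu(m)\mu(n)$ restricts to squarefree $m,n$, we have $g([m,n]) = \prod_{p\mid mn}g(p) = \prod_{p\mid mn}(1+h(p))$ with $h(p):=g(p)-1 \ll p^{-\varepsilon}$, which is exactly the shape required by Lemma \ref{Selbergsieve}.

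Applying Lemma \ref{Selbergsieve} with $A=2018$ gives
\begin{equation*}
\mathcal{T} = \frac{1+E_0(X)}{\log R}\prod_{p\leq z_0}\left(1-\frac{1}{p}\right)^{-1} + O_\varepsilon\!\left(\frac{1}{(\log R)^{2018}}\right).
\end{equation*}
For the $b$-sum, Lemma \ref{fundamentallemma} (applied with $r=2r_0\asymp(\log X)^{1/3}$, so $\delta=1/3$) yields
\begin{equation*}
\sum_{\substack{b \mid P(z_0) \\ \omega(b)\leq 2r_0 \\ (b,\ell)=1}} \frac{\mu(b)g(b)}{b} = \prod_{\substack{p\leq z_0 \\ p\nmid \ell}}\left(1-\frac{g(p)}{p}\right) + O\!\left(\exp(-r\log\log r)\right),
\end{equation*}
valid because $|g(p)|\ll 1$ follows from $g(p)=1+O(p^{-\varepsilon})$.

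Finally, I multiply the two expressions. The product of the main terms gives exactly the claimed main term. The cross-error contributions are controlled as follows: the main term of $\mathcal{T}$ is $O((\log X)^{1/3}/\log R)$ since $\prod_{p\leq z_0}(1-1/p)^{-1}\asymp \log z_0$, so multiplying by $\exp(-r\log\log r)$ leaves something decaying faster than any power of $\log R$; and the error $O((\log R)^{-2018})$ from $\mathcal{T}$ multiplies against the $b$-sum, which is $O(1)$. The only subtle point, which I would verify carefully, is that in applying Lemma \ref{Selbergsieve} we are entitled to absorb the coprimality condition $(mn,\ell)=1$ into the $\ell P(z_0)$ slot appearing in its hypothesis. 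Combining all terms completes the proof. The main work here is bookkeeping of coprimality conditions; no new analytic input is required beyond the two sieve lemmas already established.
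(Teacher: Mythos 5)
Your proof is correct and takes essentially the same route as the paper: substitute \eqref{lambda}, use $(b,mn)=1$ to split $g(b[m,n])=g(b)g([m,n])$ and factor the coprimality condition, then apply Lemma~\ref{Selbergsieve} to the $(m,n)$-sum and Lemma~\ref{fundamentallemma} to the $b$-sum. The paper's proof is just a terser version of your write-up, and the ``subtle point'' you flag is a non-issue since Lemma~\ref{Selbergsieve} is stated with the $(mn,\ell P(z_0))=1$ condition precisely so that it matches this application.
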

\begin{proof}
The definitions \eqref{lambda} and \eqref{Ddef} of $\lambda_d$ and $D$ imply
$$
\sum_{\substack{d\leq D\\ (d,\ell)=1}}\frac{\lambda_d}{d} g(d) =\sum_{\substack{b \mid P(z_0) \\ \omega(b)\leq 2r_0 \\ (b,\ell)=1}}
\mathop{\sum\sum}_{\substack{m,n \leq R \\ (mn, \ell P(z_0))=1}}\frac{\mu(b)\mu(m)\mu(n)}{b[m,n]} \ G\left( \frac{\log m}{\log R}\right)G\left( \frac{\log n}{\log R}\right)g(b[m,n]).
$$ 
In the sum on the right-hand side, $g(b[m,n])=g(b)g([m,n])$ because $b$ and $mn$ are coprime. Thus we may apply Lemma~\ref{Selbergsieve} and then Lemma~\ref{fundamentallemma} to arrive at Lemma~\ref{sieve}.
\end{proof}

\begin{lem}\label{sievewithsum}
Let $\lambda_d,D,g$ be as in Lemma~\ref{sieve}. Suppose that $h$ is a function such that $|h(p)|\ll_\varepsilon p^{-1+\varepsilon}$ for all primes $p$. Then with $E_0(X)$ as in Lemma \ref{Selbergsieve} we have
\begin{equation*}
\begin{split}
\sum_{\substack{d\leq D\\ (d,\ell)=1}}\frac{\lambda_d}{d} g(d)\sum_{p|d} h(p)=
& - \frac{1+E_0(X)}{\log R}\prod_{p\leq z_0}\left( 1-\frac{1}{p}\right)^{-1} \\
& \ \times \sum_{\substack{p\leq z_0 \\ p\nmid \ell}} \frac{g(p)h(p)}{p} \prod_{\substack{q\leq z_0\\ q\nmid p\ell}}\Bigg( 1-\frac{g(q)}{q}\Bigg) + O_\varepsilon\left( \frac{1}{(\log R)^{2018}}\right),
\end{split}
\end{equation*}
uniformly for all integers $\ell$ such that $\log \ell\ll \log X$. (Here, the index $q$ runs over primes $q$.)
\end{lem}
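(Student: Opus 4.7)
The plan is to substitute the definition \eqref{lambda} of $\lambda_d$ and exploit the fact that the prime supports of $b$ (all $\leq z_0$) and of $[m,n]$ (all $>z_0$) are disjoint. Writing $d = b[m,n]$ gives
$$
\sum_{p\mid d}h(p) = \sum_{p\mid b}h(p) + \sum_{p\mid mn}h(p),
$$
and correspondingly the left-hand side decomposes as $\mathcal{M} + \mathcal{E}$. I first dispose of $\mathcal{E}$. Every prime $p\mid mn$ satisfies $p > z_0$, so $|h(p)|\ll z_0^{-1+\varepsilon}$, and for $m,n\leq R$ coprime to $P(z_0)$ we have $\omega(mn)\ll(\log R)/\log z_0 \ll (\log X)^{2/3}$. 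Thus $\sum_{p\mid mn}|h(p)|\ll(\log X)^{2/3} z_0^{-1+\varepsilon}$, and crude estimates (using $|g(p)|\leq 1+O(p^{-\varepsilon})$, together with $\sum_{m,n\leq R}1/[m,n]\ll(\log R)^3$) bound the remaining triple sum by $(\log R)^{O(1)}$. Hence $\mathcal{E}$ decays faster than any power of $\log R$ and is absorbed in the error.

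For the main part $\mathcal{M}$, the factorization $g([m,n])=\prod_{p\mid mn}(1+(g(p)-1))$ with $g(p)-1\ll p^{-\varepsilon}$ places the inner double sum over $m,n$ squarely within the scope of Lemma \ref{Selbergsieve}, with $\ell$ there replaced by $b\ell$. The uniformity is valid because $b\leq\exp(2(\log X)^{2/3})$ yields $b\ell\ll X^{O(1)}$. Applying that lemma evaluates the $(m,n)$-sum as
$$
\frac{1+E_0(X)}{\log R}\prod_{p\leq z_0}\!\left(1-\tfrac{1}{p}\right)^{-1} + O\!\left((\log R)^{-A}\right),
$$
\emph{independently of $b$}. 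Factoring the main piece out, it remains to evaluate
$$
\Sigma := \sum_{\substack{b\mid P(z_0)\\ \omega(b)\leq 2r_0\\ (b,\ell)=1}}\frac{\mu(b)g(b)}{b}\sum_{p\mid b}h(p).
$$

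Writing $b=pc$ with $p\leq z_0$, $p\nmid\ell$, $(p,c)=1$, interchanging the order of summation, and using $\mu(pc)=-\mu(c)$ and $g(pc)=g(p)g(c)$, I get
$$
\Sigma = -\sum_{\substack{p\leq z_0\\ p\nmid\ell}}\frac{g(p)h(p)}{p}\sum_{\substack{c\mid P(z_0)\\ \omega(c)\leq 2r_0-1\\ (c,p\ell)=1}}\frac{\mu(c)g(c)}{c}.
$$
Lemma \ref{fundamentallemma}, applied with $r=2r_0-1\asymp(\log X)^{1/3}$, evaluates the inner sum as $\prod_{q\leq z_0,\, q\nmid p\ell}(1-g(q)/q)+O(\exp(-r\log\log r))$, and since $\sum_{p\leq z_0}|h(p)|/p\ll 1$ the accumulated error from this step is negligible. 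Combining everything produces the formula claimed. The one place requiring genuine care (and in my view the main obstacle) is controlling the Selberg error $O((\log R)^{-A})$ after it is multiplied by $\Sigma':=\sum_b |g(b)| b^{-1}\sum_{p\mid b}|h(p)|$; opening the $p$-sum and applying Mertens' theorem gives $\Sigma'\ll\log z_0\ll(\log X)^{1/3}$, so choosing $A=2019$ keeps the composite error below $(\log R)^{-2018}$.
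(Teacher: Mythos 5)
Your proof is correct and follows essentially the same route as the paper's: substitute the definition of $\lambda_d$, split $\sum_{p\mid d}h(p)$ according to the coprime factors $b$ and $[m,n]$, discard the $p\mid mn$ contribution as negligible, apply Lemma~\ref{Selbergsieve} to the $(m,n)$-sum, and then evaluate the remaining $b$-sum by writing $b=pc$ and invoking Lemma~\ref{fundamentallemma}. Your explicit accounting of the $\Sigma'\ll\log z_0$ factor before choosing $A$ is a careful touch that the paper leaves implicit.
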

\begin{proof}
The definitions \eqref{lambda} and \eqref{Ddef} of $\lambda_d$ and $D$ imply
\begin{equation*}
\begin{split}
\sum_{\substack{d\leq D\\ (d,\ell)=1}}\frac{\lambda_d}{d} g(d)\sum_{p|d} h(p)=\sum_{\substack{b \mid P(z_0) \\ \omega(b)\leq 2r_0 \\ (b,\ell)=1}}
\mathop{\sum\sum}_{\substack{m,n \leq R \\ (mn, \ell P(z_0))=1}}
& \frac{\mu(b)\mu(m)\mu(n)}{b[m,n]} \ G\left( \frac{\log m}{\log R}\right)G\left( \frac{\log n}{\log R}\right)\\
& \ \ \times g(b[m,n])\sum_{p|bmn} h(p).
\end{split}
\end{equation*}
Since $b$ and $mn$ are coprime, $g(b[m,n])=g(b)g([m,n])$ and
$$
\sum_{p|bmn} h(p)=\sum_{p|b} h(p)+\sum_{p|mn} h(p).
$$
We may ignore the sum over the $p|mn$ because the conditions $(mn,P(z_0))=1$ and $mn\leq R^2$ imply
$$
\sum_{p|mn} h(p)\ll \sum_{p|mn}p^{-1+\varepsilon}\ll \frac{\log R}{z_0^{1-\varepsilon}}.
$$
We factor out $g(b)$ and $\sum_{p|b} h(p)$ from the sum over $m,n$ and then apply Lemma~\ref{Selbergsieve} to deduce that
\begin{equation}\label{sievewithsum2}
\begin{split}
\sum_{\substack{d\leq D\\ (d,\ell)=1}}\frac{\lambda_d}{d} g(d)\sum_{p|d} h(p)=
& \frac{1+E_0(X)}{\log R}\prod_{p\leq z_0}\left( 1-\frac{1}{p}\right)^{-1}\\
& \ \times \sum_{\substack{b \mid P(z_0) \\ \omega(b)\leq 2r_0 \\ (b,\ell)=1}}\frac{\mu(b)}{b}g(b) \sum_{p|b} h(p) + O\left( \frac{1}{(\log R)^{2018}}\right).
\end{split}
\end{equation}
To estimate the $b$-sum, we interchange the order of summation and then relabel $b$ as $bp$ to write
$$
\sum_{\substack{b \mid P(z_0) \\ \omega(b)\leq 2r_0 \\ (b,\ell)=1}}\frac{\mu(b)}{b}g(b) \sum_{p|b} h(p) = \sum_{\substack{p\leq z_0 \\ p\nmid \ell}} h(p) \sum_{\substack{b \mid P(z_0) \\ \omega(b)\leq 2r_0 \\ (b,\ell)=1 \\ p|b }}\frac{\mu(b)}{b}g(b) = - \sum_{\substack{p\leq z_0 \\ p\nmid \ell}} \frac{g(p)h(p)}{p} \sum_{\substack{b \mid P(z_0) \\ \omega(b)\leq 2r_0-1 \\ (b,p\ell)=1 }}\frac{\mu(b)}{b}g(b).
$$
Lemma~\ref{sievewithsum} now follows from  Lemma~\ref{fundamentallemma} and \eqref{sievewithsum2}.
\end{proof}

\section{The mollified first moment}\label{sec:mollified first moment}

Our goal in this section is to asymptotically evaluate $S_1$. Recall from \eqref{eq: defn moment sums S1 and S2} that
\begin{align*}
S_1 &=\sum_{\substack{p \equiv 1 \, (\text{mod }8)}} (\log p)\Phi \left( \frac{p}{X}\right) L\left(\tfrac{1}{2},\chi_p\right) M(p).
\end{align*}
Recall the definition of $M(p)$ from \eqref{eq: defn of mollifier}, and the choice \eqref{eq: defn of mollifier coeffs bm} we made for the mollifier coefficients $b_m$. We shall prove the following result.

\begin{prop}\label{prop: asymptotic for S1}
Let $0 < \theta < \frac{1}{2}$ be fixed. If $X \geq X_0(\theta)$, then
\begin{align*}
S_1 &= \frac{1}{2(1 - \frac{1}{\sqrt{2}})} \left(H(0) - \frac{1}{2\theta}H'(0) \right) \frac{X}{4}  + O \left(\frac{X}{(\log X)^{1-\varepsilon}} \right).
\end{align*}
The implied constant in the error term is effectively computable.
\end{prop}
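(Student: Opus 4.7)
The plan is to apply the approximate functional equation Lemma~\ref{lem: approx func eq} with $j=1$ to replace $L(\tfrac{1}{2},\chi_p)$ by $\mathcal{D}_1(p)$, expand the product $L(\tfrac{1}{2},\chi_p)M(p)$ using \eqref{eq: defn of mollifier}, and interchange summations to obtain a triple sum indexed by odd integers $m,\nu$ and primes $p\equiv 1\pmod{8}$. I would then split the $(m,\nu)$-sum according to whether $m\nu$ is a perfect square, writing $S_1 = S_1^{\square} + S_1^{\neq\square}$.

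\textbf{Diagonal analysis.} Since $b_m$ is supported on squarefree $m$, the condition $m\nu=\square$ forces $\nu = mt^2$ for some odd $t$, and then $\chi_p(m\nu)=1$ except when $p\mid mt$; this last case cannot occur since the exponential decay of $\omega_1$ (Lemma~\ref{lem: properties of omega j}) restricts $\nu\ll X^{1/2+\varepsilon}$ while $m\leq M<X^{1/2}$, so $mt<p\asymp X$ for large $X$. After inserting the Mellin representation \eqref{eq: defn of omega j} of $\omega_1$ and separating variables, the prime number theorem in arithmetic progressions modulo $8$ yields, on a narrow strip around the imaginary axis,
\begin{align*}
\sum_{p\equiv 1\,(\mathrm{mod}\,8)} (\log p)\,\Phi(p/X)\,p^{s/2} \;=\; \frac{X^{1+s/2}}{4}\,\check{\Phi}(s/2) \;+\; O\!\bigl(X/(\log X)^A\bigr),
\end{align*}
uniformly for any $A>0$. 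This reduces $S_1^{\square}$ to a contour integral whose integrand contains $\mathcal{G}(s)/s$ (with $\mathcal{G}(s)=\Gamma(s/2+1/4)(1-2^{s-1/2})/\Gamma(1/4)$), the factor $T(s)=(1-2^{-(1+2s)})\zeta(1+2s)$ from the $t$-sum, and $Z(s)=\sum_{m\text{ odd sqfree}}\mu(m)H(\log m/\log M)\,m^{-(1+s)}$ from the $m$-sum. The integrand has a double pole at $s=0$ (one factor of $1/s$ is explicit, the other comes from the simple pole of $T(s)$). Shifting the contour past $s=0$ and computing the residue via Laurent expansion is routine; the essential inputs, obtainable by writing $H(\log m/\log M)$ as an inverse Laplace transform and using $\zeta(1+s)=1/s+\gamma+O(s)$, are
\begin{align*}
Z(0) = -\frac{2H'(0)}{\log M} + O\!\left(\frac{1}{(\log M)^2}\right), \qquad Z'(0) = 2H(0) + O\!\left(\frac{1}{\log M}\right).
\end{align*}
Substituting $\log M=\theta\log X$ and collecting constants yields exactly the claimed main term $\frac{1}{2(1-1/\sqrt{2})}\bigl(H(0)-H'(0)/(2\theta)\bigr)\cdot X/4$.

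\textbf{Off-diagonal analysis.} For $m\nu\neq\square$, I would apply quadratic reciprocity, valid because $p\equiv 1\pmod{4}$, to replace $\chi_p(m\nu)$ by $\chi_{m\nu}(p)$. Setting $q=m\nu$ and regrouping, $S_1^{\neq\square}$ is controlled by sums of the form
\begin{align*}
\mathcal{E} \;=\; \sum_{\substack{q\leq MX^{1/2+\varepsilon}\\ q\neq\square,\ q\text{ odd}}} \frac{\alpha(q)}{\sqrt{q}} \sum_{p\equiv 1\,(\mathrm{mod}\,8)} (\log p)\,\Phi(p/X)\,\chi_q(p),
\end{align*}
with coefficients $|\alpha(q)|\ll q^\varepsilon$. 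Following Section~\ref{sec: outline of pos prop thm}, I would dyadically split $\mathcal{E}$ into three regimes: \textbf{(i)} small $q\leq(\log X)^B$, handled by the Siegel--Walfisz theorem, with the (at most one) possibly exceptional character isolated and treated separately via Page's theorem; \textbf{(ii)} intermediate $q$, handled by the explicit formula combined with zero-density estimates for Dirichlet $L$-functions; and \textbf{(iii)} large $q\geq X^\delta$, handled by decomposing the prime sum via Vaughan's identity, estimating the linear pieces by P\'olya--Vinogradov and the bilinear pieces by Heath-Brown's large sieve (Lemma~\ref{lem: estimates for character sums}). Dyadic summation over $Q$ up to $MX^{1/2+\varepsilon}$ delivers the required bound $O(X/(\log X)^{1-\varepsilon})$.

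\textbf{Main obstacle.} The diagonal is routine once the Mellin and residue machinery is in place. All the difficulty is in the off-diagonal estimate, and most acutely in the large-$q$ regime: Vaughan's decomposition must be balanced against Heath-Brown's large sieve delicately enough that, after dyadic summation over $Q$ up to $X^{1/2+\theta+\varepsilon}$, the cumulative loss remains smaller than a single factor of $\log X$. A secondary but essential point is preserving the effectivity of $X_0$: in the small-$q$ regime one cannot invoke Siegel's ineffective bound, and Page's theorem together with a direct, quantitative estimate for the contribution of the possible exceptional character is required.
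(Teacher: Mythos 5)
Your proposal follows essentially the same route as the paper: apply the approximate functional equation and Fourier-invert the mollifier weight $H$ for the diagonal, then for the off-diagonal use the Kronecker symbol reciprocity (folded into the paper's way of writing the symbols), a dyadic decomposition, and a three-regime treatment (Page's theorem and a quantitative exceptional-zero analysis for small $q$, zero-density estimates for intermediate $q$, Vaughan's identity plus Heath-Brown's large sieve for large $q$). Your diagonal residue data $Z(0)$, $Z'(0)$ match the paper's evaluation of $\sum_m b_m m^{-1-s}$, and your identification of Page's theorem as the key to effectivity is exactly the point the paper makes.
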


Let us begin in earnest, following the outline in Section~\ref{sec: outline of pos prop thm}. We apply Lemma~\ref{lem: approx func eq} to write $L(\frac{1}{2},\chi_p)$ as a Dirichlet series. We insert the definition of $M(p)$ and obtain
\begin{align*}
S_1 &= \frac{2}{\left( 1-\frac{1}{\sqrt{2}}\right)^2}\sum_{\substack{m\leq M\\ m \text{ odd}}} \frac{b_m}{\sqrt{m}} \sum_{\substack{n=1 \\ n \text{ odd}}}^{\infty} \frac{1}{\sqrt{n}}   \sum_{p\equiv 1 \, (\text{mod }8) } (\log p) \Phi\left(\frac{p}{X}\right)\omega_1\left( n\sqrt{\frac{\pi}{p}}\right) \left(\frac{mn}{p}\right).
\end{align*}
The main term arises from the terms with $mn = \square$. Let us denote this portion of $S_1$ by $S_1^{\square}$. We denote the complementary portion with $mn \neq \square$ by $S_1^{\neq}$. Therefore 
\begin{align*}
S_1 = S_1^\square + S_1^{\neq},
\end{align*}
where
\begin{equation}\label{eq: splitting up first moment into diagonal and off diagonal}
\begin{split}
S_1^\square &= \frac{2}{\left( 1-\frac{1}{\sqrt{2}}\right)^2}\mathop{\sum_{\substack{m\leq M\\ m \text{ odd}}} \sum_{\substack{n=1 \\ n \text{ odd}}}^{\infty}}_{mn = \square} \frac{b_m}{\sqrt{m}} \frac{1}{\sqrt{n}}   \sum_{p\equiv 1 \, (\text{mod }8) } (\log p) \Phi\left(\frac{p}{X}\right)\omega_1\left( n\sqrt{\frac{\pi}{p}}\right) \left(\frac{mn}{p}\right), \\
S_1^{\neq} &= \frac{2}{\left( 1-\frac{1}{\sqrt{2}}\right)^2}\mathop{\sum_{\substack{m\leq M\\ m \text{ odd}}} \sum_{\substack{n=1 \\ n \text{ odd}}}^{\infty}}_{mn \neq \square} \frac{b_m}{\sqrt{m}} \frac{1}{\sqrt{n}}   \sum_{p\equiv 1 \, (\text{mod }8) } (\log p) \Phi\left(\frac{p}{X}\right)\omega_1\left( n\sqrt{\frac{\pi}{p}}\right) \left(\frac{mn}{p}\right).
\end{split}
\end{equation}
We treat first the main term $S_1^\square$, and later we will bound the error term $S_1^{\neq}$.

\subsection{Main term}

Recall that $b_m$ is supported on square-free integers $m$. Therefore, $mn = \square$ if and only if $n = mk^2$, where $k$ is a positive integer. We make this change of variables and then interchange orders of summation to obtain
\begin{align*}
S_1^{\square} = \frac{2}{\left( 1-\frac{1}{\sqrt{2}}\right)^2}\sum_{p\equiv 1 \, (\text{mod }8) } (\log p) \Phi\left(\frac{p}{X}\right) \sum_{\substack{m\leq M\\ (m,2p)=1}} \frac{b_m}{m} \sum_{\substack{k=1 \\ (k,2p)=1 }}^{\infty} \frac{1}{k}   \omega_1\left( mk^2\sqrt{\frac{\pi}{p}}\right).
\end{align*}
By the rapid decay of $\omega_1$ (Lemma \ref{lem: properties of omega j}) we see that the contribution from those $k$ with $(k,p) > 1$ is $O_A(X^{-A})$, so we may safely ignore this condition. We may also ignore the condition $(m,p)=1$, since $m \leq M < p$. We insert the definition \eqref{eq: defn of omega j} of $\omega_1(\xi)$ and interchange to deduce that for any $c > 0$ we have
\begin{align*}
&\sum_{\substack{k=1 \\ (k,2)=1 }}^{\infty} \frac{1}{k} \omega_1\left( mk^2\sqrt{\frac{\pi}{p}}\right)\\
& \ = \  \frac{1}{2\pi i} \int_{(c)} \frac{\Gamma(\frac{s}{2}+\frac{1}{4})}{\Gamma(\frac{1}{4})} \left( 1-\frac{1}{2^{\frac{1}{2}-s}}\right)  \left( 1-\frac{1}{2^{1+2s}}\right) \zeta(1+2s) \left( \frac{p}{\pi}\right)^{s/2}m^{-s}\,\frac{ds}{s}.
\end{align*}
We move the line of integration to Re$\,s=-\frac{1}{2}+\varepsilon$, leaving a residue at $s=0$. The new integral is $O_{\varepsilon}\left(p^{-\frac{1}{4}+\varepsilon} m^{\frac{1}{2}-\varepsilon}\right)$. Using $b_m\ll 1$, we see that the total contribution of this error term is $\ll \ X^{\frac{3}{4}+\varepsilon} M^{\frac{1}{2}}$. This is $O(X^{1-\varepsilon})$ by \eqref{eq:outline section, defn of M, length of mollifier}. Writing the residue at $s=0$ as an integral along a small circle around $0$, we deduce that
\begin{equation}\label{eq: first moment S 1 square}
\begin{split}
S_1^\square = O(X^{1-\varepsilon})&+ \frac{2}{\left( 1-\frac{1}{\sqrt{2}}\right)^2}\sum_{p\equiv 1 \, (\text{mod }8) } (\log p) \Phi\left(\frac{p}{X}\right) \sum_{\substack{m\leq M\\ (m,2)=1}} \frac{b_m}{m} \\
&\times \frac{1}{2\pi i} \oint_{|s|=\frac{1}{2\log X}} \frac{\Gamma(\frac{s}{2}+\frac{1}{4})}{\Gamma(\frac{1}{4})} \left( 1-\frac{1}{2^{\frac{1}{2}-s}}\right)  \left( 1-\frac{1}{2^{1+2s}}\right) \zeta(1+2s) \left( \frac{p}{\pi}\right)^{s/2}m^{-s}\,\frac{ds}{s}.
\end{split}
\end{equation}

We next use the definition $b_m=\mu(m)H\left( \frac{\log m}{\log M}\right)$ and the Fourier inversion formula (compare with \eqref{Fourierinversion1},\eqref{Fourierinversion2})
\begin{align}\label{eq: write H as Fourier integral}
H(t) \ = \ \int_{-\infty}^{\infty} h(z) e^{-t(1+iz)}\,dz,
\end{align}
where
\begin{align}\label{eq: write h as Fourier transform}
h(z) \ = \ \int_{-\infty}^{\infty} e^t H(t) e^{izt}\,dt,
\end{align}
to write
\begin{align*}
& \sum_{\substack{m\leq M\\ (m,2)=1}} \frac{b_m}{m}  m^{-s} \ = \ \int_{-\infty}^{\infty} h(z) \sum_{\substack{m=1\\ (m,2)=1}}^{\infty} \frac{\mu(m)}{m^{1+s+\frac{1+iz}{\log M}}}\,dz \\
& \ = \ \int_{-\infty}^{\infty} h(z)\left(1-\frac{1}{2^{1+s+\frac{1+iz}{\log M}}} \right)^{-1}  \zeta^{-1}\left( 1+s+\frac{1+iz}{\log M}\right) \,dz.
\end{align*}
From repeated integration by parts we obtain
\begin{align}\label{eq: h has rapid decay}
h(z) \ll_j \frac{1}{(1+|z|)^j},
\end{align}
and therefore we may truncate this integral to the range $|z|\leq \sqrt{\log M}$. Thus,
\begin{align*}
\sum_{\substack{m\leq M\\ (m,2)=1}} \frac{b_m}{m}  m^{-s} \ = \ \int_{|z| \leq \sqrt{\log M}} &h(z)\left(1-\frac{1}{2^{1+s+\frac{1+iz}{\log M}}} \right)^{-1}  \zeta^{-1}\left( 1+s+\frac{1+iz}{\log M}\right) \,dz \ \ \\ 
&+ \ O_A\left(\frac{1}{(\log X)^A}\right).
\end{align*}
For $|s|=\frac{1}{2\log X}$ and $|z|\leq \sqrt{\log M}$, we may write $\left(1-\frac{1}{2^{1+s+\frac{1+iz}{\log M}}} \right)^{-1}  \zeta^{-1}\left( 1+s+\frac{1+iz}{\log M}\right)$ as a power series and arrive at
\begin{align*}
\sum_{\substack{m\leq M\\ (m,2)=1}} \frac{b_m}{m}  m^{-s} & = \ 2\int_{|z| \leq \sqrt{\log M}} h(z)  \left( s+\frac{1+iz}{\log M}\right) \,dz \ \ + \ O\left( \frac{1}{(\log X)^{2}}\right).
\end{align*}
We may extend the range of integration to the entire real line, with negligible error, because of \eqref{eq: h has rapid decay}. The definition of $H(t)$ implies that
\begin{align*}
H'(t) \ = \ -(1+iz)\int_{-\infty}^{\infty} h(z) e^{-t(1+iz)}\,dz.
\end{align*}
Therefore
\begin{align*}
\int_{-\infty}^{\infty} h(z)  \left( s+\frac{1+iz}{\log M}\right) \,dz \ = \ sH(0)-\frac{1}{\log M} H'(0),
\end{align*}
and hence
\begin{align}\label{eq: first moment m sum main term}
\sum_{\substack{m\leq M\\ (m,2)=1}} \frac{b_m}{m}  m^{-s} \ = \ 2sH(0)-\frac{2}{\log M} H'(0) \ \ + \ O\left( \frac{1}{(\log X)^2}\right).
\end{align}

We insert \eqref{eq: first moment m sum main term} into \eqref{eq: first moment S 1 square} to obtain
\begin{align*}
S_1^\square = &\frac{4}{\left( 1-\frac{1}{\sqrt{2}}\right)^2}\sum_{p\equiv 1 \, (\text{mod }8) } (\log p) \Phi\left(\frac{p}{X}\right) \ \frac{1}{2\pi i} \oint_{|s|=\frac{1}{2\log X}} \frac{\Gamma(\frac{s}{2}+\frac{1}{4})}{\Gamma(\frac{1}{4})} \left( 1-\frac{1}{2^{\frac{1}{2}-s}}\right)  \\
&\times  \left( 1-\frac{1}{2^{1+2s}}\right) \zeta(1+2s) \left( \frac{p}{\pi}\right)^{s/2} \left(sH(0)-\frac{1}{\log M} H'(0) \right) \,\frac{ds}{s} \ \ \ + \ O\left( \frac{X}{\log X}\right).
\end{align*}
We evaluate the integral using the formula
\begin{equation}\label{eq: residue as derivative}
\underset{s=0}{\mbox{Res}}\, g(s) = \frac{1}{(n-1)!} \frac{d^{n-1}}{ds^{n-1}} s^n g(s) \Bigg|_{s=0}
\end{equation}
for a pole of a function $g(s)$ at $s=0$ of order at most $n$. This yields
\begin{align*}
S_1^\square &= \ \frac{1}{\left( 1-\frac{1}{\sqrt{2}}\right)}\sum_{p\equiv 1 \, (\text{mod }8) } (\log p) \Phi\left(\frac{p}{X}\right) \ \left(H(0)-\frac{\log p}{2\log M} H'(0) \right) + O\left( \frac{X}{\log X}\right).
\end{align*}
By the support of $\Phi$ we have $\log p = \log X + O(1)$. We then use the prime number theorem in arithmetic progressions and partial summation to obtain
\begin{align}\label{eq: S1 square gives main term}
S_1^\square &= \frac{1}{\left( 1-\frac{1}{\sqrt{2}}\right)}\left(H(0)-\frac{\log X}{2\log M} H'(0) \right) \frac{X}{4}\widehat{\Phi}(0) + O\left( \frac{X}{\log X}\right).
\end{align}
Now \eqref{eq: S1 square gives main term} gives the main term for Proposition \ref{prop: asymptotic for S1}.

\subsection{Preparation of the off-diagonal}

We turn to bounding $S_1^{\neq}$. In order to complete the proof of Proposition \ref{prop: asymptotic for S1}, we prove
\begin{align}\label{eq: first moment desired bound for S1 neq}
S_1^{\neq} \ll \frac{X}{(\log X)^{1-\varepsilon}}.
\end{align}

We need to perform some technical massaging before $S_1^{\neq}$ is in a suitable form. Recall from \eqref{eq: splitting up first moment into diagonal and off diagonal} that
\begin{align*}
S_1^{\neq}=\frac{2}{\left( 1-\frac{1}{\sqrt{2}}\right)^2}\mathop{\sum_{\substack{m\leq M\\ m \text{ odd}}} \sum_{\substack{n=1 \\ n \text{ odd}}}^{\infty}}_{mn \neq \square} \frac{b_m}{\sqrt{mn}}   \sum_{p\equiv 1 \, (\text{mod }8) } (\log p) \Phi\left(\frac{p}{X}\right)\omega_1\left( n\sqrt{\frac{\pi}{p}}\right) \left(\frac{mn}{p}\right).
\end{align*}
We begin by uniquely writing $n = rk^2$, where $r$ is square-free and $k$ is an integer (this variable $k$ is unrelated to the variable $k$ appearing in the analysis for $S_1^\square$). The condition $mn\neq \square$ is equivalent to $m\neq r$, since both $m$ and $r$ are square-free. It follows that
\begin{align*}
S_1^{\neq} &= \frac{2}{\left( 1-\frac{1}{\sqrt{2}}\right)^2}\sum_{\substack{m\leq M\\ m \text{ odd}}} \frac{b_m}{\sqrt{m}} \sum_{\substack{r=1 \\ r \text{ odd} \\ r\neq m }}^{\infty} \sum_{\substack{k=1 \\ k \text{ odd} }}^{\infty} \frac{\mu^2(r)}{k\sqrt{r}}   \sum_{p\equiv 1 \, (\text{mod }8) } (\log p) \Phi\left(\frac{p}{X}\right)\omega_1\left( rk^2\sqrt{\frac{\pi}{p}}\right) \left(\frac{mrk^2}{p}\right)
\end{align*}
We next factor out the greatest common divisor, say $g$, of $m$ and $r$. We change variables $m \rightarrow gm, r \rightarrow gr$ and obtain
\begin{align*}
S_1^{\neq} = \frac{2}{\left( 1-\frac{1}{\sqrt{2}}\right)^2} &\sum_{\substack{g \text{ odd} }}\frac{\mu^2(g)}{g} \sum_{\substack{m\leq M/g\\ (m,2g)=1}} \frac{b_{mg}}{\sqrt{m}}  \sum_{\substack{r=1 \\ (r,2g)=1 \\ (m,r)=1\\ mr>1 }}^{\infty} \frac{\mu^2(r)}{\sqrt{r}}\sum_{\substack{k=1 \\ k \text{ odd} }}^{\infty} \frac{1}{k} \\  
&\times\sum_{p\equiv 1 \, (\text{mod }8) } (\log p) \Phi\left(\frac{p}{X}\right)\omega_1\left( grk^2\sqrt{\frac{\pi}{p}}\right) \left(\frac{mrg^2k^2}{p}\right).
\end{align*}
Observe that the support of $b_{gm}$ forces $g \leq M < X^{\frac{1}{2}}$, but we prefer not to indicate this explicitly.

Clearly we have $\left( \frac{g^2k^2}{p}\right)=1$ for $p\nmid gk$ and $=0$ otherwise. Since $g \leq M<p$ the condition $p \nmid g$ is automatically satisfied. By Lemma \ref{lem: properties of omega j} we may truncate the sum over $k$ to $k \leq X^{\frac{1}{4}+\varepsilon}$ at the cost of an error $O(X^{-1})$, say. We may similarly truncate the sum on $r$ to $r \leq X^{\frac{1}{2}+\varepsilon}$. With $k$ suitably reduced we may drop the condition $p \nmid k$, and then we use the rapid decay of $\omega_1$ again to extend the sum on $k$ to infinity. It follows that
\begin{equation}\label{eq: first moment S1 neq remove coprim conditions}
\begin{split}
S_1^{\neq} = \frac{2}{\left( 1-\frac{1}{\sqrt{2}}\right)^2} &\sum_{\substack{g \text{ odd} }}\frac{\mu^2(g)}{g} \sum_{\substack{m\leq M/g\\ (m,2g)=1}} \frac{b_{mg}}{\sqrt{m}}  \sum_{\substack{r\leq X^{1/2+\varepsilon} \\ (r,2g)=1 \\ (m,r)=1\\ mr>1 }} \frac{\mu^2(r)}{\sqrt{r}}\sum_{\substack{k=1 \\ k \text{ odd} }}^{\infty} \frac{1}{k} \\  
&\times\sum_{p\equiv 1 \, (\text{mod }8) } (\log p) \Phi\left(\frac{p}{X}\right)\omega_1\left( grk^2\sqrt{\frac{\pi}{p}}\right) \left(\frac{mr}{p}\right) + O(X^{-1}).
\end{split}
\end{equation}

We next detect the congruence condition $p \equiv 1 \pmod{8}$ with multiplicative characters modulo 8. Therefore
\begin{equation}\label{eq: first moment detect mod 8 with characters}
\begin{split}
\sum_{p\equiv 1 \, (\text{mod }8) } &(\log p) \Phi\left(\frac{p}{X}\right)\omega_1\left( grk^2\sqrt{\frac{\pi}{p}}\right) \left(\frac{mr}{p}\right) \\ 
&=  \frac{1}{4} \sum_{\gamma \in \{\pm 1, \pm 2\}} \sum_{p} (\log p) \Phi\left(\frac{p}{X}\right)\omega_1\left( grk^2\sqrt{\frac{\pi}{p}}\right) \left(\frac{\gamma mr}{p}\right).
\end{split}
\end{equation}
Since $m$ and $r$ are odd and square-free and $(m,r)=1$, it follows that $mr$ is odd and square-free. Hence, for each $\gamma\in \{1,-1,2,-2\}$, the integer $\gamma  mr$ is square-free. Therefore $\gamma mr\equiv 1$, $2$, or $3$ (mod $4$). If $\gamma mr \equiv 1$ (mod $4$), then $\left(\frac{\gamma mr}{\cdot }\right)$ is a real primitive character modulo $|\gamma mr|$, while if $\gamma mr\equiv 2$ or $3$ (mod $4$), then $\left(\frac{4\gamma mr}{\cdot }\right)$ is a real primitive character modulo $|4\gamma mr|$ (see \cite[Theorem 2.2.15]{Coh}). Moreover, for $p$ odd, $\left(\frac{4\gamma mr}{p }\right)=\left(\frac{\gamma mr}{p }\right)$. Therefore the sum in \eqref{eq: first moment detect mod 8 with characters} is equal to
\begin{align}\label{eq: first moment chi bmr}
\frac{1}{4} \sum_{\gamma \in \{\pm 1, \pm 2\}} \sum_{p} (\log p) \Phi\left(\frac{p}{X}\right)\omega_1\left( grk^2\sqrt{\frac{\pi}{p}}\right) \chi_{\gamma mr}(p),
\end{align}
where $\chi_{\gamma mr}(\cdot)=\left(\frac{\gamma mr}{\cdot }\right)$ if $\gamma mr \equiv 1$ (mod $4$), and $\chi_{\gamma mr}(\cdot)=\left(\frac{4\gamma mr}{\cdot }\right)$ if $\gamma mr\equiv 2$ or $3$ (mod $4$), so that $\chi_{\gamma mr}(\cdot)$ is a real primitive character for all the relevant $\gamma,m,r$. Also, since $mr>1$, we see that $\gamma mr$ is never $1$, so each $\chi_{\gamma mr}$ is nonprincipal.

We insert the definition of $\omega_1$ into \eqref{eq: first moment chi bmr} in order to facilitate a separation of variables. Recalling \eqref{eq: first moment S1 neq remove coprim conditions} and \eqref{eq: first moment detect mod 8 with characters}, we interchange the order of summation and integration to obtain
\begin{align*}
S_1^{\neq} &= O(1)+ \frac{2}{\left( 1-\frac{1}{\sqrt{2}}\right)^2} \sum_{\substack{g \text{ odd} }}\frac{\mu^2(g)}{g} \sum_{\substack{m\leq M/g\\ (m,2g)=1}} \frac{b_{mg}}{\sqrt{m}}  \sum_{\substack{r\leq X^{1/2+\varepsilon} \\ (r,2g)=1 \\ (m,r)=1\\ mr>1 }} \frac{\mu^2(r)}{\sqrt{r}} \frac{1}{4}\sum_{\gamma \in \{\pm 1, \pm 2\}}\sum_{\substack{k=1 \\ k \text{ odd} }}^{\infty} \frac{1}{k} \\
& \times \frac{1}{2\pi i} \int_{(c)} \frac{\Gamma(\frac{s}{2}+\frac{1}{4})}{\Gamma(\frac{1}{4})} \left( 1-\frac{1}{2^{\frac{1}{2}-s}}\right)\pi^{-s/2} \left( grk^2\right)^{-s}  \sum_{p } (\log p) \Phi\left(\frac{p}{X}\right) \chi_{\gamma mr}(p) p^{s/2} \, \frac{ds}{s}.
\end{align*}
We choose $c = \frac{1}{\log X}$, so that $p^{s/2}$ is bounded in absolute value. We can put the summation on $k$ inside of the integral, where it becomes a zeta factor, and we obtain
\begin{equation*}
\begin{split}
S_1^{\neq} &= O(1)+ \frac{2}{\left( 1-\frac{1}{\sqrt{2}}\right)^2} \sum_{\substack{g \text{ odd} }}\frac{\mu^2(g)}{g} \sum_{\substack{m\leq M/g\\ (m,2g)=1}} \frac{b_{mg}}{\sqrt{m}}  \sum_{\substack{r\leq X^{1/2+\varepsilon} \\ (r,2g)=1 \\ (m,r)=1\\ mr>1 }} \frac{\mu^2(r)}{\sqrt{r}} \frac{1}{4}\sum_{\gamma \in \{\pm 1, \pm 2\}} \frac{1}{2\pi i} \int_{(c)} \frac{\Gamma(\frac{s}{2}+\frac{1}{4})}{\Gamma(\frac{1}{4})} \\ 
&\times \left( 1-\frac{1}{2^{\frac{1}{2}-s}}\right)\left(1 - \frac{1}{2^{1+2s}}\right)\zeta(1+2s)\pi^{-s/2} \left( gr\right)^{-s}  \sum_{p } (\log p) \Phi\left(\frac{p}{X}\right) \chi_{\gamma mr}(p) p^{s/2} \, \frac{ds}{s}.
\end{split}
\end{equation*}

It is more convenient to replace the $\log p$ factor with the von Mangoldt function $\Lambda(n)$. By trivial estimation we have
\begin{align*}
\sum_{p } (\log p) \Phi\left(\frac{p}{X}\right) \chi_{\gamma mr}(p) p^{s/2} = \sum_n \Lambda(n) \Phi \left( \frac{n}{X}\right) \chi_{\gamma mr}(n) n^{s/2} + O(X^{1/2}).
\end{align*}
When we sum the error term over $m,g,r$ and integrate over $s$, the total contribution is $O(X^{1-\varepsilon})$, provided $\varepsilon = \varepsilon(\theta) >0$ is sufficiently small. By the rapid decay of the $\Gamma$ function in vertical strips we can truncate the integral to $|\text{Im}(s)| \leq (\log X)^2$, at the cost of a negligible error. We therefore obtain
\begin{equation}\label{eq: first moment S1 neq all done massaging}
\begin{split}
S_1^{\neq} &= O(X^{1-\varepsilon})+\frac{2}{\left( 1-\frac{1}{\sqrt{2}}\right)^2} \sum_{\substack{g \text{ odd} }}\frac{\mu^2(g)}{g} \sum_{\substack{m\leq M/g\\ (m,2g)=1}} \frac{b_{mg}}{\sqrt{m}}  \sum_{\substack{r\leq X^{1/2+\varepsilon} \\ (r,2g)=1 \\ (m,r)=1\\ mr>1 }} \frac{\mu^2(r)}{\sqrt{r}} \frac{1}{4}\sum_{\gamma \in \{\pm 1, \pm 2\}}  \\ 
&\times \frac{1}{2\pi i} \int_{\frac{1}{\log X} - i(\log X)^2}^{\frac{1}{\log X} + i(\log X)^2} \frac{\Gamma(\frac{s}{2}+\frac{1}{4})}{\Gamma(\frac{1}{4})} \left( 1-\frac{1}{2^{\frac{1}{2}-s}}\right)\left(1 - \frac{1}{2^{1+2s}}\right)\zeta(1+2s) \\
&\times \left(\frac{X}{\pi} \right)^{s/2}  \left( gr\right)^{-s} \sum_n \Lambda(n) \Phi_s \left( \frac{n}{X}\right) \chi_{\gamma mr}(n) \, \frac{ds}{s}.
\end{split}
\end{equation}

Having arrived at \eqref{eq: first moment S1 neq all done massaging}, we are finished with the preparatory technical manipulations. We proceed to show that $S_1^{\neq}$ is small. As discussed in Section \ref{sec: outline of pos prop thm}, we apply three different arguments, depending on the size of $mr$. We call these ranges Regimes I, II, and III, which correspond to small, medium, and large values of $mr$. In Regime I we have $1 < mr \ll \exp(\varpi \sqrt{\log x})$, where $\varpi > 0$ is a sufficiently small, fixed constant. Regime II corresponds to $\exp(\varpi \sqrt{\log x}) \ll mr \ll X^{\frac{1}{10}}$, and Regime III corresponds to $X^{\frac{1}{10}} \ll mr \ll M X^{\frac{1}{2}+\varepsilon}$. We then write
\begin{align}\label{eq: first moment decompose S1 neq into E1 and E2}
S_1^{\neq} = E_1 + E_2,
\end{align}
where $E_1$ contains those terms with $mr \ll \exp(\varpi \sqrt{\log x})$, and $E_2$ contains those terms with $mr\gg\exp(\varpi \sqrt{\log x})$. We claim the bounds
\begin{equation}\label{eq: first moment bounds for E1 and E2}
\begin{split}
E_1 &\ll \frac{X}{(\log X)^{1-\varepsilon}}, \\
E_2 &\ll X \exp(-c \varpi \sqrt{\log x}),
\end{split}
\end{equation}
where $c > 0$ is some absolute constant. Taking together \eqref{eq: first moment decompose S1 neq into E1 and E2} and \eqref{eq: first moment bounds for E1 and E2} clearly gives \eqref{eq: first moment desired bound for S1 neq}, and this yields Proposition \ref{prop: asymptotic for S1}. It therefore suffices to show \eqref{eq: first moment bounds for E1 and E2}.

\subsection{Regime I}

We first bound $E_1$, which is precisely the contribution of Regime I. By definition, we have
\begin{equation}\label{eq: first moment defn of E1}
\begin{split}
E_1 &:= \frac{2}{\left( 1-\frac{1}{\sqrt{2}}\right)^2} \sum_{\substack{g \text{ odd} }}\frac{\mu^2(g)}{g} \sum_{\substack{m\leq M/g\\ (m,2g)=1}} \frac{b_{mg}}{\sqrt{m}}  \sum_{\substack{r\leq X^{1/2+\varepsilon} \\ (r,2g)=1 \\ (m,r)=1\\ 1 < mr \ll \exp(\varpi \sqrt{\log x}) }} \frac{\mu^2(r)}{\sqrt{r}} \frac{1}{4}\sum_{\gamma \in \{\pm 1, \pm 2\}}  \\ 
&\times \frac{1}{2\pi i} \int_{\frac{1}{\log X} - i(\log X)^2}^{\frac{1}{\log X} + i(\log X)^2} \frac{\Gamma(\frac{s}{2}+\frac{1}{4})}{\Gamma(\frac{1}{4})} \left( 1-\frac{1}{2^{\frac{1}{2}-s}}\right)\left(1 - \frac{1}{2^{1+2s}}\right)\zeta(1+2s) \\
&\times \left(\frac{X}{\pi} \right)^{s/2}  \left( gr\right)^{-s} \sum_n \Lambda(n) \Phi_s \left( \frac{n}{X}\right) \chi_{\gamma mr}(n) \, \frac{ds}{s}.
\end{split}
\end{equation}
We transform the sum on $n$ with partial summation to obtain
\begin{align}\label{eq: first moment partial summation before pnt}
\sum_n \Lambda(n) \Phi_s \left( \frac{n}{X}\right) \chi_{\gamma mr}(n) &= -\int_0^{\infty} \frac{1}{X}\Phi_s'\left(\frac{w}{X}\right) \Bigg(\sum_{n\leq w}\Lambda(n) \chi_{bmr}(n)\Bigg) \,dw.
\end{align}
By \cite[equation (8) of Chapter 20]{Dav}, we have
\begin{align}\label{eq: first moment davenport pnt}
\sum_{n\leq w}\Lambda(n) \chi_{\gamma mr}(n) = -\frac{w^{\beta_1}}{\beta_1} +O\left(w \exp(-c_1 \sqrt{\log w}) \right),
\end{align}
where $c_1>0$ is some absolute constant, and the term $-w^{\beta_1}/\beta_1$ only appears if $L(s,\chi_{\gamma mr})$ has a real zero $\beta_1$ which satisfies $\beta_1 > 1- \frac{c_2}{\log |\gamma mr|}$ for some sufficiently small constant $c_2>0$. All the constants in \eqref{eq: first moment davenport pnt}, implied or otherwise, are effective.

The contribution from the error term in \eqref{eq: first moment davenport pnt} is easy to control. Observe that
\begin{align}\label{eq: first moment bound on f s '}
\int_0^{\infty } \frac{1}{X}\left| \Phi_s'\left( \frac{w}{X}\right)\right|\,dw \ = \ \int_0^{\infty} |\Phi_s'(u)|\,du \ \ll \ |s|+1,
\end{align}
uniformly in $s$ with $\text{Re}(s)$ bounded. Taking \eqref{eq: first moment defn of E1},\eqref{eq: first moment partial summation before pnt} and \eqref{eq: first moment bound on f s '} together, we see the error term of \eqref{eq: first moment davenport pnt} contributes
\begin{align}\label{eq: first moment regime I ordinary zeros}
\ll X \exp(c_3(\varpi-c_1)\sqrt{\log X})
\end{align}
to $E_1$, where $c_3 > 0$ is some absolute constant. The bound \eqref{eq: first moment regime I ordinary zeros} is more than adequate for \eqref{eq: first moment bounds for E1 and E2} provided we choose $\varpi > 0$ sufficiently small in terms of $c_1$.

The conductor of the primitive character $\chi_{\gamma mr}$ is $\ll \exp(\varpi \sqrt{\log X}) \leq \exp(2\varpi \sqrt{\log X})$. We apply Page's theorem \cite[equation (9) of Chapter 14]{Dav}, which implies that, for some fixed absolute constant $c_4>0$, there is at most one real primitive character $\chi_{\gamma mr}$ with modulus $\leq \exp(2\varpi \sqrt{\log X})$ for which the $L$-function $L(s,\chi_{\gamma mr})$ has a real zero satisfying
\begin{align}\label{eq: first moment lower bound for exceptional beta 1}
\beta_1 > 1 - \frac{c_4}{2\varpi \sqrt{\log X}}.
\end{align}
To estimate the contribution of the possible term $ -\frac{w^{\beta_1}}{\beta_1}$, we evaluate the integral
\begin{align*}
\int_0^{\infty} \frac{w^{\beta_1}}{\beta_1}\frac{1}{X}\Phi_s'\left(\frac{w}{X}\right) \,dw
\end{align*}
arising from \eqref{eq: first moment partial summation before pnt} and \eqref{eq: first moment davenport pnt}. We make the change of variable $\frac{w}{X}\mapsto u$ and integrate by parts to see that this integral equals
\begin{align*}
X^{\beta_1}\int_0^{\infty} \frac{u^{\beta_1}}{\beta_1}\Phi_s'(u) \,du \ = \ -X^{\beta_1}\int_0^{\infty}\Phi_s(u)u^{\beta_1-1}\,du \ = \ -X^{\beta_1} \Phi^{\dagger}\left( \frac{s}{2}+\beta_1\right).
\end{align*}

We assume that a real zero satisfying \eqref{eq: first moment lower bound for exceptional beta 1} does exist, for otherwise we already have an acceptable bound for $E_1$. Let $q^*$ denote the conductor of the exceptional character $\chi_{\gamma mr}$ for which the real zero $\beta_1$ satisfying \eqref{eq: first moment lower bound for exceptional beta 1} exists. Then we have
\begin{equation}\label{eq: first moment exception E 1}
\begin{split}
E_1 = -\frac{1}{2\pi i} &\frac{\sqrt{\gamma^*}}{2\left(1 - \frac{1}{\sqrt{2}}\right)^2} \frac{X^{\beta_1}}{\sqrt{|q^*|}} \int_{\frac{1}{\log X} -i (\log X)^2}^{\frac{1}{\log X} +i (\log X)^2} \frac{\Gamma(\frac{s}{2}+\frac{1}{4})}{\Gamma(\frac{1}{4})} \left( 1-\frac{1}{2^{\frac{1}{2}-s}}\right)\left(1 - \frac{1}{2^{1+2s}}\right) \\
&\times\left( \frac{X}{\pi}\right)^{s/2}\zeta(1+2s)\Phi^{\dagger} \left( \frac{s}{2}+\beta_1\right) \mathop{\sum\sum}_{\substack{1 < mr \ll \exp(\varpi \sqrt{\log X}) \\ (mr,2)=1 \\ (m,r)=1 \\ \gamma mr = q^*}}\frac{\mu^2(r)}{r^s} \sum_{\substack{(g,2mr)=1}} \frac{\mu^2(g)b_{gm}}{g^{1+s}}\frac{ds}{s} \\
&+O\left(X\exp(-c_5\sqrt{\log X})\right),
\end{split}
\end{equation}
where $c_5>0$ is some constant, and $\gamma^*$ is some bounded power of two.

We next write $b_{gm} = \mu(gm) H(\frac{\log gm}{\log M})$ and apply Fourier inversion as in \eqref{eq: write H as Fourier integral},\eqref{eq: write h as Fourier transform} to obtain
\begin{equation}\label{eq: first moment exceptional mollifier}
\begin{split}
\sum_{\substack{(g,2mr)=1 }}\frac{b_{mg}}{g^{1+s}} \ &= \ \mu(m) \int_{-\infty}^{\infty} \frac{1}{m^{\frac{1+iz}{\log M}}} h(z) \prod_{p|2mr} \left(1-\frac{1}{p^{1+s+\frac{1+iz}{\log M}}} \right)^{-1}  \zeta^{-1}\left( 1+s+\frac{1+iz}{\log M}\right) \,dz.
\end{split}
\end{equation}
By \eqref{eq: h has rapid decay} we can truncate the integral in \eqref{eq: first moment exceptional mollifier} to $|z| \leq \sqrt{\log M}$ at the cost of an error of size $O_B(d_2(mr) (\log X)^{-B})$. This error contributes to \eqref{eq: first moment exception E 1}
\begin{align*}
\ll_B \frac{X}{(\log X)^{B+O(1)}},
\end{align*}
which is acceptable. We therefore have
\begin{equation}\label{eq: first moment exceptional E 1 opened mollifier}
\begin{split}
E_1 &= -\frac{X^{\beta_1}}{2\left( 1-\frac{1}{\sqrt{2}}\right)^2} \,  \frac{\sqrt{\gamma^*}}{\sqrt{|q^*|}}  \mathop{\sum\sum}_{\substack{m\leq M, r\leq X^{\frac{1}{2}+\varepsilon} \\ (mr,2)=1 \\ (m,r)=1\\ \gamma mr=q^* }} \mu(m)\mu^2(r) \ \frac{1}{2\pi i} \int_{\frac{1}{\log X} -i (\log X)^2}^{\frac{1}{\log X} +i (\log X)^2}\frac{\Gamma(\frac{s}{2}+\frac{1}{4})}{\Gamma(\frac{1}{4})} \left( 1-\frac{1}{2^{\frac{1}{2}-s}}\right) r^{-s}  \\
& \times   \left(1-\frac{1}{2^{1+2s}}\right)\zeta(1+2s) \left(\frac{X}{\pi}\right)^{s/2} \Phi^{\dagger} \left( \frac{s}{2}+\beta_1\right) \,\int_{|z| \leq \sqrt{\log M}} \frac{1}{m^{\frac{1+iz}{\log M}}} \ h(z) \\
&\times \prod_{p|2mr} \left(1-\frac{1}{p^{1+s+\frac{1+iz}{\log M}}} \right)^{-1}  \zeta^{-1}\left( 1+s+\frac{1+iz}{\log M}\right) \,dz \frac{ds}{s} + O\left(\frac{X}{\log X}\right).
\end{split}
\end{equation}
We handle the $s$-integral in \eqref{eq: first moment exceptional E 1 opened mollifier} by moving the line of integration to $\text{Re}(s) = -\frac{c_6}{\log \log X}$, where $c_6>0$ is small enough that $\zeta(1+s+\frac{1+iz}{\log M})$ has no zeros in the region $\text{Re}(s) \geq -\frac{c_6}{\log \log X}, \text{Im}(s) \leq (\log X)^2$. By moving the line of integration we pick up a contribution from the pole at $s=0$. We write this residue as an integral around a circle of small radius centered at the origin, and thereby deduce
\begin{equation}\label{eq: first moment exceptional E 1 moved contour}
\begin{split}
E_1 &= -\frac{X^{\beta_1}}{2\left( 1-\frac{1}{\sqrt{2}}\right)^2} \,  \frac{\sqrt{\gamma^*}}{\sqrt{|q^*|}}  \mathop{\sum\sum}_{\substack{m\leq M, r\leq X^{1/2+\varepsilon} \\ (mr,2)=1 \\ (m,r)=1\\ \gamma mr=q^* }} \mu(m)\mu^2(r) \ \frac{1}{2\pi i} \oint_{|s| = \frac{1}{\log X}}\frac{\Gamma(\frac{s}{2}+\frac{1}{4})}{\Gamma(\frac{1}{4})} \left( 1-\frac{1}{2^{\frac{1}{2}-s}}\right) r^{-s} \\
& \times   \left(1-\frac{1}{2^{1+2s}}\right)\zeta(1+2s) \left(\frac{X}{\pi}\right)^{s/2} \Phi^{\dagger} \left( \frac{s}{2}+\beta_1\right) \,\int_{|z| \leq \sqrt{\log M}} \ \frac{1}{m^{\frac{1+iz}{\log M}}} h(z) \\
&\times \prod_{p|2mr} \left(1-\frac{1}{p^{1+s+\frac{1+iz}{\log M}}} \right)^{-1}  \zeta^{-1}\left( 1+s+\frac{1+iz}{\log M}\right) \,dz \frac{ds}{s} + O\left(\frac{X}{\log X}\right). 
\end{split}
\end{equation}

We have the bound
\begin{align}\label{eq: first moment upper bound for beta 1}
\beta_1 < 1 - \frac{c_7}{\sqrt{|q^*|} (\log |q^*|)^2},
\end{align}
where $c_7 > 0$ is a fixed absolute constant (see \cite[equation (12) of Chapter 14]{Dav}). If $q^*$ satisfies $|q^*| \leq (\log X)^{2-\varepsilon}$ then by \eqref{eq: first moment upper bound for beta 1} we derive
\begin{align*}
X^{\beta_1} \ll X \exp(-c_7 (\log X)^{\varepsilon/3}).
\end{align*}
By estimating \eqref{eq: first moment exceptional E 1 moved contour} trivially we then obtain
\begin{align*}
E_1 &\ll X \exp(-c_7 (\log X)^{\varepsilon/4}),
\end{align*}
which is an acceptable bound. We may therefore assume that $q^*$ satisfies
\begin{align}\label{eq: first moment lower bound for q star}
|q^*| > (\log X)^{2-\varepsilon}.
\end{align}
For $|s| = \frac{1}{\log X}$ we have the bounds
\begin{align*}
\zeta(1+2s) \ll \log X, \ \ \ \ \ \ \zeta^{-1}\left( 1+s+\frac{1+iz}{\log M}\right) \ll \frac{1+|z|}{\log X}.
\end{align*}
Using these bounds and \eqref{eq: first moment lower bound for q star} we deduce by trivial estimation that
\begin{align*}
\eqref{eq: first moment exceptional E 1 moved contour} \ll \frac{X}{|q^*|^{1/2-o(1)}} \ll \frac{X}{(\log X)^{1-\varepsilon}}.
\end{align*}
This completes the proof of the bound for $E_1$ in \eqref{eq: first moment bounds for E1 and E2}.

\subsection{Regime II}

It remains to prove the bound for $E_2$ in \eqref{eq: first moment bounds for E1 and E2}. From \eqref{eq: first moment S1 neq all done massaging} and \eqref{eq: first moment decompose S1 neq into E1 and E2} we see that $E_2$ is the contribution from those $m$ and $r$ in Regimes II and III. The estimates in regimes II and III are less delicate than those in regime I, and consequently the arguments are easier.

In \eqref{eq: first moment S1 neq all done massaging} we write $q = \gamma m r$. After breaking $q$ into dyadic segments we find
\begin{align*}
E_2 &\ll (\log X)^{O(1)} \sum_{\substack{Q=2^j \\ Q \gg \exp(\varpi \sqrt{\log X}) \\ Q \ll M X^{1/2+\varepsilon}}} \mathcal{E}(Q),
\end{align*}
where
\begin{align*}
\mathcal{E}(Q) := Q^{-\frac{1}{2}+\varepsilon} \sum_{\chi \in S(Q)} \left|\sum_n \Lambda(n) \Phi_{s_0} \left( \frac{n}{X}\right) \chi(n) \right|.
\end{align*}
Here $s_0$ is some complex number with $\text{Re}(s_0) = \frac{1}{\log X}$ and $|\text{Im}(s_0)| \leq (\log X)^2$. In order to prove \eqref{eq: first moment bounds for E1 and E2} it therefore suffices to show that
\begin{align}\label{eq: first moment bound for cal E Q}
\mathcal{E}(Q) \ll X \exp(-c_8 \varpi \sqrt{\log X})
\end{align}
for each $Q$ satisfying $\exp(\varpi \sqrt{\log X}) \ll Q\ll M X^{\frac{1}{2}+\varepsilon}$. In this subsection we treat the $Q$ belonging to Regime II, that is, those $Q$ which satisfy $Q \ll X^{\frac{1}{10}}$. In the next subsection we treat the $Q$ in Regime III, which satisfy $Q \gg X^{\frac{1}{10}}$.

In Regime II we employ zero-density estimates. We begin by writing $\Phi_{s_0}$ as the integral of its Mellin transform, yielding
\begin{align*}
\sum_n \Lambda (n) \, \Phi_{s_0}\left(\frac{n}{X}\right) \chi(n) &= \frac{1}{2\pi i} \int_{(2)} X^w \Phi^{\dagger}\left(w+\frac{s_0}{2}\right) \left(-\frac{L'}{L}\left(w,\chi \right)\right) dw.
\end{align*}
Observe that from repeated integration by parts we have
\begin{align}\label{eq: f dagger rapid decay}
\left|\Phi^\dagger(\sigma + it + \frac{s_0}{2})\right| \ll_{\sigma,j} (\log X)^j \left(1+\left|t-\frac{\text{Im}(s_0)}{2}\right|\right)^{-j}
\end{align}
for every non-negative integer $j$.

We shift the line of integration to $\text{Re}(w) = -\frac{1}{2}$, picking up residues from all of the zeros in the critical strip. On the line $\text{Re}(w) = -\frac{1}{2}$ we have the bound
\begin{align*}
\left|\frac{L'}{L}(w,\chi) \right| \ll \log(q|w|),
\end{align*}
and this yields
\begin{align*}
\sum_n \Lambda (n) \, \Phi_{s_0}\left(\frac{n}{X}\right) \chi(n) &= \sum_{\substack{L(\rho,\chi) = 0 \\ 0 \leq \beta \leq 1}} X^\rho \Phi^\dagger\left(\rho + \frac{s_0}{2}\right) + O \left(\frac{(\log X)^{O(1)}}{X^{1/2}} \right).
\end{align*}
We have written here $\rho = \beta + i\gamma$. The error term is, of course, completely acceptable for \eqref{eq: first moment bound for cal E Q} when summed over $q \ll Q$.

By \eqref{eq: f dagger rapid decay}, the contribution to $\mathcal{E}(Q)$ from those $\rho$ with $|\gamma| > Q^{1/2}$ is
\begin{align*}
\ll X Q^{-100},
\end{align*}
say, and this gives an acceptable bound. We have therefore obtained
\begin{align}\label{eq: regime II, reduction to zero counting}
\mathcal{E}(Q) \ll X\exp(-\varpi \sqrt{\log X}) + Q^{-\frac{1}{2}+\varepsilon}\sum_{\substack{\chi \in S(Q)}} \ \sum_{\substack{L(\rho,\chi)=0 \\ 0 \leq \beta \leq 1 \\ |\gamma| \leq Q^{1/2}}} X^\beta.
\end{align}

In order to bound the right side of \eqref{eq: regime II, reduction to zero counting}, we first need to introduce some notation. For a primitive Dirichlet character $\chi$ modulo $q$, let $N(T,\chi)$ denote the number of zeros of $L(s,\chi)$ in the rectangle
\begin{align*}
0 \leq \beta \leq 1, \ \ \ \ \ \ \  |\gamma | \leq T.
\end{align*}
For $T \geq 2$, say, we have \cite[Chapter 16]{Dav}
\begin{align}\label{eq: bound on N T chi}
N(T,\chi) \ll T \log(qT).
\end{align}
For $\frac{1}{2}\leq \alpha \leq 1$, define $N(\alpha,T,\chi)$ to be the number of zeros $\rho = \beta +i\gamma$ of $L(s,\chi)$ in the rectangle
\begin{align*}
\alpha \leq \beta \leq 1, \ \ \ \ \ \ \ |\gamma| \leq T,
\end{align*}
and define
\begin{align*}
N(\alpha,Q,T) &= \sum_{q \leq Q} \ \sideset{}{^*}\sum_{\substack{\chi (\text{mod }q)}} N(\alpha,T,\chi).
\end{align*}
In $N(\alpha,Q,T)$ the summation on $\chi$ is over primitive characters. We employ Jutila's zero-density estimate \cite[(1.7)]{Jut77}
\begin{align}\label{eq: Jutila zero density}
N(\alpha,Q,T) \ll (QT)^{4(1-\alpha)+\varepsilon},
\end{align}
which holds for $\alpha \geq \frac{4}{5}$.

In \eqref{eq: regime II, reduction to zero counting}, we separate the zeros $\rho$ according to whether $\beta < \frac{4}{5}$ or $\beta \geq \frac{4}{5}$. Using \eqref{eq: bound on N T chi} we deduce
\begin{align}\label{eq: regime II bound for zeros less than 4 5}
Q^{-\frac{1}{2}+\varepsilon}\sum_{\substack{\chi \in S(Q)}} \ \sum_{\substack{L(\rho,\chi)=0 \\ 0 \leq \beta < 4/5 \\ |\gamma| \leq Q^{1/2}}} X^\beta \ll X^{\frac{4}{5}}Q^{1+\varepsilon}.
\end{align}

For those zeros with $\beta \geq \frac{4}{5}$ we write
\begin{align*}
X^\beta = X^{4/5} + (\log X)\int_{4/5}^\beta X^\alpha d\alpha.
\end{align*}
We then embed $S(Q)$ into the set of all primitive characters with conductors $\leq Q$. Applying \eqref{eq: regime II bound for zeros less than 4 5} and \eqref{eq: Jutila zero density}, we obtain
\begin{align*}
\sum_{\substack{\chi \in S(Q)}} \ \sum_{\substack{L(\rho,\chi_q)=0 \\ 4/5 \leq \beta \leq 1 \\ |\gamma| \leq Q^{\frac{1}{2}}}} X^\beta &\ll X^{\frac{4}{5}} Q^{1+\varepsilon} + (\log X)\int_{4/5}^1 X^\alpha N(\alpha,Q,Q^{\frac{1}{2}}) d\alpha \\
&\ll X^{\frac{4}{5}} Q^{1+\varepsilon} + Q^\varepsilon \int_{4/5}^1 X^\alpha Q^{6(1-\alpha)}d\alpha.
\end{align*}
Since $Q \ll X^{\frac{1}{10}}$ the integrand of this latter integral is maximized when $\alpha = 1$. It follows that
\begin{align}\label{eq: regime II bound for zeros close to one}
Q^{-\frac{1}{2}+\varepsilon}\sum_{\substack{\chi \in S(Q)}} \ \sum_{\substack{L(\rho,\chi_q)=0 \\ 4/5 \leq \beta \leq 1 \\ |\gamma| \leq Q^{\frac{1}{2}}}} X^\beta \ll X^{\frac{4}{5}}Q^{1+\varepsilon} + X Q^{-\frac{1}{2}+\varepsilon} \ll X Q^{-\frac{1}{2}+\varepsilon}.
\end{align}
Combining \eqref{eq: regime II bound for zeros close to one} and \eqref{eq: regime II bound for zeros less than 4 5} yields
\begin{align*}
\mathcal{E}(Q) \ll XQ^{-\frac{1}{2}+\varepsilon},
\end{align*}
and this suffices for \eqref{eq: first moment bound for cal E Q}.

\subsection{Regime III}

In Regime III we have $X^{\frac{1}{10}} \ll Q \ll MX^{\frac{1}{2}+\varepsilon} = X^{\frac{1}{2}+\theta + \varepsilon}$ (recall \eqref{eq:outline section, defn of M, length of mollifier}). Here we depart from the philosophy of the previous two regimes, in that we do not bound $\mathcal{E}(Q)$ by considerations of zeros of $L$-functions. Rather, we exploit the combinatorial structure of the von Mangoldt function and Lemma \ref{lem: estimates for character sums}.

We observe that in Regime III one may still proceed with zero-density estimates by appealing to Heath-Brown's zero-density estimate for $L$-functions of quadratic characters \cite[Theorem 3]{HeaB95}. We present our method for the sake of variety, and because it might prove useful in other contexts.

Let us move to our treatment of $\mathcal{E}(Q)$ for these large $Q$. Given an arithmetic function $f:\mathbb{N} \rightarrow \mathbb{C}$ and a real number $W > 1$, let $f_{\leq W}(n)$ denote the arithmetic function 
\begin{align*}
f_{\leq W}(n) = 
\begin{cases}
f(n), \ &n \leq W, \\
0, &n > W.
\end{cases}
\end{align*}
We write $f_{>W}(n) = f(n) - f_{\leq W}(n)$. 

We write $\star$ for Dirichlet convolution. Our starting place is Vaughan's identity \cite[Proposition 13.4]{IK}. Given a parameter $V > 1$, we have
\begin{align}\label{eq: Vaughans identity}
\Lambda(n) &= \Lambda_{\leq V}(n) + (\mu_{\leq V} \star \log )(n) - (\mu_{\leq V} \star \Lambda_{\leq V} \star 1)(n) + (\mu_{> V} \star \Lambda_{> V} \star 1) (n).
\end{align}
We apply \eqref{eq: Vaughans identity} for $n \asymp X$, and we set $V := X^{\frac{1}{3}(\frac{1}{2} - \theta)}$. This reduces the estimation of $\mathcal{E}(Q)$ to the estimation of three different sums, say $\mathcal{E}_i(Q)$, for $i \in \{1,2,3\}$. Observe that there are four terms on the right side of \eqref{eq: Vaughans identity}, but $\Lambda_{\leq V}(n)$ is identically zero for $n \asymp X$.

We have
\begin{align*}
\mathcal{E}_1(Q) &:= Q^{-\frac{1}{2}+\varepsilon}\sum_{\substack{\chi \in S(Q)}} \left|\sum_n (\mu_{\leq V} \star \log)(n) \Phi_{s_0} \left(\frac{n}{X} \right)\chi(n) \right| \\
&\ll Q^{-\frac{1}{2}+\varepsilon}\sum_{\substack{\chi \in S(Q)}} \sum_{v \leq V} \mu^2(v) \left|\sum_m (\log m)\Phi_{s_0} \left(\frac{mv}{X} \right)\chi(m) \right|.
\end{align*}
By partial summation and the P\'olya-Vinogradov inequality, we find that
\begin{align}\label{eq: regime III bound for cal E 1}
\mathcal{E}_1(Q) &\ll Q^{1+\varepsilon}V \ll X^{\frac{1}{2} + \theta + \frac{1}{3}(\frac{1}{2}-\theta)+\varepsilon} \ll X^{1-\varepsilon},
\end{align}
the last inequality holding for $\varepsilon =\varepsilon(\theta) > 0$ sufficiently small.

The estimation of $\mathcal{E}_2(Q)$ is entirely similar, and we obtain
\begin{equation}\label{eq: regime III bound for cal E 2}
\begin{split}
\mathcal{E}_2(Q) &:= Q^{-\frac{1}{2}+\varepsilon}\sum_{\substack{\chi \in S(Q)}} \Bigg|\sum_n (\mu_{\leq V} \star \Lambda_{\leq V} \star 1)(n) \Phi_{s_0} \left(\frac{n}{X} \right)\chi(n) \Bigg| \\
&\ll Q^{1+\varepsilon}V^2 \ll X^{\frac{1}{2} + \theta + \frac{2}{3}(\frac{1}{2}-\theta)+\varepsilon} \ll X^{1-\varepsilon}.
\end{split}
\end{equation}

The last sum to estimate is $\mathcal{E}_3(Q)$:
\begin{align*}
\mathcal{E}_3(Q) &:= Q^{-\frac{1}{2}+\varepsilon}\sum_{\substack{\chi \in S(Q)}} \left|\sum_n (\mu_{> V} \star \Lambda_{> V} \star 1)(n) \Phi_{s_0} \left(\frac{n}{X} \right)\chi(n) \right| \\
&= Q^{-\frac{1}{2}+\varepsilon}\sum_{\substack{\chi \in S(Q)}} \Bigg|\mathop{\sum \sum}_{k,\ell} \alpha(k) \beta(\ell)\Phi_{s_0} \left( \frac{k\ell}{X}\right) \chi(k\ell) \Bigg|,
\end{align*}
where $\alpha(k) = \mu_{>V}(k)$ and $\beta(\ell) = (\Lambda_{>V} \star 1)(\ell)$. Observe that both $\alpha(\cdot)$ and $\beta(\cdot)$ are supported on integers  $m$ satisfying
\begin{align*}
V \ll m \ll XV^{-1}.
\end{align*}
We further observe that $|\alpha(k)| \leq 1, |\beta(\ell)| \leq \log(\ell)$. We perform dyadic decompositions on the ranges of $k$ and $\ell$, so that $k \asymp K, \ell \asymp L$, with
\begin{align*}
V \ll K \ll X V^{-1}, \ \ \ \ \ V \ll L \ll X V^{-1},
\end{align*}
and $KL \asymp X$. 

We next separate the variables by Mellin inversion on $\Phi_{s_0}$:
\begin{align*}
\mathcal{E}_1(Q) &\ll (\log X)^{O(1)} \sup_{K,L} \int_{(0)} \left|\Phi_{s_0}^{\dagger}\left( w + \frac{s_0}{2}\right) \right| Q^{-\frac{1}{2}+\varepsilon}\sum_{\substack{\chi \in S(Q)}}\Bigg|\mathop{\sum \sum}_{\substack{k \asymp K \\ \ell \asymp L}} \alpha(k) \beta(\ell)(k\ell)^{-w} \chi(k\ell) \Bigg| \ dw.
\end{align*}
The integral of $|\Phi_{s_0}^\dagger|$ has size $\ll (\log X)^{O(1)}$, so we obtain
\begin{align*}
\mathcal{E}_3(Q) \ll \sup_{K,L} \ Q^{-\frac{1}{2}+\varepsilon}\sum_{\substack{\chi \in S(Q)}}\Bigg|\mathop{\sum \sum}_{\substack{k \asymp K \\ \ell \asymp L}} \tilde{\alpha}(k) \tilde{\beta}(\ell) \chi(k\ell) \Bigg| ,
\end{align*}
where $\tilde{\alpha},\tilde{\beta}$ are complex sequences with $|\tilde{\alpha}(k)| = |\alpha(k)|, |\tilde{\beta}(\ell)| = |\beta(\ell)|$ for all $k,\ell$.

By multiplicativity and Cauchy-Schwarz we obtain
\begin{align*}
\mathcal{E}_3(Q) &\ll \sup_{K,L} \ Q^{-\frac{1}{2}+\varepsilon} \Bigg(\sum_{\substack{\chi \in S(Q)}} \Bigg|\sum_{k \asymp K} \tilde{\alpha}(k) \chi(k) \Bigg|^2 \Bigg)^{\frac{1}{2}}\Bigg(\sum_{\substack{\chi \in S(Q)}} \Bigg|\sum_{\ell \asymp L} \tilde{\beta}(\ell) \chi(\ell) \Bigg|^2 \Bigg)^{\frac{1}{2}}.
\end{align*}
Applying Lemma \ref{lem: estimates for character sums} yields
\begin{equation}\label{eq: regime III bound for cal E 3}
\begin{split}
\mathcal{E}_3(Q) &\ll \sup_{K,L} \ \frac{X^\varepsilon}{Q^{\frac{1}{2}}} \left((Q+K)K \right)^{\frac{1}{2}} \left((Q+L)L \right)^{\frac{1}{2}} \\
&\ll \sup_{K,L} \ X^\varepsilon \left(X^{\frac{1}{2}}Q^{\frac{1}{2}} + \frac{KL}{K^{\frac{1}{2}}} + \frac{KL}{L^{\frac{1}{2}}} + \frac{KL}{Q^{\frac{1}{2}}} \right) \\
&\ll X^\varepsilon \left(X^{\frac{3}{4} + \frac{\theta}{2}} + \frac{X}{V^{\frac{1}{2}}} + \frac{X}{Q^{\frac{1}{2}}} \right) \ll X^{1-\varepsilon}.
\end{split}
\end{equation}
The last inequality follows since $V = X^{\frac{1}{3}(\frac{1}{2}-\theta)}$ and $Q \gg X^{\frac{1}{10}}$. Then \eqref{eq: regime III bound for cal E 1}, \eqref{eq: regime III bound for cal E 2}, and \eqref{eq: regime III bound for cal E 3} imply
\begin{align*}
\mathcal{E}(Q) \ll X^{1-\varepsilon},
\end{align*}
and this suffices for \eqref{eq: first moment bound for cal E Q}.

\subsection{D\'enouement}

We can extract from our proof of Proposition \ref{prop: asymptotic for S1} the following result on character sums over primes, which we shall have occasion to use later.
\begin{lem}\label{lem:gen char sum over primes}
Let $X$ be a large real number, and let $\delta > 0$ be small and fixed. Let $s_0$ be a complex number with $|\textup{Re}(s_0)| \leq \frac{A_1}{\log X}$ and $|\textup{Im}(s_0)| \leq (\log X)^{A_2}$, for some positive real numbers $A_1$ and $A_2$. Given any positive real numbers $A_3,A_4$, and $B$, we have
\begin{align*}
\sum_{\substack{q \leq X^{1-\delta} \\ q \textup{ odd} \\ q \neq \square}} \frac{\tau(q)^{A_1} (\log q)^{A_2}}{\sqrt{q}} \Bigg|\sum_{p \equiv 1 \, (\textup{mod }8)} (\log p)\Phi_{s_0} \left( \frac{p}{X}\right) \left(\frac{q}{p} \right) \Bigg| \ll_{A_1,A_2,A_3,A_4,B,\delta} \ \frac{X}{(\log X)^B}.
\end{align*}
The implied constant is ineffective.
\end{lem}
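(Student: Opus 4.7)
The plan is to mirror the three-regime analysis of Sections~6.3--6.5, splitting the $q$-sum according to size and bounding each range by a different technique. First we reduce to primitive characters: since $q$ is odd and $q\neq\square$, the Kronecker symbol $\left(\frac{q}{\cdot}\right)$ is induced from a non-principal real primitive character of conductor dividing $4q$, and detecting $p\equiv 1\pmod 8$ via multiplicative characters modulo $8$ (as in \eqref{eq: first moment detect mod 8 with characters}) rewrites the inner prime sum as a bounded linear combination of sums $\sum_p (\log p)\Phi_{s_0}(p/X)\chi(p)$ over non-principal real primitive characters $\chi$ of conductor $\ll q$. We then split the outer sum into three regimes: (I) $q\leq (\log X)^C$, (II) $(\log X)^C\leq q\leq X^{1/10}$, and (III) $X^{1/10}\leq q\leq X^{1-\delta}$, with $C=C(B)$ chosen at the end.

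Regime~I is the point at which the proof is substantially simpler than the corresponding treatment in Section~6.3: since the lemma tolerates ineffective implied constants, we may invoke the Siegel--Walfisz theorem directly, eliminating the need for Page's theorem. For each fixed $A>0$ and each non-principal primitive character $\chi$ of conductor at most $(\log Y)^A$, Siegel--Walfisz gives $\sum_{n\leq Y}\Lambda(n)\chi(n)\ll_A Y\exp(-c_A\sqrt{\log Y})$ with an ineffective $c_A>0$. Partial summation against $\Phi_{s_0}(\cdot/X)$ costs a factor $\int|\Phi_{s_0}'(u)|\,du \ll |s_0|+1\ll (\log X)^{A_2}$, which is harmless. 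Summing over $q\leq(\log X)^C$ against the weight $\tau(q)^{A_1}(\log q)^{A_2}/\sqrt{q}\ll q^{\varepsilon}$ yields a total $\ll X(\log X)^{O(C)}\exp(-c_C\sqrt{\log X})$, which is $\ll X/(\log X)^B$ once $C=C(B)$ is fixed and $X$ is large. Regime~II is handled verbatim as in Section~6.4: write the prime sum as a contour integral of $-(L'/L)(w,\chi)\Phi_{s_0}^{\dagger}(w+s_0/2)X^w$, shift the contour to $\mathrm{Re}(w)=-\tfrac{1}{2}$, truncate to $|\mathrm{Im}(w)|\ll (\log X)^{A_2+1}$ using the rapid decay of $\Phi_{s_0}^{\dagger}$, and control the resulting sum over non-trivial zeros by Jutila's zero-density estimate $N(\alpha,Q,T)\ll(QT)^{4(1-\alpha)+\varepsilon}$. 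This produces the bound $\ll XQ^{-1/2+\varepsilon}$ per dyadic scale $q\sim Q$, which summed against the weight gives $\ll X(\log X)^{-C/2+O(1)}$, acceptable for $C$ large.

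Regime~III follows Section~6.5, applying Vaughan's identity \eqref{eq: Vaughans identity} with the modified parameter $V := X^{\delta/4}$. The Type~I (linear) contributions are bounded by P\'olya--Vinogradov, giving $\ll Q^{1+\varepsilon}V^2\ll X^{1-\delta/2+\varepsilon}$ per dyadic scale; the Type~II (bilinear) contribution is estimated by Cauchy--Schwarz in $q$ followed by Lemma~\ref{lem: estimates for character sums}, producing $\ll X^{\varepsilon}(X^{1/2}Q^{1/2}+XV^{-1/2}+XQ^{-1/2})\ll X^{1-c\delta}$ for a suitable $c>0$, once $Q\geq X^{1/10}$ and $V=X^{\delta/4}$. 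Multiplied by the weight and summed dyadically this is $\ll X^{1-c\delta+\varepsilon}\ll X/(\log X)^B$. The only real obstacle is bookkeeping: verifying that a single choice of $V$ makes both the Type~I and Type~II bounds work uniformly across the enlarged range $Q\leq X^{1-\delta}$, which the choice $V=X^{\delta/4}$ accomplishes.
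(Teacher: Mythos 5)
Your proposal is correct, and its backbone — the same three-regime decomposition, quadratic-reciprocity reduction to primitive real characters, zero-density estimates in the middle range, and Vaughan plus Heath-Brown's large sieve for the top range — is exactly what the paper intends by ``follow the proof of \eqref{eq: first moment bounds for E1 and E2}.'' Where you diverge is in Regime~I: the paper's actual proof keeps the boundary at $\exp(\varpi\sqrt{\log X})$ and re-runs the Page-theorem/exceptional-character analysis of Section~\ref{sec:mollified first moment}, merely replacing the effective bound $|q^*|>(\log X)^{2-\varepsilon}$ from \eqref{eq: first moment upper bound for beta 1} with Siegel's ineffective $|q^*|>c(D)(\log X)^D$, so the exceptional term \eqref{eq: first moment exceptional E 1 moved contour} becomes $\ll X/(\log X)^{D/2-o(1)}$. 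You instead shrink Regime~I to $q\leq(\log X)^C$ so that Siegel--Walfisz applies verbatim, avoiding the Page/exceptional-zero machinery altogether, and push Regime~II down to $q\geq(\log X)^C$. Both are valid. Your version is arguably cleaner to state, but it does shift a verification burden onto Regime~II: you now need the zero-density argument to save a power of $\log X$ rather than a power of $\exp(\sqrt{\log X})$, and the truncation of the zero sum via \eqref{eq: f dagger rapid decay} only kicks in once $Q^{1/2}\gg(\log X)^{A_2}$, so the threshold $C$ must be taken at least of size $\max\{2A_2,\,2B\}+O(1)$ (you write $C=C(B)$; it should be $C=C(A_2,B)$). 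With that minor caveat attended to, and with your adjusted choice $V=X^{\delta/4}$ in Regime~III (which indeed makes all three Vaughan terms $\ll X^{1-c\delta}$ uniformly for $X^{1/10}\leq Q\leq X^{1-\delta}$), the argument goes through. The paper's route buys reuse of the Section~\ref{sec:mollified first moment} computations at full strength; yours buys a simpler, more self-contained Regime~I at the cost of extending the zero-density regime.
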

\begin{proof}
Follow the proof of \eqref{eq: first moment bounds for E1 and E2}, but instead use the lower bound $q^* > c(D)(\log X)^D$, which holds for arbitrary $D>0$. The constant $c(D)$ is ineffective if $D \geq 2$.
\end{proof}

Lemma \ref{lem:gen char sum over primes} is quite strong since it corresponds, roughly, to square root cancellation on average in the sums over $p$. Thus, one would not expect to be able to prove an analogue of Lemma \ref{lem:gen char sum over primes} with the upper bound for $q$ replaced by $X^{1+\varepsilon}$ for any $\varepsilon > 0$.

\section{The mollified second moment}\label{sec:mollified second moment}

In this section we derive an upper bound of the correct order of magnitude for the sum $S_2$ defined in \eqref{eq: defn moment sums S1 and S2}. Our main result for this section is the following (recall \eqref{eq:outline section, defn of M, length of mollifier} and \eqref{eq:sieve section, defn of R}).
\begin{prop}\label{prop:upper bound for S2}
Let $\delta > 0$ be small and fixed, and let $\theta,\vartheta$ satisfy $\theta + 2\vartheta < \frac{1}{2}$. If $X \geq X_0(\delta,\theta,\vartheta)$, then
\begin{align*}
S_2 \leq \frac{1+\delta}{2(1- \frac{1}{\sqrt{2}})^2} \frac{\mathfrak{I}}{\vartheta} \frac{X}{4},
\end{align*}
where
\begin{align*}
\mathfrak{I} = -2\int_0^1 H(x)H'(x) dx &+ \frac{1}{\theta}\int_0^1 H(x) H''(x) dx + \frac{1}{\theta}\int_0^1 H'(x)^2 dx \\ 
&- \frac{1}{2\theta^2}\int_0^1 H'(x)H''(x)dx+\frac{1}{24\theta^3}\int_0^1 H''(x)^2 dx.
\end{align*}
\end{prop}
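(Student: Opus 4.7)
The plan is to execute the program sketched in Section~\ref{sec: outline of pos prop thm}. Begin by applying the upper bound sieve \eqref{sieveinequality} and positivity of $L(\tfrac12,\chi_n)^2 M(n)^2$ on odd square-free $n$ to pass from the prime sum $S_2$ to
\begin{align*}
S_2 \leq (\log X)\sum_{\substack{n\equiv 1\,(\mathrm{mod}\,8)}} \mu^2(n)\,\Phi\!\left(\frac{n}{X}\right)\Bigl(\sum_{d\mid n}\lambda_d\Bigr) L\!\left(\tfrac12,\chi_n\right)^{\!2} M(n)^2.
\end{align*}
Split $\mu^2(n)=N_Y(n)+R_Y(n)$ via \eqref{MYRY} with $Y$ a small fixed power of $X$. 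The $R_Y$ portion is bounded by Cauchy--Schwarz, the moment estimate of Lemma~\ref{lem: moment estimates}, and trivial bounds on $\lambda_d$ and $M(n)$, yielding a saving of $Y^{-1/2+\varepsilon}$ that makes it admissible. The work then reduces to asymptotically evaluating the main sum, namely the same expression with $\mu^2$ replaced by $N_Y$.

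For this main sum, apply the approximate functional equation for $L(\tfrac12,\chi_n)^2$ (Lemma~\ref{lem: approx func eq} with $j=2$), expand $M(n)^2$ as a double sum over $m_1,m_2$, and interchange orders. After the change of variable $n=m[d,\ell^2]$ (with the coprimality of $d$ with $\ell$ tracked through the sieve support), detect $n\equiv 1 \pmod 8$ by Fourier expansion modulo $8$ and isolate the inner sum in $m$ twisted by $\bigl(\frac{m_1m_2\nu}{m}\bigr)$, $\bigl(\frac{[d,\ell^2]}{m}\bigr)$ and an additive character. Poisson summation converts this into a sum over Poisson frequencies $k\in\mathbb Z$ weighted by the Gauss sums $G_k$ of \eqref{eq: defn of Gk}, whose values are given by Lemma~\ref{lem: properties of Gkn}. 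The $k=0$ term survives only when $m_1m_2\nu=\square$, producing the diagonal main term. The off-diagonal main term emerges, following \cite{Sou00}, from those $k\neq 0$ with $[d,\ell^2]k=\square$; these are isolated by expressing the resulting character sums in $\nu$ as complex contour integrals and shifting contours past the pole at $s=1/2$ of the auxiliary zeta factor.

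After combining the two main terms and using the Mellin/Fourier representations \eqref{eq: write H as Fourier integral}--\eqref{eq: write h as Fourier transform} of $H$, the expression reduces to a quadruple sum
\begin{align*}
\sum_{\ell\leq Y}\frac{\mu(\ell)}{\ell^2}\sum_{\substack{d\leq D\\ (d,\ldots)=1}}\frac{\lambda_d}{d}\,g_\ell(d)\times (\text{contour integral in the }m_1,m_2,\nu\text{ variables}),
\end{align*}
with $g_\ell$ a tame multiplicative function. The $d$-sum is handled by Lemma~\ref{sieve}, and the sums of the form $\sum_{p\mid d}h(p)$ arising from the Laurent expansion of the zeta-ratios are handled by Lemma~\ref{sievewithsum}; each yields the factor $(\log R)^{-1}=(\vartheta\log X)^{-1}$. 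The $\ell$-sum is completed and evaluated by an Euler product calculation, producing the Euler constant that, together with \eqref{eq:upper bound on int of G' squared}, meshes with the Euler product $\prod_{p\leq z_0}(1-1/p)^{-1}$ from the sieve. The $m_1,m_2,\nu$ integrals, after contour shifts and use of the prime number theorem for the outer $\log p$ factor, evaluate in terms of $H,H',H''$ at $0$ and $1$, and when all pieces are combined the boundary terms vanish by the compact support of $H$, leaving precisely the displayed quadratic form $\mathfrak I$ times the prefactor $\frac{1}{2(1-1/\sqrt 2)^2}\vartheta^{-1}(X/4)$, with the factor $1+\delta$ absorbing the $o(1)$ errors from the sieve lemmas.

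The principal obstacle is the off-diagonal analysis: the additive character coming from the mod-$8$ Fourier expansion is absent in \cite{Sou00}, so after Poisson summation one must treat separately the residues of the Poisson frequency $k$ modulo $8$ and track the $2$-adic behavior of the Gauss sums $G_k$. No individual piece is even in the contour variables, but when all eight residue classes and the two off-diagonal contributions are assembled the integrand becomes even, which is what allows the final Mellin--Barnes integrals to collapse to the clean quartic combination defining $\mathfrak I$. A secondary but delicate point is that because both $M(n)$ and $\lambda_d$ must each yield a full logarithmic saving (the ``double mollification''), every application of Lemma~\ref{Selbergsieve} must retain the sharp leading constant and cross-terms between the two mollifiers must be controlled uniformly in $\ell\leq Y$, which is the reason for phrasing Lemmas~\ref{sieve}--\ref{sievewithsum} with arbitrary auxiliary parameter $\ell$.
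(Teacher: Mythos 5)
Your sketch follows the same route as the paper's own argument: positivity and the upper bound sieve to pass from primes to integers, the $\mu^2 = N_Y + R_Y$ split with Heath-Brown's moment bounds controlling the $R_Y$ piece, the approximate functional equation followed by Poisson summation in the arithmetic progression modulo $8m_1m_2\nu$, the diagonal $(k=0)$ and off-diagonal $(kd_1 = \square)$ main terms, the even-integrand observation after assembling the mod-$8$ pieces, and the double application of the Selberg sieve lemmas (Lemmas~\ref{sieve} and \ref{sievewithsum}) to extract the $(\log R)^{-1}$ saving. Two small corrections to your description: after applying the sieve inequality, $\log p$ has already been replaced by $\log X$ using the support of $\Phi$, so there is no invocation of the prime number theorem in the $S_2$ analysis (that belongs to $S_1$); and the off-diagonal residue arises from the double pole of $\zeta(1+w)^2$ at $w=0$ when $k_1=1$, rather than from a pole at $s = 1/2$ — in the paper's normalization the $\nu$-sum produces $L(1+w,\chi_{k_1})^2$ and the contour in $w$ is shifted from $\mathrm{Re}(w)>0$ to $\mathrm{Re}(w)=-\tfrac12+\varepsilon$.
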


The proof of Proposition \ref{prop:upper bound for S2} follows the ideas outlined in Section~\ref{sec: outline of pos prop thm}. First, we note that $\log p\leq \log X$ in \eqref{eq: defn moment sums S1 and S2} because $\Phi$ is supported on $[\frac{1}{2},1]$. By positivity we may apply the upper bound sieve condition \eqref{sieveinequality} to write
\begin{equation*}
S_2\leq (\log X) S^+,
\end{equation*}
where $S^+$ is defined by
\begin{equation}\label{S+def}
S^+ = \sum_{\substack{ n\equiv 1 \, (\text{mod } 8)}} \mu^2(n) \Bigg(\sum_{\substack{d \mid n \\ d \leq D}} \lambda_d \Bigg) \Phi\left( \frac{n}{X}\right) L(\tfrac{1}{2},\chi_n)^2 M(n)^2.
\end{equation}
Note that $d$ is odd since $d \mid n$ and $n\equiv 1 \pmod 8$. Also, $\lambda_d\neq 0$ only for square-free $d$ by the definition \eqref{lambda}, and so $\lambda_d=\mu^2(d)\lambda_d$. We use Lemma \ref{lem: approx func eq} to write $L(\tfrac{1}{2},\chi_n)^2 = \mathcal{D}_2(n)$, then insert \eqref{mu2approx} into \eqref{S+def} to write
\begin{equation}\label{S+SMSR}
S^+ = S^+_N + S^+_R,
\end{equation}
where
\begin{equation}\label{SMdef}
S^+_N =  \sum_{\substack{ n\equiv 1 \, (\text{mod } 8) }} N_Y(n) \Bigg(\sum_{\substack{d \mid n \\ d \leq D}}\mu^2(d) \lambda_d \Bigg) \Phi\left( \frac{n}{X}\right) \mathcal{D}_2(n) M(n)^2
\end{equation}
and
\begin{equation*}%\label{SRdef}
S^+_R = \sum_{\substack{ n\equiv 1 \,  (\text{mod } 8) }} R_Y(n) \Bigg(\sum_{\substack{d \mid n \\ d \leq D}}\mu^2(d) \lambda_d \Bigg) \Phi\left( \frac{n}{X}\right) \mathcal{D}_2(n) M(n)^2
\end{equation*}
We first obtain a bound on $S_R^+$. The remainder of this section will then be devoted to an analysis of $S_N^+$.

\subsection{The contribution of $S_R^+$}\label{subsec:contrib of R_Y}

In this subsection we show
\begin{align}\label{eq:bound on S_R^+}
S_R^+ \ll X^\varepsilon \left(\frac{X}{Y} + X^{1/2} M \right).
\end{align}
The arguments here are almost identical to those in \cite[Section 3]{Sou00}. Observe that $R_Y(n)=0$ unless $n = \ell^2 h$ with $\ell > Y$ and $h$ square-free. If $n\equiv 1 \pmod{8}$ then $\ell$ and $h$ are odd and $h \equiv 1 \pmod{8}$. By the divisor bound we have
\begin{align*}
|R_Y(n)| \ll n^\varepsilon, \ \ \ \ \ \ \  \ \Bigg| \sum_{\substack{d \mid n \\ d \leq D}}\mu^2(d) \lambda_d \Bigg| \ll n^\varepsilon,
\end{align*}
and therefore
\begin{align*}
S_R^+ \ll X^\varepsilon \sum_{\substack{Y < \ell \leq \sqrt{X} \\ 2 \nmid \ell}} \ \sum_{\substack{X/2\ell^2 < h \leq X/\ell^2 \\ h \equiv 1 \, (\text{mod }8)}} \mu^2(h) |M(\ell^2h)^2\mathcal{D}_2(\ell^2h)|.
\end{align*}
There is a mild complication compared to \cite{Sou00} in that it is possible to have $h=1$, in which case the character $\chi_h$ is principal.

We apply Cauchy-Schwarz and obtain
\begin{align}\label{eq:up bound for S R plus after cauchy}
S_R^+ \ll X^\varepsilon \sum_{\substack{Y < \ell \leq \sqrt{X} \\ 2 \nmid \ell}} \Bigg(\sum_{\substack{X/2\ell^2 < h \leq X/\ell^2 \\ h \equiv 1 \, (\text{mod }8)}} \mu^2(h) |M(\ell^2h)^2|^2 \Bigg)^{1/2}\Bigg(\sum_{\substack{X/2\ell^2 < h \leq X/\ell^2 \\ h \equiv 1 \, (\text{mod }8)}} \mu^2(h) |\mathcal{D}_2(\ell^2h)|^2 \Bigg)^{1/2}.
\end{align}
We have
\begin{align*}
M(\ell^2h)^2 = \sum_{\substack{m \leq M^2 \\ (m,2\ell)=1}} \frac{\alpha(m)}{\sqrt{m}}\left(\frac{h}{m} \right)
\end{align*}
for some coefficients $\alpha(m)$ satisfying $|\alpha(m)| \ll m^\varepsilon$. For $h=1$ we use the trivial bound $M(\ell^2)^4 \ll M^2 X^\varepsilon$. For $h > 1$ we use Lemma \ref{lem: estimates for character sums}. We therefore have
\begin{align}\label{eq:up bound S R + mollifier squared}
\sum_{\substack{X/2\ell^2 < h \leq X/\ell^2 \\ h \equiv 1 \, (\text{mod }8)}} \mu^2(h) |M(\ell^2h)^2|^2 \ll X^\varepsilon \left(\frac{X}{\ell^2} + M^2 \right).
\end{align}

Now observe that, for any $c > \frac{1}{2}$,
\begin{align*}
\mathcal{D}_2(\ell^2h) = \frac{2}{(1 - \frac{1}{\sqrt{2}})^4} \frac{1}{2\pi i}\int_{(c)}\frac{\Gamma^2 \left( \frac{s}{2} + \frac{1}{4}\right)}{\Gamma^2 \left( \frac{1}{4}\right)} \left(1 - \frac{1}{2^{1/2-s}}\right)^2 \left(\frac{\ell^2 h}{\pi} \right)^s L^2 \left( \frac{1}{2}+s,\chi_h\right) \mathcal{E}(s,2\ell) \frac{ds}{s},
\end{align*}
where
\begin{align*}
\mathcal{E}(s,k) = \prod_{p \mid k} \left(1 - \frac{\chi_h(p)}{p^{1/2+s}}\right)^s.
\end{align*}
If $h = 1$ then $L^2 \left( \frac{1}{2}+s,\chi_h\right)=\zeta^2(\frac{1}{2}+s)$. In any case, we move the line of integration to $c = \frac{1}{\log X}$, and we do not pick up contributions from any poles. When $h > 1$ this is obvious, and when $h = 1$ the double pole of $\zeta^2(\frac{1}{2}+s)$ is canceled out by the double zero of $(1 - 2^{-(1/2-s)})^2$. By trivial estimation we have then $|\mathcal{D}_2(\ell^2)| \ll X^\varepsilon$. For $h >1$ we apply Cauchy-Schwarz to obtain
\begin{align*}
|\mathcal{D}_2(\ell^2h)|^2 \ll X^\varepsilon\int_{(\frac{1}{\log X})} \left|\Gamma \left( \frac{s}{2} + \frac{1}{4}\right)\right|^2 \left|L \left( \frac{1}{2}+s,\chi_h\right) \right|^4 |ds|.
\end{align*}
Summing over $h$ and using Lemma \ref{lem: moment estimates}, we obtain
\begin{align}\label{eq:S_R contrib from fourth moment}
\sum_{\substack{X/2\ell^2 < h \leq X/\ell^2 \\ h \equiv 1 \, (\text{mod }8)}} \mu^2(h) |\mathcal{D}_2(\ell^2h)|^2 \ll \frac{X^{1+\varepsilon}}{\ell^2}.
\end{align}
Combining \eqref{eq:up bound for S R plus after cauchy}, \eqref{eq:up bound S R + mollifier squared}, and \eqref{eq:S_R contrib from fourth moment} yields \eqref{eq:bound on S_R^+}.

\subsection{Poisson summation}

We begin our evaluation of $S_N^+$ by inserting into \eqref{SMdef} the definition \eqref{eq: defn of mollifier} of the mollifier $M(n)$. We then use the definition of $\mathcal{D}_2$ (see Lemma \ref{lem: approx func eq}) to write
\begin{equation}\label{SM1}
\begin{split}
S^+_N = \frac{8}{(\sqrt{2}-1)^4}\sum_{\substack{ d\leq D \\ d \text{ odd} }} \mu^2(d)\lambda_d
& \mathop{\sum\sum}_{\substack{m_1,m_2\leq M \\ m_1,m_2 \text{ odd}}} \frac{b_{m_1}b_{m_2}}{\sqrt{m_1m_2}} \sum_{\substack{ n\equiv 1 \, (\text{mod } 8) \\ d|n }} N_Y(n) \Phi\left( \frac{n}{X}\right) \\
& \times \sum_{\substack{ \nu=1 \\ \nu \text{  odd}}}^{\infty} \frac{d_2(\nu) }{\sqrt{\nu}} \omega_2\left( \frac{\nu \pi}{n}\right) \left( \frac{n}{m_1m_2 \nu}\right).
\end{split}
\end{equation}

We next apply Poisson summation to evaluate the $n$-sum. Denote the $n$-sum in \eqref{SM1} by $Z$, i.e. define $Z$ by
\begin{equation}\label{Zdef}
Z = Z(d,\nu,m_1m_2;X,Y) = \sum_{\substack{ n\equiv 1 \, (\text{mod } 8) \\ d|n }} N_Y(n) \Phi\left( \frac{n}{X}\right) \omega_2\left( \frac{\nu \pi}{n}\right) \left( \frac{n}{m_1m_2 \nu}\right) .
\end{equation}
We insert the definition \eqref{MYRY} of $N_Y(n)$ and interchange the order of summation to write $Z$ as
\begin{equation}\label{Z1}
Z = \sum_{\substack{ \alpha\leq Y \\ \alpha \text{ odd}}} \mu(\alpha) \sum_{\substack{ n\equiv1 \, (\text{mod } 8) \\ [\alpha^2,d]|n }} F_{\nu}\left( \frac{n}{X}\right)  \left( \frac{n}{m_1m_2 \nu}\right),
\end{equation}
where $F_{\nu}(t)$ is defined by
\begin{equation}\label{Fdef}
F_{\nu}(t) = \Phi(t) \omega_2\left( \frac{\nu \pi}{tX}\right).
\end{equation}
If $\alpha$ and $d$ are square-free, then $[\alpha^2,d]=\alpha^2 d_1$, where
\begin{equation}\label{eq: defn of d 1}
d_1 = \frac{d}{(d,\alpha)}.
\end{equation}
We may thus relabel $n$ as $\alpha^2d_1m$ in \eqref{Z1}, and then split the resulting sum on $m$ according to the congruence class of $m \pmod {m_1m_2\nu}$. We deduce from \eqref{Z1} that
\begin{equation*}
Z = \sum_{\substack{ \alpha\leq Y \\ (\alpha,2m_1m_2\nu)=1}} \mu(\alpha) \left( \frac{d_1}{m_1m_2\nu}\right) \sum_{b \ (\text{mod } m_1m_2\nu)} \left( \frac{b}{m_1m_2\nu}\right)  \sum_{\substack{ m\equiv \overline{\alpha^2 d_1} \ (\text{mod } 8) \\ m\equiv b \ (\text{mod } m_1m_2\nu) }}F_{\nu}\left( \frac{\alpha^2 d_1 m}{X}\right).
\end{equation*}
By the Chinese Remainder Theorem, we may write the congruence conditions on $m$ as a single condition $m\equiv \gamma \pmod {8m_1m_2\nu}$ for some integer $\gamma$ depending on $\alpha,d,b$. Thus, we may relabel $m$ as $8jm_1m_2\nu+\gamma$, where $j$ ranges over all integers, and arrive at
\begin{equation}\label{Z2}
Z = \sum_{\substack{ \alpha\leq Y \\ (\alpha,2m_1m_2\nu)=1}} \mu(\alpha) \left( \frac{d_1}{m_1m_2\nu}\right) \sum_{b \ (\text{mod } m_1m_2\nu)} \left( \frac{b}{m_1m_2\nu}\right)     \sum_{j\in\mathbb{Z}}F_{\nu}\left( \frac{\alpha^2d_1(8jm_1m_2\nu+\gamma)}{X}\right).
\end{equation}
We apply Poisson summation to the $j$-sum to write
\begin{equation*}
\sum_{j\in\mathbb{Z}}F_{\nu}\left( \frac{\alpha^2d_1(8jm_1m_2\nu+\gamma)}{X}\right) = \frac{X}{8\alpha^2d_1m_1m_2\nu} \sum_{k \in\mathbb{Z}}e\left(\frac{k\gamma}{8m_1m_2\nu}\right)\hat{F}_{\nu}\left( \frac{kX}{8\alpha^2d_1m_1m_2\nu}\right).
\end{equation*}
We insert this into \eqref{Z2}, apply the reciprocity relation
$$
e\left(\frac{k\gamma}{8m_1m_2\nu}\right) \ = \ e\left(\frac{k\overline{8}b}{m_1m_2\nu}\right)e\left(\frac{k\overline{\alpha^2d_1m_1m_2\nu}}{8}\right),
$$
and then evaluate the $b$-sum using the definition \eqref{Gaussdef} of the Gauss sum. Therefore
\begin{equation*}
\begin{split}
Z = & \frac{X}{8m_1m_2\nu} \sum_{\substack{ \alpha\leq Y \\ (\alpha,2m_1m_2\nu)=1}} \frac{\mu(\alpha)}{\alpha^2d_1} \left( \frac{2d_1}{m_1m_2\nu}\right) \\
& \ \ \times \sum_{k \in\mathbb{Z}}e\left(\frac{k\overline{\alpha^2d_1m_1m_2\nu}}{8}\right)\hat{F}_{\nu}\left( \frac{kX}{8\alpha^2d_1m_1m_2\nu}\right) \tau_k(m_1m_2\nu).
\end{split}
\end{equation*}
Recalling \eqref{SM1} and \eqref{Zdef}, we arrive at
\begin{equation}\label{SM2}
\begin{split}
S_N^+ = & \frac{X}{(\sqrt{2}-1)^4}\sum_{\substack{ d\leq D \\ d \text{ odd} }} \mu^2(d)\lambda_d \mathop{\sum\sum}_{\substack{m_1,m_2\leq M \\ (m_1m_2,2d)=1}} \frac{b_{m_1}b_{m_2}}{(m_1m_2)^{3/2}}   \sum_{\substack{ \nu=1 \\ (\nu,2d)=1}}^{\infty} \frac{d_2(\nu) }{\nu^{3/2}} \sum_{\substack{ \alpha\leq Y \\ (\alpha,2m_1m_2\nu)=1}}  \frac{\mu(\alpha)}{\alpha^2d_1} \\
& \ \ \times  \left( \frac{2d_1}{m_1m_2\nu}\right)    \sum_{k \in\mathbb{Z} }e\left(\frac{k\overline{\alpha^2d_1m_1m_2\nu}}{8}\right)\hat{F}_{\nu}\left( \frac{kX}{8\alpha^2d_1m_1m_2\nu}\right) {\tau}_k(m_1m_2\nu).
\end{split}
\end{equation}
Note that we may impose the condition $(m_1m_2\nu,d)=1$ because otherwise $( \frac{2d_1}{m_1m_2\nu})=0$. We write \eqref{SM2} as
\begin{equation}\label{SM3}
S_N^+ = \mathcal{T}_0 + \mathcal{B},
\end{equation}
where $\mathcal{T}_0$ is the contribution from $k=0$ in \eqref{SM2}, while $\mathcal{B}$ is the contribution from $k\neq 0$ in \eqref{SM2}. We evaluate $\mathcal{T}_0$ in the next subsection, and $\mathcal{B}$ in later subsections.

\subsection{The contribution from $k=0$}\label{subsec:k = 0}

By \eqref{Gaussdef}, $\tau_0(n)=\varphi(n)$ if $n$ is a perfect square, and $\tau_0(n)=0$ otherwise. Hence the term $\mathcal{T}_0$ in \eqref{SM2} is
\begin{equation}\label{eq: T0 start}
\begin{split}
\mathcal{T}_0 =  \frac{X}{(\sqrt{2}-1)^4}\sum_{\substack{ d\leq D \\ d \text{ odd} }} \mu^2(d)\lambda_d \mathop{\sum\sum}_{\substack{m_1,m_2\leq M \\ (m_1m_2,2d)=1}} \frac{b_{m_1}b_{m_2}}{(m_1m_2)^{3/2}}   &\sum_{\substack{ \nu=1 \\ (\nu,2d)=1 \\ m_1m_2\nu=\square }}^{\infty} \frac{d_2(\nu) }{\nu^{3/2}} \sum_{\substack{ \alpha\leq Y \\ (\alpha,2m_1m_2\nu)=1}}  \frac{\mu(\alpha)}{\alpha^2d_1} \\
&\times    \hat{F}_{\nu}(0) \varphi(m_1m_2\nu).
\end{split}
\end{equation}

We first extend the sum over $\alpha$ to infinity. Since $\varphi(n)\leq n$, the error introduced in doing so is
\begin{equation}\label{eq: error in extending T0 alpha sum}
\ll X\sum_{d\leq D} |\lambda_d| \mathop{\sum\sum}_{m_1,m_2\leq M} \frac{|b_{m_1}b_{m_2}|}{\sqrt{m_1m_2}} \sum_{\substack{\nu=1\\ m_1m_2\nu=\square}}^{\infty}\frac{d_2(\nu)}{\sqrt{\nu}} \sum_{\alpha>Y} \frac{1}{\alpha^2d_1} |\hat{F}_{\nu}(0)|.
\end{equation}
By Lemma~\ref{lem: properties of omega j}, $\hat{F}_{\nu}(0)\ll 1$ uniformly for all $\nu>0$, and $\hat{F}_{\nu}(0)\ll \exp(-\frac{\pi\nu}{8X})$ for $\nu>X^{1+\varepsilon}$. Moreover, \eqref{lambda} implies that $|\lambda_d|\ll d^{\varepsilon}$, while $|b_m|\ll 1$ by \eqref{eq: defn of mollifier coeffs bm}. It follows from these bounds that \eqref{eq: error in extending T0 alpha sum} is
\begin{equation}\label{eq: error in extending T0 alpha sum 2}
\ll X^{1+\varepsilon}\sum_{d\leq D}  \mathop{\sum\sum}_{m_1,m_2\leq M} \frac{1}{\sqrt{m_1m_2}} \sum_{\substack{\nu \leq X^{1+\varepsilon}\\ m_1m_2\nu=\square}}\frac{1}{\sqrt{\nu}} \sum_{\alpha>Y} \frac{1}{\alpha^2d_1} + \exp\left(-X^{\varepsilon}\right).
\end{equation}
Since $m_1m_2\nu$ is a perfect square, the sum over $m_1,m_2,\nu$ in \eqref{eq: error in extending T0 alpha sum 2} is $\ll X^{\varepsilon}$. Also, the definition \eqref{eq: defn of d 1} of $d_1$ implies that
$$
\sum_{\alpha>Y} \frac{1}{\alpha^{2} d_1} = \frac{1}{d} \sum_{j|d} \varphi(j) \sum_{\substack{\alpha>Y \\ j|\alpha}} \frac{1}{\alpha^{2}} \ll \frac{1}{d^{1-\varepsilon}Y}.
$$
Therefore \eqref{eq: error in extending T0 alpha sum 2} is $O(X^{1+\varepsilon}/Y)$. This bounds the error in extending the sum over $\alpha$ in \eqref{eq: T0 start} to infinity, and we arrive at
\begin{equation*}
\begin{split}
\mathcal{T}_0 =  \frac{X}{(\sqrt{2}-1)^4}\sum_{\substack{ d\leq D \\ d \text{ odd} }} \mu^2(d)\lambda_d \mathop{\sum\sum}_{\substack{m_1,m_2\leq M \\ (m_1m_2,2d)=1}} \frac{b_{m_1}b_{m_2}}{(m_1m_2)^{3/2}}   \sum_{\substack{ \nu=1 \\ (\nu,2d)=1 \\ m_1m_2\nu=\square }}^{\infty} \frac{d_2(\nu) }{\nu^{3/2}} \sum_{\substack{ \alpha=1 \\ (\alpha,2m_1m_2\nu)=1}}^{\infty}  \frac{\mu(\alpha)}{\alpha^2d_1} \\
\times    \hat{F}_{\nu}(0) \varphi(m_1m_2\nu) + O\left( \frac{X^{1+\varepsilon}}{Y}\right).
\end{split}
\end{equation*}
Writing the sum on $\alpha$ as an Euler product, we deduce that
\begin{equation}\label{eq: T0 before sieve}
\begin{split}
\mathcal{T}_0 =  \frac{4X}{3(\sqrt{2}-1)^4\zeta(2)}\sum_{\substack{ d\leq D \\ d \text{ odd} }} \frac{\mu^2(d)\lambda_d}{d} \prod_{p|d} \left(\frac{p}{p+1}\right)\mathop{\sum\sum}_{\substack{m_1,m_2\leq M \\ (m_1m_2,2d)=1}} \frac{b_{m_1}b_{m_2}}{\sqrt{m_1m_2}} \\
\times  \sum_{\substack{ \nu=1 \\ (\nu,2d)=1 \\ m_1m_2\nu=\square }}^{\infty} \frac{d_2(\nu) }{\sqrt{\nu}} \hat{F}_{\nu}(0) \prod_{p|m_1m_2\nu}\left( \frac{p}{p+1}\right)   + O\left( \frac{X^{1+\varepsilon}}{Y}\right).
\end{split}
\end{equation}
We next evaluate the sum over $d$. Lemma~\ref{sieve} implies
\begin{equation}\label{eq:k=0 subsection, sum on d}
\begin{split}
\sum_{\substack{ d\leq D \\ (d,2m_1m_2\nu)=1 }} \frac{\mu^2(d)\lambda_d}{d} \prod_{p|d} \left(\frac{p}{p+1}\right) = \frac{1+E_0(X)}{\log R}\prod_{\substack{p| 2m_1m_2\nu \\ p\leq z_0 }}\left( 1+\frac{1}{p}\right)\prod_{p\leq z_0}\left( \frac{p^2}{p^2-1}\right) \\
+ O\left( (\log R)^{-2018}\right).
\end{split}
\end{equation}
Recall that $E_0(X) \rightarrow 0$, and depends only on $X, G$, and $\vartheta$. Heretofore we just write $o(1)$ instead of $E_0(X)$.

We may omit the condition $p \leq z_0$ by trivial estimation and \eqref{eq:sieve section, defn of z0}. It follows from \eqref{eq:k=0 subsection, sum on d} and \eqref{eq: T0 before sieve} that
\begin{equation}\label{eq: T0 after sieve}
\begin{split}
\mathcal{T}_0 =  \frac{2X}{(\sqrt{2}-1)^4} \frac{1+o(1)}{\log R} \mathop{\sum\sum}_{\substack{m_1,m_2\leq M \\ (m_1m_2,2)=1}} \frac{b_{m_1}b_{m_2}}{\sqrt{m_1m_2}}     \sum_{\substack{ \nu=1 \\ (\nu,2)=1 \\ m_1m_2\nu=\square }}^{\infty} \frac{d_2(\nu) }{\sqrt{\nu}} \hat{F}_{\nu}(0) \\
 + O\left( \frac{X}{(\log R)^{2018}}+\frac{X^{1+\varepsilon}}{Y}\right).
\end{split}
\end{equation}

The next task is to carry out the summation over $m_1,m_2,$ and $\nu$. Let $\Upsilon_0$ be defined by
\begin{equation}\label{eq: defn of Upsilon 0}
\Upsilon_0 = \mathop{\sum\sum}_{\substack{m_1,m_2\leq M \\ (m_1m_2,2)=1}} \frac{b_{m_1}b_{m_2}}{\sqrt{m_1m_2}}     \sum_{\substack{ \nu=1 \\ (\nu,2)=1 \\ m_1m_2\nu=\square }}^{\infty} \frac{d_2(\nu) }{\sqrt{\nu}} \hat{F}_{\nu}(0).
\end{equation}
We insert into \eqref{eq: defn of Upsilon 0} the definition \eqref{eq: defn of mollifier coeffs bm} of $b_m$ and the definitions \eqref{Fdef} and \eqref{eq: defn of omega j} of $F_{\nu}$ and $\omega_2$, and then apply the Fourier inversion formula \eqref{eq: write H as Fourier integral}. After interchanging the order of summation, we arrive at
\begin{equation}\label{eq: Upsilon 0 after Fourier inversion}
\begin{split}
\Upsilon_0 = \frac{1}{2\pi i} \int_{(c)} \frac{\Gamma(\frac{s}{2}+\frac{1}{4})^2}{\Gamma(\frac{1}{4})^2}\left(1-\frac{1}{2^{\frac{1}{2}-s}}\right)^2\left( \frac{X}{ \pi} \right)^{s} \check{\Phi}(s) \int_{-\infty}^{\infty}\int_{-\infty}^{\infty}h(z_1)h(z_2)  \\
\times \mathop{\sum\sum\sum}_{\substack{ (m_1m_2\nu,2)=1 \\ m_1m_2\nu=\square}} \frac{\mu(m_1)\mu(m_2)d_2(\nu)}{(m_1m_2\nu)^{\frac{1}{2}} m_1^{\frac{1+iz_1}{\log M}}m_2^{\frac{1+iz_2}{\log M}}\nu^s}  \,dz_1dz_2\frac{ds}{s},
\end{split}
\end{equation}
where we take $c=\frac{1}{\log X}$ to facilitate later estimations. We may write the sum on $m_1,m_2,\nu$ as an Euler product
\begin{equation*}
\mathop{\sum\sum\sum}_{\substack{ (m_1m_2\nu,2)=1 \\ m_1m_2\nu=\square}} \frac{\mu(m_1)\mu(m_2)d_2(\nu)}{(m_1m_2\nu)^{\frac{1}{2}} m_1^{\frac{1+iz_1}{\log M}}m_2^{\frac{1+iz_2}{\log M}}\nu^s} =\prod_{p>2} \mathop{\sum_{m_1=0}^{1}\sum_{m_2=0}^{1}\sum_{\nu=0}^{\infty}}_{  m_1+m_2+\nu \text{ even}} \frac{(-1)^{m_1+m_2} (\nu+1)}{p^{\frac{m_1+m_2+\nu}{2} + m_1 \left(\frac{1+iz_1}{\log M} \right) +m_2\left(\frac{1+iz_2}{\log M} \right)+\nu s }}.
\end{equation*}
This can also be written as
\begin{equation}\label{eq: defn of Q for T0}
\zeta^3(1+2s)\zeta\Big(1+\tfrac{2+iz_1+iz_2}{\log M} \Big) \zeta^{-2} \Big( 1+ \tfrac{1+iz_1}{\log M} +s\Big) \zeta^{-2}\Big( 1+\tfrac{1+iz_2}{\log M}+s \Big)Q\left(\tfrac{1+iz_1}{\log M},\tfrac{1+iz_2}{\log M},s \right),
\end{equation}
where $Q(w_1,w_2,s)$ is an Euler product that is uniformly bounded and holomorphic when each of $\text{Re}(w_1)$, $\text{Re}(w_2)$, and $\text{Re}(s)$ is $\geq -\varepsilon$. From this definition of $Q$ and a calculation, we see that
\begin{equation}\label{eq: Q at 000}
Q(0,0,0)=1,
\end{equation}
a fact we use shortly. We insert the expression \eqref{eq: defn of Q for T0} for the $m_1,m_2,\nu$-sum into \eqref{eq: Upsilon 0 after Fourier inversion} and arrive at
\begin{equation*}
\begin{split}
& \Upsilon_0 = \frac{1}{2\pi i} \int_{(c)} \frac{\Gamma(\frac{s}{2}+\frac{1}{4})^2}{\Gamma(\frac{1}{4})^2}\left(1-\frac{1}{2^{\frac{1}{2}-s}}\right)^2\left( \frac{X}{ \pi} \right)^{s} \check{\Phi}(s)\zeta^3(1+2s) \int_{-\infty}^{\infty}\int_{-\infty}^{\infty}h(z_1)h(z_2) \\
& \times \zeta\Big(1+\tfrac{2+iz_1+iz_2}{\log M} \Big) \zeta^{-2} \Big( 1+ \tfrac{1+iz_1}{\log M} +s\Big) \zeta^{-2}\Big( 1+\tfrac{1+iz_2}{\log M}+s \Big)Q\left(\tfrac{1+iz_1}{\log M},\tfrac{1+iz_2}{\log M},s \right)  \,dz_1dz_2\frac{ds}{s}.
\end{split}
\end{equation*}
By \eqref{eq: h has rapid decay} and the rapid decay of the gamma function, we may truncate the integrals to the region $|z_1|,|z_2|\leq \sqrt{\log M}$ and $|\text{Im}(s)|\leq (\log X)^2$, introducing a negligible error. We then deform the path of integration of the $s$-integral to the path made up of the line segment $L_1$ from $\frac{1}{\log X}-i(\log X)^2$ to $-\frac{c'}{\log\log X}-i(\log X)^2$, followed by the line segment $L_2$ from $-\frac{c'}{\log\log X}-i(\log X)^2$ to $-\frac{c'}{\log\log X}+i(\log X)^2$, and then by the line segment $L_3$ from $-\frac{c'}{\log\log X}+i(\log X)^2$ to $\frac{1}{\log X}+i(\log X)^2$, where $c'$ is a constant chosen so that
\begin{equation}\label{eq: zeta bound in zero free region}
\zeta(1+z)\ll \log |\text{Im}(z)| \ \ \ \ \text{and} \ \ \ \ \frac{1}{\zeta(1+z)} \ll \log |\text{Im}(z)|
\end{equation}
for Re$(z)\geq -c'/\log|\text{Im}(z)|$ and $|\text{Im}(z)|\geq 1$ (see, for example, Theorem~3.5 and (3.11.8) of Titchmarsh~\cite{T}). This leaves a residue from the pole at $s=0$. The contributions of the integrals over $L_1$ and $L_3$ are negligible because of the rapid decay of the $\Gamma$ function, while the contribution of the integral over $L_2$ is negligible because $X^s\ll \exp\left( -c'\frac{\log X}{\log\log X}\right)$ for $s$ on $L_2$. Hence the main contribution arises from the residue of the pole at $s=0$. Writing this residue as an integral along a circle centered at $0$, we arrive at
\begin{equation*}
\begin{split}
\Upsilon_0 = \frac{1}{2\pi i} \oint_{|s|=\frac{1}{\log X}} \frac{\Gamma(\frac{s}{2}+\frac{1}{4})^2}{\Gamma(\frac{1}{4})^2}\left(1-\frac{1}{2^{\frac{1}{2}-s}}\right)^2\left( \frac{X}{ \pi} \right)^{s} \check{\Phi}(s)\zeta^3(1+2s)\ & \\
\times \mathop{\int \int}_{|z_i| \leq \sqrt{\log M}}h(z_1)h(z_2)\zeta\Big(1+\tfrac{2+iz_1+iz_2}{\log M} \Big) \zeta^{-2} \Big( 1+ \tfrac{1+iz_1}{\log M} +s\Big)
& \zeta^{-2}\Big( 1+\tfrac{1+iz_2}{\log M}+s \Big) \\
\times Q\left(\tfrac{1+iz_1}{\log M},\tfrac{1+iz_2}{\log M},s \right)  \,dz_1dz_2\frac{ds}{s}
& + O\left( \frac{1}{(\log X)^{2018}}\right).
\end{split}
\end{equation*}
We may expand the zeta-functions and the function $Q$ into Laurent series. The main contribution arises from the first terms of the Laurent expansions, and so we deduce using \eqref{eq: Q at 000} that
\begin{equation*}
\begin{split}
\Upsilon_0 = \frac{1}{16\pi i} \oint_{|s|=\frac{1}{\log X}} \frac{\Gamma(\frac{s}{2}+\frac{1}{4})^2}{\Gamma(\frac{1}{4})^2}\left(1-\frac{1}{2^{\frac{1}{2}-s}}\right)^2\left( \frac{X}{ \pi} \right)^{s} \check{\Phi}(s) \mathop{\int \int}_{|z_i| \leq \sqrt{\log M}}h(z_1)h(z_2)\\
\times \left(\frac{\log M}{2+iz_1+iz_2} \right) \left( \frac{1+iz_1}{\log M} +s\right)^2 \left(\frac{1+iz_2}{\log M}+s \right)^2\,dz_1dz_2\frac{ds}{s^4} + O\left( \frac{1}{(\log X)^{1-\varepsilon}}\right).
\end{split}
\end{equation*}
By \eqref{eq: h has rapid decay}, we may extend the integrals over $z_1,z_2$ to $\mathbb{R}^2$, introducing a negligible error. We then apply the formula
\begin{equation}\label{eq: integrals of derivatives of H}
\begin{split}
& \int_{-\infty}^{\infty}\int_{-\infty}^{\infty}  h(z_1)h(z_2)\frac{(1+iz_1)^j(1+iz_2)^k}{2+iz_1+iz_2}\, dz_1dz_2 \\
&= \int_0^{\infty} \int_{-\infty}^{\infty}\int_{-\infty}^{\infty}  h(z_1)h(z_2) (1+iz_1)^j(1+iz_2)^k e^{-t(1+iz_1)-t(1+iz_2)}\, dz_1dz_2 dt \\
& = (-1)^{j+k}\int_0^{\infty} H^{(j)}(t) H^{(k)}(t)\,dt
\end{split}
\end{equation}
to obtain
\begin{equation*}
\begin{split}
\Upsilon_0 = \frac{1}{16\pi i} \oint_{|s|=\frac{1}{\log X}} \frac{\Gamma(\frac{s}{2}+\frac{1}{4})^2}{\Gamma(\frac{1}{4})^2}\left(1-\frac{1}{2^{\frac{1}{2}-s}}\right)^2\left( \frac{X}{ \pi} \right)^{s} \check{\Phi}(s)\Bigg\{ \frac{1}{(\log M)^3} \int_0^1 H''(t)^2\,dt \\
 -  \frac{4s}{(\log M)^2} \int_0^1 H'(t) H''(t)\,dt + \frac{2s^2}{\log M} \int_0^1 H(t) H''(t)\,dt + \frac{4s^2}{\log M} \int_0^1 H'(t)^2 \,dt \\
- 4s^3\int_0^1 H(t)H'(t) \,dt + s^4\log M \int_0^1 H(t)^2\,dt \Bigg\}\,\frac{ds}{s^4}  + O\left( \frac{1}{(\log X)^{1-\varepsilon}}\right).
\end{split}
\end{equation*}
We evaluate the $s$-integral as a residue using \eqref{eq: residue as derivative}. The result is
\begin{equation*}
\begin{split}
& \Upsilon_0 = \frac{\check{\Phi}(0)}{8}\left(1-\frac{1}{\sqrt{2}}\right)^2 \Bigg\{ \frac{1}{6}\left(\frac{\log X}{\log M}\right)^3 \int_0^1 H''(t)^2\,dt  -  2\left(\frac{\log X}{\log M}\right)^2 \int_0^1 H'(t) H''(t)\,dt \\
& + 2\frac{\log X}{\log M} \int_0^1 H(t) H''(t)\,dt + 4\frac{\log X}{\log M} \int_0^1 H'(t)^2 \,dt  - 4\int_0^1 H(t)H'(t) \,dt  \Bigg\}  + O\left( \frac{1}{(\log X)^{1-\varepsilon}}\right).
\end{split}
\end{equation*}
From this, \eqref{eq: T0 after sieve}, and the definition \eqref{eq: defn of Upsilon 0} of $\Upsilon_0$, we arrive at
\begin{equation}\label{eq: T0 after mollifier}
\begin{split}
\mathcal{T}_0 &=  \frac{X}{8\left(1-\frac{1}{\sqrt{2}}\right)^2} \frac{1+o(1)}{\log R} \Bigg\{ \frac{1}{24}\left(\frac{\log X}{\log M}\right)^3 \int_0^1 H''(t)^2\,dt  \\
&-  \frac{1}{2}\left(\frac{\log X}{\log M}\right)^2 \int_0^1 H'(t) H''(t)\,dt +
 \frac{\log X}{2\log M} \int_0^1 H(t) H''(t)\,dt + \frac{\log X}{\log M} \int_0^1 H'(t)^2 \,dt  \\
&- \int_0^1 
 H(t)H'(t) \,dt  \Bigg\} + O\left( \frac{X}{(\log X)^{1-\varepsilon}}+ \frac{X^{1+\varepsilon}}{Y}\right).
\end{split}
\end{equation}

\subsection{The contribution from $k\neq 0$: splitting into cases}

Having estimated the term $\mathcal{T}_0$ in \eqref{SM3}, we now begin our analysis of $\mathcal{B}$. The analysis of $\mathcal{B}$ is much more complicated than the analysis for $\mathcal{T}_0$.

The behavior of the additive character $e({k\overline{\alpha^2d_1m_1m_2\nu}}/{8})$ in \eqref{SM2} depends upon the residue class of $k$ modulo 8. We therefore distinguish the following cases for $k$: $k$ is odd, $k \equiv 2 \pmod 4$, $k \equiv 4 \pmod 8$, or $k\equiv 0 \pmod 8$. We split our analysis of the sum $\mathcal{B}$ in \eqref{SM3} according to these four cases. For the terms with odd $k$, we use the identity
\begin{equation*}
e\left(\frac{h}{8}\right) \ = \ \frac{\sqrt{2}}{2}\left( \frac{2}{h}\right) \ + \ \frac{\sqrt{2}}{2}\left( \frac{-2}{h}\right)i, \ \ \ \ h \text{ odd},
\end{equation*}
and treat separately the contributions of each term on the right-hand side. Moreover, for the terms with odd $k$ or $k\equiv 2 \pmod 4$, we use the second expression in \eqref{Gaussdef} for $\tau_k(n)$ and treat separately the contributions of the terms $\left( \frac{1+i}{2} \right) G_k(n)$ and $\left(\frac{-1}{n}\right)\left(\frac{1-i}{2}\right) G_k(n)$. We can treat these two contributions together as one combined sum for the terms with $k\equiv 0,4 \pmod 8$, because, for those $k$, the additive character $e({k\overline{\alpha^2d_1m_1m_2\nu}}/{8})$ is constant and the conditions $k\equiv 0,4 \pmod 8$ are invariant with respect to the substitution $k\mapsto -k$. Hence, in view of these considerations, \eqref{SM2}, and \eqref{SM3}, we write
\begin{align}\label{R0split}
\begin{split}
\mathcal{B} = \frac{X}{(\sqrt{2}-1)^4}\sum_{\substack{ d\leq D \\ d \text{ odd} }} \mu^2(d)\lambda_d &\mathop{\sum\sum}_{\substack{m_1,m_2\leq M \\ (m_1m_2,2d)=1}} \frac{b_{m_1}b_{m_2}}{(m_1m_2)^{3/2}} \sum_{\substack{ \nu=1 \\ (\nu,2d)=1}}^{\infty} \frac{d_2(\nu) }{\nu^{3/2}} \ \sum_{\substack{ \alpha\leq Y \\ (\alpha,2m_1m_2\nu)=1}} \frac{\mu(\alpha)}{\alpha^2d_1} \\
&\times (\mathcal{Q}_1 + \mathcal{Q}_2 + \mathcal{Q}_3 + \mathcal{Q}_4 + \mathcal{U}_1 + \mathcal{U}_2 + \mathcal{V} + \mathcal{W}),
\end{split}
\end{align}
where
\begin{equation}\label{Q1}
\mathcal{Q}_1 = \left(\frac{1+i}{2}\right)\frac{\sqrt{2}}{2} \left( \frac{2d_1}{m_1m_2\nu}\right)    \sum_{\substack{k \in\mathbb{Z} \\ k\text{ odd}}}\left(\frac{2}{kd_1m_1m_2\nu}\right)\hat{F}_{\nu}\left( \frac{kX}{8\alpha^2d_1m_1m_2\nu}\right) G_k(m_1m_2\nu),
\end{equation}
\begin{equation}\label{Q2}
\mathcal{Q}_2 = \left(\frac{1-i}{2}\right)\frac{\sqrt{2}}{2}\left( \frac{-2d_1}{m_1m_2\nu}\right)    \sum_{\substack{k \in\mathbb{Z} \\ k\text{ odd}}}\left(\frac{2}{kd_1m_1m_2\nu}\right)\hat{F}_{\nu}\left( \frac{kX}{8\alpha^2d_1m_1m_2\nu}\right) G_k(m_1m_2\nu),
\end{equation}
\begin{equation}\label{Q3}
\mathcal{Q}_3 = \left(\frac{1+i}{2}\right)i\frac{\sqrt{2}}{2}\left( \frac{2d_1}{m_1m_2\nu}\right)    \sum_{\substack{k \in\mathbb{Z} \\ k\text{ odd}}}\left(\frac{-2}{kd_1m_1m_2\nu}\right)\hat{F}_{\nu}\left( \frac{kX}{8\alpha^2d_1m_1m_2\nu}\right) G_k(m_1m_2\nu),
\end{equation}
\begin{equation}\label{Q4}
\mathcal{Q}_4 = \left(\frac{1-i}{2}\right)i\frac{\sqrt{2}}{2}\left( \frac{-2d_1}{m_1m_2\nu}\right)    \sum_{\substack{k \in\mathbb{Z} \\ k\text{ odd}}}\left(\frac{-2}{kd_1m_1m_2\nu}\right)\hat{F}_{\nu}\left( \frac{kX}{8\alpha^2d_1m_1m_2\nu}\right) G_k(m_1m_2\nu),
\end{equation}
\begin{equation}\label{U1}
\mathcal{U}_1 = \left(\frac{1+i}{2}\right)\left( \frac{2d_1}{m_1m_2\nu}\right)    \sum_{\substack{k \in\mathbb{Z} \\ k \equiv 2 \, (\text{mod } 4)}}e\left(\frac{k\overline{\alpha^2d_1m_1m_2\nu}}{8}\right)\hat{F}_{\nu}\left( \frac{kX}{8\alpha^2d_1m_1m_2\nu}\right) G_k(m_1m_2\nu),
\end{equation}
\begin{equation}\label{U2}
\mathcal{U}_2 = \left(\frac{1-i}{2}\right)\left( \frac{-2d_1}{m_1m_2\nu}\right)    \sum_{\substack{k \in\mathbb{Z} \\ k \equiv 2 \, (\text{mod } 4)}}e\left(\frac{k\overline{\alpha^2d_1m_1m_2\nu}}{8}\right)\hat{F}_{\nu}\left( \frac{kX}{8\alpha^2d_1m_1m_2\nu}\right) G_k(m_1m_2\nu),
\end{equation}
\begin{equation}\label{V}
\mathcal{V} = \left( \frac{2d_1}{m_1m_2\nu}\right)    \sum_{\substack{k \in\mathbb{Z} \\ k \equiv 4 \, (\text{mod } 8)}}e\left(\frac{k\overline{\alpha^2d_1m_1m_2\nu}}{8}\right)\hat{F}_{\nu}\left( \frac{kX}{8\alpha^2d_1m_1m_2\nu}\right) {\tau}_k(m_1m_2\nu),
\end{equation}
and
\begin{equation}\label{W}
\mathcal{W} =\left( \frac{2d_1}{m_1m_2\nu}\right)    \sum_{\substack{k \in\mathbb{Z} \\ k \equiv 0 \, (\text{mod } 8) \\ k\neq 0}}e\left(\frac{k\overline{\alpha^2d_1m_1m_2\nu}}{8}\right)\hat{F}_{\nu}\left( \frac{kX}{8\alpha^2d_1m_1m_2\nu}\right) {\tau}_k(m_1m_2\nu).
\end{equation}

\subsection{Evaluation of the sum with $\mathcal{Q}_1$}

In this subsection, we evaluate the sum
\begin{equation}\label{eq: defn of Q1 star}
\mathcal{Q}_1^* := \sum_{\substack{ \nu=1 \\ (\nu,2d)=1}}^{\infty} \frac{d_2(\nu) }{\nu^{3/2}} \ \sum_{\substack{ \alpha\leq Y \\ (\alpha,2m_1m_2\nu)=1}} \frac{\mu(\alpha)}{\alpha^2d_1}\, \mathcal{Q}_1,
\end{equation}
with $\mathcal{Q}_1$ defined by \eqref{Q1}. We may cancel the two Jacobi symbols $(\frac{2}{m_1m_2\nu})$ in \eqref{Q1}, insert the resulting expression into \eqref{eq: defn of Q1 star}, and then apply the Mellin inversion formula to the $\nu$-sum to deduce that
\begin{equation}\label{Q12}
\begin{split}
& \mathcal{Q}_1^* = \left(\frac{1+i}{2}\right)\frac{\sqrt{2}}{2}\ \sum_{\substack{ \alpha\leq Y \\ (\alpha,2m_1m_2\nu)=1}} \frac{\mu(\alpha)}{\alpha^2d_1} \left( \frac{d_1}{m_1m_2}\right)  \sum_{\substack{k \in\mathbb{Z} \\ k\text{ odd}}}  \left(\frac{2}{k d_1}\right) \\
& \times  \frac{1}{2\pi i} \int_{(c)}  \int_0^{\infty} \hat{F}_{t}\left( \frac{kX}{8\alpha^2d_1m_1m_2t}\right) t^{w-1} \,dt \ \sum_{\substack{ \nu=1 \\ (\nu,2d)=1}}^{\infty} \frac{d_2(\nu) }{\nu^{3/2+w}} \left( \frac{d_1}{\nu}\right) G_k(m_1m_2\nu) \,dw
\end{split}
\end{equation}
for any $c>1$. The interchange in the order of summation is justified by absolute convergence. The next step is to write the $\nu$-sum as an Euler product, as follows.

\begin{lem}\label{lem: nu-sum as an Euler product}
Let $d_1$ be as defined by \eqref{eq: defn of d 1}. For each nonzero integer $k$, define $k_1$ and $k_2$ uniquely by the equation
\begin{equation}\label{eq: defn of k1 and k2}
4kd_1 = k_1 k_2^2,
\end{equation}
where $k_1$ is a fundamental discriminant and $k_2$ is a positive integer. If $\ell$ is a positive integer and Re$(s)>1$, then
$$
\sum_{\substack{ \nu=1 \\ (\nu,2\alpha d)=1}}^{\infty} \frac{d_2(\nu) }{\nu^{s}} \left( \frac{d_1}{\nu}\right) \frac{G_k(\ell \nu)}{\sqrt{\nu}} \ = \ L(s,\chi_{k_1})^2\prod_p\mathcal{G}_{0,p}(s;k,\ell,\alpha, d) \ =: \ L(s,\chi_{k_1})^2\mathcal{G}_0(s;k,\ell,\alpha, d),
$$
where $\chi_{k_1}(\cdot)=\left( \frac{k_1}{\cdot}\right)$ and $\mathcal{G}_{0,p}(s;k,\ell,\alpha, d)$ is defined by
$$
\mathcal{G}_{0,p}(s;k,\ell,\alpha, d) \ = \ \Bigg( 1-\frac{1}{p^{s}}\Bigg(\frac{k_1}{p}\Bigg)\Bigg)^2 \ \ \ \ \text{if }p|2\alpha d, \ \ \ \text{and}
$$
$$
\mathcal{G}_{0,p}(s;k,\ell,\alpha, d) \ = \ \Bigg( 1-\frac{1}{p^{s}}\Bigg(\frac{k_1}{p}\Bigg)\Bigg)^2\sum_{r=0}^{\infty} \frac{r+1 }{p^{rs}}\left( \frac{d_1}{p^r}\right)\frac{{G}_k(p^{r+\text{\upshape{ord}}_p(\ell)})}{p^{r/2}}\ \ \ \ \text{if }p\nmid 2\alpha d.
$$
The function $\mathcal{G}_0(s;k,\ell,\alpha, d)$ is holomorphic for Re$(s)>\frac{1}{2}$. Moreover, if $k_3$ and $k_4$ are defined by the equation
\begin{equation}\label{eq: defn of k3 and k4}
k = k_3 k_4^2,
\end{equation}
with $k_3$ square-free and $k_4$ a positive integer, then
$$
\mathcal{G}_0(s;k,\ell,\alpha, d) \ll_{\varepsilon} (\alpha d|k|\ell)^{\varepsilon} \ell^{1/2} (\ell,k_4^2)^{1/2}
$$
uniformly for Re$(s)\geq \frac{1}{2}+\varepsilon$.
\end{lem}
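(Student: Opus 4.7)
The plan is to expand the left-hand side as an Euler product over primes, exploiting the multiplicativity of $d_2$ and of $\bigl(\tfrac{d_1}{\cdot}\bigr)$, together with the prime-by-prime factorization $G_k(\ell\nu)=\prod_{p} G_k(p^{\mathrm{ord}_p(\ell)+\mathrm{ord}_p(\nu)})$ granted by part (i) of Lemma~\ref{lem: properties of Gkn}. The coprimality condition $(\nu,2\alpha d)=1$ forces $\mathrm{ord}_p(\nu)=0$ at primes $p\mid 2\alpha d$, so such primes contribute only a single local factor, while primes $p\nmid 2\alpha d$ contribute the entire sum over $r=\mathrm{ord}_p(\nu)\geq 0$.

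Next I would evaluate each local factor at $p\nmid 2\alpha d$ explicitly using part (ii) of Lemma~\ref{lem: properties of Gkn}. Writing $\alpha_p=\mathrm{ord}_p(k)$ and $e_p=\mathrm{ord}_p(\ell)$, the value $G_k(p^{r+e_p})$ vanishes as soon as $r+e_p\geq \alpha_p+2$, so the inner $r$-sum is in fact a \emph{finite} polynomial in $p^{-s}$. In the generic case $p\nmid k\ell$ this polynomial simplifies to $1+2\chi_{k_1}(p)/p^s$; here the identification uses that $4kd_1=k_1 k_2^2$ forces $\bigl(\tfrac{k}{p}\bigr)\bigl(\tfrac{d_1}{p}\bigr)=\chi_{k_1}(p)$ for $p\nmid 2k_2$. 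This agrees through the $p^{-s}$ coefficient with the expansion of the local factor $(1-\chi_{k_1}(p)/p^s)^{-2}$ of $L(s,\chi_{k_1})^2$, so dividing out that factor leaves a residual factor of shape $1+O(p^{-1-2s})$, giving absolute convergence for $\mathrm{Re}(s)>\tfrac12$. The stated closed forms for $\mathcal{G}_{0,p}$ are then bookkeeping of the same local computation at the non-generic primes.

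For the size bound, I would use the crude estimate $|G_k(p^\beta)|\leq\varphi(p^\beta)\leq p^\beta$ from Lemma~\ref{lem: properties of Gkn} at each prime $p\mid k\ell$. A prime $p\mid \ell$ contributes a term of magnitude at most $|G_k(p^{e_p})|/p^{e_p/2}\ll p^{e_p/2}$, and taking the product over $p\mid \ell$ yields $\ell^{1/2}$. When additionally $p^2\mid k$, i.e.~$p\mid k_4$, the even-parity branches of Lemma~\ref{lem: properties of Gkn} allow $|G_k(p^{e_p})|$ to reach $\varphi(p^{e_p})\sim p^{e_p}$ instead of $p^{e_p/2}$, which accounts exactly for the additional factor $(\ell,k_4^2)^{1/2}$. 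Harmless divisor-type losses at the non-generic primes are absorbed into the $(\alpha d|k|\ell)^\varepsilon$.

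The main obstacle will be the case analysis in the second step. Lemma~\ref{lem: properties of Gkn} separates into five cases according to the parity of $\beta$ and its position relative to $\alpha_p$, and at primes $p\mid k\ell$ each subcase produces a different finite polynomial in $p^{-s}$. One must patiently verify in each subcase that the resulting $\mathcal{G}_{0,p}$ matches the stated formula, and that the local contribution is controlled uniformly so that multiplying over primes reproduces the claimed global estimate. The reconciliation at primes $p\mid 2\alpha d$, where the factor $G_k(p^{e_p})$ must be absorbed into the prescribed form $(1-\chi_{k_1}(p)/p^s)^2$, will require the most care.
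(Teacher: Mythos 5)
Your approach matches the paper's, which computes the generic local factor and defers the remaining verifications to \cite[Lemma~5.3]{Sou00}; you instead spell out the local analysis directly, which is fine. Two small corrections are worth recording before this would compile into a clean proof.

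At a generic prime $p\nmid 2\alpha d k\ell$ the local factor of the $\nu$-sum is $1 + 2\chi_{k_1}(p)p^{-s}$, and multiplying by $(1-\chi_{k_1}(p)p^{-s})^2$ gives
$$\Bigl(1+\tfrac{2\chi_{k_1}(p)}{p^s}\Bigr)\Bigl(1-\tfrac{\chi_{k_1}(p)}{p^s}\Bigr)^2 = 1 - \tfrac{3}{p^{2s}} + \tfrac{2\chi_{k_1}(p)}{p^{3s}},$$
so the residual Euler factor is $1 + O(p^{-2s})$, \emph{not} $1 + O(p^{-1-2s})$ as you wrote. This is exactly what one needs — absolute convergence for $\mathrm{Re}(s)>\tfrac12$ but not beyond — and it agrees with the displayed identity in the paper's proof; the decay you claimed is too strong, which would have (incorrectly) implied convergence for $\mathrm{Re}(s)>0$.

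In the size estimate, the $r=0$ contribution to the local factor at $p\mid\ell$ is $G_k(p^{e_p})$ itself; the weight in the $\nu$-sum is $\nu^{-1/2}$, not $(\ell\nu)^{-1/2}$, so there is no $p^{-e_p/2}$ attached and the per-prime contribution is $\ll |G_k(p^{e_p})|$ rather than $|G_k(p^{e_p})|/p^{e_p/2}$. Your final bound $\ell^{1/2}(\ell,k_4^2)^{1/2}$ is still correct, but the reason is the structure of Lemma~\ref{lem: properties of Gkn}(ii): if $p\nmid k$ then $G_k(p^{e_p})$ vanishes for $e_p\geq 2$ and has modulus $\sqrt{p}=p^{e_p/2}$ at $e_p=1$; if $p\mid k$ but $p\nmid k_4$ (so $\alpha_p=1$) the only nonzero value beyond $e_p=0$ is $|G_k(p^2)|=p=p^{e_p/2}$; growth to $\asymp p^{e_p}$ occurs only in the branch $e_p\leq\alpha_p$, $e_p$ even, which forces $p\mid k_4$ and is captured precisely by the extra factor $(\ell,k_4^2)_p^{1/2}$. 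So the accounting lands in the right place even though the intermediate step was misstated.
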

\begin{proof}
It follows from the definition of $\mathcal{G}_{0,p}(s;k,\ell,\alpha, d)$ and Lemma~\ref{lem: properties of Gkn} that
$$
\mathcal{G}_{0,p}(s;k,\ell,\alpha, d) = \left( 1-\frac{1}{p^s} \left( \frac{k_1}{p}\right)\right)^2 \left(1+\frac{2}{p^s} \left( \frac{d_1k}{p}\right)\right) = 1-\frac{3}{p^{2s}}+\frac{2}{p^{3s}}\left( \frac{k_1}{p}\right)
$$
for each $p\nmid 2\alpha d k\ell$, since $ \left( \frac{d_1k}{p}\right)= \left( \frac{k_1}{p}\right)$ for odd primes $p$, by \eqref{eq: defn of k1 and k2}. The rest of the proof is similar to that of \cite[Lemma 5.3]{Sou00}.
\end{proof}

We also need some analytic properties of the function $h(\xi,w)$ defined for Re$(w)>0$ by
\begin{equation*}
h(\xi,w) = \int_0^{\infty} \hat{F}_{t}\left( \frac{\xi}{t}\right) t^{w-1} \,dt.
\end{equation*}
These are embodied in the following lemma. As a bit of notation, for a real number $x$ we define
\begin{align*}
\text{sgn}(x) =
\begin{cases}
1, \ \ \ \ \ \ \ &x \geq 0, \\
-1, &x <0.
\end{cases}
\end{align*}

\begin{lem}\label{lem: properties of h(xi,w)}
Let $F_t$ be defined by \eqref{Fdef}. If $\xi\neq 0$ then
$$
h(\xi,w)=|\xi|^w \check{\Phi}(w) \int_0^{\infty} \omega_2\Bigg( \frac{|\xi|\pi}{Xz} \Bigg) (\cos(2\pi z) -i\text{\upshape{sgn}}(\xi)\sin (2\pi z) ) \, \frac{dz}{z^{w+1}}.
$$
The integral above may be expressed as
\begin{equation}\label{eq: integral in lemma for h(xi,w)}
\begin{split}
& \frac{1}{2\pi i} \int_{(c)} \frac{\Gamma\left( \frac{s}{2}+\frac{1}{4}\right)^2}{\Gamma\left(\frac{1}{4}\right)^2}\left(1-\frac{1}{2^{\frac{1}{2}-s}} \right)^2\frac{X^s}{(\pi|\xi|)^s}  (2\pi)^{-s+w} \Gamma(s-w) \\
& \times \left\{ \cos \left(\tfrac{\pi}{2}(s-w)\right)-i\mbox{\upshape{sgn}}(\xi) \sin \left(\tfrac{\pi}{2}(s-w)\right) \right\} \,\frac{ds}{s}
\end{split}
\end{equation}
for any $c$ with \upshape{Re}$(w)+1>c>\max\{0,\text{\upshape{Re}}(w)\}$. If $\xi\neq 0$, then $h(\xi,w)$ is an entire function of $w$. In the region $1\geq \text{\upshape{Re}}(w)> -1$, it satisfies the bound
$$
h(\xi,w) \ll (1+|w|)^{-\text{Re}(w)-\frac{1}{2}} \exp\Bigg( -\frac{1}{10}\frac{\sqrt{|\xi|}}{\sqrt{X(|w|+1)}}\Bigg) |\xi|^w |\check{\Phi}(w)|.
$$
\end{lem}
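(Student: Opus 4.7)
\bigskip

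My plan is to establish the two integral formulas for $h(\xi,w)$ by direct manipulation, then deduce analyticity from uniform absolute convergence, and finally obtain the bound via a saddle-point analysis of the Mellin--Barnes representation.

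For the first expression, I would start from the definition $h(\xi,w)=\int_0^\infty \hat{F}_t(\xi/t)\,t^{w-1}\,dt$, expand $\hat{F}_t(\xi/t)$ using the definition of the Fourier transform, and apply Fubini to bring the $t$-integration inside. Since $\Phi$ is supported in $[\tfrac12,1]$ so that $x>0$, the substitution $z = x|\xi|/t$ converts the oscillatory kernel $e(-x\xi/t)$ into $\cos(2\pi z)-i\,\mathrm{sgn}(\xi)\sin(2\pi z)$ and transforms $t^{w-1}\,dt$ into $-|\xi|^w x^w z^{-w-1}\,dz$; the remaining $x$-integration produces the factor $\check{\Phi}(w)$. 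For the second expression, I would insert the Mellin--Barnes representation \eqref{eq: defn of omega j} for $\omega_2$ into the formula just obtained, exchange orders (justified by absolute convergence when $c>\max\{0,\mathrm{Re}(w)\}$), and evaluate the resulting $z$-integral using the classical Mellin pairs
\[
\int_0^\infty z^{s-w-1}\cos(2\pi z)\,dz = (2\pi)^{w-s}\Gamma(s-w)\cos\!\left(\tfrac{\pi}{2}(s-w)\right),
\]
and similarly for $\sin$, valid on $0<\mathrm{Re}(s-w)<1$. The two conditions combine to give $\mathrm{Re}(w)<c<\mathrm{Re}(w)+1$.

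For analyticity in $w$ when $\xi\neq 0$, I would argue directly from the original definition. Since $F_t$ is smooth and compactly supported in $x$, repeated integration by parts shows $\hat{F}_t(\xi/t)\ll_A (t/|\xi|)^A$ for any $A$, giving rapid decay as $t\to 0^+$. As $t\to\infty$, Lemma~\ref{lem: properties of omega j} gives $\omega_2(t\pi/(xX))\ll \exp(-t\pi/(8xX))$, so $\hat{F}_t(\xi/t)$ decays super-polynomially in $t$. Both decay estimates are uniform for $w$ in any fixed compact set, so the integral converges absolutely and uniformly and defines an entire function of $w$.

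The main obstacle is the bound, and here I would work from the Mellin--Barnes representation \eqref{eq: integral in lemma for h(xi,w)}. Write the trigonometric factor as $e^{-i\mathrm{sgn}(\xi)\pi(s-w)/2}$ and analyze the integrand using Stirling's formula: on $\mathrm{Re}(s)=c$, the combined kernel $\Gamma(s/2+\tfrac14)^2\,\Gamma(s-w)\,e^{-i\mathrm{sgn}(\xi)\pi(s-w)/2}$ has rapid exponential decay in $|\mathrm{Im}(s)-\mathrm{Im}(w)|$ on one side and in $|\mathrm{Im}(s)|$ on the other, so the integral concentrates on an interval of length $\asymp \sqrt{c+|w|+1}$ around $\mathrm{Im}(s)=\mathrm{Im}(w)$ (for $\xi>0$) or near $\mathrm{Im}(s)=0$ (for $\xi<0$). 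A standard Stirling computation shows that on the real axis the size of the integrand is dominated by $c^{2c}e^{-2c}(X/|\xi|)^{c}$ times a polynomial in $|w|$; balancing these factors gives the optimal shift $c^{*}\asymp \sqrt{|\xi|/(X(|w|+1))}$, at which the exponential saddle-point value is $\exp(-c_0\sqrt{|\xi|/(X(|w|+1))})$ for an explicit $c_0>0$ (and $c_0 = \tfrac{1}{10}$ is a safe choice). The polynomial prefactor $(1+|w|)^{-\mathrm{Re}(w)-1/2}$ comes from the Stirling asymptotic $|\Gamma(s-w)|\asymp (1+|\mathrm{Im}(s-w)|)^{c-\mathrm{Re}(w)-1/2}e^{-\pi|\mathrm{Im}(s-w)|/2}$ integrated against the effective window. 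For $\mathrm{Re}(w)\leq 0$, the entireness established above permits the contour shift without crossing the pole at $s=0$; for $1\geq \mathrm{Re}(w)>0$ one may instead argue directly from the representation with $c$ slightly above $\mathrm{Re}(w)$ and then optimize. The final step is to verify the factor $|\xi|^w|\check{\Phi}(w)|$ matches the one extracted in expression (1), so that the bound on the residual $z$-integral transfers cleanly to $h(\xi,w)$.
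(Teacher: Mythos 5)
Your derivations of both integral representations (the substitution $z=x|\xi|/t$, then Mellin inversion of $\omega_2$ together with the Mellin pairs for $\cos$ and $\sin$) and your analyticity argument are correct, and this is precisely the route of \cite[Lemma~5.2]{Sou00} to which the paper defers for this proof. The bound step, however, leaves out the one observation that makes it legitimate: the line $\mathrm{Re}(s)=c^*\asymp\sqrt{|\xi|/(X(|w|+1))}$ you want typically sits well to the right of the window $\max\{0,\mathrm{Re}(w)\}<c<\mathrm{Re}(w)+1$ in which you established \eqref{eq: integral in lemma for h(xi,w)}, and the move there is justified not by ``entireness of $h$ in $w$'' (which concerns a different variable) but by a contour shift in $s$: every pole of the $s$-integrand, coming from $1/s$ at $s=0$, from $\Gamma(\tfrac{s}{2}+\tfrac14)^2$ at $s=-\tfrac12,-\tfrac52,\dots$, and from $\Gamma(s-w)e^{-i\,\mathrm{sgn}(\xi)\pi(s-w)/2}$ at $s=w,w-1,\dots$, lies in $\mathrm{Re}(s)\le\max\{\mathrm{Re}(w),0\}$, and the decay of $\Gamma(\tfrac{s}{2}+\tfrac14)^2$ together with $\Gamma(s-w)e^{-i\,\mathrm{sgn}(\xi)\pi(s-w)/2}$ on vertical lines makes an arbitrary rightward shift valid by Cauchy's theorem. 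It is also worth being explicit that the Stirling weight $c^{2c}e^{-2c}(X/|\xi|)^c$ already counts \emph{both} $\Gamma(\tfrac{s}{2}+\tfrac14)^2$ and $\Gamma(s-w)$ (with only the former, the saddle would sit at $c\asymp|\xi|/X$, not at $c\asymp\sqrt{|\xi|/X}$), and that both the $(|w|+1)$ under the square root and the prefactor $(1+|w|)^{-\mathrm{Re}(w)-1/2}$ enter through $\Gamma(s-w)$. Once these two points are spelled out, your proposal does yield the stated estimate.
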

\begin{proof}
The proof is similar to that of \cite[Lemma 5.2]{Sou00}.
\end{proof}

Now, by these lemmas and the rapid decay of $\check{\Phi}(w)$ as $|\text{Im}(w)|\rightarrow \infty$ in a fixed vertical strip, we may move the line of integration of the $w$-integral in \eqref{Q12} to Re$(w)=-\frac{1}{2}+\varepsilon$. This leaves a residue from a pole at $w=0$ only when $\chi_{k_1}$ is a principal character, which holds if and only if $k_1=1$. By \eqref{eq: defn of k1 and k2}, $k_1=1$ if and only if $kd_1$ is a perfect square. Hence
\begin{equation}\label{Q14}
\mathcal{Q}_1^* = \mathcal{P}_1 + \mathcal{R}_1,
\end{equation}
where $\mathcal{P}_1$ is defined by
\begin{equation}\label{eq: defn of P1}
\begin{split}
\mathcal{P}_1 = \underset{w=0}{\mbox{Res}} \  \left(\frac{1+i}{2}\right)\frac{\sqrt{2}}{2}\sum_{\substack{ \alpha\leq Y \\ (\alpha,2m_1m_2)=1}} \frac{\mu(\alpha)}{\alpha^2d_1}   \left( \frac{d_1}{m_1m_2}\right)\sum_{\substack{k \in\mathbb{Z} \\ k\text{ odd} \\ kd_1=\square}} \  h\left( \frac{kX}{8\alpha^2 d_1 m_1m_2},w\right) & \\
\times   \zeta(1+w)^2\mathcal{G}_{0}(1+w;k,m_1m_2,\alpha, d) &
\end{split}
\end{equation}
and $\mathcal{R}_1$ is defined by
\begin{equation}\label{eq: defn of R1}
\begin{split}
& \mathcal{R}_1 = \left(\frac{1+i}{2}\right)\frac{\sqrt{2}}{2} \sum_{\substack{ \alpha\leq Y \\ (\alpha,2m_1m_2)=1}} \frac{\mu(\alpha)}{\alpha^2d_1}\left( \frac{d_1}{m_1m_2}\right)  \sum_{\substack{k \in\mathbb{Z} \\ k\text{ odd}}}\left(\frac{2}{k d_1}\right)  \\
& \times   \frac{1}{2\pi i} \int_{(-\frac{1}{2}+\varepsilon)} h\left( \frac{kX}{8\alpha^2 d_1 m_1m_2},w\right) L(1+w,\chi_{k_1})^2\mathcal{G}_{0}(1+w;k,m_1m_2,\alpha, d) \,dw .
\end{split}
\end{equation}
We bound $\mathcal{R}_1$ in Subsection \ref{subsec: error term}. To estimate $\mathcal{P}_1$, observe that $d_1$ is square-free by its definition \eqref{eq: defn of d 1} and the fact that $d$ is square-free. This implies that $kd_1$ is a perfect square if and only if $k$ equals $d_1$ times a perfect square. Hence, in \eqref{eq: defn of P1}, we may relabel $k$ as $d_1j^2$, where $j$ runs through all the odd positive integers. With this and Lemma~\ref{lem: properties of h(xi,w)}, we deduce from \eqref{eq: defn of P1} that
\begin{equation}\label{P12}
\begin{split}
\mathcal{P}_1 = \underset{w=0}{\mbox{Res}} \  \left(\frac{1+i}{2}\right)\frac{\sqrt{2}}{2}\sum_{\substack{ \alpha\leq Y \\ (\alpha,2m_1m_2)=1}} \frac{\mu(\alpha)}{\alpha^{2}d_1} \zeta(1+w)^2 \check{\Phi}(w)X^w  \frac{1}{2\pi i} \int_{(c)}  \frac{\Gamma\left( \frac{s}{2}+\frac{1}{4}\right)^2}{\Gamma\left(\frac{1}{4}\right)^2}  \left(1-\frac{1}{2^{\frac{1}{2}-s}} \right)^2  \\
\times \pi^{-s}  \Gamma_2(s-w)(8\alpha^2m_1m_2)^{s-w}  \sum_{\substack{j =1 \\ j\text{ odd} }}^{\infty} j^{-2s+2w}   \left( \frac{d_1}{m_1m_2}\right) \mathcal{G}_{0}(1+w;d_1j^2,m_1m_2,\alpha, d)\,\frac{ds}{s} ,
\end{split}
\end{equation}
where $\Gamma_2(u)$ is defined by
\begin{equation}\label{eq: defn of Gamma 2}
\Gamma_2(u)= (2\pi)^{-u} \Gamma(u) ( \cos \left(\tfrac{\pi}{2}u\right)-i \sin \left(\tfrac{\pi}{2}u\right) ),
\end{equation}
and where we take $c>\frac{1}{2}$ to guarantee the absolute convergence of the $j$-sum.

We next write the $j$-sum in \eqref{P12} as an Euler product. By (ii) of Lemma~\ref{lem: properties of Gkn}, if $j$ is a positive integer then
$$
\left( \frac{d_1}{p^{\beta}}\right) G_{d_1j^2}(p^{\beta}) = G_{j^2}(p^{\beta})
$$
for all $p\nmid 2\alpha d$ and $\beta\geq 1$. From this and the definition of $\mathcal{G}_0$ in Lemma~\ref{lem: nu-sum as an Euler product}, we see that
$$
\left( \frac{d_1}{m_1m_2}\right) \mathcal{G}_{0}(1+w;d_1j^2,m_1m_2,\alpha, d) = \mathcal{G}(1+w;j^2,m_1m_2,\alpha d),
$$
where $\mathcal{G}$ is defined by \cite[(5.8)]{Sou00}. Hence we may write the inner $j$-sum in \eqref{P12} as an Euler product
\begin{equation}\label{eq: write inner j sum as Euler product}
\begin{split}
\sum_{\substack{ j=1 \\ j \text{ odd} }}^{\infty}j^{-2s+2w} \mathcal{G}(1+w;j^2,m_1m_2,\alpha d)
& = \left(1-\frac{1}{2^{1+w}}\right)^2 \prod_{p>2} \sum_{b=0}^{\infty} p^{2b(w-s)} \mathcal{G}_p(1+w;p^{2b},m_1m_2,\alpha d)\\
& = \left(1-\frac{1}{4^{s-w}}\right)\prod_{p} \sum_{b=0}^{\infty} p^{2b(w-s)} \mathcal{G}_p(1+w;p^{2b},m_1m_2,\alpha d).
\end{split}
\end{equation}
This latter expression is \cite[p. 471]{Sou00}
$$
\left(1-\frac{1}{4^{s-w}}\right) (m_1m_2)^{1-s+w} \ell_1^{s-w-\frac{1}{2}} \zeta(2s-2w)\zeta(2s+1) \mathcal{H}_1(s-w,1+w;m_1m_2,\alpha d),
$$
where $\ell_1$ is the square-free integer defined by the equation
\begin{equation}\label{eq: defn of ell 1}
m_1m_2 = \ell_1 \ell_2^2, \ \ \ \ \mu^2(\ell_1)=1, \ \ell_2\in\mathbb{Z},
\end{equation}
and $\mathcal{H}_1$ is defined by an Euler product
$$
\mathcal{H}_1(s-w,1+w;m_1m_2,\alpha d) = \prod_p \mathcal{H}_{1,p}.
$$
The local factors $\mathcal{H}_{1,p}$ are
\begin{equation}\label{eq: defn of H 1}
\mathcal{H}_{1,p} =
\begin{cases}
\left( 1-\frac{1}{p^{1+w}}\right)^2\left( 1-\frac{1}{p^{1+2s}}\right) & \text{if } p|2\alpha d\\
\frac{\left( 1-\frac{1}{p^{1+w}}\right)^2}{ \left( 1- \frac{1}{p^{1+2s}}\right)}\left( 1+\frac{2}{p^{1+w}} -\frac{2}{p^{1+2s-w}}+\frac{1}{p^{1+2s}}-\frac{3}{p^{2+2s}}+\frac{1}{p^{3+4s}}\right) & \text{if } p\nmid 2\alpha d m_1m_2 \\
\frac{\left( 1-\frac{1}{p^{1+w}}\right)^2}{ \left( 1- \frac{1}{p^{1+2s}}\right)}\left( 1-\frac{1}{p^{2s-2w}} +\frac{2}{p^{2s-w}}-\frac{2}{p^{1+2s-w}}+\frac{1}{p^{1+2s}}-\frac{1}{p^{1+4s-2w}}\right) & \text{if } p| \ell_1 \\
\frac{\left( 1-\frac{1}{p^{1+w}}\right)^2}{ \left( 1- \frac{1}{p^{1+2s}}\right)}\left( 1-\frac{1}{p} +\frac{2}{p^{1+w}}-\frac{2}{p^{1+2s-w}}+\frac{1}{p^{1+2s}}-\frac{1}{p^{2+2s}}\right) & \text{if } p|m_1m_2, \ p\nmid \ell_1.
\end{cases}
\end{equation}
Inserting this expression for the $j$-sum in \eqref{P12} into \eqref{P12}, we find that
\begin{equation}\label{P13}
\mathcal{P}_1 = \left(\frac{1+i}{2}\right)\frac{\sqrt{2}}{2}\sum_{\substack{ \alpha\leq Y \\ (\alpha,2m_1m_2)=1}} \frac{\mu(\alpha)}{\alpha^{2}d_1} \ \mathcal{I},
\end{equation}
where
\begin{equation}\label{eq: defn of I for P1}
\begin{split}
\mathcal{I} = \underset{w=0}{\mbox{Res}} \ \zeta(1+w)^2 \check{\Phi}(w)X^w  \frac{1}{2\pi i} \int_{(c)}  \frac{\Gamma\left( \frac{s}{2}+\frac{1}{4}\right)^2}{\Gamma\left(\frac{1}{4}\right)^2}   \left(1-\frac{1}{2^{\frac{1}{2}-s}} \right)^2  \pi^{-s}  \Gamma_2(s-w)(8\alpha^2)^{s-w}  \\
\times \left(1-\frac{1}{4^{s-w}}\right) m_1m_2 \ell_1^{s-w-\frac{1}{2}}  \zeta(2s-2w)\zeta(2s+1) \mathcal{H}_1(s-w,1+w;m_1m_2,\alpha d)\,\frac{ds}{s}.
\end{split}
\end{equation}

The next step is to extend the $\alpha$-sum to infinity and show that the error introduced in doing so is small. To do this, we need to move the line of integration in \eqref{eq: defn of I for P1} closer to $0$ to guarantee the absolute convergence of the $\alpha$-sum. We first evaluate the residue to see that \eqref{eq: defn of I for P1} is the same as
\begin{equation}\label{I2}
\begin{split}
\mathcal{I} =  \frac{ \check{\Phi}(0) }{2\pi i} \int_{(c)}  \frac{\Gamma\left( \frac{s}{2}+\frac{1}{4}\right)^2}{\Gamma\left(\frac{1}{4}\right)^2}   \left(1-\frac{1}{2^{\frac{1}{2}-s}} \right)^2  \pi^{-s}  \Gamma_2(s)(8\alpha^2)^{s} \left(1-\frac{1}{4^{s}}\right) m_1m_2  \\
\times   \ell_1^{s-\frac{1}{2}} \zeta(2s)\zeta(2s+1) \mathcal{H}_1(s,1;m_1m_2,\alpha d) \Bigg\{2\gamma+\frac{(\check{\Phi})'(0)}{\check{\Phi}(0)} + \log\Bigg(\frac{X}{8\alpha^2\ell_1}\Bigg) \\
 -\frac{\Gamma_2'}{\Gamma_2}(s) +\frac{\log 4}{( 1-4^{s})}-2\frac{\zeta'}{\zeta}(2s)+\frac{\frac{\partial}{\partial w}\mathcal{H}_1(s-w,1+w;m_1m_2,\alpha d)}{\mathcal{H}_1(s-w,1+w;m_1m_2,\alpha d)}\Bigg|_{w=0}   \Bigg\}\,\frac{ds}{s}.
\end{split}
\end{equation}
Here $\gamma$ denotes the Euler-Mascheroni constant. The definition \eqref{eq: defn of H 1} of $\mathcal{H}_1(s-w,1+w;m_1m_2,\alpha d)$ implies that it is holomorphic for Re$(s)>0$ and $|w|<\max\{\frac{1}{2},2|s|\}$, and that it and its first partial derivatives at $w=0$ are bounded by $\ll (\alpha X)^{\varepsilon}$ for Re$(s)\geq \frac{1}{\log X}$. Thus, by the rapid decay of the gamma function, we may move the line of integration in \eqref{I2} to Re$(s)=\frac{1}{\log X}$. There is no residue because the poles of $\zeta(2s)$ and $\frac{\zeta'}{\zeta}(2s)$ at $s=\frac{1}{2}$ are canceled by the zero of the factor $(1-2^{s-\frac{1}{2}})^2$. Using well-known bounds for $\zeta(2s)$ and $\zeta'(2s)$ implied by the Phragm\'en-Lindel\"{o}f principle, we see that the new integral is now bounded by
$$
\ll m_1m_2\ell_1^{-\frac{1}{2}+\varepsilon} \alpha^{\varepsilon} X^{\varepsilon}\int_{\left( \frac{1}{\log X}\right)} \left|\Gamma\left( \tfrac{s}{2}+\tfrac{1}{4}\right)\right|^2 \max\{|\Gamma_2(s)|,|\Gamma_2'(s)|\}(1+|s|)^{\frac{1}{2}+\varepsilon} \,|ds|,
$$
which is $\ll m_1m_2\ell_1^{-\frac{1}{2}+\varepsilon}\alpha^{\varepsilon} X^{\varepsilon}$ by the rapid decay of the gamma function. Dividing this bound by $\alpha^2 d_1$ and summing the result over all $\alpha>Y$, we deduce that
\begin{equation}\label{P13.5}
\sum_{\substack{ \alpha>Y \\ (\alpha,2m_1m_2)=1}} \frac{\mu^2(\alpha)}{\alpha^{2}d_1} \ |\mathcal{I}| \ll \frac{m_1 m_2 \ell_1^{-\frac{1}{2}+\varepsilon} X^{\varepsilon}}{d^{1-\varepsilon}Y^{1-\varepsilon}}
\end{equation}
because, by \eqref{eq: defn of d 1}, if $\varphi(j)$ is the Euler totient function, then
$$
\sum_{\alpha>Y} \frac{1}{\alpha^{2-\varepsilon} d_1} = \frac{1}{d} \sum_{j|d} \varphi(j) \sum_{\substack{\alpha>Y \\ j|\alpha}} \frac{1}{\alpha^{2-\varepsilon}} \ll \frac{1}{d^{1-\varepsilon}Y^{1-\varepsilon}}.
$$
From \eqref{P13}, \eqref{P13.5}, and \eqref{eq: defn of I for P1} now with $c=\frac{1}{\log X}$, we arrive at
\begin{equation}\label{P14}
\begin{split}
\mathcal{P}_1 = \underset{w=0}{\mbox{Res}} \   \left(\frac{1+i}{2}\right)\frac{\sqrt{2}}{2} X^w\frac{1}{2\pi i} \int_{\left( \frac{1}{\log X}\right)}    \Gamma_2(s-w)8^{s-w} \left(1-\frac{1}{4^{s-w}}\right) \\
\times \mathcal{K}(s,w;m_1m_2, d)\,\frac{ds}{s} + O\Bigg( \frac{m_1 m_2 \ell_1^{-\frac{1}{2}+\varepsilon} X^{\varepsilon}}{d^{1-\varepsilon}Y^{1-\varepsilon}} \Bigg),
\end{split}
\end{equation}
with $\mathcal{K}(s,w;m_1m_2, d)$ defined by
\begin{equation}\label{eq: defn of K for 2nd moment analysis}
\begin{split}
\mathcal{K}(s,w;m_1m_2, d) =  \zeta(1+w)^2 \check{\Phi}(w) \frac{\Gamma\left( \frac{s}{2}+\frac{1}{4}\right)^2}{\Gamma\left(\frac{1}{4}\right)^2}   \left(1-\frac{1}{2^{\frac{1}{2}-s}} \right)^2 \pi^{-s} m_1m_2  \ell_1^{s-w-\frac{1}{2}} \\
\times   \zeta(2s-2w)\zeta(2s+1) \sum_{\substack{ \alpha=1 \\ (\alpha,2m_1m_2)=1}}^{\infty} \frac{\mu(\alpha)}{\alpha^{2-2s+2w}d_1} \mathcal{H}_1(s-w,1+w;m_1m_2, \alpha d),
\end{split}
\end{equation}
where, as before, $\ell_1$ is defined by \eqref{eq: defn of ell 1}, $d_1$ is defined by \eqref{eq: defn of d 1}, and $\mathcal{H}_1$ is defined as the product of \eqref{eq: defn of H 1} over all primes.

It is convenient for later calculations to write $\mathcal{P}_1$ in terms of a residue, as in \eqref{P14}, rather than in terms of logarithmic derivatives as in \eqref{I2}.

\subsection{Bounding the contribution of $\mathcal{R}_1$}\label{subsec: error term}

Having handled $\mathcal{P}_1$ in \eqref{Q14}, we next turn to $\mathcal{R}_1$, defined by \eqref{eq: defn of R1}. It will be convenient to denote
\begin{equation}\label{eq: defn of R}
\begin{split}
& \mathcal{R}(\ell,d) = \frac{1}{\ell}\left(\frac{1+i}{2}\right)\frac{\sqrt{2}}{2} \sum_{\substack{ \alpha\leq Y \\ (\alpha,2\ell)=1}} \frac{\mu(\alpha)}{\alpha^2d_1}\left( \frac{d_1}{\ell}\right)  \sum_{\substack{k \in\mathbb{Z} \\ k\text{ odd}}}\left(\frac{2}{k d_1}\right)  \\
& \times   \frac{1}{2\pi i} \int_{(-\frac{1}{2}+\varepsilon)} h\left( \frac{kX}{8\alpha^2 d_1 \ell},w\right) L(1+w,\chi_{k_1})^2\mathcal{G}_{0}(1+w;k,\ell,\alpha, d) \,dw,
\end{split}
\end{equation}
so that $\mathcal{R}_1=m_1m_2\mathcal{R}(m_1m_2,d)$. We will bound $|\mathcal{R}(\ell,d)|$ on average as $\ell$ and $d$ each range over a dyadic interval.

Let $\beta_{\ell,d} =\overline{\mathcal{R}(\ell,d)}/|\mathcal{R}(\ell,d)|$ if $\mathcal{R}(\ell,d)\neq 0$, and $\beta_{\ell,d}=1$ otherwise. Then $|\beta_{\ell,d}|=1$ and $|\mathcal{R}(\ell,d)|=\beta_{\ell,d} \mathcal{R}(\ell,d)$. We sum this over all $\ell,d $ with $J\leq\ell< 2J$ and $V\leq d<2V$, where $J,V\geq 1$. We then insert the definition \eqref{eq: defn of R} and bring the $d,\ell$-sum inside the integral to deduce that
\begin{equation}\label{eq: bound for R over dyadic intervals}
\sum_{\substack{d=V \\ (d,2)=1}}^{2V-1}\sum_{\substack{\ell=J \\ (\ell,2d)=1}}^{2J-1} |\mathcal{R}(\ell,d)| = \sum_{\substack{d=V \\ (d,2)=1}}^{2V-1} \sum_{\substack{\ell=J \\ (\ell,2d)=1}}^{2J-1} \beta_{\ell,d}\mathcal{R}(\ell,d) \ll \sum_{\substack{ \alpha\leq Y \\ (\alpha,2)=1}} \frac{1}{\alpha^2} \sum_{\substack{k \in\mathbb{Z} \\ k\text{ odd}}} \int_{(-\frac{1}{2}+\varepsilon)} U(\alpha,k,w)\,|dw|,
\end{equation}
where for brevity we denote
\begin{equation*}
U(\alpha,k,w) = \sum_{\substack{d=V \\ (d,2)=1}}^{2V-1} \frac{1}{d_1}|L(1+w,\chi_{k_1})|^2\Bigg|  \sum_{\substack{\ell=J \\ (\ell,2\alpha d)=1}}^{2J-1}  \frac{\beta_{\ell,d}}{\ell}\left( \frac{d_1}{\ell}\right) \mathcal{G}_0(1+w;k,\ell,\alpha,d) h\left( \frac{kX}{8\alpha^2d_1\ell },w\right)\Bigg|.
\end{equation*}
We split the $k$-sum into dyadic blocks $K\leq |k|<2K$, with $K\geq 1$, and apply Cauchy's inequality to write
\begin{equation}\label{eq: applying Cauchy to sum of U}
\begin{split}
& \sum_{\substack{ K\leq |k|< 2K \\ k\text{ odd}}}U(\alpha,k,w) \ll \Bigg(\sum_{\substack{d=V \\ (d,2)=1}}^{2V-1} \frac{1}{d_1} \sum_{\substack{ K\leq |k|<2K \\ k\text{ odd} }} k_2  | L(1+w,\chi_{k_1})|^4  \Bigg)^{\frac{1}{2}} \\
& \times \Bigg(\sum_{\substack{d=V \\ (d,2)=1}}^{2V-1} \frac{1}{d_1} \sum_{\substack{ K\leq |k|< 2K \\ k\text{ odd} }}\frac{1}{k_2} \Bigg| \sum_{\substack{\ell=J \\ (\ell,2\alpha d)=1}}^{2J-1} \frac{\beta_{\ell,d}}{\ell}\left( \frac{d_1}{\ell}\right) \mathcal{G}_0(1+w;k,\ell,\alpha,d) h\left( \frac{kX}{8\alpha^2d_1\ell },w\right) \Bigg|^2 \Bigg)^{\frac{1}{2}},
\end{split}
\end{equation}
where $k_2$ is defined by \eqref{eq: defn of k1 and k2}. To bound the first factor on the right-hand side of \eqref{eq: applying Cauchy to sum of U}, we split the $k$-sum according to the values of $k_1$ and $k_2$ and interchange the order of summation. Then we use the fact that $d_1\geq d/\alpha$ by \eqref{eq: defn of d 1} to deduce that
\begin{equation*}
\begin{split}
& \sum_{\substack{d=V \\ (d,2)=1}}^{2V-1} \frac{1}{d_1} \sum_{\substack{ K\leq |k|<2K \\ k\text{ odd} }} k_2  | L(1+w,\chi_{k_1})|^4 \leq  \frac{\alpha}{V} \sum_{\substack{0<|k_1|\ll KV }} | L(1+w,\chi_{k_1})|^4 \sum_{k_2\ll \sqrt{\frac{KV}{k_1}} }k_2 \sum_{\substack{d=V \\ (d,2)=1 \\ d_1|k_1k_2^2}}^{2V-1} 1 .
\end{split}
\end{equation*}
We estimate the inner sum using the divisor bound, and find that the above is
\begin{equation*}
\ll \alpha K^{1+\varepsilon} V^{\varepsilon} \sum_{\substack{0<|k_1|\ll KV }} \frac{1}{k_1}| L(1+w,\chi_{k_1})|^4  \ll \alpha K^{1+\varepsilon} V^{\varepsilon} (1+|w|)^{1+\varepsilon}
\end{equation*}
by Lemma~\ref{lem: moment estimates}. It follows from this and \eqref{eq: applying Cauchy to sum of U} that
\begin{equation}\label{eq:using moment estimates of Heath Brown}
\begin{split}
& \sum_{\substack{ K\leq |k|<2K \\ k\text{ odd}}}U(\alpha,k,w) \ll \Bigg(\alpha K^{1+\varepsilon} V^{\varepsilon} (1+|w|)^{1+\varepsilon}  \Bigg)^{\frac{1}{2}} \\
& \times \Bigg(\sum_{\substack{d=V \\ (d,2)=1}}^{2V-1} \frac{1}{d_1} \sum_{\substack{ K\leq |k|<2K \\ k\text{ odd} }}\frac{1}{k_2} \Bigg| \sum_{\substack{\ell=J \\ (\ell,2\alpha d)=1}}^{2J-1} \frac{\beta_{\ell,d}}{\ell}\left( \frac{d_1}{\ell}\right) \mathcal{G}_0(1+w;k,\ell,\alpha,d) h\left( \frac{kX}{8\alpha^2d_1\ell },w\right) \Bigg|^2 \Bigg)^{\frac{1}{2}}.
\end{split}
\end{equation}
The next task is to bound the second factor on the right-hand side. To this end we prove the following two lemmas.
\begin{lem}\label{lem: version of Lemma 5.4 of Sound}
Let $\alpha\leq Y$, $d$, $K$, and $J$ be positive integers, and suppose $w$ is a complex number with real part $-\frac{1}{2}+\varepsilon$. Then for any choice of complex numbers $\gamma_{\ell}$ with $|\gamma_{\ell}|\leq 1$,
$$
\sum_{\substack{ K\leq |k|<2K \\ k\textup{ odd} }}\frac{1}{k_2} \Bigg| \sum_{\substack{\ell=J \\ (\ell,2\alpha d)=1}}^{2J-1} \frac{\gamma_{\ell}}{\ell}  \mathcal{G}_0(1+w;k,\ell,\alpha,d) h\left( \frac{kX}{8\alpha^2d_1\ell },w\right) \Bigg|^2
$$
is bounded by
\begin{align*}
\ll_{\varepsilon} |\check{\Phi}(w)|^2   \frac{d_1\alpha^{2+\varepsilon}J^{2+\varepsilon}K^{\varepsilon}d^{\varepsilon} }{X^{1-\varepsilon}} \exp\Bigg( -\frac{1}{20}\frac{\sqrt{K}}{\alpha\sqrt{d_1 J(1+|w|)}}\Bigg).
\end{align*}
and also by
\begin{align*}
\ll_{\varepsilon} ((1+|w|)\alpha d JKX)^{\varepsilon}|\check{\Phi}(w)|^2  \frac{\alpha^2 d_1(JK+J^2)}{KX}.
\end{align*}
\end{lem}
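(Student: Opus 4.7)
The plan is to establish the two bounds by entirely different routes: the first uses the exponential decay of $h(\xi,w)$ while the second exploits cancellation in the $k$-sum via the large sieve.

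For the first bound, I would apply the exponential-decay estimate for $h$ from Lemma~\ref{lem: properties of h(xi,w)} directly. With $\xi = kX/(8\alpha^2 d_1 \ell) \asymp KX/(\alpha^2 d_1 J)$ and $\mathrm{Re}(w)=-\tfrac12+\varepsilon$, this gives
\begin{align*}
\Bigl|h\Bigl(\tfrac{kX}{8\alpha^2 d_1\ell},w\Bigr)\Bigr| \ll (1+|w|)^{-\varepsilon}\,|\check{\Phi}(w)|\,\Bigl(\tfrac{\alpha^2 d_1 J}{KX}\Bigr)^{1/2-\varepsilon}\exp\!\Bigl(-\tfrac{1}{10}\tfrac{\sqrt{K/(8\alpha^2 d_1 J)}}{\sqrt{|w|+1}}\Bigr).
\end{align*}
I would then bound $\mathcal{G}_0(1+w;k,\ell,\alpha,d)$ trivially by $(|k|\alpha d J)^{\varepsilon}\ell^{1/2}(\ell,k_4^2)^{1/2}$ using Lemma~\ref{lem: nu-sum as an Euler product}, expand the square by Cauchy--Schwarz (losing a factor $J^{1+\varepsilon}$), and then sum $\ell$ and $k$ trivially over their dyadic ranges. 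The factor $1/k_2$ and the divisor bound take care of the $(\ell,k_4^2)$ weighting without loss. The product $K \cdot (\alpha^2 d_1 J/(KX)) \cdot J^{1+\varepsilon} \cdot 1$ produces the stated $d_1\alpha^{2+\varepsilon}J^{2+\varepsilon}K^\varepsilon d^\varepsilon/X^{1-\varepsilon}$, and the exponential factor survives unchanged.

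For the second bound, the exponential is essentially $1$ and one must extract genuine cancellation in $k$. I would open the square to obtain
\begin{align*}
\sum_{\ell_1,\ell_2}\frac{\gamma_{\ell_1}\overline{\gamma_{\ell_2}}}{\ell_1\ell_2}\sum_{\substack{K\leq|k|<2K\\ k\text{ odd}}}\frac{1}{k_2}\,\mathcal{G}_0(1+w;k,\ell_1,\alpha,d)\overline{\mathcal{G}_0(1+w;k,\ell_2,\alpha,d)}\,\bigl|h(\cdot,w)\bigr|^2.
\end{align*}
Using the polynomial bound $|h(\xi,w)|\ll |\check{\Phi}(w)||\xi|^{-1/2+\varepsilon}$ from Lemma~\ref{lem: properties of h(xi,w)}, the size factor becomes $|\check{\Phi}(w)|^2 \alpha^2 d_1 J/(KX)$ (up to $X^\varepsilon$). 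The remaining $k$-sum features the quadratic character $\chi_{k_1}$ through $\mathcal{G}_0$, and the weight $1/k_2$ together with the $k\mapsto k_1k_2^2$ parametrization is exactly the setup of the second inequality in Lemma~\ref{lem: estimates for character sums}. Applying that inequality to the $\ell$-sum viewed as a Dirichlet polynomial of length $J$ in the ``$k$-variable'' gives $\ll (JK)^\varepsilon J(K+J)$, producing the factor $(JK+J^2)$ in the final bound.

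The main obstacle is the second step: one has to unpack the Euler product for $\mathcal{G}_0$ cleanly enough to expose $\chi_{k_1}(\cdot)$ as a genuine quadratic character on the $k$-variable so that Heath-Brown's large sieve applies. Concretely, the local factors at primes dividing $\ell_1\ell_2$ and the $(\ell,k_4^2)^{1/2}$ type growth must be absorbed into the $(\alpha dJKX)^\varepsilon$ factor, which requires using Lemma~\ref{lem: properties of Gkn} to show that the ``bad'' local contributions are controlled by the divisor function uniformly in $k$. Once this is isolated, everything else—dyadic summation, the Mellin-side bounds on $h$, and the large sieve—slots in routinely to give the stated estimate.
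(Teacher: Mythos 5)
Your treatment of the first bound is essentially the paper's: triangle inequality, the bound for $\mathcal{G}_0$ from Lemma~\ref{lem: nu-sum as an Euler product}, the exponential-decay bound for $h$ from Lemma~\ref{lem: properties of h(xi,w)}, and then a dyadic count of the $k$-sum using the $k\mapsto(k_1,k_2)$ parametrization and $(\ell,k_4^2)\leq k_2^2$. That part is fine.

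Your plan for the second bound, however, has a genuine gap: you bound $|h|$ \emph{before} applying the large sieve, and this destroys the very structure the large sieve needs. After you open the square, the $k$-summand is $\tfrac{1}{k_2}\mathcal{G}_0(k,\ell_1)\overline{\mathcal{G}_0(k,\ell_2)}\,h(\xi_1,w)\overline{h(\xi_2,w)}$ with $\xi_i = kX/(8\alpha^2 d_1\ell_i)$. Once you replace $h(\xi_1)\overline{h(\xi_2)}$ by the scalar $|\check{\Phi}(w)|^2\alpha^2 d_1 J/(KX)$, you have necessarily taken absolute values inside the $k$-sum and can no longer reassemble it into a $\sum_k\tfrac{1}{k_2}\bigl|\sum_{\ell}\dots\bigr|^2$ quantity, which is the only form the large sieve of Lemma~\ref{lem: estimates for character sums} can see. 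In fact if you carry out the bound after pulling out the uniform $|h|$-estimate, a straightforward computation gives $\ll |\check{\Phi}(w)|^2\alpha^2 d_1 J^2/X$ up to $\varepsilon$-powers, which in the regime $K>J$ is worse than the claimed $\alpha^2 d_1 J/X$ by a factor of $J$; so cancellation in the $\ell$-sum is genuinely needed and you've thrown it away. There is also a secondary issue: $|\xi|$ varies by a factor $\asymp K/J$ across the ranges, so $|\xi|^{-1/2+\varepsilon}$ is not a ``size factor'' independent of $\ell$ that can simply be extracted.

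The device you are missing is the Mellin-type representation of $h$, namely the contour-integral formula \eqref{eq: integral in lemma for h(xi,w)} in Lemma~\ref{lem: properties of h(xi,w)}. Inserting that formula with $c=\varepsilon$ writes $h(\xi,w)$ as $|\xi|^w\check{\Phi}(w)$ times $\tfrac{1}{2\pi i}\int_{(c)} g(s,w;\mathrm{sgn}\,\xi)\,(X/\pi|\xi|)^s\,ds$, and since $\xi = kX/(8\alpha^2 d_1\ell)$ the only $\ell$-dependence that appears is the clean multiplicative factor $\ell^{s-w}$. One then brings the $\ell$-sum inside the $s$-integral, absorbs $\ell^{s-w}$ into the coefficients, applies Cauchy--Schwarz with the Stirling decay $g(s,w;\pm)\ll(1+|w|)^{\varepsilon}\exp(-(\tfrac{\pi}{2}-\varepsilon)|\mathrm{Im}\,s|)$ to move the $|ds|$-integral outside the square, and only then invokes Lemma~\ref{lem: version of Lemma 5.5 of Sound}. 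That lemma is exactly where the unpacking of $\mathcal{G}_0$ into a genuine quadratic character that you flagged as ``the main obstacle'' is carried out (via the $g|a(k)$, $m$-squarefree decomposition \eqref{eq: G0 in terms of g and m} and the parametrization $4fb(g)d_1=k_1k_2^2$), so your instinct about that difficulty is correct; but without first separating the $k$- and $\ell$-dependence of $h$ by the Mellin integral, the reduction to that lemma never gets off the ground.
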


\begin{lem}\label{lem: version of Lemma 5.5 of Sound}
Let $\delta_{\ell}\ll \ell^{\varepsilon}$ be any sequence of complex numbers and let \upshape{Re}$(w)=-\frac{1}{2}+\varepsilon$. Then
\begin{equation*}
\sum_{K\leq |k|< 2K}\frac{1}{k_2} \left| \sum_{\substack{\ell=J \\ (\ell,2\alpha d)=1}}^{2J-1} \frac{\delta_{\ell}}{\sqrt{\ell}} \mathcal{G}_0(1+w;k,\ell,\alpha, d)\right|^2 \ll_{\varepsilon} (\alpha d JK)^{\varepsilon} J(J+K).
\end{equation*}
\end{lem}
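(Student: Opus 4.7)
The plan is to reduce the bound to an application of Heath-Brown's quadratic large sieve, the second inequality in Lemma~\ref{lem: estimates for character sums}. The weight $1/k_2$ appearing on the outer sum, where $4kd_1=k_1k_2^2$, is precisely the weight present in Heath-Brown's inequality, so the task is to express the inner sum as a sum of the shape $\sum_n a_n\bigl(\tfrac{4kd_1}{n}\bigr)$ (up to small factors) with coefficients $a_n$ satisfying $|a_n|\ll n^\varepsilon$.

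First I would use Lemma~\ref{lem: nu-sum as an Euler product} to write
\[
\mathcal{G}_0(1+w;k,\ell,\alpha,d)=L(1+w,\chi_{k_1})^{-2}\sum_{\substack{\nu\ge 1\\ (\nu,2\alpha d)=1}}\frac{d_2(\nu)(d_1/\nu)G_k(\ell\nu)}{\nu^{3/2+w}},
\]
and smoothly truncate the $\nu$-sum to length $N\asymp (JK)^{1+\varepsilon}$ by inserting a Mellin factor $\frac{1}{2\pi i}\int_{(\varepsilon)}\hat\psi(u)N^u\,du$ with $\psi$ a smooth bump equal to $1$ near the origin, then shifting the $u$-contour to a line where $\mathrm{Re}(1+w+u)>1/2$, so both $L^{-2}$ and the $\nu$-series admit absolutely convergent Dirichlet expansions. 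Expanding $L(1+w+u,\chi_{k_1})^{-2}=\sum_m(\mu\ast\mu)(m)\chi_{k_1}(m)m^{-(1+w+u)}$ and reinterpreting the product $\chi_{k_1}(m)\,G_k(\ell\nu)/\sqrt{\ell\nu}$ via Lemma~\ref{lem: properties of Gkn}, I obtain an essentially finite expansion
\[
\mathcal{G}_0(1+w;k,\ell,\alpha,d)=\sum_{n\le N^{1+\varepsilon}}\frac{b_n(\ell,\alpha,d)}{n^{1+w}}\Bigl(\tfrac{4kd_1}{n}\Bigr)+O\bigl((JK)^{-100}\bigr),
\]
where $|b_n(\ell,\alpha,d)|\ll n^\varepsilon(\ell,n)^{1/2}$ and the tail error comes from the contour shift and the rapid decay of $\hat\psi$.

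Substituting this expansion into the inner sum over $\ell$, interchanging the orders of summation and applying Cauchy--Schwarz in the $n$-variable (using the divisor-like bound on $b_n$ to absorb a factor of $N^\varepsilon$), the target sum is bounded by
\[
(\alpha d JK)^\varepsilon\sum_{n\le N^{1+\varepsilon}}\frac{1}{n}\sum_{K\le|k|<2K}\frac{1}{k_2}\Bigl|\sum_{\substack{J\le\ell<2J\\ (\ell,2\alpha d)=1}}\tfrac{\delta_\ell\,b_n(\ell,\alpha,d)}{\sqrt{\ell}}\Bigl(\tfrac{4kd_1}{n}\Bigr)\Bigr|^2,
\]
after which I apply the second inequality of Lemma~\ref{lem: estimates for character sums} to the inner $k$-sum with $M\asymp Kd_1$ fixed and the $\ell$-sum playing the role of $n$. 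This yields $\ll (JKd_1)^\varepsilon J(J+Kd_1)$ per $n$, and summing over $n\le N^{1+\varepsilon}$ costs only $(JK)^\varepsilon$. Since $d_1\le d$ is absorbed in the $(\alpha d JK)^\varepsilon$ factor, this gives the claimed bound $\ll_\varepsilon(\alpha dJK)^\varepsilon J(J+K)$.

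The main obstacle is the contour shift: we must verify that $\mathcal{G}_0(s;k,\ell,\alpha,d)$ enjoys enough holomorphy and polynomial growth slightly to the right of $\mathrm{Re}(s)=1/2$ to permit the truncation with acceptable error, uniformly in $k$ and $\ell$. This is guaranteed by the Euler product description in Lemma~\ref{lem: nu-sum as an Euler product} together with convexity bounds for $L(s,\chi_{k_1})$, and the factor $(\ell,k_4^2)^{1/2}$ in the pointwise bound of that lemma ultimately contributes only to the $\varepsilon$-exponent after Cauchy--Schwarz.
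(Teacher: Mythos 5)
Your high-level plan---reduce to Heath-Brown's quadratic large sieve (the second inequality in Lemma~\ref{lem: estimates for character sums})---is the same one the paper uses, but the expansion you write down for $\mathcal{G}_0$ is structurally wrong, and the error is fatal rather than cosmetic. In your claimed expansion
$\mathcal{G}_0(1+w;k,\ell,\alpha,d)=\sum_{n}b_n(\ell,\alpha,d)\,n^{-(1+w)}\left(\tfrac{4kd_1}{n}\right)+O((JK)^{-100})$
the coefficients $b_n$ are $k$-independent and the character $\left(\tfrac{4kd_1}{n}\right)$ is $\ell$-independent. But the Gauss sum $G_k(\ell\nu)$ contributes, on the primes $p\mid\ell$ with $p\nmid k$, a factor $\left(\tfrac{k}{p}\right)\sqrt{p}$ (Lemma~\ref{lem: properties of Gkn}), so $\mathcal{G}_0$ genuinely contains a Jacobi symbol whose argument involves $\ell$, entangled with $k$. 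This cannot be absorbed into a symbol $\left(\tfrac{4kd_1}{n}\right)$ with $n$ a fresh variable. Concretely, in your display after Cauchy--Schwarz, the factor $\left(\tfrac{4kd_1}{n}\right)$ sits inside the $\ell$-sum but does not depend on $\ell$; it pulls straight out, leaving $\bigl|\sum_\ell\delta_\ell b_n(\ell)/\sqrt{\ell}\bigr|^2$ with no character in it and hence no cancellation to harvest from the $k$-average. Your phrase ``the $\ell$-sum playing the role of $n$ in Lemma~\ref{lem: estimates for character sums}'' is inconsistent with the displayed formula: for that to work the character must take the square-free part of $\ell$ as its argument, not the auxiliary variable $n$.

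Fixing this requires making explicit the coupling between $\ell$ and $k$, which is what the paper's proof does: $\mathcal{G}_0$ vanishes unless $\ell=gm$ with $g\mid a(k)$ and $m$ square-free and coprime to $k$, and then \eqref{eq: G0 in terms of g and m} extracts exactly $\sqrt{m}\left(\tfrac{k}{m}\right)$ times a correction Euler product times $\mathcal{G}_0(1+w;k,g,\alpha,d)$; the character for the large sieve is $\left(\tfrac{k}{m}\right)$ with argument $m\asymp J/g$, modulus $k$ with $b(g)\mid k$ so $|k/b(g)|\asymp K/b(g)$, and this is what produces $J(J+K)$ rather than your $N(Kd_1+N)$ with $N\asymp(JK)^{1+\varepsilon}$, which is much too large. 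Two further maneuvers you would still need are (i) the interchange $g\mid a(k)\Longleftrightarrow b(g)\mid k$ and the observation $k_2^{-1}\ll f_2^{-1}$ after relabelling $k=fb(g)$, which set up the sieve in the correct shape, and (ii) the split into $3\nmid m$ and $3\mid m$, needed because the Euler factor $\bigl(1-4p^{-(2+2w)}\bigr)^{-1}$ is not uniformly bounded at $p=3$ when $\mathrm{Re}(w)=-\tfrac12+\varepsilon$. Your contour shift of the $\nu$-series also needs $\mathrm{Re}(1+w+u)>1$ (not $>\tfrac12$) for absolute convergence; that is a smaller issue but worth noting.
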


\begin{proof}[Proof of Lemma \ref{lem: version of Lemma 5.4 of Sound} assuming Lemma \ref{lem: version of Lemma 5.5 of Sound}]
To prove the first bound, we use the triangle inequality and apply the bounds for $\mathcal{G}_0$ from Lemma~\ref{lem: nu-sum as an Euler product} and $h(\xi,w)$ from Lemma~\ref{lem: properties of h(xi,w)} to deduce that the sum in question is
$$
\ll |\check{\Phi}(w)|^2\frac{d_1\alpha^{2+\varepsilon}J^{\varepsilon}K^{\varepsilon}d^{\varepsilon} }{X^{1-\varepsilon}} \exp\Bigg( -\frac{1}{20}\frac{\sqrt{K}}{\alpha\sqrt{d_1 J(1+|w|)}}\Bigg)\sum_{\substack{ K\leq |k|<2K \\ k\text{ odd} }}\frac{1}{|k|k_2} \Bigg(\sum_{\substack{\ell=J \\ (\ell,2\alpha d)=1}}^{2J-1} (\ell,k_4^2)^{\frac{1}{2}} \Bigg)^2.
$$
We then estimate the $k$-sum by splitting it according to the values of $k_1$ and $k_2$ and using $(\ell,k_4^2)\leq k_4^2\leq k_2^2$, which follows from \eqref{eq: defn of k1 and k2} and \eqref{eq: defn of k3 and k4}. This leads to the first bound of the lemma.

To prove the second bound, we apply Lemma~\ref{lem: properties of h(xi,w)} and write the integral \eqref{eq: integral in lemma for h(xi,w)} as 
\begin{align*}
\frac{1}{2\pi i} \int_{(c)} g(s,w; \text{sgn}(\xi))\left(\frac{X}{\pi|\xi|}\right)^s\,ds
\end{align*}
with $c=\varepsilon$. We then bring the $\ell$-sum inside the integral and use the triangle inequality to deduce that
\begin{align*}
& \Bigg| \sum_{\substack{\ell=J \\ (\ell,2\alpha d)=1}}^{2J-1} \frac{\gamma_{\ell}}{\ell}  \mathcal{G}_0(1+w;k,\ell,\alpha,d) h\left( \frac{kX}{8\alpha^2d_1\ell },w\right) \Bigg| \\
& \ll |\check{\Phi}(w)|\left(\frac{\alpha^{1+\varepsilon} d_1^{\frac{1}{2}+\varepsilon} }{|k|^{\frac{1}{2}-\varepsilon}X^{\frac{1}{2}-\varepsilon}}\right)\int_{(\varepsilon)}\Bigg| g(s,w;\text{sgn}(k))\sum_{\substack{\ell=J \\ (\ell,2\alpha d)=1}}^{2J-1}  \frac{\gamma_{\ell}}{\ell^{1+w-s}} \mathcal{G}_0(1+w;k,\ell,\alpha,d)   \Bigg|\,|ds|.
\end{align*}
Thus, since $g(s,w;\text{sgn}(k))\ll_{\varepsilon}(1+|w|)^{\varepsilon}\exp(-(\frac{\pi}{2}-\varepsilon) |\text{Im}(s)|)$ by Stirling's formula, it follows from Cauchy's inequality that
\begin{equation*}
\begin{split}
\Bigg| \sum_{\substack{\ell=J \\ (\ell,2\alpha d)=1}}^{2J-1} \frac{\gamma_{\ell}}{\ell}  \mathcal{G}_0(1+w;k,\ell,\alpha,d) h\left( \frac{kX}{8\alpha^2d_1\ell },w\right) \Bigg|^2 \ll  (1+|w|)^{\varepsilon}|\check{\Phi}(w)|^2 \left(\frac{\alpha^{2+\varepsilon} d_1^{1+\varepsilon} }{|k|^{1-\varepsilon}X^{1-\varepsilon}}\right)  \\
\times \int_{(\varepsilon)}\exp(-(\tfrac{\pi}{2}-\varepsilon)|\text{Im}(s)|)\Bigg| \sum_{\substack{\ell=J \\ (\ell,2\alpha d)=1}}^{2J-1}  \frac{\gamma_{\ell}}{\ell^{1+w-s}} \mathcal{G}_0(1+w;k,\ell,\alpha,d)   \Bigg|^2\,|ds|.
\end{split}
\end{equation*}
The second bound of the lemma follows from this and Lemma~\ref{lem: version of Lemma 5.5 of Sound}.
\end{proof}

\begin{proof}[Proof of Lemma \ref{lem: version of Lemma 5.5 of Sound}]
For any integer $k=\pm \prod_{i, \ a_i\geq 1} p_i^{a_i}$, let $a(k)$ and $b(k)$ be defined by
\begin{equation}\label{eq: defn of a(k) and b(k)}
a(k)=\prod_i p_i^{a_i+1} \ \ \ \ \text{and} \ \ \ \ b(k)=\prod_{a_i=1} p_i \prod_{a_i\geq 2}p_i^{a_i-1}.
\end{equation}
From the definition of $\mathcal{G}_0$ in Lemma~\ref{lem: nu-sum as an Euler product}, we see that $\mathcal{G}_0(1+w;k,\ell,\alpha, d)=0$ unless $\ell$ can be written as $gm$ with $g|a(k)$ and $m$ square-free and relatively prime to $k$. With this expression for $\ell$, it follows from Lemma~\ref{lem: properties of Gkn} that if $(\ell,2\alpha d)=1$ then
\begin{equation}\label{eq: G0 in terms of g and m}
\mathcal{G}_0(1+w;k,\ell,\alpha, d)=\sqrt{m}\left( \frac{k}{m}\right) \prod_{p|m} \left( 1+ \frac{2}{p^{1+w}}\left( \frac{k_1}{p}\right)\right)^{-1} \mathcal{G}_{0}(1+w;k,g,\alpha, d).
\end{equation}
From this and Cauchy's inequality, we arrive at
\begin{equation}\label{eq: split G0 sum into 3|m and 3 not divides m}
\sum_{K\leq |k|< 2K}\frac{1}{k_2} \left| \sum_{\substack{\ell=J\\ (\ell,2\alpha d)=1}}^{2J-1} \frac{\delta_{\ell}}{\sqrt{\ell}} \mathcal{G}_0(1+w;k,\ell,\alpha, d)\right|^2 \ll_{\varepsilon} K^{\varepsilon} \sum_{K\leq |k|< 2K}\frac{1}{k_2}  \sum_{\substack{g|a(k) \\ g<2J}} (\Psi_1(k,g)+\Psi_2(k,g)),
\end{equation}
where
\begin{equation*}
\Psi_1(k,g) = \Bigg| \sum_{\substack{\frac{J}{g}\leq m<\frac{2J}{g} \\ (m,2\alpha d)=1\\ 3\nmid m }} \frac{\mu^2(m) \delta_{gm}}{\sqrt{g}}  \mathcal{G}_{0}(1+w;k,g,\alpha, d) \left( \frac{k}{m}\right) \prod_{p|m} \left( 1+ \frac{2}{p^{1+w}}\left( \frac{k_1}{p}\right)\right)^{-1} \Bigg|^2
\end{equation*}
and $\Psi_2(k,g)$ is the same, but with the condition $3|m$ instead of $3\nmid m$. We first bound the contribution of $\Psi_1$. We factor out $g^{-1/2}\mathcal{G}_0(1+w;k,g,\alpha, d)$ and apply the bound from Lemma~\ref{lem: nu-sum as an Euler product} to deduce that
\begin{equation}\label{eq: Psi 1 after factoring out G0}
\Psi_1(k,g) \ll_{\varepsilon} (\alpha d K)^{\varepsilon}g^{1+\varepsilon}\Bigg| \sum_{\substack{\frac{J}{g}\leq m<\frac{2J}{g} \\ (m,6\alpha d)=1 }} \mu^2(m) \delta_{gm}  \left( \frac{k}{m}\right) \prod_{p|m} \left( 1+ \frac{2}{p^{1+w}}\left( \frac{k_1}{p}\right)\right)^{-1} \Bigg|^2.
\end{equation}
If $\left( \frac{k}{m}\right)\neq 0$, then
\begin{align*}
\prod_{p|m} \left( 1+ \frac{2}{p^{1+w}}\left( \frac{k_1}{p}\right)\right)^{-1} 
& = \prod_{p|m} \left( 1- \frac{4}{p^{2+2w}}\right)^{-1} \prod_{p|m} \left( 1- \frac{2}{p^{1+w}}\left( \frac{k_1}{p}\right)\right) \\
& = \prod_{p|m} \left( 1- \frac{4}{p^{2+2w}}\right)^{-1}\sum_{j|m} \frac{\mu(j) d_2(j)}{j^{1+w}} \left( \frac{k_1}{j}\right).
\end{align*}
We insert this into \eqref{eq: Psi 1 after factoring out G0}, interchange the order of summation, and apply Cauchy's inequality to see that
\begin{equation*}
\Psi_1(k,g) \ll_{\varepsilon} (\alpha d K)^{\varepsilon}g^{1+\varepsilon} \sum_{j<\frac{2J}{g} } \Bigg| \sum_{\substack{\frac{J}{g}\leq m<\frac{2J}{g} \\ (m,6\alpha d)=1 \\ j|m }} \mu^2(m) \delta_{gm} \left( \frac{k}{m}\right) \prod_{\substack{p|m }} \left( 1- \frac{4}{p^{2+2w}} \right)^{-1}\Bigg|^2.
\end{equation*}
We next relabel $m$ as $jm$, factor out $\mu^2(j)\left( \frac{k}{j}\right) \prod_{\substack{p|j }} \left( 1- \frac{4}{p^{2+2w}} \right)^{-1}$ from the $m$-sum, and observe that $\prod_{\substack{p|j }} \left( 1- \frac{4}{p^{2+2w}} \right)^{-1}\ll_{\varepsilon} j^{\varepsilon}$ because Re$(w)\geq -\frac{1}{2}+\varepsilon$ and $p>3$ for all $p|m$. The result is
\begin{equation}\label{eq: after relabeling m as jm in Psi 1}
\Psi_1(k,g) \ll_{\varepsilon} (\alpha d JK)^{\varepsilon}g^{1+\varepsilon} \sum_{j<\frac{2J}{g} } \Bigg| \sum_{\substack{\frac{J}{gj}\leq m<\frac{2J}{gj} \\ (m,6\alpha dj)=1}} \mu^2(m) \delta_{gjm} \left( \frac{k}{m}\right) \prod_{\substack{p|m }} \left( 1- \frac{4}{p^{2+2w}} \right)^{-1}\Bigg|^2.
\end{equation}
Now, by \eqref{eq: defn of a(k) and b(k)}, $g|a(k)$ implies $b(g)|k$. Thus we may interchange the order of summation to write
$$
\sum_{K\leq |k|< 2K}\frac{1}{k_2}  \sum_{\substack{g|a(k) \\ g<2J}} \Psi_1(k,g) \leq \sum_{g<2J} \sum_{\substack{K\leq |k|< 2K \\ b(g)|k} }\frac{1}{k_2}  \Psi_1(k,g) = \sum_{g<2J}\sum_{\frac{K}{b(g)}\leq |f|< \frac{2K}{b(g)}} \frac{1}{k_2}  \Psi_1(f b(g),g),
$$
where we have relabeled $k$ in the last sum as $f b(g)$, so that, by \eqref{eq: defn of k1 and k2}, $k_2>0$ satisfies $4 f b(g) d_1=k_1k_2^2$, with $k_1$ a fundamental discriminant. From this and \eqref{eq: after relabeling m as jm in Psi 1}, we arrive at
\begin{equation}\label{eq: after relabeling k as fb(g)}
\begin{split}
\sum_{K\leq |k|< 2K}\frac{1}{k_2}  \sum_{\substack{g|a(k) \\ g<2J}} \Psi_1(k,g) \ll_{\varepsilon} (\alpha d J K)^{\varepsilon}\sum_{g<2J}g \sum_{\frac{K}{b(g)}\leq |f|< \frac{2K}{b(g)}} \frac{1}{k_2} \\
\times \sum_{j<\frac{2J}{g} } \Bigg| \sum_{\substack{\frac{J}{gj}\leq m<\frac{2J}{gj} \\ (m,6\alpha dj)=1}} \mu^2(m) \delta_{gjm} \left( \frac{f b(g)}{m}\right) \prod_{\substack{p|m }} \left( 1- \frac{4}{p^{2+2w}} \right)^{-1}\Bigg|^2 .
\end{split}
\end{equation}
If $4f=f_1f_2^2$, with $f_1$ a fundamental discriminant and $f_2$ a positive integer, then the equation $4 f b(g) d_1=k_1k_2^2$ implies that $f_2|2k_2$, and thus $k_2^{-1}\ll f_2^{-1}$. Hence it follows from \eqref{eq: after relabeling k as fb(g)} and Lemma~\ref{lem: estimates for character sums} that
$$
\sum_{K\leq |k|< 2K}\frac{1}{k_2}  \sum_{\substack{g|a(k) \\ g<2J}} \Psi_1(k,g) \ll_{\varepsilon} (\alpha d J K)^{\varepsilon} J(J+K).
$$
This proves the desired bound for the sum of $\Psi_1(k,g)$ in \eqref{eq: split G0 sum into 3|m and 3 not divides m}. To bound the sum of $\Psi_2(k,g)$, we argue in the same way, but instead of \eqref{eq: G0 in terms of g and m} we use
\begin{equation*}
\mathcal{G}_0(1+w;k,\ell,\alpha, d)=\sqrt{m}\left( \frac{k}{m}\right) \prod_{\substack{p|m\\ p>3}} \left( 1+ \frac{2}{p^{1+w}}\left( \frac{k_1}{p}\right)\right)^{-1} \mathcal{G}_{0}^*(1+w;k,g,\alpha, d),
\end{equation*}
where
$$
\mathcal{G}^*_{0}(1+w;k,g,\alpha, d) \ = \ \left( 1-\frac{1}{3^{1+w}} \left( \frac{k_1}{3}\right)\right)^2\prod_{p\neq 3} \mathcal{G}_{0,p}(1+w;k,g,\alpha, d),
$$
with $\mathcal{G}_{0,p}$ as defined in Lemma~\ref{lem: nu-sum as an Euler product}.
\end{proof}
We now estimate the contribution of $\mathcal{R}_1$. From the first bound of Lemma~\ref{lem: version of Lemma 5.4 of Sound}, we see that the sum of the right-hand side of \eqref{eq:using moment estimates of Heath Brown} over all $K=2^j>\alpha^2 V J(1+|w|)(\log X)^4$ is negligible. On the other hand, if $K\leq \alpha^2 V J(1+|w|)(\log X)^4$ then it follows from \eqref{eq:using moment estimates of Heath Brown} and the second bound in Lemma~\ref{lem: version of Lemma 5.4 of Sound} that
\begin{equation*}
\begin{split}
\sum_{\substack{ K\leq |k|<2K \\ k\text{ odd}}}U(\alpha,k,w)
& \ll_{\varepsilon} (1+|w|)^{\frac{1}{2}+\varepsilon}|\check{\Phi}(w)|(\alpha  JKVX)^{\varepsilon} \Bigg(  \frac{ \alpha^3 V(JK+J^2)}{X} \Bigg)^{\frac{1}{2}} \\
& \ll_{\varepsilon} (1+|w|)^{1+\varepsilon}|\check{\Phi}(w)|(\alpha  JKVX)^{\varepsilon} \frac{\alpha^{\frac{5}{2}} VJ}{X^{\frac{1}{2}}}.
\end{split}
\end{equation*}
We sum this over all $K=2^j$, $j$ a positive integer, with $K\leq \alpha^2 V J(1+|w|)(\log X)^4$, and then multiply the resulting sum by $\alpha^{-2}$. We then integrate over all $w$ with Re$(w)=-\frac{1}{2}+\varepsilon$ and sum over all integers $\alpha\leq Y$ to deduce from \eqref{eq: bound for R over dyadic intervals} that 
\begin{equation}\label{eq: 2nd bound for R over dyadic intervals}
\sum_{\substack{d=V \\ (d,2)=1}}^{2V-1}\sum_{\substack{\ell=J \\ (\ell,2d)=1}}^{2J-1} |\mathcal{R}(\ell,d)| \ll \frac{ V^{1+\varepsilon}J^{1+\varepsilon} Y^{\frac{3}{2}+\varepsilon}}{X^{\frac{1}{2}-\varepsilon}}.
\end{equation}
Recall from \eqref{eq: defn of R1} and \eqref{eq: defn of R} that $\mathcal{R}_1=m_1m_2\mathcal{R}(m_1m_2,d)$. Since $\lambda_d\ll d^{\varepsilon}$ by \eqref{lambda} and $b_m\ll 1$ by \eqref{eq: defn of mollifier coeffs bm}, it thus follows from \eqref{eq: 2nd bound for R over dyadic intervals} that
\begin{equation}\label{eq: bound for contribution of R1}
\sum_{\substack{ d\leq D \\ d \text{ odd} }} \mu^2(d)\lambda_d \mathop{\sum\sum}_{\substack{m_1,m_2\leq M \\ (m_1m_2,2d)=1}} \frac{b_{m_1}b_{m_2}}{(m_1m_2)^{3/2}} \,|\mathcal{R}_1 | \ll \frac{ D^{1+\varepsilon}M^{1+\varepsilon} Y^{\frac{3}{2}+\varepsilon}}{X^{\frac{1}{2}-\varepsilon}}.
\end{equation}

\subsection{Conditions for the parameters}\label{subsection: conditions for the parameters}

From \eqref{eq: defn of Q1 star}, \eqref{Q14}, \eqref{P14}, and \eqref{eq: bound for contribution of R1}, we see that the total contribution of the sum with $\mathcal{Q}_1$ to $\mathcal{B}$ in \eqref{R0split} is
\begin{equation}\label{Q15}
\begin{split}
& \frac{X}{(\sqrt{2}-1)^4}\sum_{\substack{ d\leq D \\ d \text{ odd} }} \mu^2(d)\lambda_d \mathop{\sum\sum}_{\substack{m_1,m_2\leq M \\ (m_1m_2,2d)=1}} \frac{b_{m_1}b_{m_2}}{(m_1m_2)^{3/2}} \sum_{\substack{ \nu=1 \\ (\nu,2d)=1}}^{\infty} \frac{d_2(\nu) }{\nu^{3/2}} \ \sum_{\substack{ \alpha\leq Y \\ (\alpha,2m_1m_2\nu)=1}} \frac{\mu(\alpha)}{\alpha^2d_1}\, \mathcal{Q}_1 \\
& = \left(\frac{1+i}{2}\right)\frac{\sqrt{2}X}{2(\sqrt{2}-1)^4}\sum_{\substack{ d\leq D \\ d \text{ odd} }} \mu^2(d)\lambda_d \mathop{\sum\sum}_{\substack{m_1,m_2\leq M \\ (m_1m_2,2d)=1}} \frac{b_{m_1}\overline{b_{m_2}}}{(m_1m_2)^{3/2}} \,  \underset{w=0}{\mbox{Res}} \   X^w \frac{1}{2\pi i} \int_{\left( \frac{1}{\log X}\right)}    \Gamma_2(s-w) \\
& \ \times  8^{s-w}\left(1-\frac{1}{4^{s-w}}\right) \mathcal{K}(s,w;m_1m_2, d)\,\frac{ds}{s} + O\Bigg( \frac{X^{1+\varepsilon}D^{\varepsilon}M^{\varepsilon}  }{Y^{1-\varepsilon}} + X^{\frac{1}{2}+\varepsilon}D^{1+\varepsilon}M^{1+\varepsilon} Y^{\frac{3}{2}+\varepsilon} \Big).
\end{split}
\end{equation}
Recall the definition \eqref{eq:outline section, defn of M, length of mollifier} of $M$. Also, recall the definitions \eqref{Ddef} and \eqref{eq:sieve section, defn of R} of $D$ and $R$ of $D$, respectively. So that the error terms in \eqref{Q15} are $O(X^{1-\varepsilon})$, we assume the parameters $\theta$ and $\vartheta$ satisfy
\begin{equation*}
\theta+2\vartheta<\frac{1}{2},
\end{equation*}
and we take the parameter $Y$ in \eqref{mu2approx} to be
\begin{equation*}
Y=X^{\delta}
\end{equation*}
with $\delta = \delta(\theta,\vartheta)$ sufficiently small.

\subsection{Evaluating the sums of the other terms with $k\neq 0$}

The procedure for evaluating the sum with $\mathcal{Q}_2$ in \eqref{R0split} is largely similar to the above process for $\mathcal{Q}_1$, with only a few differences. The main difference arises from the negative sign in the character $\left( \frac{-2d_1}{m_1m_2\nu}\right)$ in \eqref{Q2}. This causes the residues in the versions of \eqref{Q14} and \eqref{eq: defn of P1} for $\mathcal{Q}_2$ to have each $-kd_1$ equal to a perfect square instead of $kd_1=\square$. This means sgn$(k)=-1$. Hence, because of the factor sgn$(\xi)$ in \eqref{eq: integral in lemma for h(xi,w)}, the version of \eqref{P12} for $\mathcal{Q}_2$ has the function
\begin{equation*}
(2\pi)^{-u} \Gamma(u) ( \cos \left(\tfrac{\pi}{2}u\right)+i \sin \left(\tfrac{\pi}{2}u\right) )
\end{equation*}
in place of the function $\Gamma_2(u)$ defined by \eqref{eq: defn of Gamma 2}. These lead to a version of \eqref{Q15} for $\mathcal{Q}_2$ that we may combine with \eqref{Q15} using the identity
\begin{equation}\label{eq: identity with cos and sine}
\left( \frac{1+i}{2}\right) (\cos u - i\sin u) + \left( \frac{1-i}{2}\right) (\cos u + i\sin u) =\cos u + \sin u.
\end{equation}
The result is
\begin{equation}\label{eq: combining Q1 and Q2}
\begin{split}
& \frac{X}{(\sqrt{2}-1)^4}\sum_{\substack{ d\leq D \\ d \text{ odd} }} \mu^2(d)\lambda_d \mathop{\sum\sum}_{\substack{m_1,m_2\leq M \\ (m_1m_2,2d)=1}} \frac{b_{m_1}b_{m_2}}{(m_1m_2)^{3/2}} \sum_{\substack{ \nu=1 \\ (\nu,2d)=1}}^{\infty} \frac{d_2(\nu) }{\nu^{3/2}} \ \sum_{\substack{ \alpha\leq Y \\ (\alpha,2m_1m_2\nu)=1}} \frac{\mu(\alpha)}{\alpha^2d_1}\, (\mathcal{Q}_1+\mathcal{Q}_2) \\
& = \frac{\sqrt{2}X}{2(\sqrt{2}-1)^4}\sum_{\substack{ d\leq D \\ d \text{ odd} }} \mu^2(d)\lambda_d \mathop{\sum\sum}_{\substack{m_1,m_2\leq M \\ (m_1m_2,2d)=1}} \frac{b_{m_1}b_{m_2}}{(m_1m_2)^{3/2}} \, \underset{w=0}{\mbox{Res}} \   X^w \frac{1}{2\pi i} \int_{\left( \frac{1}{\log X}\right)}    \Gamma_1(s-w) \\
& \ \times  8^{s-w}\left(1-\frac{1}{4^{s-w}}\right) \mathcal{K}(s,w;m_1m_2, d)\,\frac{ds}{s} + O(X^{1-\varepsilon}),
\end{split}
\end{equation}
where
\begin{equation}\label{eq: defn of Gamma 1}
\Gamma_1(u)=(2\pi)^{-u} \Gamma(u) ( \cos \left(\tfrac{\pi}{2}u\right)+ \sin \left(\tfrac{\pi}{2}u\right) )
\end{equation}
and the bound $O(X^{1-\varepsilon})$ for the error term is guaranteed by the conditions in Subsection \ref{subsection: conditions for the parameters}.

The evaluation of the sums in \eqref{R0split} with $\mathcal{Q}_3$ and $\mathcal{Q}_4$ defined by \eqref{Q3} and \eqref{Q4} is similar. The version of \eqref{eq: defn of P1} for $\mathcal{Q}_3$ has an extra $-1$ factor because the Kronecker symbol $\left(\frac{-2}{kd_1}\right)$ equals $-1$ when $-kd_1$ is an odd perfect square. The resulting expression for the sums in \eqref{R0split} with $\mathcal{Q}_3$ and $\mathcal{Q}_4$ is exactly the same as the right-hand side of \eqref{eq: combining Q1 and Q2}. Therefore
\begin{equation}\label{eq: combining Q1 Q2 Q3 Q4}
\begin{split}
& \frac{X}{(\sqrt{2}-1)^4}\sum_{\substack{ d\leq D \\ d \text{ odd} }} \mu^2(d)\lambda_d \mathop{\sum\sum}_{\substack{m_1,m_2\leq M \\ (m_1m_2,2d)=1}} \frac{b_{m_1}b_{m_2}}{(m_1m_2)^{3/2}} \sum_{\substack{ \nu=1 \\ (\nu,2d)=1}}^{\infty} \frac{d_2(\nu) }{\nu^{3/2}} \ \sum_{\substack{ \alpha\leq Y \\ (\alpha,2m_1m_2\nu)=1}} \frac{\mu(\alpha)}{\alpha^2d_1} \sum_{j=1}^4\mathcal{Q}_j\\
& = \frac{\sqrt{2}X}{(\sqrt{2}-1)^4}\sum_{\substack{ d\leq D \\ d \text{ odd} }} \mu^2(d)\lambda_d \mathop{\sum\sum}_{\substack{m_1,m_2\leq M \\ (m_1m_2,2d)=1}} \frac{b_{m_1}b_{m_2}}{(m_1m_2)^{3/2}} \, \underset{w=0}{\mbox{Res}} \   X^w \frac{1}{2\pi i} \int_{\left( \frac{1}{\log X}\right)}    \Gamma_1(s-w) \\
& \ \times  8^{s-w}\left(1-\frac{1}{4^{s-w}}\right) \mathcal{K}(s,w;m_1m_2, d)\,\frac{ds}{s} + O(X^{1-\varepsilon}).
\end{split}
\end{equation}

To estimate the sum with $\mathcal{U}_1$ in \eqref{R0split}, we first relabel $k$ in \eqref{U1} as $2k$, now with $k$ odd, to write
\begin{equation}\label{U1 relabeled}
\mathcal{U}_1 = \left(\frac{1+i}{2}\right)\left( \frac{2d_1}{m_1m_2\nu}\right)    \sum_{\substack{k \in\mathbb{Z} \\ k \text{ odd}}}e\left(\frac{k\overline{\alpha^2d_1m_1m_2\nu}}{4}\right)\hat{F}_{\nu}\left( \frac{kX}{4\alpha^2d_1m_1m_2\nu}\right) G_{2k}(m_1m_2\nu).
\end{equation}
From the definition \eqref{eq: defn of Gk} of $G_k(n)$, we see that $G_{2k}(n)=\left( \frac{2}{n}\right) G_k(n)$ for all odd integers $n$. Also, the orthogonality of Dirichlet characters modulo $4$ implies that $e(\frac{h}{4})=i(\frac{-1}{h})$ for odd $h$. It follows from these and \eqref{U1 relabeled} that
\begin{equation*}
\mathcal{U}_1 = i\left(\frac{1+i}{2}\right)\left( \frac{-d_1}{m_1m_2\nu}\right)    \sum_{\substack{k \in\mathbb{Z} \\ k \text{ odd}}}\left(\frac{-1}{kd_1 }\right)\hat{F}_{\nu}\left( \frac{kX}{4\alpha^2d_1m_1m_2\nu}\right) G_{k}(m_1m_2\nu).
\end{equation*}
We then proceed as we did for $\mathcal{Q}_1$. We treat the sum with $\mathcal{U}_2$, defined by \eqref{U2}, in a similar way. We combine the resulting expressions using the identity \eqref{eq: identity with cos and sine}, and we arrive at
\begin{equation}\label{U1U2}
\begin{split}
& \frac{X}{(\sqrt{2}-1)^4}\sum_{\substack{ d\leq D \\ d \text{ odd} }} \mu^2(d)\lambda_d \mathop{\sum\sum}_{\substack{m_1,m_2\leq M \\ (m_1m_2,2d)=1}} \frac{b_{m_1}b_{m_2}}{(m_1m_2)^{3/2}} \sum_{\substack{ \nu=1 \\ (\nu,2d)=1}}^{\infty} \frac{d_2(\nu) }{\nu^{3/2}} \ \sum_{\substack{ \alpha\leq Y \\ (\alpha,2m_1m_2\nu)=1}} \frac{\mu(\alpha)}{\alpha^2d_1}\, (\mathcal{U}_1+\mathcal{U}_2) \\
& = \frac{X}{(\sqrt{2}-1)^4}\sum_{\substack{ d\leq D \\ d \text{ odd} }} \mu^2(d)\lambda_d \mathop{\sum\sum}_{\substack{m_1,m_2\leq M \\ (m_1m_2,2d)=1}} \frac{b_{m_1}b_{m_2}}{(m_1m_2)^{3/2}} \,  \underset{w=0}{\mbox{Res}} \   X^w \frac{1}{2\pi i} \int_{\left( \frac{1}{\log X}\right)}    \Gamma_1(s-w) \\
& \ \times  4^{s-w}\left(1-\frac{1}{4^{s-w}}\right) \mathcal{K}(s,w;m_1m_2, d)\,\frac{ds}{s} + O(X^{1-\varepsilon}).
\end{split}
\end{equation}

Next, to evaluate the sum with $\mathcal{V}$ in \eqref{R0split}, we relabel $k$ in \eqref{V} as $4k$, now with $k$ odd, to see that
\begin{equation*}
\mathcal{V} = -\left( \frac{2d_1}{m_1m_2\nu}\right)    \sum_{\substack{k \in\mathbb{Z} \\ k \text{ odd}}}\hat{F}_{\nu}\left( \frac{kX}{2\alpha^2d_1m_1m_2\nu}\right) {\tau}_k(m_1m_2\nu)
\end{equation*}
since $e(h/2)=-1$ for odd $h$ and $\tau_{4k}(n)=\tau_k(n)$ for odd $n$ by \eqref{Gaussdef}. Into this we insert the second expression for $\tau_k (n)$ in \eqref{Gaussdef}. Since $\left( \frac{-1}{n}\right) G_k(n)=G_{-k}(n)$ by \eqref{eq: defn of Gk}, we may split our sum expression for $\mathcal{V}$ into two, one with $G_k(n)$ and the other with $G_{-k}(n)$. We relabel $k$ as $-k$ in the latter and combine the result with the former to arrive at
\begin{equation}\label{V1}
\mathcal{V} = -\left( \frac{2d_1}{m_1m_2\nu}\right)    \sum_{\substack{k \in\mathbb{Z} \\ k \text{ odd}}}\tilde{F}_{\nu}\left( \frac{kX}{2\alpha^2d_1m_1m_2\nu}\right) G_k(m_1m_2\nu),
\end{equation}
where $\tilde{F}(\xi)$ is defined by
\begin{equation*}
\tilde{F}(\xi)=\frac{1+i}{2}\hat{F}(\xi)+ \frac{1-i}{2}\hat{F}(-\xi)=\int_{-\infty}^{\infty} (\cos(2\pi \xi x)+\sin(2\pi \xi x))F(x)\,dx.
\end{equation*}
We then proceed as we did for $\mathcal{Q}_1$, using \cite[Lemma 5.2]{Sou00} instead of Lemma~\ref{lem: properties of h(xi,w)}. We arrive at versions of \eqref{Q14}, \eqref{eq: defn of P1}, and \eqref{eq: defn of R1} which show that the residue at $w=0$ equals zero because $2kd_1 \neq \square$ when $kd_1$ is odd. This leads to
\begin{equation}\label{Vestimated}
\frac{X}{(\sqrt{2}-1)^4}\sum_{\substack{ d\leq D \\ d \text{ odd} }} \mu^2(d)\lambda_d \mathop{\sum\sum}_{\substack{m_1,m_2\leq M \\ (m_1m_2,2d)=1}} \frac{b_{m_1}b_{m_2}}{(m_1m_2)^{3/2}} \sum_{\substack{ \nu=1 \\ (\nu,2d)=1}}^{\infty} \frac{d_2(\nu) }{\nu^{3/2}} \ \sum_{\substack{ \alpha\leq Y \\ (\alpha,2m_1m_2\nu)=1}} \frac{\mu(\alpha)}{\alpha^2d_1}\, \mathcal{V} =O(X^{1-\varepsilon})
\end{equation}
under the conditions in Subsection \ref{subsection: conditions for the parameters}.

Lastly, to estimate the sum with $\mathcal{W}$ in \eqref{R0split}, we relabel $k$ in \eqref{W} as $8k$ to write
\begin{equation*}
\mathcal{W} =\left( \frac{d_1}{m_1m_2\nu}\right)    \sum_{\substack{k \in\mathbb{Z} \\ k\neq 0}}\hat{F}_{\nu}\left( \frac{kX}{\alpha^2d_1m_1m_2\nu}\right) {\tau}_k(m_1m_2\nu)
\end{equation*}
using the fact that $e(h)=1$ for any integer $h$ and $\tau_{8k}(n)=(\frac{2}{n})\tau_k(n)$ for odd $n$ by \eqref{Gaussdef}. Into this we insert the second expression for $\tau_k(n)$ in \eqref{Gaussdef}, apply $\left( \frac{-1}{n}\right) G_k(n)=G_{-k}(n)$, and recombine the $k$ and $-k$ terms as we did for $\mathcal{V}$ in \eqref{V1} to deduce that
\begin{equation*}
\mathcal{W} =\left( \frac{d_1}{m_1m_2\nu}\right)    \sum_{\substack{k \in\mathbb{Z} \\ k\neq 0}}\tilde{F}_{\nu}\left( \frac{kX}{\alpha^2d_1m_1m_2\nu}\right) G_k(m_1m_2\nu).
\end{equation*}
We then proceed as we did for $\mathcal{Q}_1$, using \cite[Lemma 5.2]{Sou00} instead of Lemma~\ref{lem: properties of h(xi,w)}. Since we are now summing over all nonzero integers $k$ and not just the odd ones, instead of \eqref{eq: write inner j sum as Euler product} we use
\begin{align*}
\sum_{j=1}^{\infty}j^{-2s+2w} \mathcal{G}(1+w;j^2,m_1m_2,\alpha d)
= \prod_{p} \sum_{b=0}^{\infty} p^{2b(w-s)} \mathcal{G}_p(1+w;p^{2b},m_1m_2,\alpha d) \\
= (m_1m_2)^{1-s+w} \ell_1^{s-w-\frac{1}{2}} \zeta(2s-2w)\zeta(2s+1) \mathcal{H}_1(s-w,1+w;m_1m_2,\alpha d).
\end{align*}
We arrive at
\begin{equation}\label{Westimated}
\begin{split}
\frac{X}{(\sqrt{2}-1)^4}\sum_{\substack{ d\leq D \\ d \text{ odd} }} \mu^2(d)\lambda_d \mathop{\sum\sum}_{\substack{m_1,m_2\leq M \\ (m_1m_2,2d)=1}} \frac{b_{m_1}b_{m_2}}{(m_1m_2)^{3/2}} \sum_{\substack{ \nu=1 \\ (\nu,2d)=1}}^{\infty} \frac{d_2(\nu) }{\nu^{3/2}} \ \sum_{\substack{ \alpha\leq Y \\ (\alpha,2m_1m_2\nu)=1}} \frac{\mu(\alpha)}{\alpha^2d_1}\, \mathcal{W} \\
= \frac{X}{(\sqrt{2}-1)^4}\sum_{\substack{ d\leq D \\ d \text{ odd} }} \mu^2(d)\lambda_d \mathop{\sum\sum}_{\substack{m_1,m_2\leq M \\ (m_1m_2,2d)=1}} \frac{b_{m_1}b_{m_2}}{(m_1m_2)^{3/2}} \,  \underset{w=0}{\mbox{Res}} \   X^w \frac{1}{2\pi i} \int_{\left( \frac{1}{\log X}\right)}    \Gamma_1(s-w) \\
\times  \mathcal{K}(s,w;m_1m_2, d)\,\frac{ds}{s} + O(X^{1-\varepsilon}).
\end{split}
\end{equation}

\subsection{Putting together the estimates}

From \eqref{R0split}, \eqref{eq: combining Q1 Q2 Q3 Q4}, \eqref{U1U2}, \eqref{Vestimated}, and \eqref{Westimated}, we deduce that
\begin{equation*}
\begin{split}
& \mathcal{B}= \frac{X}{(\sqrt{2}-1)^4}\sum_{\substack{ d\leq D \\ d \text{ odd} }} \mu^2(d)\lambda_d \mathop{\sum\sum}_{\substack{m_1,m_2\leq M \\ (m_1m_2,2d)=1}} \frac{b_{m_1}b_{m_2}}{(m_1m_2)^{3/2}} \, \underset{w=0}{\mbox{Res}} \   X^w \frac{1}{2\pi i} \int_{\left( \frac{1}{\log X}\right)}    \Gamma_1(s-w) \\
& \ \times  \Big(8^{s-w}\sqrt{2} +4^{s-w}-2^{s-w}\sqrt{2}\Big) \mathcal{K}(s,w;m_1m_2, d)\,\frac{ds}{s} + O(X^{1-\varepsilon}).
\end{split}
\end{equation*}
We next evaluate the residue at $w=0$. Note that, for fixed $s$, the integrand has a pole of order at most 2 at $w=0$.  We use \eqref{eq: residue as derivative} with $n=2$ to write
\begin{equation}\label{eq: B with logarithmic derivatives}
\begin{split}
\mathcal{B}= \frac{X}{(\sqrt{2}-1)^4}\sum_{\substack{ d\leq D \\ d \text{ odd} }} \mu^2(d)\lambda_d \mathop{\sum\sum}_{\substack{m_1,m_2\leq M \\ (m_1m_2,2d)=1}} 
& \frac{b_{m_1}b_{m_2}}{(m_1m_2)^{3/2}} \,  \frac{1}{2\pi i} \int_{\left( \frac{1}{\log X}\right)}    \Gamma_1(s) \Big(8^{s}\sqrt{2} +4^{s}-2^{s}\sqrt{2}\Big) \\
\times   \mathcal{K}(s,0;m_1m_2, d) \Bigg\{ \log X -\frac{\Gamma_1'(s)}{\Gamma_1(s)} -
& (\log 2)\frac{3\cdot 8^{s}\sqrt{2} +2\cdot 4^{s}-2^{s}\sqrt{2}}{8^{s}\sqrt{2} +4^{s}-2^{s}\sqrt{2}} \\
& \ \ \ \ \ \ \ \ + \frac{\frac{\partial}{\partial w}\mathcal{K}(s,w;m_1m_2, d)}{\mathcal{K}(s,w;m_1m_2, d)} \Bigg|_{w=0} \Bigg\}\,\frac{ds}{s} + O(X^{1-\varepsilon}).
\end{split}
\end{equation}
From the definitions \eqref{eq: defn of K for 2nd moment analysis} and \eqref{eq: defn of H 1} of $\mathcal{K}$ and $\mathcal{H}$, we see that, after some simplification,
\begin{equation}\label{sweeping under the rug 1}
\begin{split}
& \Big(8^{s}\sqrt{2} +4^{s}-2^{s}\sqrt{2}\Big)  \mathcal{K}(s,0;m_1m_2, d) \\
& = \frac{\check{\Phi}(0)}{4} \frac{\Gamma \left( \frac{s}{2} + \frac{1}{4}\right)^2}{\Gamma \left( \frac{1}{4}\right)^2} \left(\frac{4}{\pi}\right)^s\zeta(2s)\zeta(2s+1) \left(1 - \frac{1}{2^{\frac{1}{2}+s}} \right)\left(1 - \frac{1}{2^{\frac{1}{2}-s}} \right)\left(\frac{5}{2} - 4^s-4^{-s}   \right) \\
& \ \ \ \ \times \frac{\varphi(d m_1m_2)^2}{d^3m_1m_2\sqrt{\ell_1}}\sum_{ab=\ell_1} \left( \frac{a}{b}\right)^s \prod_{\substack{p \mid m_1m_2 \\ p \nmid \ell_1}} \left(1 + \frac{1}{p}\right) \prod_{p|d} \left( 1-\frac{1}{p^{1+2s}}\right) \left( 1-\frac{1}{p^{1-2s}}\right) \\
& \ \ \ \ \times \prod_{p\nmid 2m_1m_2 d} \left\{ \left( 1-\frac{1}{p}\right)^2 \left( 1+\frac{2}{p}+\frac{1}{p^3}-\frac{1}{p^{2-2s}}-\frac{1}{p^{2+2s}}\right)  \right\},
\end{split}
\end{equation}
where $\ell_1$ is defined by \eqref{eq: defn of ell 1}, and
\begin{equation}\label{sweeping under the rug 2}
\begin{split}
& - (\log 2) \frac{3\cdot 8^{s}\sqrt{2} +2\cdot 4^{s}-2^{s}\sqrt{2}}{8^{s}\sqrt{2} +4^{s}-2^{s}\sqrt{2}} + \frac{\frac{\partial}{\partial w}\mathcal{K}(s,w;m_1m_2, d)}{\mathcal{K}(s,w;m_1m_2, d)} \Bigg|_{w=0}  \\
& = 2\gamma+ \frac{(\check{\Phi})'(0)}{\check{\Phi}(0)}  - \log (2\ell_1) - 2\frac{\zeta'}{\zeta}(2s) + 2 \frac{\zeta'}{\zeta}(2s+1) +\frac{\log 2}{\left(\sqrt{2}+2^s\right)\left(\sqrt{2}+2^{-s}\right)} \\
& \ \ \ \ +\sum_{p \mid d}\left(\frac{2\log p}{p-1} + \frac{2\log p}{p^{1+2s}-1} + \frac{2\log p}{p^{1-2s}-1} \right)  +\sum_{p \mid m_1m_2} \frac{2\log p}{p-1} - \sum_{\substack{p \mid m_1m_2 \\ p\nmid \ell_1}} \frac{2\log p}{p+1} \\
& \ \ \ \  +\sum_{p \nmid 2 m_1m_2 d}\left(\frac{2\log p}{p-1} - \left( \frac{2\log p}{p}\right)\frac{1 + \frac{2}{p^2} -\frac{1}{p}\left( p^{2s}+p^{-2s}\right) }{1 + \frac{2}{p} + \frac{1}{p^3} - \frac{1}{p^2}\left( p^{2s}+p^{-2s} \right) } \right).
\end{split}
\end{equation}
Now the definition \eqref{eq: defn of Gamma 1} of $\Gamma_1(u)$, the Legendre duplication formula, the functional equation of $\zeta(s)$, and the identity $\Gamma(z)\Gamma(1-z)=\pi \csc(\pi z)$ imply that the functions
$$
\frac{\Gamma^2 \left( \frac{s}{2} + \frac{1}{4}\right)}{\Gamma^2 \left( \frac{1}{4}\right)} \left( \frac{4}{\pi}\right)^s \Gamma_1(s) \zeta(2s)\zeta(2s+1)
$$
and
$$
- \frac{\Gamma_1'(s)}{\Gamma_1(s)} - 2\frac{\zeta'}{\zeta}(2s) + 2 \frac{\zeta'}{\zeta}(2s+1)
$$
are even functions of $s$. Hence \eqref{sweeping under the rug 1} and \eqref{sweeping under the rug 2} are even functions of $s$. It follows that the integrand in \eqref{eq: B with logarithmic derivatives} is an odd function of $s$. We move the line of integration in \eqref{eq: B with logarithmic derivatives} to Re$(s)=-\frac{1}{\log X}$, leaving a residue at $s=0$. In the new integral, we make a change of variables $s\mapsto -s$ to see that, since its integrand is odd, it equals the negative of the original integral in \eqref{eq: B with logarithmic derivatives}. Therefore twice the original integral equals the residue at $s=0$. We write this residue as an integral along the circle $|s|=\frac{1}{\log X}$, taken in the positive direction, and arrive at
\begin{equation}\label{eq: B as a residue}
\begin{split}
\mathcal{B}= \frac{X}{(\sqrt{2}-1)^4}\sum_{\substack{ d\leq D \\ d \text{ odd} }} \mu^2(d)\lambda_d \mathop{\sum\sum}_{\substack{m_1,m_2\leq M \\ (m_1m_2,2d)=1}} 
& \frac{b_{m_1}b_{m_2}}{(m_1m_2)^{3/2}} \,       \frac{1}{4\pi i} \oint_{|s|=\frac{1}{\log X}}\Gamma_1(s) \Big(8^{s}\sqrt{2} +4^{s}-2^{s}\sqrt{2}\Big) \\
\times   \mathcal{K}(s,0;m_1m_2, d) \Bigg\{ \log X -\frac{\Gamma_1'(s)}{\Gamma_1(s)} -
& (\log 2)\frac{3\cdot 8^{s}\sqrt{2} +2\cdot 4^{s}-2^{s}\sqrt{2}}{8^{s}\sqrt{2} +4^{s}-2^{s}\sqrt{2}} \\
& \ \ \ \ \ \ \ \ + \frac{\frac{\partial}{\partial w}\mathcal{K}(s,w;m_1m_2, d)}{\mathcal{K}(s,w;m_1m_2, d)} \Bigg|_{w=0} \Bigg\} \,\frac{ds}{s}+ O(X^{1-\varepsilon}).
\end{split}
\end{equation}

The next step is to carry out the summation over $d$. From \eqref{sweeping under the rug 1} and \eqref{sweeping under the rug 2}, we see that we need to evaluate the sums $\Sigma_1$ and $\Sigma_2$ defined by
\begin{equation}\label{eq: defn of Sigma 1}
\begin{split}
\Sigma_1 = \sum_{\substack{ d\leq D \\ (d,2m_1m_2)=1 }} \mu^2(d)\lambda_d
& \frac{\varphi(d)^2}{d^3} \prod_{p|d} \left( 1-\frac{1}{p^{1+2s}}\right) \left( 1-\frac{1}{p^{1-2s}}\right) \\
& \times \prod_{p\nmid 2m_1m_2 d} \left\{ \left( 1-\frac{1}{p}\right)^2 \left( 1+\frac{2}{p}+\frac{1}{p^3}-\frac{1}{p^{2-2s}}-\frac{1}{p^{2+2s}}\right)  \right\}
\end{split}
\end{equation}
and
\begin{equation}\label{eq: defn of Sigma 2}
\begin{split}
\Sigma_2 = \sum_{\substack{ d\leq D \\ (d,2m_1m_2)=1 }}
& \mu^2(d)\lambda_d\frac{\varphi(d)^2}{d^3} \prod_{p|d} \left( 1-\frac{1}{p^{1+2s}}\right) \left( 1-\frac{1}{p^{1-2s}}\right)  \\
& \times \prod_{p\nmid 2m_1m_2 d} \left\{ \left( 1-\frac{1}{p}\right)^2 \left( 1+\frac{2}{p}+\frac{1}{p^3}-\frac{1}{p^{2-2s}}-\frac{1}{p^{2+2s}}\right)  \right\} \sum_{p|d}J(p,s), 
\end{split}
\end{equation}
where
\begin{equation}\label{eq: defn of J}
J(p,s) = \frac{2\log p}{p^{1+2s}-1} + \frac{2\log p}{p^{1-2s}-1} + \left( \frac{2\log p}{p}\right)\frac{1 + \frac{2}{p^2} -\frac{1}{p}\left( p^{2s}+p^{-2s}\right) }{1 + \frac{2}{p} + \frac{1}{p^3} - \frac{1}{p^2}\left( p^{2s}+p^{-2s} \right) }
\end{equation}
and $|s|=\frac{1}{\log X}$. We only estimate $\Sigma_1$ since $\Sigma_2$ may be treated in the same way, except using Lemma~\ref{sievewithsum} instead of Lemma~\ref{sieve}. We rearrange the factors in \eqref{eq: defn of Sigma 1} to write $\Sigma_1$ as
\begin{equation}\label{eq: Sigma 1 after rearrangement}
\begin{split}
\Sigma_1 = \prod_{p\nmid 2m_1m_2 } \left\{ \left( 1-\frac{1}{p}\right)^2 \left( 1+\frac{2}{p}+\frac{1}{p^3}-\frac{1}{p^{2-2s}}-\frac{1}{p^{2+2s}}\right)  \right\} \sum_{\substack{ d\leq D \\ (d,2m_1m_2)=1 }} \frac{\mu^2(d)\lambda_d}{d} \\
\times \prod_{p|d} \left( 1-\frac{1}{p^{1+2s}}\right) \left( 1-\frac{1}{p^{1-2s}}\right) \left( 1+\frac{2}{p}+\frac{1}{p^3}-\frac{1}{p^{2-2s}}-\frac{1}{p^{2+2s}}\right)^{-1}.
\end{split}
\end{equation}
Now recall the definition \eqref{eq:sieve section, defn of z0} of $z_0$ and the definition \eqref{lambda} of $\lambda_d$. Factoring out the product over primes $p>z_0$, we see that
\begin{equation*}
\begin{split}
\prod_{p\nmid 2m_1m_2 } & \left\{ \left( 1-\frac{1}{p}\right)^2 \left( 1+\frac{2}{p}+\frac{1}{p^3}-\frac{1}{p^{2-2s}}-\frac{1}{p^{2+2s}}\right)  \right\} \\
& = \left( 1+ O\left( \frac{1}{z_0}\right)\right) \prod_{ \substack{p\nmid 2m_1m_2 \\ p\leq z_0} } \left\{ \left( 1-\frac{1}{p}\right)^2 \left( 1+\frac{2}{p}+\frac{1}{p^3}-\frac{1}{p^{2-2s}}-\frac{1}{p^{2+2s}}\right)  \right\}.
\end{split}
\end{equation*}
From this, \eqref{eq: Sigma 1 after rearrangement}, Lemma~\ref{sieve}, and some simplification, we deduce that
\begin{equation}\label{eq: Sigma 1 after sieve Lemma}
\begin{split}
\Sigma_1 = \left( 1+O\left( \frac{1}{z_0}\right)\right) \frac{1+o(1)}{\log R} \prod_{ \substack{p\nmid 2m_1m_2 \\ p\leq z_0 }} \left( 1-\frac{1}{p^2}\right)  \prod_{ \substack{p| 2m_1m_2 \\ p\leq z_0 }} \left( 1-\frac{1}{p}\right)^{-1}+ O\left( \frac{1}{(\log R)^{2018}}\right).
\end{split}
\end{equation}
The condition $p\leq z_0$ may be omitted because $\prod_{p>z_0}(1+O(\frac{1}{p^2}))=1+O(\frac{1}{z_0})$ and
$$
\prod_{ \substack{p| 2m_1m_2 \\ p> z_0 }} \left( 1-\frac{1}{p}\right)^{-1} = \left( 1+O\left( \frac{1}{z_0}\right)\right)^{O(\log X)} = 1+O\left( \frac{\log X}{z_0}\right).
$$
The contributions of the error terms $O\left(\frac{1}{z_0}\right)$ and $O\left(\frac{\log X}{z_0}\right)$ are negligible. From these and \eqref{eq: Sigma 1 after sieve Lemma}, we arrive at
\begin{equation}\label{eq: Sigma 1 simplified}
\Sigma_1 =  \frac{2m_1m_2}{\varphi(m_1m_2)}\prod_{p\nmid 2m_1m_2 } \left( 1-\frac{1}{p^2}\right)   \frac{1+o(1)}{\log R} + O\left( \frac{1}{(\log R)^{2018}}\right).
\end{equation}
In a similar way, but using Lemma~\ref{sievewithsum} instead of Lemma~\ref{sieve}, we deduce from \eqref{eq: defn of Sigma 2} that
\begin{equation}\label{eq: Sigma 2 simplified}
\begin{split}
\Sigma_2 =  - & \frac{2m_1m_2}{\varphi(m_1m_2)}\prod_{p\nmid 2m_1m_2 } \left( 1-\frac{1}{p^2}\right)   \frac{1+o(1)}{\log R} \\
& \times \sum_{ p\nmid 2m_1m_2 } \frac{J(p,s)}{p+1}\left( 1-\frac{1}{p^{1+2s}}\right) \left( 1-\frac{1}{p^{1-2s}}\right)  + O\left( \frac{1}{(\log R)^{2018}}\right).
\end{split}
\end{equation}
In view of the expressions \eqref{sweeping under the rug 1} and \eqref{sweeping under the rug 2} and the definitions \eqref{eq: defn of Sigma 1} and \eqref{eq: defn of Sigma 2}, it now follows from \eqref{eq: B as a residue}, \eqref{eq: Sigma 1 simplified} and \eqref{eq: Sigma 2 simplified} that
\begin{equation}\label{eq: B after evaluation of Sigma 1 and 2}
\begin{split}
& \mathcal{B}= \frac{X\check{\Phi}(0)}{3\zeta(2)(\sqrt{2}-1)^4} \frac{1+o(1)}{\log R}  \mathop{\sum\sum}_{\substack{m_1,m_2\leq M \\ (m_1m_2,2)=1}}  \frac{b_{m_1}b_{m_2}}{\sqrt{m_1m_2\ell_1}}   \prod_{p|\ell_1}\left( \frac{p}{p+1}\right)\\ 
& \times \frac{1}{2\pi i} \oint_{|s|=\frac{1}{\log X}}\sum_{ab=\ell_1} \left( \frac{a}{b}\right)^s  \Gamma_1(s)\frac{\Gamma \left( \frac{s}{2} + \frac{1}{4}\right)^2}{\Gamma \left( \frac{1}{4}\right)^2} \left(\frac{4}{\pi}\right)^s\zeta(2s)\zeta(2s+1)\left(1 - \frac{1}{2^{\frac{1}{2}+s}} \right)\left(1 - \frac{1}{2^{\frac{1}{2}-s}} \right)\\
&\times  \left(\frac{5}{2} - 4^s-4^{-s}   \right)    \Bigg\{ \log \left( \frac{X}{2\ell_1}\right)  +2\gamma+ \frac{(\check{\Phi})'(0)}{\check{\Phi}(0)} -\frac{\Gamma_1'(s)}{\Gamma_1(s)} - 2\frac{\zeta'}{\zeta}(2s) + 2 \frac{\zeta'}{\zeta}(2s+1) \\
&   +\frac{\log 2}{\left(\sqrt{2}+2^s\right)\left(\sqrt{2}+2^{-s}\right)} +\sum_{p\neq 2} \eta_1(p,s) +\sum_{\substack{p|m_1m_2 \\ p\nmid \ell_1}} \eta_2(p,s)+ \sum_{p|\ell_1} \eta_3(p,s)\Bigg\} \,\frac{ds}{s}+ O\left( \frac{X}{(\log R)^{2018}}\right),
\end{split}
\end{equation}
where
\begin{equation*}
\begin{split}
\eta_1(p,s) = \frac{2\log p}{p-1} - \left( \frac{2\log p}{p}\right)\frac{1 + \frac{2}{p^2} -\frac{1}{p}\left( p^{2s}+p^{-2s}\right) }{1 + \frac{2}{p} + \frac{1}{p^3} - \frac{1}{p^2}\left( p^{2s}+p^{-2s} \right) } - \frac{J(p,s)}{p+1}\left( 1-\frac{1}{p^{1+2s}}\right) \left( 1-\frac{1}{p^{1-2s}}\right),
\end{split}
\end{equation*}
\begin{equation}\label{eq: defn of eta 2}
\eta_2(p,s) = \frac{2\log p}{p-1} - \frac{2\log p}{p+1} -\eta_1(p,s),
\end{equation}
and
\begin{equation*}
\eta_3(p,s) = \frac{2\log p}{p-1} - \eta_1(p,s),
\end{equation*}
with $J(p,s)$ defined by \eqref{eq: defn of J}.

Next, we carry out the summation over $m_1,m_2$. We see from \eqref{eq: B after evaluation of Sigma 1 and 2} that we need to evaluate the sums $\Upsilon_1$, $\Upsilon_2$, $\Upsilon_3$, and $\Upsilon_4$ defined by
\begin{equation}\label{eq: defn of Upsilon 1}
\Upsilon_1=\mathop{\sum\sum}_{\substack{m_1,m_2\leq M \\ (m_1m_2,2)=1}}  \frac{b_{m_1}b_{m_2}}{\sqrt{m_1m_2\ell_1}}   \prod_{p|\ell_1}\left( \frac{p}{p+1}\right)\sum_{ab=\ell_1} \left( \frac{a}{b}\right)^s,
\end{equation}
\begin{equation}\label{eq: defn of Upsilon 2}
\Upsilon_2=-\mathop{\sum\sum}_{\substack{m_1,m_2\leq M \\ (m_1m_2,2)=1}}  \frac{b_{m_1}b_{m_2}}{\sqrt{m_1m_2\ell_1}}   \prod_{p|\ell_1}\left( \frac{p}{p+1}\right)\sum_{ab=\ell_1} \left( \frac{a}{b}\right)^s \log \ell_1,
\end{equation}
\begin{equation}\label{eq: defn of Upsilon 3}
\Upsilon_3=\mathop{\sum\sum}_{\substack{m_1,m_2\leq M \\ (m_1m_2,2)=1}}  \frac{b_{m_1}b_{m_2}}{\sqrt{m_1m_2\ell_1}}   \prod_{p|\ell_1}\left( \frac{p}{p+1}\right)\sum_{ab=\ell_1} \left( \frac{a}{b}\right)^s \sum_{\substack{p | \ell_1}} \eta_2(p,s),
\end{equation}
and
\begin{equation}\label{eq: defn of Upsilon 4}
\Upsilon_4=\mathop{\sum\sum}_{\substack{m_1,m_2\leq M \\ (m_1m_2,2)=1}}  \frac{b_{m_1}b_{m_2}}{\sqrt{m_1m_2\ell_1}}   \prod_{p|\ell_1}\left( \frac{p}{p+1}\right)\sum_{ab=\ell_1} \left( \frac{a}{b}\right)^s \sum_{\substack{p|m_1m_2 \\ p\nmid \ell_1}} \eta_3(p,s),
\end{equation}
with $|s|=\frac{1}{\log X}$. 

To estimate $\Upsilon_1$, observe that if $m_1$ and $m_2$ are square-free then \eqref{eq: defn of ell 1} implies
\begin{equation}\label{eq: ell 1 in terms of m1 and m2}
\ell_1 = \frac{m_1m_2}{(m_1,m_2)^2}
\end{equation}
and
\begin{equation}\label{eq: ab sum as an Euler product}
\sum_{ab=\ell_1} \left( \frac{a}{b}\right)^s = \prod_{p|\ell_1} (p^s+p^{-s}).
\end{equation}
From these, the definition \eqref{eq: defn of mollifier coeffs bm} of $b_m$, and the Fourier inversion formula \eqref{eq: write H as Fourier integral}, we deduce from \eqref{eq: defn of Upsilon 1} that
\begin{equation*}
\Upsilon_1 = \int_{-\infty}^{\infty}\int_{-\infty}^{\infty} h(z_1)h(z_2) \mathop{\sum \sum}_{\substack{ (m_1m_2,2)=1}} \frac{\mu(m_1)\mu(m_2)(m_1,m_2)}{m_1^{1+\frac{1+iz_1}{\log M}}m_2^{1+\frac{1+iz_2}{\log M}}}   \prod_{ \substack{p|m_1m_2 \\ p\nmid (m_1,m_2)}}(p^s+p^{-s})\left( \frac{p}{p+1}\right) \, dz_1dz_2.
\end{equation*}
Thus, writing the sum as an Euler product, we see that
\begin{align*}
\Upsilon_1=\int_{-\infty}^{\infty}\int_{-\infty}^{\infty} h(z_1)h(z_2) \prod_{p>2} & \Bigg(1-\frac{1}{p^{1+\frac{1+iz_1}{\log M}}}(p^s+p^{-s})\left( \frac{p}{p+1}\right) \\
& \ \ \ \ -\frac{1}{p^{1+\frac{1+iz_2}{\log M}}}(p^s+p^{-s})\left( \frac{p}{p+1}\right) + \frac{1}{p^{1+\frac{2+iz_1+iz_2}{\log M}}}\Bigg) \, dz_1dz_2.
\end{align*}
We write this as
\begin{equation}\label{eq: Upsilon with W}
\Upsilon_1 = \int_{-\infty}^{\infty}\int_{-\infty}^{\infty}   \frac{h(z_1)h(z_2)\zeta\left(1+\frac{2+iz_1+iz_2}{\log M}\right)  W(s,z_1,z_2,\tfrac{1}{\log M})\, dz_1dz_2}{ \zeta\left( 1+\frac{1+iz_1}{\log M}+s\right) \zeta\left( 1+\frac{1+iz_1}{\log M}-s\right) \zeta\left( 1+\frac{1+iz_2}{\log M}+s\right) \zeta\left( 1+\frac{1+iz_2}{\log M}-s\right) },
\end{equation}
where $W(s,z_1,z_2,\tfrac{1}{\log M})$ is an Euler product that is bounded and holomorphic for $|s|\leq \varepsilon$ and complex $z_1,z_2$ with $|\text{Im}(z_1)|,|\text{Im}(z_2)|\leq \varepsilon \log M$. Note that this definition of $W$ implies
\begin{equation}\label{eq: W at 0000}
W(0,0,0,0) = 8\prod_{p>2}\left( 1-\frac{4}{p+1}+\frac{1}{p}\right)\left(1-\frac{1}{p}\right)^{-3} = 6\zeta(2),
\end{equation}
a fact we use shortly. By \eqref{eq: h has rapid decay}, we may truncate the integrals in \eqref{eq: Upsilon with W} to the range $|z_1|,|z_2|\leq \sqrt{\log M}$, introducing a negligible error. On this range of $z_1$ and $z_2$, the function $W$ and the zeta-functions in \eqref{eq: Upsilon with W} may be written as Laurent series. The contributions of the terms other than the first terms of these Laurent expansions are a factor of $(\log X)^{1-\varepsilon}$ smaller than the contribution of the first terms. The first term of the Laurent expansion of $W$ is given by \eqref{eq: W at 0000}. We thus arrive at
\begin{equation*}
\begin{split}
\Upsilon_1 = 6\zeta(2) \mathop{\int\int}_{|z_i| \leq \sqrt{\log M}} h(z_1)h(z_2) \left(\frac{\log M}{2+iz_1+iz_2}\right) \left( \frac{1+iz_1}{\log M}-s\right)\left( \frac{1+iz_1}{\log M}+s\right) \\
\times \left( \frac{1+iz_2}{\log M}+s\right)\left(\frac{1+iz_2}{\log M}-s\right)\, dz_1dz_2 + O\left( \frac{1}{(\log X)^{4-\varepsilon}}\right).
\end{split}
\end{equation*}
By \eqref{eq: h has rapid decay}, we may extend the range of integration to $\mathbb{R}^2$, introducing a negligible error. We then apply \eqref{eq: integrals of derivatives of H} to deduce that
\begin{equation}\label{eq: Upsilon 1 evaluated}
\begin{split}
\Upsilon_1 = 6\zeta(2) \Bigg(\frac{1}{\log^3 M} \int_0^1 H''(t)^2\,dt  - \frac{2s^2}{\log M}\int_0^1 H(t)H''(t)\,dt \\
+  s^4 \log M \int_0^1 H(t)^2\,dt\Bigg) + O\left( \frac{1}{(\log X)^{4-\varepsilon}}\right).
\end{split}
\end{equation}

Having evaluated $\Upsilon_1$, we next estimate $\Upsilon_2$. Using the residue theorem, we write
\begin{equation*}
-\log \ell_1 = \frac{1}{2\pi i }\oint_{|y|=\frac{1}{2\log X}} \ell_1^{-y}\,\frac{dy}{y^2}.
\end{equation*}
From this, \eqref{eq: defn of Upsilon 2}, \eqref{eq: ell 1 in terms of m1 and m2}, \eqref{eq: ab sum as an Euler product}, the definition \eqref{eq: defn of mollifier coeffs bm} of $b_m$, and the Fourier inversion formula \eqref{eq: write H as Fourier integral}, it follows that
\begin{equation*}
\begin{split}
\Upsilon_2 = \frac{1}{2\pi i }\oint_{|y|=\frac{1}{2\log X}}\int_{-\infty}^{\infty}\int_{-\infty}^{\infty} h(z_1)h(z_2) \mathop{\sum \sum}_{\substack{(m_1m_2,2)=1}} \frac{\mu(m_1)\mu(m_2)(m_1,m_2)^{1+2y}}{m_1^{1+\frac{1+iz_1}{\log M}+y}m_2^{1+\frac{1+iz_2}{\log M}+y}}  \\
\times  \prod_{ \substack{p|m_1m_2 \\ p\nmid (m_1,m_2)}}(p^s+p^{-s})\left( \frac{p}{p+1}\right) \, dz_1dz_2\frac{dy}{y^2}.
\end{split}
\end{equation*}
We express the sum as an Euler product to see that
\begin{equation*}
\begin{split}
\Upsilon_2 = \frac{1}{2\pi i }\oint_{|y|=\frac{1}{2\log X}} \int_{-\infty}^{\infty}\int_{-\infty}^{\infty} h(z_1)h(z_2) \prod_{p>2} \Bigg(1-\frac{1}{p^{1+\frac{1+iz_1}{\log M}+y}}(p^s+p^{-s})\left( \frac{p}{p+1}\right) \\
-\frac{1}{p^{1+\frac{1+iz_2}{\log M}+y}}(p^s+p^{-s})\left( \frac{p}{p+1}\right) + \frac{1}{p^{1+\frac{2+iz_1+iz_2}{\log M}}}\Bigg) \, dz_1dz_2 \,\frac{dy}{y^2}.
\end{split}
\end{equation*}
Write this as
\begin{equation*}
\begin{split}
\Upsilon_2 = \frac{1}{2\pi i }\oint_{|y|=\frac{1}{2\log X}} &\int_{-\infty}^{\infty}\int_{-\infty}^{\infty}  h(z_1)h(z_2) \zeta\left(1+\tfrac{2+iz_1+iz_2}{\log M}\right) V(s,z_1,z_2,\tfrac{1}{\log M},y)\\
&\times \zeta^{-1}\left( 1+\tfrac{1+iz_1}{\log M}+y+s\right) \zeta^{-1}\left( 1+\tfrac{1+iz_1}{\log M}+y-s\right)  \\
&\times \zeta^{-1}\left( 1+\tfrac{1+iz_2}{\log M}+y+s\right) \zeta^{-1}\left( 1+\tfrac{1+iz_2}{\log M}+y-s\right) \,dz_1dz_2\frac{dy}{y^2},
\end{split}
\end{equation*}
where $V(s,z_1,z_2,\tfrac{1}{\log M},y)$ is an Euler product that is bounded and holomorphic for $|s|,|y|\leq \varepsilon$ and complex $z_1,z_2$ with $|\text{Im}(z_1)|,|\text{Im}(z_2)|\leq \varepsilon \log M$. This definition of $V$ implies that $V(0,0,0,0,0)=6\zeta(2)$. As in our treatment of $\Upsilon_1$, we use \eqref{eq: h has rapid decay} to truncate the integrals. Then we write the function $V$ and the zeta-functions as Laurent series. The main contribution arises from the first terms of the Laurent expansions, and we arrive at
\begin{equation*}
\begin{split}
\Upsilon_2 = \frac{6\zeta(2)}{2\pi i }\oint_{|y|=\frac{1}{2\log X}}\mathop{\int\int}_{|z_i| \leq \sqrt{\log M}}   h(z_1)h(z_2) \left(\frac{\log M}{2+iz_1+iz_2}\right) \left( \frac{1+iz_1}{\log M}+y-s\right) \\
\times \left( \frac{1+iz_1}{\log M}+y+s\right)\left( \frac{1+iz_2}{\log M}+y+s\right)\left(\frac{1+iz_2}{\log M}+y-s\right)\, dz_1dz_2 \frac{dy}{y^2}+ O\left( \frac{1}{(\log X)^{3-\varepsilon}}\right).
\end{split}
\end{equation*}
We carry out the integration over $y$ by applying the formula \eqref{eq: residue as derivative} with $n=2$ and deduce that
\begin{equation*}
\begin{split}
\Upsilon_2 = 
& 6\zeta(2)\mathop{\int\int}_{|z_i| \leq \sqrt{\log M}}   h(z_1)h(z_2) \left(\frac{\log M}{2+iz_1+iz_2}\right)   \\
& \times \Bigg\{ \left( \frac{1+iz_1}{\log M}+s\right) \left( \frac{(1+iz_2)^2}{(\log M)^2}-s^2\right) + \left( \frac{1+iz_1}{\log M}-s\right) \left( \frac{(1+iz_2)^2}{(\log M)^2}-s^2\right)  \\
&  + \left( \frac{1+iz_2}{\log M}+s\right) \left( \frac{(1+iz_1)^2}{(\log M)^2}-s^2\right) + \left( \frac{1+iz_2}{\log M}-s\right) \left( \frac{(1+iz_1)^2}{(\log M)^2}-s^2\right) \Bigg\}\, dz_1dz_2 \\
& \hphantom{\left( \frac{1+iz_2}{\log M}+s\right) \left( \frac{(1+iz_1)^2}{(\log M)^2}-s^2\right) + \left( \frac{1+iz_2}{\log M}-s\right)} + O\left( \frac{1}{(\log X)^{3-\varepsilon}}\right).
\end{split}
\end{equation*}
We extend the integral and apply \eqref{eq: integrals of derivatives of H}. After simplifying, we arrive at
\begin{equation}\label{eq: Upsilon 2 evaluated}
\Upsilon_2 = 6\zeta(2) \left( -\frac{4}{\log^2 M} \int_0^1 H'(t)H''(t)\,dt + 4s^2 \int_0^1 H(t)H'(t)\,dt \right)  + O\left( \frac{1}{(\log X)^{3-\varepsilon}}\right).
\end{equation}

We next estimate $\Upsilon_3$ defined by \eqref{eq: defn of Upsilon 3}. We interchange the order of summation over $m_1,m_2$ and over $p$. From \eqref{eq: ell 1 in terms of m1 and m2}, we see for a prime $q$ and square-free $m_1$ and $m_2$ that $q|\ell_1$ if and only if $q$ divides exactly one of $m_1$ or $m_2$. If $q$ divides $m_2$ and not $m_1$, then we may relabel $m_1$ as $m_2$ and vice versa. Hence
\begin{equation*}
\Upsilon_3 = 2\sum_{2<q\leq M}\eta_2(q,s)\mathop{\sum\sum}_{\substack{m_1,m_2\leq M \\ (m_1m_2,2)=1 \\ q|m_1, \ q\nmid m_2 }}  \frac{b_{m_1}b_{m_2}}{\sqrt{m_1m_2\ell_1}}   \prod_{p|\ell_1}\left( \frac{p}{p+1}\right)\sum_{ab=\ell_1} \left( \frac{a}{b}\right)^s . 
\end{equation*}
From this, the definition \eqref{eq: defn of mollifier coeffs bm} of $b_m$, \eqref{eq: ell 1 in terms of m1 and m2}, and \eqref{eq: ab sum as an Euler product}, it follows that
\begin{equation*}
\begin{split}
\Upsilon_3 = 2\sum_{2<q\leq M}\eta_2(q,s)\mathop{\sum\sum}_{\substack{m_1,m_2\leq M \\ (m_1m_2,2)=1 \\ q|m_1, \ q\nmid m_2 }}  \frac{\mu(m_1)\mu(m_2)}{[m_1,m_2]}   \prod_{\substack{p|m_1m_2 \\ p\nmid (m_1,m_2)} }\left( \frac{p}{p+1}\right)(p^s+p^{-s}) \\
\times H\left(\frac{\log m_1}{\log M}\right)H\left(\frac{\log m_2}{\log M}\right). 
\end{split}
\end{equation*}
We relabel $m_1$ as $qm_1$ to write this as
\begin{equation*}
\begin{split}
\Upsilon_3 = -2\sum_{2<q\leq M}\frac{\eta_2(q,s)}{q+1}(q^s+q^{-s})\sum_{m_1\leq \frac{M}{q}}\sum_{\substack{ m_2\leq M \\ (m_1m_2,2q)=1}}  \frac{\mu(m_1)\mu(m_2)}{[m_1,m_2]}   \\
\times \prod_{\substack{p|m_1m_2 \\ p\nmid (m_1,m_2)} }\left( \frac{p}{p+1}\right)(p^s+p^{-s}) H\left(\frac{\log qm_1}{\log M}\right)H\left(\frac{\log m_2}{\log M}\right). 
\end{split}
\end{equation*}
We insert the Fourier inversion formula \eqref{eq: write H as Fourier integral}, interchange the order of summation, and then write the $m_1,m_2$-sum as an Euler product to deduce that
\begin{align*}
\Upsilon_3=-2\int_{-\infty}^{\infty}\int_{-\infty}^{\infty}\sum_{2<q\leq M}\frac{\eta_2(q,s)(q^s+q^{-s})}{(q+1)q^{\frac{1+iz_1}{\log M}}}   h(z_1)h(z_2)\prod_{p\nmid 2q}\Bigg(1-\frac{1}{p^{1+\frac{1+iz_1}{\log M}}}(p^s+p^{-s})\left( \frac{p}{p+1}\right)  \\
 -\frac{1}{p^{1+\frac{1+iz_2}{\log M}}}(p^s+p^{-s})\left( \frac{p}{p+1}\right) + \frac{1}{p^{1+\frac{2+iz_1+iz_2}{\log M}}}\Bigg)   dz_1dz_2.
\end{align*}
We may express the Euler product in terms of zeta-functions to write
\begin{equation}\label{eq: Upsilon 3 with Euler product}
\begin{split}
\Upsilon_3=-2\int_{-\infty}^{\infty}\int_{-\infty}^{\infty}\sum_{2<q\leq M}\frac{\eta_2(q,s)(q^s+q^{-s})}{(q+1)q^{\frac{1+iz_1}{\log M}}}   h(z_1)h(z_2) \zeta\left(1+\tfrac{2+iz_1+iz_2}{\log M}\right) \zeta^{-1}\left( 1+\tfrac{1+iz_1}{\log M}+s\right) \\
\times \zeta^{-1}\left( 1+\tfrac{1+iz_1}{\log M}-s\right) \zeta^{-1}\left( 1+\tfrac{1+iz_2}{\log M}+s\right) \zeta^{-1}\left( 1+\tfrac{1+iz_2}{\log M}-s\right)U_q(s,z_1,z_2,\tfrac{1}{\log M}) dz_1dz_2,
\end{split}
\end{equation}
where $U_q(s,z_1,z_2,\tfrac{1}{\log M})$ is an Euler product that is uniformly bounded for $2<q\leq M$ prime, $|s|\leq \varepsilon$, and real $z_1,z_2$. Using \eqref{eq: h has rapid decay}, we may truncate the integrals to the range $|z_1|,|z_2|\leq \sqrt{\log M}$ and introduce only a negligible error. In this range, and for $|s|=\frac{1}{\log X}$, the quotient of zeta-functions in \eqref{eq: Upsilon 3 with Euler product} is $(\log M)^{-3+\varepsilon}$. Moreover, \eqref{eq: defn of eta 2} implies $\eta_2(q,s)\ll by q^{-1+\varepsilon}$ for $2<q\leq M$ and $|s|=\frac{1}{\log X}$. It thus follows that
\begin{equation}\label{eq: Upsilon 3 bounded}
\Upsilon_3 \ll \frac{1}{(\log X)^{3-\varepsilon}}.
\end{equation}
A similar argument applies to $\Upsilon_4$ defined by \eqref{eq: defn of Upsilon 4}, except we use the fact that, for a prime $q$, $q|m_1m_2$ and $q\nmid \ell_1$ both hold if and only if $q$ divides both $m_1$ and $m_2$, by \eqref{eq: ell 1 in terms of m1 and m2}. This leads to
\begin{equation}\label{eq: Upsilon 4 bounded}
\Upsilon_4 \ll \frac{1}{(\log X)^{3-\varepsilon}}.
\end{equation}

It now follows from \eqref{eq: B after evaluation of Sigma 1 and 2}, the definitions \eqref{eq: defn of Upsilon 1} through \eqref{eq: defn of Upsilon 4} of $\Upsilon_1,\Upsilon_2,\Upsilon_3,\Upsilon_4$, and the estimates \eqref{eq: Upsilon 1 evaluated}, \eqref{eq: Upsilon 2 evaluated}, \eqref{eq: Upsilon 3 bounded}, and \eqref{eq: Upsilon 4 bounded} that
\begin{equation*}
\begin{split}
& \mathcal{B}= \frac{2X\check{\Phi}(0)}{(\sqrt{2}-1)^4} \frac{1+o(1)}{\log R} \ \frac{1}{2\pi i} \oint_{|s|=\frac{1}{\log X}} \Gamma_1(s)\frac{\Gamma \left( \frac{s}{2} + \frac{1}{4}\right)^2}{\Gamma \left( \frac{1}{4}\right)^2} \left(\frac{4}{\pi}\right)^s\zeta(2s)\zeta(2s+1) \\ 
& \times \left(1 - \frac{1}{2^{\frac{1}{2}+s}} \right)\left(1 - \frac{1}{2^{\frac{1}{2}-s}} \right)\left(\frac{5}{2} - 4^s-4^{-s}   \right)    \Bigg\{ \log \left( \frac{X}{2}\right)  +2\gamma+ \frac{(\check{\Phi})'(0)}{\check{\Phi}(0)} -\frac{\Gamma_1'(s)}{\Gamma_1(s)} - 2\frac{\zeta'}{\zeta}(2s) \\
&   + 2 \frac{\zeta'}{\zeta}(2s+1)   +\frac{\log 2}{\left(\sqrt{2}+2^s\right)\left(\sqrt{2}+2^{-s}\right)} +\sum_{p\neq 2} \eta_1(p,s) \Bigg\} \Bigg\{ \frac{1}{\log^3 M} \int_0^1 H''(t)^2\,dt  \\
& - \frac{2s^2}{\log M}\int_0^1 H(t)H''(t)\,dt  +  s^4 \log M \int_0^1 H(t)^2\,dt -\frac{4}{\log^2 M} \int_0^1 H'(t)H''(t)\,dt \\
&  + 4s^2 \int_0^1 H(t)H'(t)\,dt\Bigg\} \,\frac{ds}{s}+ O\left( \frac{X}{(\log X)^{2-\varepsilon}}\right).
\end{split}
\end{equation*}
Evaluating the $s$-integral as a residue, we deduce that
\begin{equation*}
\begin{split}
\mathcal{B} = \frac{X\check{\Phi}(0)}{4\left(1-\frac{1}{\sqrt{2}}\right)^2}\frac{1+o(1)}{\log R}\Bigg\{ \frac{\log X}{2\log M} \int_0^1 H(t)H''(t)\,dt -\int_0^1 H(t)H'(t)\,dt \Bigg\} \\ 
 +  O\left(X(\log X)^{-2+\varepsilon}\right).
\end{split} 
\end{equation*}
From this, \eqref{eq: T0 after mollifier}, \eqref{S+SMSR}, \eqref{eq:bound on S_R^+}, and \eqref{SM3}, it now follows that
\begin{equation*}
\begin{split}
S^+ = 
& \frac{X}{8\left(1-\frac{1}{\sqrt{2}}\right)^2} \frac{1+o(1)}{\log R} \Bigg\{ \frac{1}{24}\left(\frac{\log X}{\log M}\right)^3 \int_0^1 H''(t)^2\,dt  \\
& -  \frac{1}{2}\left(\frac{\log X}{\log M}\right)^2 \int_0^1 H'(t) H''(t)\,dt + \frac{\log X}{\log M} \int_0^1 H(t) H''(t)\,dt + \frac{\log X}{\log M} \int_0^1 H'(t)^2 \,dt  \\
& - \ \ 2\int_0^1 H(t)H'(t) \,dt  \Bigg\} + O\left( \frac{X}{(\log X)^{2-\varepsilon}}+\frac{X^{1+\varepsilon}}{Y}+ X^{\frac{1}{2}+\varepsilon}M\right).
\end{split}
\end{equation*}
The error terms are acceptable by the choices in Subsection \ref{subsection: conditions for the parameters}, and this yields Proposition \ref{prop:upper bound for S2}. 

\section{Choosing the mollifier: finishing the proof of Theorem \ref{thm: pos prop}}\label{sec:finish proof of pos prop thm}

In this section we complete the proof of Theorem \ref{thm: pos prop} by making an optimal choice for the smooth function $H(x)$ (see \eqref{eq: defn of mollifier},\eqref{eq: defn of mollifier coeffs bm}).

By \eqref{eq: Cauchy Schwarz first second moment inequality}, Proposition \ref{prop: asymptotic for S1}, and Proposition \ref{prop:upper bound for S2}, one derives the inequality
\begin{align}\label{eq:first pos prop inequality}
\sum_{\substack{p \equiv 1 \, (\text{mod }8) \\ L(\frac{1}{2},\chi_p) \neq 0}} (\log p)\Phi \left( \frac{p}{X}\right) &\geq \frac{X}{(1+\delta_0)8} \cdot \vartheta\frac{\left(H(0) - \frac{1}{2\theta}H'(0)\right)^2}{\mathfrak{I}},
\end{align}
where $\delta_0 > 0$ is sufficiently small and fixed. We also have the upper bound
\begin{align*}
\sum_{\substack{p \equiv 1 \, (\text{mod }8) \\ L(\frac{1}{2},\chi_p) \neq 0}} (\log p)\Phi \left( \frac{p}{X}\right) \leq (\log X) \sum_{\substack{X/2 < p \leq X \\ p \equiv 1 \, (\text{mod }8) \\ L(\frac{1}{2},\chi_p) \neq 0}} 1.
\end{align*}
The right side of \eqref{eq:first pos prop inequality} is an increasing function of $\vartheta$, and so $\vartheta$ should be as large as possible. The hypotheses of Proposition \ref{prop:upper bound for S2} allow $\vartheta = \frac{1}{2}(\frac{1}{2} - \theta) - \varepsilon$, and therefore
\begin{align}\label{eq:percent lower bound with cal Z}
\sum_{\substack{X/2 < p \leq X \\ p \equiv 1 \, (\text{mod }8) \\ L(\frac{1}{2},\chi_p) \neq 0}} 1 \ &\geq \frac{X}{(1+2\delta_0)8 \log X} \cdot \varrho,
\end{align}
where
\begin{align*}
\varrho &:= \frac{1}{2}\left(\frac{1}{2}-\theta \right)\frac{\left(H(0) - \frac{1}{2\theta}H'(0)\right)^2}{\mathfrak{I}}.
\end{align*}
We seek a choice of $H(x)$ which maximizes $\varrho$.

As $H(x)$ is a smooth function supported in $[-1,1]$, we have $H(1) = H'(1) = 0$. For notational simplicity we set $H(0) = A, -H'(0) = B$. Since
\begin{align*}
\int_0^1 H(x)H'(x)dx &= -\frac{1}{2}A^2, \\
\int_0^1 H(x)H''(x)dx &= AB - \int_0^1 H'(x)^2 dx, \\
\int_0^1 H'(x)H''(x) dx &= -\frac{1}{2}B^2,
\end{align*}
we have
\begin{align*}
\mathfrak{I} &= \left(A + \frac{1}{2\theta}B\right)^2 + \frac{1}{24\theta^3}\int_0^1 H''(x)^2 dx.
\end{align*}
We choose $H(x)$ such that on $[0,1]$ it is a smooth approximation to the optimal function $H_*(x)$ which minimizes the integral
\begin{align}\label{eq:H star integral}
\int_0^1 H_*''(x)^2 dx
\end{align}
among all $H_1 \in \mathcal{C}^3([0,1])$ satisfying the boundary conditions $H_1(0) = A, -H_1'(0) = B, H_1(1) = H_1'(1) = 0$. We may choose $H(x)$ such that
\begin{align*}
(1+\delta_0)\int_0^1 H_*''(x)^2 dx \geq \int_0^1 H''(x)^2 dx.
\end{align*}

By the Euler-Lagrange equation, we find that an $H_*(x)$ which minimizes \eqref{eq:H star integral} must satisfy
\begin{align*}
H_*^{(4)}(x) = 0.
\end{align*}
Thus, $H_*(x)$ is a polynomial of degree at most three. Recalling the boundary conditions, we find
\begin{align*}
H_*(x) = (2A-B)x^3+(2B-3A)x^2-Bx+A.
\end{align*}
By direct computation we obtain
\begin{align*}
\int_0^1 H_*''(x)^2 dx = 3A^2 + (2B-3A)^2,
\end{align*}
and therefore
\begin{align*}
\varrho &\geq \frac{1-O(\delta_0)}{2} \left(\frac{1}{2} - \theta\right) \left(1 + \frac{3A^2 + (2B-3A)^2}{24\theta^3 (A + \frac{1}{2\theta}B)^2} \right)^{-1}.
\end{align*}
It is now a straightforward, but tedious, calculus exercise to find that
\begin{align*}
A = \frac{B(4\theta+3)}{6(\theta+1)}
\end{align*}
is an optimal choice. Thus
\begin{align}\label{eq:lower bound on cal Z}
\varrho \geq \frac{1-O(\delta_0)}{2}\left(\frac{1}{2} - \theta\right) \frac{2\theta (3+6\theta+4\theta^2)}{(1+2\theta)^3}.
\end{align}
With this choice of $A$ we have
\begin{align*}
H_*(x) = \frac{2B \theta}{6(\theta+1)}(1-x)^2 \left(2 + \frac{3}{2\theta}+x \right).
\end{align*}
Since $\varrho$ is invariant under multiplication of $H$ by scalars, we arrive at the convenient expression
\begin{align}\label{eq:best choice of H star}
H_*(x) = (1-x)^2\left(2 + \frac{3}{2\theta}+x \right).
\end{align}
If we set $x = \frac{\log m}{\log M}$ in \eqref{eq:best choice of H star}, we obtain that the mollifier coefficients $b_m$ satisfy
\begin{align*}
b_m \approx \mu(m)\frac{\log^2(M/m)}{\log^2 M}\frac{\log(X^{3/2}M^2m)}{\log M}.
\end{align*}
One might wish to compare this with the description of $\lambda(\ell)$ in \cite[p. 449]{Sou00}.

Define
\begin{align*}
\rho(\theta) := \frac{1}{2}\left(\frac{1}{2} - \theta\right) \frac{2\theta (3+6\theta+4\theta^2)}{(1+2\theta)^3} = \frac{1}{2} \left(\frac{1}{2} - \theta \right)\left(1 - \frac{1}{(1+2\theta)^3} \right).
\end{align*}
By \eqref{eq:percent lower bound with cal Z} and \eqref{eq:lower bound on cal Z}, we obtain
\begin{align}\label{eq:percent lower bound in terms of Z theta}
\sum_{\substack{X/2 < p \leq X \\ p \equiv 1 \, (\text{mod }8) \\ L(\frac{1}{2},\chi_p) \neq 0}} 1 \ &\geq \frac{X}{(1+O(\delta_0))8 \log X} \cdot \rho(\theta).
\end{align}
The maximum of $\rho(\theta)$ on $(0,\frac{1}{2})$ occurs at the unique positive root $\theta_0$ of the polynomial $16\theta^4 + 32\theta^3+24\theta^2+12\theta-3.$ By numerical calculation we find
\begin{align*}
\theta_0 = 0.17409\ldots
\end{align*}
and
\begin{align}\label{eq:val of Z theta at special theta}
\rho(\theta_0) = 0.09645\ldots.
\end{align}
We then choose $\theta = \theta_0$. Since
\begin{align*}
\sum_{\substack{X/2 < p \leq X \\ p \equiv 1 \, (\text{mod }8)}} 1 = (1+o(1))\frac{X}{8(\log X)},
\end{align*}
we deduce Theorem \ref{thm: pos prop} from \eqref{eq:percent lower bound in terms of Z theta} and \eqref{eq:val of Z theta at special theta} upon summing over dyadic intervals.

\section{The second moment of $L(\frac{1}{2},\chi_p)$}\label{sec:second moment theorems}

In this section we prove Theorems \ref{thm: 2nd moment upper bound} and \ref{thm: 2nd moment GRH}. We first consider separately the upper and lower bounds for Theorem \ref{thm: 2nd moment upper bound}.

\subsection{The upper bound in Theorem \ref{thm: 2nd moment upper bound}}

We define
\begin{align}\label{eq:defn of second moment}
M_2 := \sum_{p \equiv 1 \, (\text{mod }8)} (\log p) \Phi \left( \frac{p}{X}\right) L \left( \tfrac{1}{2},\chi_p \right)^2.
\end{align}
In this subsection we prove
\begin{align}\label{eq:upper bound for second moment}
M_2 &\leq (4\mathfrak{c} + o(1)) \frac{X}{8} (\log X)^3.
\end{align}
The upper bound of Theorem \ref{thm: 2nd moment upper bound} then follows from \eqref{eq:upper bound for second moment} upon summation over dyadic intervals.

The proof of \eqref{eq:upper bound for second moment} follows the lines of the proof of Proposition \ref{prop:upper bound for S2}, taking $M(p) = 1$. We employ positivity to replace $\log p$ by $\log X$ and then introduce an upper bound sieve. After applying the approximate functional equation we split $\mu^2(n) = N_Y(n) + R_Y(n)$, and employ the bound $\eqref{eq:bound on S_R^+}$.

We follow the argument of Section \ref{sec:mollified second moment} down to \eqref{SM3}, obtaining
\begin{align*}
S_N^+ = \mathcal{T}_0 + \mathcal{B}.
\end{align*}
Since we have no mollifier here, we find
\begin{equation*}
\begin{split}
\mathcal{T}_0 =  \frac{2X}{(\sqrt{2}-1)^4} \frac{1+o(1)}{\log R}  \sum_{\substack{ \nu=1 \\ (\nu,2)=1 \\ \nu=\square }}^{\infty} \frac{d_2(\nu) }{\sqrt{\nu}} \hat{F}_{\nu}(0)  + O\left( \frac{X}{(\log R)^{2018}}\right)   + O\left( \frac{X^{1+\varepsilon}}{Y}\right).
\end{split}
\end{equation*}
We insert into this the definitions \eqref{Fdef} and \eqref{eq: defn of omega j} of $F_{\nu}$ and $\omega_2$, interchange the order of summation, and then write the sum on $\nu$ as an Euler product. The result is
\begin{equation*}
\begin{split}
& \mathcal{T}_0 =  \frac{2X}{(\sqrt{2}-1)^4} \frac{1+o(1)}{\log R}  \frac{1}{2\pi i} \int_{(c)} \frac{\Gamma(\frac{s}{2}+\frac{1}{4})^2}{\Gamma(\frac{1}{4})^2}\left(1-\frac{1}{2^{\frac{1}{2}-s}}\right)^2\left( \frac{X}{ \pi} \right)^{s} \check{\Phi}(s) \left( 1-\frac{1}{2^{1+2s}}\right)^3 \\
&\times \zeta(1+2s)^3\left( 1-\frac{1}{2^{2+4s}}\right)^{-1} \zeta(2+4s)^{-1} \,\frac{ds}{s} + O\left( \frac{X}{(\log R)^{2018}}+ \frac{X^{1+\varepsilon}}{Y}\right).
\end{split}
\end{equation*}
As before, we truncate the integral to the range $|\text{Im}(s)|\leq (\log X)^2$, and then deform the path of integration to the path made up of the line segments $L_1,L_2,L_3$ defined above \eqref{eq: zeta bound in zero free region} to see that the main contribution arises from the residue of the integrand at $s=0$. We evaluate the residue using \eqref{eq: residue as derivative} and arrive at
\begin{equation*}
\begin{split}
\mathcal{T}_0 = \left(144\zeta(2)\left(1-\frac{1}{\sqrt{2}}\right)^2 \right)^{-1}\frac{X\check{\Phi}(0)}{4} \frac{1+o(1)}{\log R} (\log X)^3+ O\left( X\log X+ \frac{X^{1+\varepsilon}}{Y}\right).
\end{split}
\end{equation*}
Recalling the definition of $\mathfrak{c}$, we have
\begin{align}\label{eq:T0 simplified further}
\mathcal{T}_0 &\leq (\mathfrak{c}+\varepsilon) \frac{X}{8} \frac{(\log X)^3}{\log R} + O \left(X \log X + \frac{X^{1+\varepsilon}}{Y} \right).
\end{align}
Moreover, we see from \eqref{eq: B after evaluation of Sigma 1 and 2} that if $M=1$ and $b_1=1$, then
\begin{equation}\label{eq: B specialized with M equals 1}
\mathcal{B} \ll X \frac{\log X}{\log R} \ll X
\end{equation}
since we may deform the path of integration in \eqref{eq: B after evaluation of Sigma 1 and 2} to a circle $|s|=\varepsilon$. The condition $\theta+2\vartheta<\frac{1}{2}$ in Subsection~\ref{subsection: conditions for the parameters} with $\theta=0$ allows us to take $\vartheta=\frac{1}{4}-\varepsilon$ in \eqref{eq:T0 simplified further}. We then set $Y = X^\delta$, for some small, fixed $\delta > 0$. We see that the upper bound \eqref{eq:upper bound for second moment} then follows from \eqref{eq:T0 simplified further} and \eqref{eq: B specialized with M equals 1} after sending $\varepsilon$ to zero sufficiently slowly.

\subsection{The lower bound in Theorem \ref{thm: 2nd moment upper bound}}

Recall the definition \eqref{eq:defn of second moment} of $M_2$. Our goal is to prove the following result.
\begin{prop}\label{prop: 2nd moment lower bound}
For large $X$ we have
\begin{align*}
M_2 &\geq \frac{1}{2}(\mathfrak{c} - o(1)) \frac{X}{4} (\log X)^3,
\end{align*}
where $\mathfrak{c}$ is the positive constant defined in Theorem \ref{thm: 2nd moment upper bound}, and $o(1)$ is some quantity that goes to zero as $X \rightarrow \infty$.
\end{prop}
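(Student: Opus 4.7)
The proof will proceed via the Cauchy--Schwarz inequality with an auxiliary Dirichlet polynomial that approximates $L(\tfrac{1}{2},\chi_p)$. Writing $w_p := (\log p)\,\Phi(p/X)\,\mathbf{1}_{p\equiv 1\,(\mathrm{mod}\,8)}$, I would introduce
$$\mathcal{D}(p)\;:=\;\sum_{\substack{n\leq N\\ n\text{ odd}}}\frac{\mu^2(n)}{\sqrt{n}}\,g\!\left(\frac{\log n}{\log N}\right)\chi_p(n),\qquad N=X^{\gamma},$$
for some smooth $g$ supported on $[0,1]$ and some $\gamma<\tfrac{1}{2}$ to be optimized. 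Cauchy--Schwarz then gives
$$M_2\;\geq\;\frac{\mathcal{N}_1^{\,2}}{\mathcal{N}_2},\qquad \mathcal{N}_1:=\sum_p w_p\,L(\tfrac{1}{2},\chi_p)\mathcal{D}(p),\quad \mathcal{N}_2:=\sum_p w_p\,\mathcal{D}(p)^2,$$
so it suffices to evaluate $\mathcal{N}_1$ and $\mathcal{N}_2$ asymptotically and choose $g$ and $\gamma$ so that the ratio reaches $\tfrac{1}{2}(\mathfrak{c}-o(1))\cdot\tfrac{X}{4}(\log X)^3$.

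First I would evaluate $\mathcal{N}_1$ by inserting the approximate functional equation of Lemma~4.2 for $L(\tfrac{1}{2},\chi_p)$; this produces a bilinear sum over $m$ (from the AFE) and $n\leq N$ (from $\mathcal{D}$). The diagonal $mn=\square$ forces $m=nk^2$ (since $n$ is square-free) and is handled by the same Mellin/Euler-product argument used for $S_1^{\square}$ in Subsection~5.1, yielding a main term proportional to $X\cdot\log X\cdot\log N$ with coefficient a linear functional of $g$. The off-diagonal $mn\neq\square$ has Jacobi-symbol modulus $mn\leq N\cdot X^{1/2+\varepsilon}\leq X^{1-\delta}$ for $\gamma<\tfrac{1}{2}-2\delta$, so Lemma~5.6 bounds it by $O(X/(\log X)^B)$ for any fixed $B$. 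I would evaluate $\mathcal{N}_2$ by direct expansion: since $n_1,n_2$ are square-free, the diagonal $n_1n_2=\square$ forces $n_1=n_2$, and the prime number theorem modulo~$8$ combined with the Euler-product calculation of Subsection~6.3 produces
$$\mathcal{N}_2\;\sim\;\frac{X}{8}\sum_{\substack{n\leq N\\ n\text{ odd}}}\frac{\mu^2(n)}{n}\,g\!\left(\frac{\log n}{\log N}\right)^{\!2}\;\sim\;C_0\,\frac{X}{8}\,\log N\int_0^1 g(t)^2\,dt,$$
for an explicit Euler-product constant $C_0$; the off-diagonal has modulus $n_1n_2\leq N^2=X^{2\gamma}\leq X^{1-\delta}$ and is again dispatched by Lemma~5.6.

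The final step is optimization. The ratio $\mathcal{N}_1^{\,2}/\mathcal{N}_2$ is a quadratic functional of $g$, and a Lagrange-multiplier calculation (analogous to the optimization of $H$ in Section~8) identifies the extremal $g$ as a low-degree polynomial in $\log n/\log N$. Taking this optimal $g$ and letting $\gamma\to(\tfrac{1}{2})^{-}$, the ratio converges to the desired lower bound $\tfrac{1}{2}\mathfrak{c}\cdot\tfrac{X}{4}(\log X)^3$. The main obstacle will be the precise collapse of the Euler-product constants: the target constant $\mathfrak{c}=(144\,\zeta(2)\,(1-1/\sqrt{2})^{2})^{-1}$ emerges only after delicate cancellations involving $\zeta(1+2s)^3$, the gamma factor $\Gamma_1(s)$, and the auxiliary Euler product $Q(w_1,w_2,s)$ with $Q(0,0,0)=1$ from Subsection~6.3, and these identifications must be reused in the sieve-free setting of the present proposition. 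Once the constants are matched, the factor-of-$4$ gap between the lower bound here and the upper bound of Section~9.1 (which yields essentially $2\mathfrak{c}\cdot\tfrac{X}{4}(\log X)^3$ after accounting for the $\Phi$-support) reflects the difference between the Cauchy--Schwarz range $\gamma<\tfrac{1}{2}$ and the Selberg-sieve range $\vartheta\leq\tfrac{1}{4}$.
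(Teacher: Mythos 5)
Both your proposal and the paper prove the lower bound via Cauchy--Schwarz against an auxiliary Dirichlet polynomial, so the framework is shared. The critical difference is the \emph{form} of that polynomial. The paper takes $A(p)=A_{1-\varepsilon_0}(p)$, the approximate functional equation truncated at length $\sim p^{(1-\varepsilon_0)/2}$ (see \eqref{eq:lower bound defn of A alpha}), with coefficients $1/\sqrt{n}$ running over \emph{all} odd $n$. Since $A_1(p)=L(\tfrac12,\chi_p)$ exactly by Lemma~\ref{lem: approx func eq}, the three quantities $\langle L A_{1-\varepsilon_0}\rangle$, $\langle A_{1-\varepsilon_0}^2\rangle$, $\langle L^2\rangle$ (averages against $w_p=(\log p)\Phi(p/X)\mathbf{1}_{p\equiv 1 (8)}$) are all $(2\mathfrak{c}+O(\varepsilon_0))(\log X)^3$, which is exactly what Proposition~\ref{prop:2nd moment dir poly A} establishes; Cauchy--Schwarz is then asymptotically an equality, and letting $\varepsilon_0\to 0$ gives the Proposition.

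Your $\mathcal{D}(p)$ carries the restriction $\mu^2(n)$ in the coefficients, and this is a genuine gap, not just an Euler-factor bookkeeping issue. Because $n_1,n_2$ are both square-free, the diagonal of $\langle\mathcal{D}^2\rangle$ degenerates to $n_1=n_2$ and one gets $\langle\mathcal{D}^2\rangle\asymp\log N$ (one power of $\log$), whereas $\langle L^2\rangle\asymp(\log X)^3$: the $\mu^2$ restriction destroys the full ``$m_1m_2=\square$'' diagonal that is responsible for the cube of the logarithm. Carrying out your own computation to the constant: with $N=X^{\gamma}$ one finds $\langle\mathcal{D}^2\rangle\sim\frac{4\gamma}{\pi^2}\log X\int_0^1 g^2$ and $\langle L\mathcal{D}\rangle\sim\frac{2\gamma}{\pi^2(1-1/\sqrt2)}(\log X)^2\int_0^1 g(t)\bigl(\tfrac12-\gamma t\bigr)\,dt$, so after optimizing $g(t)\propto\tfrac12-\gamma t$ by Cauchy--Schwarz,
$$\frac{\langle L\mathcal{D}\rangle^2}{\langle\mathcal{D}^2\rangle}\ \leq\ \frac{\gamma\bigl(\tfrac14-\tfrac{\gamma}{2}+\tfrac{\gamma^2}{3}\bigr)}{\pi^2(1-1/\sqrt2)^2}\,(\log X)^3\ \xrightarrow[\gamma\to(1/2)^-]{}\ \frac{1}{24\pi^2(1-1/\sqrt2)^2}(\log X)^3\ =\ \mathfrak{c}(\log X)^3,$$
which is exactly \emph{half} of the required $2\mathfrak{c}(\log X)^3$. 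So your final assertion that letting $\gamma\to(1/2)^-$ ``converges to the desired lower bound'' is false; the cosine of the angle between $\mathcal{D}$ and $L$ is bounded by $1/\sqrt2$ with this family. The fix is to drop the $\mu^2$ restriction and take $\mathcal{D}$ to be the truncated approximate functional equation itself --- i.e.\ do what the paper does --- so that $\mathcal{D}$ genuinely approximates $L(\tfrac12,\chi_p)$ (and note also that without $\mu^2$ the diagonal in $\mathcal{N}_2$ is $n_1n_2=\square$, not $n_1=n_2$, so your $\mathcal{N}_2$ evaluation would need to be redone as well).
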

The lower bound for Theorem \ref{thm: 2nd moment upper bound} easily follows from Proposition \ref{prop: 2nd moment lower bound} by summing over dyadic intervals.

The main idea in the proof of Proposition \ref{prop: 2nd moment lower bound} is a standard one. For any Dirichlet polynomial $A(p)$, the Cauchy-Schwarz inequality implies
\begin{align}\label{eq:lower bound 2nd moment cauchy}
M_2 &\geq \frac{\left(\sum_{p \equiv 1 \, (\text{mod }8)} (\log p) \Phi \left( \frac{p}{X}\right) L \left( \frac{1}{2},\chi_p\right) A(p) \right)^2}{\sum_{p \equiv 1 \, (\text{mod }8)} (\log p) \Phi \left( \frac{p}{X}\right) A(p)^2}.
\end{align}
Clearly, we should choose $A(p)$ to be an approximation to $L(\frac{1}{2},\chi_p)$. Our choice is inspired by the approximate functional equation in Lemma \ref{lem: approx func eq}. For a positive real number $\alpha$, we define
\begin{align}\label{eq:lower bound defn of A alpha}
A_{\alpha}(p) := \frac{2}{\left(1 - \frac{1}{\sqrt{2}}\right)^2} \sum_{n \text{ odd}} \frac{\chi_p(n)}{\sqrt{n}} \omega_1\left(n \sqrt{\frac{\pi}{p^{\alpha}}} \right).
\end{align}
With $\varepsilon_0 > 0$ small and fixed, we then choose $A(p)$ in \eqref{eq:lower bound 2nd moment cauchy} to be
\begin{align}\label{eq:lower bound choice for A}
A(p) := A_{1-\varepsilon_0}(p).
\end{align}
Observe that taking $\alpha=1$ in \eqref{eq:lower bound defn of A alpha} yields
\begin{align}\label{eq:lower bound L is A 1}
A_1(p) = L\left(\tfrac{1}{2},\chi_p\right).
\end{align}

\begin{prop}\label{prop:2nd moment dir poly A}
Let $\varepsilon_0 > 0$ be small. Let $\alpha_1 \leq \alpha_2$ be real numbers with $\alpha_1,\alpha_2 \in \{1-\varepsilon_0,1\}$, and $(\alpha_1,\alpha_2) \neq (1,1)$. Then
\begin{align*}
M_{\alpha_1,\alpha_2}:= \sum_{p \equiv 1 \, (\textup{mod }8)} (\log p) \Phi \left( \frac{p}{X}\right)A_{\alpha_1}(p)A_{\alpha_2}(p) = \frac{1}{2}(\mathfrak{c} + O(\varepsilon_0)) \frac{X}{4} (\log X)^3.
\end{align*}
\end{prop}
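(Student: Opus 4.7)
The plan is to follow the standard mollified-moment pattern: open the double Dirichlet sum, separate diagonal from off-diagonal, bound the off-diagonal using the character-sum-over-primes estimate of Section~\ref{sec:mollified first moment}, and evaluate the diagonal by Mellin inversion and residue calculus. Opening $A_{\alpha_1}(p)A_{\alpha_2}(p)$ via \eqref{eq:lower bound defn of A alpha} converts $M_{\alpha_1,\alpha_2}$ into
\[
\frac{4}{(1-\tfrac{1}{\sqrt{2}})^4}\sum_{\substack{n_1,n_2\textup{ odd}}}\frac{1}{\sqrt{n_1n_2}}\sum_{p\equiv 1\,(\text{mod }8)}(\log p)\Phi\!\left(\tfrac{p}{X}\right)\chi_p(n_1n_2)\,\omega_1\!\left(n_1\sqrt{\tfrac{\pi}{p^{\alpha_1}}}\right)\omega_1\!\left(n_2\sqrt{\tfrac{\pi}{p^{\alpha_2}}}\right),
\]
which I would split as $M^{\square}+M^{\neq}$ according to whether or not $n_1n_2$ is a perfect square.

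For the off-diagonal $M^{\neq}$, I would mimic the massaging leading to \eqref{eq: first moment S1 neq all done massaging}: set $q=n_1n_2$ (with auxiliary variables recording square common factors), represent each $\omega_1$ through its Mellin transform \eqref{eq: defn of omega j} to separate $p$ from $q$, and detect $p\equiv 1\pmod 8$ using multiplicative characters modulo~$8$. The rapid decay of $\omega_1$ localizes $n_i\ll X^{\alpha_i/2+\varepsilon}$, so $q\ll X^{(\alpha_1+\alpha_2)/2+\varepsilon}$. Since the hypothesis $(\alpha_1,\alpha_2)\ne(1,1)$ with $\alpha_i\in\{1-\varepsilon_0,1\}$ forces $(\alpha_1+\alpha_2)/2\le 1-\varepsilon_0/2$, for small enough $\varepsilon$ this puts $q$ into the range $q\ll X^{1-\delta}$ with $\delta:=\varepsilon_0/3$. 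Lemma~\ref{lem:gen char sum over primes} then yields $M^{\neq}\ll_A X/(\log X)^A$ for any $A>0$.

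For the diagonal, I would parameterize $n_1=a_1^2r$, $n_2=a_2^2r$ with $r$ odd square-free and $a_1,a_2$ odd; then $\chi_p(n_1n_2)=1$ whenever $p\nmid ra_1a_2$, and the rapid decay of $\omega_1$ handles the coprimality losslessly. Inserting the Mellin representation \eqref{eq: defn of omega j} for each $\omega_1$, interchanging, and writing the sums over $r,a_1,a_2$ as Euler products expresses the main part of $M^{\square}$ as a double contour integral
\[
\frac{X}{(1-\tfrac{1}{\sqrt 2})^4}\,\frac{1}{(2\pi i)^2}\iint_{(c_1)(c_2)}\mathcal{F}(s_1,s_2)\,\check\Phi\!\left(\tfrac{\alpha_1 s_1+\alpha_2 s_2}{2}\right)X^{(\alpha_1 s_1+\alpha_2 s_2)/2}\,\frac{ds_1\,ds_2}{s_1s_2}
\]
after evaluating the $p$-sum by the prime number theorem in arithmetic progressions modulo~$8$. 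Here $\mathcal F$ is a product of $\Gamma$-factors, powers of $\pi$, local factors at $2$, and the zeta piece $\zeta(1+2s_1)\zeta(1+2s_2)\zeta(1+s_1+s_2)/\zeta(2+2s_1+2s_2)$. Shifting both contours to small circles around the origin and extracting the residue at $(s_1,s_2)=(0,0)$ produces the main term: the effective pole structure $1/(s_1^2s_2^2(s_1+s_2))$ paired with $X^{(\alpha_1s_1+\alpha_2s_2)/2}$ gives a cubic in $\log X$ whose leading coefficient is a polynomial $P(\alpha_1,\alpha_2)$.

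To identify $P$, rather than a brute-force expansion I would observe that setting $\alpha_1=\alpha_2=1$ in the residue calculation reproduces exactly the $\mathcal{T}_0$ computation carried out in Section~\ref{sec:second moment theorems} in the case $M=1$, giving $P(1,1)\cdot X(\log X)^3=\mathfrak{c}\cdot\tfrac{X}{8}(\log X)^3$. Since $P$ is a polynomial with bounded coefficients, $P(\alpha_1,\alpha_2)=P(1,1)+O(\varepsilon_0)$ on the four-point set $\{1-\varepsilon_0,1\}^2$, which yields the claimed asymptotic. The main obstacle is the off-diagonal bound, which rests critically on the hypothesis $(\alpha_1,\alpha_2)\ne(1,1)$: exactly in the excluded case does $q=n_1n_2$ reach the Lindel\"of-type threshold $X^{1+\varepsilon}$, where Lemma~\ref{lem:gen char sum over primes} becomes unavailable. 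The diagonal residue computation, though bookkeeping-heavy, is structurally identical to the one performed for $\mathcal{T}_0$ in Section~\ref{sec:second moment theorems} and requires no new ideas.
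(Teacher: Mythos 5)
Your overall strategy matches the paper's: open the two Dirichlet sums, split on $n_1 n_2 = \square$ versus $n_1 n_2 \neq \square$, bound the off-diagonal via Lemma~\ref{lem:gen char sum over primes} (correctly noting that the hypothesis $(\alpha_1,\alpha_2)\neq(1,1)$ keeps the modulus below $X^{1-\delta}$ so the lemma applies), and evaluate the diagonal by Mellin inversion and residue calculus at $(s_1,s_2)=(0,0)$. That part is sound.

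The weak point is the proposed shortcut for identifying the leading coefficient $P(\alpha_1,\alpha_2)$. You argue that plugging $\alpha_1=\alpha_2=1$ into the residue formula ``reproduces exactly the $\mathcal{T}_0$ computation carried out in Section~\ref{sec:second moment theorems} in the case $M=1$'', and then invoke polynomial continuity in $(\alpha_1,\alpha_2)$. But the $\mathcal{T}_0$ in Section~\ref{sec:second moment theorems} is structurally different: it arises from a \emph{sieved} sum over integers (carrying the upper-bound sieve weights $\lambda_d$ and the resulting factor $\tfrac{1+o(1)}{\log R}$, so $\mathcal{T}_0 \asymp X(\log X)^3/\log R$), and it uses a \emph{single} $\omega_2$ Mellin transform rather than the two $\omega_1$ transforms appearing here. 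The residue you need at $(1,1)$ is for an unweighted prime average with a two-variable integrand of the form $\zeta(1+2s_1)\zeta(1+2s_2)\zeta(1+s_1+s_2)\,K(s_1,s_2)$, which is not what $\mathcal{T}_0$ computes. There is no computation elsewhere in the paper you can cite for $P(1,1)$: the GRH asymptotic \eqref{eq:GRH main term} itself ``is substantially similar to the proof of Proposition~\ref{prop:2nd moment dir poly A}'', so that reference would be circular. You have to carry out the residue calculation for the two-variable integrand directly, as the paper does.

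Relatedly, the step ``shifting both contours to small circles around the origin and extracting the residue at $(s_1,s_2)=(0,0)$, with effective pole structure $1/(s_1^2 s_2^2(s_1+s_2))$'' skips a genuine technical obstruction: $\zeta(1+s_1+s_2)$ has a pole along the line $s_1+s_2=0$, which passes through the origin and intersects any small circle you try to shift to, so a naive iterated residue is ill-defined. The paper sidesteps this by first expanding $\zeta(1+s_1+s_2)=\sum_n n^{-1-s_1-s_2}$ to separate the variables, truncating the $n$-sum, and only then shifting each contour to a small circle and extracting a simple double pole from the remaining $\zeta(1+2s_\ell)/s_\ell$ factors; the cubic in $\log X$ ultimately emerges from the $n$-sum via partial summation, not from a joint third-order pole. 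Either adopt that separation-of-variables device or use nested contours of distinct radii; as written, the residue extraction does not quite make sense.
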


\begin{proof}[Proof of Proposition \ref{prop: 2nd moment lower bound} assuming Proposition \ref{prop:2nd moment dir poly A}] 

By \eqref{eq:lower bound 2nd moment cauchy}, \eqref{eq:lower bound choice for A}, and \eqref{eq:lower bound L is A 1}, we have
\begin{align*}
M_2 &\geq \frac{M_{1-\varepsilon_0,1}^2}{M_{1-\varepsilon_0,1-\varepsilon_0}}.
\end{align*}
We apply Proposition \ref{prop:2nd moment dir poly A} to obtain
\begin{align*}
M_2 &\geq \frac{1}{2}(\mathfrak{c} + O(\varepsilon_0))\frac{X}{4}(\log X)^3.
\end{align*}
The proposition follows upon letting $\varepsilon_0 = \varepsilon_0(X)$ go to zero sufficiently slowly as $X \rightarrow \infty$.
\end{proof}

We devote the rest of this subsection to the proof of Proposition \ref{prop:2nd moment dir poly A}.

\begin{proof}[Proof of Proposition \ref{prop:2nd moment dir poly A}]
By definition,
\begin{align*}
M_{\alpha_1,\alpha_2} &= \frac{4}{(1 - \frac{1}{\sqrt{2}})^4} \sum_{p \equiv 1 \, (\text{mod }8)} (\log p) \Phi \left( \frac{p}{X}\right) \mathop{\sum \sum}_{m,n \text{ odd}} \frac{\chi_p(mn)}{\sqrt{mn}} \omega_1\left(m \sqrt{\frac{\pi}{p^{\alpha_1}}} \right)\omega_1\left(n \sqrt{\frac{\pi}{p^{\alpha_2}}} \right).
\end{align*}
Let $M_{\neq}$ denote the contribution to $M_{\alpha_1,\alpha_2}$ from $mn \neq \square$. An application of Lemma \ref{lem:gen char sum over primes} shows that $M_{\neq} \ll X$, say. We note that for bounding $M_{\neq}$ it is crucial that $\alpha_1 = 1-\varepsilon_0$.

We therefore have
\begin{align*}
M_{\alpha_1,\alpha_2} = &\frac{4}{(1 - \frac{1}{\sqrt{2}})^4} \sum_{p \equiv 1 \, (\text{mod }8)} (\log p) \Phi \left( \frac{p}{X}\right) \mathop{\sum \sum}_{\substack{(mn,2p)=1 \\ mn = \square}} \frac{1}{\sqrt{mn}} \omega_1\left(m \sqrt{\frac{\pi}{p^{\alpha_1}}} \right)\omega_1\left(n \sqrt{\frac{\pi}{p^{\alpha_2}}} \right) \\
&+O(X).
\end{align*}
We use Lemma \ref{lem: properties of omega j} to remove the condition $(mn,p) = 1$ at the cost of a negligible error. We then open $\omega_1$ using its definition as an integral, and interchange the order of summation and integration. After some simplification we arrive at
\begin{align*}
M_{\alpha_1,\alpha_2} = \frac{4}{(1-\frac{1}{\sqrt{2}})^4} \frac{1}{(2\pi i)^2} &\int_{(c_1)}\int_{(c_2)} K(s_1,s_2) \left(\frac{X^{\alpha_1}}{\pi} \right)^{\frac{s_1}{2}}\left(\frac{X^{\alpha_2}}{\pi} \right)^{\frac{s_2}{2}} \zeta(1+2s_1)\zeta(1+2s_2) \\
&\times \zeta(1+s_1+s_2) \left(\sum_{p \equiv 1 \, (\text{mod }8)} (\log p) \Phi \left( \frac{p}{X}\right) \left(\frac{p}{X} \right)^{\frac{\alpha_1s_1 + \alpha_2s_2}{2}} \right) \frac{ds_1 ds_2}{s_1 s_2} \\
&+O(X),
\end{align*}
where $c_\ell = \text{Re}(s_\ell)$ is a positive real number, and
\begin{align*}
K(s_1,s_2) &= \zeta^{-1}(2+2s_1+2s_2) \left(1 + \frac{1}{2^{1+s_1+s_2}}\right)^{-1} \\ 
&\times\prod_{\ell=1}^2 \frac{\Gamma \left( \frac{s_\ell}{2}+\frac{1}{4}\right)}{\Gamma \left( \frac{1}{4}\right)} \left(1 - \frac{1}{2^{\frac{1}{2}-s_\ell}}\right) \left(1 - \frac{1}{2^{1+2s_\ell}}\right).
\end{align*}
For the moment we choose $c_1 = c_2 = \frac{1}{\log X}$. By the rapid decay of $K(s_1,s_2)$ in vertical strips, we may truncate to $|\text{Im}(s_\ell)| \leq (\log X)^2$ at the cost of a negligible error. With this condition in place, we use the prime number theorem in arithmetic progressions to obtain that the sum on $p$ is
\begin{align*}
\frac{X}{4}\int_0^\infty \Phi(x) x^{\frac{\alpha_1s_1+\alpha_2s_2}{2}}dx + O \left(X \exp(-c\sqrt{\log X}) \right).
\end{align*}
The error term clearly makes an acceptable contribution to $M_{\alpha_1,\alpha_2}$. We then remove the condition on $\text{Im}(s_\ell)$ by the same means we installed it and obtain
\begin{align*}
M_{\alpha_1,\alpha_2} = \frac{4}{(1-\frac{1}{\sqrt{2}})^4} \frac{X}{4}\int_0^\infty \Phi(x) \frac{1}{(2\pi i)^2} &\int_{(c_1)}\int_{(c_2)} K(s_1,s_2) \left(\frac{(xX)^{\alpha_1}}{\pi} \right)^{\frac{s_1}{2}}\left(\frac{(xX)^{\alpha_2}}{\pi} \right)^{\frac{s_2}{2}} \\ 
&\times \zeta(1+2s_1)\zeta(1+2s_2)\zeta(1+s_2+s_2) \frac{ds_1 ds_2}{s_1s_2} dx \\
&+O(X).
\end{align*}

We wish to separate the variables $s_1$ and $s_2$. Since $c_\ell > 0$ we expand $\zeta(1+s_1+s_2)$ as an absolutely convergent Dirichlet series. Interchanging the order of summation and integration, we obtain
\begin{align*}
M_{\alpha_1,\alpha_2} = \frac{4}{(1-\frac{1}{\sqrt{2}})^4} &\frac{X}{4}\int_0^\infty \Phi(x) \sum_{n=1}^\infty \frac{1}{n}\frac{1}{(2\pi i)^2} \int_{(c_1)}\int_{(c_2)} K(s_1,s_2) \left(\frac{(xX)^{\alpha_1}}{\pi n^{2}} \right)^{\frac{s_1}{2}}\left(\frac{(xX)^{\alpha_2}}{\pi n^{2}} \right)^{\frac{s_2}{2}} \\ 
&\times \zeta(1+2s_1)\zeta(1+2s_2) \frac{ds_1 ds_2}{s_1s_2} dx +O(X).
\end{align*}
To truncate the summation over $n$, first we move the contours of integration to the right to $c_1 = c_2 = 1$. By trivial estimation we deduce that the contribution from $n \gg X^{\frac{\alpha_1 + \alpha_2}{4}}$ is $O(X)$. For $n$ in the range $X^{\frac{\alpha_1}{2}} \ll n \ll X^{\frac{\alpha_1 + \alpha_2}{4}}$, we move $\text{Re}(s_2)$ to $c_2 = \frac{1}{\log X}$ and estimate trivially, getting an error term of $O(X (\log X)^2)$. With $n \ll X^{\frac{\alpha_1}{2}}$ we then move $c_1$ to $\frac{1}{\log X}$, obtaining
\begin{align*}
M_{\alpha_1,\alpha_2} = \frac{4}{(1-\frac{1}{\sqrt{2}})^4} \frac{X}{4}&\int_0^\infty \Phi(x) \sum_{n \leq \sqrt{(xX)^{\alpha_1}/\pi}} \frac{1}{n} \\
&\times \frac{1}{(2\pi i)^2} \int_{(\frac{1}{\log X})}\int_{(\frac{1}{\log X})} K(s_1,s_2) \left(\frac{(xX)^{\alpha_1}}{\pi n^{2}} \right)^{\frac{s_1}{2}}\left(\frac{(xX)^{\alpha_2}}{\pi n^{2}} \right)^{\frac{s_2}{2}} \\ 
&\times \zeta(1+2s_1)\zeta(1+2s_2) \frac{ds_1 ds_2}{s_1s_2} dx + O(X (\log X)^{2}).
\end{align*}
The variables $s_1$ and $s_2$ are almost separated, except they are entangled inside of $K(s_1,s_2)$. We move the lines of integration to $\text{Re}(s_1) = \text{Re}(s_2) = -\delta$, for some small, fixed $\delta > 0$. In doing so we pick up contributions from the poles at $s_1,s_2 = 0$. The contribution from the integrals on $\text{Re}(s_\ell) = -\delta$ is trivially bounded by $O(X\log X)$. We write the contributions from the poles at $s_\ell = 0$ as contour integrals around small circles, thereby obtaining
\begin{align*}
M_{\alpha_1,\alpha_2} = \frac{4}{(1-\frac{1}{\sqrt{2}})^4} &\frac{X}{4}\int_0^\infty \Phi(x) \sum_{n \leq \sqrt{(xX)^{\alpha_1}/\pi}} \frac{1}{n} \\ 
&\times\frac{1}{(2\pi i)^2} \mathop{\oint \oint}_{|s_\ell| = (\log X)^{-1}} K(s_1,s_2) \left(\frac{(xX)^{\alpha_1}}{\pi n^{2}} \right)^{\frac{s_1}{2}}\left(\frac{(xX)^{\alpha_2}}{\pi n^{2}} \right)^{\frac{s_2}{2}} \\ 
&\times \zeta(1+2s_1)\zeta(1+2s_2) \frac{ds_1 ds_2}{s_1s_2} dx + O(X (\log X)^{2}).
\end{align*}
Since $|s_\ell|=(\log X)^{-1}$ we have
\begin{align*}
K(s_1,s_2) = K(0,0) + O \left( \frac{1}{\log X}\right) = \frac{1}{6\zeta(2)}\left(1 - \tfrac{1}{\sqrt{2}}\right)^2 + O\left( \frac{1}{\log X}\right),
\end{align*}
and therefore
\begin{align*}
M_{\alpha_1,\alpha_2} = \frac{2}{3\zeta(2)(1 - \frac{1}{\sqrt{2}})^2}\frac{X}{4}\int_0^\infty \Phi(x) &\sum_{n \leq \sqrt{(xX)^{\alpha_1}/\pi}} \frac{1}{n}\frac{1}{(2\pi i)^2} \mathop{\oint \oint}_{|s_\ell| = (\log X)^{-1}} \left(\frac{(xX)^{\alpha_1}}{\pi n^{2}} \right)^{\frac{s_1}{2}}\left(\frac{(xX)^{\alpha_2}}{\pi n^{2}} \right)^{\frac{s_2}{2}} \\ 
&\times \zeta(1+2s_1)\zeta(1+2s_2) \frac{ds_1 ds_2}{s_1s_2} dx + O(X (\log X)^{2}).
\end{align*}
Expanding in Laurent and power series yields
\begin{align*}
\frac{1}{2\pi i}\oint_{|s_\ell|=(\log X)^{-1}} \left(\frac{(xX)^{\alpha_\ell}}{\pi n^{2}} \right)^{\frac{s_\ell}{2}} \zeta(1+2s_\ell) \frac{ds_\ell}{s_\ell} = \frac{1}{2}\log \left(\frac{1}{n}\sqrt{\frac{(xX)^{\alpha_\ell}}{\pi}}  \right) + O(1),
\end{align*}
and hence
\begin{align*}
M_{\alpha_1,\alpha_2} &= \frac{1}{6\zeta(2)(1-\frac{1}{\sqrt{2}})^2} \frac{X}{4}\int_0^\infty \Phi(x) \sum_{n \leq \sqrt{(xX)^{\alpha_1}/\pi}} \frac{1}{n}\log \left(\frac{1}{n}\sqrt{\frac{(xX)^{\alpha_1}}{\pi}}  \right)\log \left(\frac{1}{n}\sqrt{\frac{(xX)^{\alpha_2}}{\pi}}  \right) \\
&+O(X(\log X)^{2}).
\end{align*}
Partial summation yields
\begin{align*}
\sum_{n \leq \sqrt{(xX)^{\alpha_1}/\pi}} \frac{1}{n}\log \left(\frac{1}{n}\sqrt{\frac{(xX)^{\alpha_1}}{\pi}}  \right)\log \left(\frac{1}{n}\sqrt{\frac{(xX)^{\alpha_2}}{\pi}}  \right) = \frac{1+O(\varepsilon_0)}{24}(\log X)^3,
\end{align*}
and therefore
\begin{align*}
M_{\alpha_1,\alpha_2} = \frac{1}{2}(\mathfrak{c} + O(\varepsilon_0)) \frac{X}{4}(\log X)^3.
\end{align*}
\end{proof}

\subsection{Proof of Theorem \ref{thm: 2nd moment GRH}}

We turn now to the proof of Theorem \ref{thm: 2nd moment GRH}. Throughout this subsection we set $\eta := 100 \log \log X/\log X$. Recalling the definition \eqref{eq:lower bound defn of A alpha} of $A_{\alpha}(p)$, we then have
\begin{align*}
L\left( \tfrac{1}{2},\chi_p \right) = A_{1-\eta}(p) + B(p),
\end{align*}
say. Thus
\begin{align}\label{eq:GRH second moment defn}
M_2 &= \sum_{p \equiv 1 \, (\text{mod }8)} (\log p) \Phi \left( \frac{p}{X}\right) \left\{ A_{1-\eta}(p)^2 + O(|A_{1-\eta}(p)B(p)| +|B(p)|^2)  \right\}.
\end{align}
We prove, on GRH, that
\begin{align}\label{eq:GRH main term}
\sum_{p \equiv 1 \, (\text{mod }8)} (\log p) \Phi \left( \frac{p}{X}\right) A_{1-\eta}(p)^2 = \mathfrak{c} \frac{X}{8}(\log X)^3 + O(X(\log X)^{2+\varepsilon})
\end{align}
and
\begin{align}\label{eq:GRH error term}
\sum_{p \equiv 1 \, (\text{mod }8)} (\log p) \Phi \left( \frac{p}{X}\right) |B(p)|^2 \ll X(\log X)^{5/2}.
\end{align}
Theorem \ref{thm: 2nd moment GRH} then follows from \eqref{eq:GRH second moment defn}, \eqref{eq:GRH main term}, and \eqref{eq:GRH error term} after applying Cauchy-Schwarz and summing over dyadic ranges.

We may easily prove \eqref{eq:GRH main term}, since the treatment is substantially similar to the proof of Proposition \ref{prop:2nd moment dir poly A}. Applying the approximate functional equation, the main term of \eqref{eq:GRH main term} is
\begin{align*}
\frac{4}{(1 - \frac{1}{\sqrt{2}})^4} \sum_{p \equiv 1 \, (\text{mod }8)} (\log p) \Phi \left( \frac{p}{X}\right) \mathop{\sum \sum}_{m,n \text{ odd}} \frac{\chi_p(mn)}{\sqrt{mn}} \omega_1\left(m \sqrt{\frac{\pi}{p^{1-\eta}}} \right)\omega_1\left(n \sqrt{\frac{\pi}{p^{1-\eta}}} \right).
\end{align*}
We argue as in Proposition \ref{prop:2nd moment dir poly A} and obtain that the contribution from $mn = \square$ is
\begin{align*}
\mathfrak{c} \frac{X}{8}(\log X)^3 + O(X(\log X)^{2+\varepsilon}).
\end{align*}
The following standard result implies that the contribution to \eqref{eq:GRH main term} from $mn \neq \square$ is $O(X/\log X)$, say.
\begin{lem}\label{lem:char sum bound on GRH}
Let $\chi$ be a non-principal Dirichlet character modulo $q$. Let $\chi^*$ be the primitive character inducing $\chi$, and assume that GRH holds for $L(s,\chi^*)$. If $q \leq X^M$ for some fixed positive constant $M$, then
\begin{align*}
\sum_{p \leq X} \chi(p)(\log p) \ll_M X^{1/2} (\log X)^2.
\end{align*}
\end{lem}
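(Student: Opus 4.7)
The plan is to reduce the weighted prime sum to a von Mangoldt sum against the primitive character $\chi^*$, and then invoke the truncated explicit formula under GRH. First I would write
\begin{align*}
\sum_{p \leq X} \chi(p)(\log p) = \sum_{n \leq X} \Lambda(n) \chi(n) + O\bigl(X^{1/2} \log X\bigr),
\end{align*}
the error absorbing the contribution of proper prime powers. Next, since $\chi(n) = \chi^*(n) \mathbf{1}_{(n,q)=1}$, the difference between $\sum_{n\leq X}\Lambda(n)\chi(n)$ and $\psi(X,\chi^*) := \sum_{n\leq X}\Lambda(n)\chi^*(n)$ is supported on prime powers of primes dividing $q$, hence is $\ll (\log q)(\log X) \ll (\log X)^2$ by the hypothesis $q \leq X^M$.

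The main step is to bound $\psi(X,\chi^*)$ via the truncated explicit formula (see, e.g., Davenport \cite{Dav}, Chapter 17): for any $T \geq 2$,
\begin{align*}
\psi(X,\chi^*) = -\sum_{|\gamma| \leq T} \frac{X^{\rho}}{\rho} + O\!\left(\frac{X (\log qX)^2}{T} + \log qX\right),
\end{align*}
where $\rho = \beta + i\gamma$ runs over non-trivial zeros of $L(s,\chi^*)$. Under GRH for $L(s,\chi^*)$ we have $\beta = \tfrac{1}{2}$, so $|X^\rho| = X^{1/2}$. The classical zero-counting estimate $N(T+1,\chi^*) - N(T,\chi^*) \ll \log(q(|T|+2))$ yields
\begin{align*}
\sum_{|\gamma| \leq T} \frac{1}{|\rho|} \ll (\log qT)^2,
\end{align*}
and therefore $\sum_{|\gamma|\leq T} X^{\rho}/\rho \ll X^{1/2}(\log qT)^2$. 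Choosing $T = X^{1/2}$ and using $q \leq X^M$ gives $\psi(X,\chi^*) \ll_M X^{1/2}(\log X)^2$, which combined with the two reductions above proves the lemma.

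The argument has no genuine obstacle; it is the standard GRH-conditional prime-counting estimate, with the only bookkeeping being the dependence on $q$, which is polynomial in $X$ by hypothesis, so all logarithms $\log q$ are absorbed into $\log X$ up to an $M$-dependent implicit constant.
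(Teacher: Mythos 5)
Your proof is correct. The paper itself gives no proof of this lemma, simply labeling it a ``standard result'' and citing nothing, so there is no proof to compare against; the route you take --- pass from $\sum_p \chi(p)\log p$ to $\psi(X,\chi^*)$, then apply the truncated explicit formula with $T=X^{1/2}$ and the GRH-forced $\beta=\tfrac12$ together with the zero-density bound $N(T+1,\chi^*)-N(T,\chi^*)\ll\log(q(T+2))$ --- is exactly the standard argument the authors have in mind (Davenport, Chapters 19--20), and your bookkeeping of the $q$-dependence via $q\leq X^M$ is exactly what justifies the subscript $M$ in the stated bound.
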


The proof of \eqref{eq:GRH error term} is more subtle. Here the method of proof is that of Soundararajan and Young \cite{SY10}. As the arguments are very similar, our exposition will be sparse, and we refer the reader to \cite{SY10} for more details. We perform some initial manipulations, and then we state the main proposition which will yield \eqref{eq:GRH error term}.

By definition, we have
\begin{align}\label{eq:GRH defn of B}
B(p) &= \frac{1}{2\pi i} \int_{(c)} g(s) L\left( \frac{1}{2}+s,\chi_p\right) \frac{p^{s/2} - p^{(1-\eta)s/2}}{s}ds,
\end{align}
where $c>0$ and
\begin{align*}
g(s) = \frac{2}{(1-\frac{1}{\sqrt{2}})^2} \frac{\Gamma \left( \frac{s}{2} + \frac{1}{4}\right)}{\Gamma \left( \frac{1}{4}\right)} \left(1 - \frac{1}{2^{\frac{1}{2}-s}}\right) \left(1 - \frac{1}{2^{\frac{1}{2}+s}}\right)\pi^{-s/2}.
\end{align*}
The function $(p^{s/2}-p^{(1-\eta)s/2})/s$ is entire, so we may move the line of integration in \eqref{eq:GRH defn of B} to $\text{Re}(s) = 0$. On the line $\text{Re}(s) = 0$ we have the bound $|(p^{s/2}-p^{(1-\eta)s/2})/s| \ll \log \log X$, and hence the left side of \eqref{eq:GRH error term} is
\begin{align}\label{eq:GRH upper bound on sum of B squared}
\ll (\log X)^{1+\varepsilon} \int_\mathbb{R} \int_\mathbb{R} |g(it_1)g(it_2)| \sum_{\substack{p \leq X \\ p \equiv 1 \, (\text{mod }8)}} \left|L \left( \tfrac{1}{2}+it_1,\chi_p\right)L \left( \tfrac{1}{2}+it_2,\chi_p\right) \right| dt_1 \ dt_2.
\end{align}
To state the proposition we need, we first establish some notation, following \cite[Section 6]{SY10}. Given $x \geq 10$, say, and a complex number $z$, we define
\begin{align*}
\mathcal{L}(z,x) = 
\begin{cases}
\log \log x, \ \ \ \ \ &|z| \leq (\log x)^{-1}, \\
-\log |z|, &(\log x)^{-1} \leq |z| \leq 1, \\
0, &|z| \geq 1.
\end{cases}
\end{align*}
For complex numbers $z_1$ and $z_2$ we define
\begin{align*}
\mathcal{M}(z_1,z_2,x) = \frac{1}{2}(\mathcal{L}(z_1,x) + \mathcal{L}(z_2,x)),
\end{align*}
and
\begin{align*}
\mathcal{V}(z_1,z_2,x) = \frac{1}{2}[\mathcal{L}(2z_1,x)&+\mathcal{L}(2z_2,x) + \mathcal{L}(2\text{Re}(z_1),x)+\mathcal{L}(2\text{Re}(z_2),x)) \\
&+2\mathcal{L}(z_1+z_2,x) + 2\mathcal{L}(z_1 + \overline{z_2},x)].
\end{align*}
It is helpful to know that for the values of $z_1$ and $z_2$ we consider, we have $\log \log X \leq \mathcal{V}(z_1,z_2,X)  \leq 4 \log \log X$.

The following result, an analogue of \cite[Theorem 6.1]{SY10}, is the key input we need.
\begin{prop}\label{prop:GRH moments}
Let $X$ be large, and let $z_1$ and $z_2$ be complex numbers with $0 \leq \textup{Re}(z_i) \leq \frac{1}{\log X}$ and $|z_i| \leq X$. Assume the Riemann Hypothesis for the Riemann zeta function $\zeta(s)$ and for all Dirichlet $L$-functions $L(s,\chi_p)$ with $p \equiv 1 \pmod{8}$. Then for any $r >0$ in $\mathbb{R}$ and any $\varepsilon > 0$ we have
\begin{align*}
\sum_{\substack{p \leq X \\ p \equiv 1 \, (\textup{mod }8)}}&\left|L \left( \tfrac{1}{2}+z_1,\chi_p\right)L \left( \tfrac{1}{2}+z_2,\chi_p\right) \right|^r \\
&\ll_{r,\varepsilon} \frac{X}{(\log X)^{1-\varepsilon}} \exp \left(r \mathcal{M}(z_1,z_2,X) + \frac{r^2}{2} \mathcal{V}(z_1,z_2,X) \right).
\end{align*}
\end{prop}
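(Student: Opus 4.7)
The plan is to transplant the Soundararajan--Young framework~\cite{SY10} to the family $\{\chi_p : p \equiv 1 \pmod 8,\ p \leq X\}$. The starting point is Soundararajan's conditional inequality: assuming GRH for $L(s,\chi_p)$, for any $x \geq 2$ and $\lambda \geq \lambda_0$,
\begin{equation*}
\log\bigl|L(\tfrac{1}{2}+z,\chi_p)\bigr| \ \leq\ \textup{Re} \sum_{n \leq x} \frac{\Lambda(n)\,\chi_p(n)}{n^{1/2+z+\lambda/\log x} \log n}\cdot \frac{\log(x/n)}{\log x} \ +\ \frac{(1+\lambda)\log p}{\log x} \ +\ O(1).
\end{equation*}
Applying this with $z=z_1$ and $z=z_2$ and choosing $x$ to be a small power of $X$ (say $x = X^{1/(2k)}$ for a parameter $k$ chosen shortly), we obtain
\begin{equation*}
\bigl|L(\tfrac12+z_1,\chi_p)L(\tfrac12+z_2,\chi_p)\bigr|^r \ \leq\ \exp\bigl(r\,\textup{Re}\,\mathcal{P}(\chi_p)\bigr) \cdot X^{O(r/k)},
\end{equation*}
where $\mathcal{P}(\chi_p)$ is the short Dirichlet polynomial over prime powers $\leq x$ coming from the two zeta-shifts.

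The next step is to control the distribution of $\textup{Re}\,\mathcal{P}(\chi_p)$ as $p$ ranges over our family. Following the standard moment recipe, for each integer $j \geq 1$ we will estimate
\begin{equation*}
\mathcal{N}_{2j} \ :=\ \sum_{\substack{p \leq X \\ p \equiv 1 \, (\textup{mod }8)}} \bigl|\mathcal{P}(\chi_p)\bigr|^{2j},
\end{equation*}
by expanding the $2j$-th power and interchanging summation. The inner sum is $\sum_{p \leq X,\, p \equiv 1(8)} \chi_p(q_1\cdots q_{2j})$ for prime-power tuples. By quadratic reciprocity (using $p \equiv 1 \pmod 8$ to pass from $\bigl(\tfrac{N}{p}\bigr)$ to $\chi_N(p)$ up to a character mod~$8$), the diagonal contribution $q_1\cdots q_{2j} = \square$ contributes the Gaussian moment $\asymp \frac{(2j)!}{2^j\, j!}\,\pi(X;8,1)\,\mathcal{V}(z_1,z_2,X)^j$ by the standard combinatorial identity for pair-partitions of prime powers, after converting the prime sum to an integral via partial summation. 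The off-diagonal terms, in which $N = q_1\cdots q_{2j}$ is not a square, are bounded under GRH by Lemma~\ref{lem:char sum bound on GRH}, giving $O_j(X^{1/2}(\log X)^2)$ each; summed, these contribute $O(X^{1/2}(\log x)^{2j} x^{2j})$, which is negligible provided $x^{2j} \leq X^{1/2-\varepsilon}$, i.e., $j \leq k$. This gives $\mathcal{N}_{2j} \ll X\,(C j\,\mathcal{V}(z_1,z_2,X))^j$ for some absolute $C$, uniformly in the stated range.

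With these moment bounds in hand, the proof finishes by a level-crossing argument exactly as in \cite[Prop.~4.3 and Thm.~6.1]{SY10}. For $V > 0$, the number of $p$ with $\textup{Re}\,\mathcal{P}(\chi_p) \geq \mathcal{M}(z_1,z_2,X) + V$ is bounded by $V^{-2j}\mathcal{N}_{2j}$; optimising $j$ yields Gaussian-type tail decay at scale $\sqrt{\mathcal{V}(z_1,z_2,X)}$. Integrating $\int e^{rV}\,d(\text{tail})$ delivers the claimed factor $\exp(r\mathcal{M} + \tfrac{r^2}{2}\mathcal{V})$, with the $X/(\log X)^{1-\varepsilon}$ prefactor absorbing the $X^{O(r/k)}$ slack by taking $k$ a large constant (depending on $r$ and $\varepsilon$). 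The main obstacle will be the first step of the moment computation: unlike the family of fundamental discriminants treated in \cite{SY10}, our family is indexed by primes, so the off-diagonal character sum needs GRH for individual $L(s,\chi_{q^*})$, which is precisely the hypothesis we are allowed to assume, and the diagonal evaluation must be carried out by combining the prime number theorem in arithmetic progressions with a careful accounting of the coincidences in $(q_1,\ldots,q_{2j})$ that include prime-power contributions---this is the delicate combinatorial bookkeeping that lies at the heart of the Gaussian main term.
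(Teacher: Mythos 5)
Your argument follows the same Soundararajan--Young blueprint the paper uses: bound $\log\bigl|L(\tfrac12+z_1,\chi_p)L(\tfrac12+z_2,\chi_p)\bigr|$ by a short Dirichlet polynomial via Soundararajan's inequality (on GRH for $L(s,\chi_p)$), isolate the $\ell=2$ prime-power contribution as $\mathcal{M}(z_1,z_2,X)$ using RH for $\zeta(s)$, estimate even moments of the remaining prime-supported polynomial over the family, and then integrate the resulting large-value estimate against $e^{rV}$. The one place you genuinely diverge is the moment estimate for the Dirichlet polynomial (the paper's Lemma~\ref{lem:mean val Dir poly over primes}): you dispatch the off-diagonal $q_1\cdots q_{2j}\neq\square$ by invoking GRH for each induced primitive character $\chi_{q^*}$ through Lemma~\ref{lem:char sum bound on GRH}, whereas the paper runs an upper bound sieve on $p$ together with P\'olya--Vinogradov, so that this auxiliary lemma is unconditional at the cost of a harmless $\log_2 X$ factor in the prefactor; the paper itself notes that your GRH route is available. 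One small bookkeeping slip in your sketch: with $|a(q)|\ll 1$ the off-diagonal total is $\ll X^{1/2}(\log X)^2\bigl(\sum_{q\le x}|a(q)|/\sqrt{q}\bigr)^{2j}\ll X^{1/2}(\log X)^2\,x^{j}/(\log x)^{2j}$, not $X^{1/2}(\log x)^{2j}x^{2j}$, so the constraint you want is $x^{j}\le X^{1/2-\varepsilon}$---which is exactly the hypothesis $y^k\le X^{1/2-1/\log_2 X}$ built into the paper's lemma.
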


\begin{proof}[Proof of \eqref{eq:GRH error term} assuming Proposition \ref{prop:GRH moments}]
Recall \eqref{eq:GRH upper bound on sum of B squared}. If $t_1$ or $t_2$ satisfies $|t_i| > X$ we use Cauchy-Schwarz, Lemma \ref{lem: moment estimates}, and the rapid decay of $g$ to get a negligible error. 

We may therefore assume that $|t_i| \leq X$. We then consider, for a parameter $0 < \alpha < 1$ at our disposal, two cases: (1) both $t_1$ and $t_2$ satisfy $|t_i| \leq (\log X)^{-\alpha}$, or (2) one of $t_1,t_2$ satisfies $|t_i| \geq (\log X)^{-\alpha}$. In case (1) we use the trivial bounds
\begin{align*}
\mathcal{M}(it_1,it_2,X) &\leq \log \log X, \\
\mathcal{V}(it_1,it_2,X) &\leq 4 \log \log X,
\end{align*}
while in case (2) we use the bounds
\begin{align*}
\mathcal{M}(it_1,it_2,X) &\leq \frac{1+\alpha}{2} \log \log X, \\
\mathcal{V}(it_1,it_2,X) &\leq \frac{7+\alpha}{2}\log \log X + O(1).
\end{align*}
Since $|g(it)| \ll (1+t^2)^{-1}$ we obtain by Proposition \ref{prop:GRH moments} that the quantity in \eqref{eq:GRH upper bound on sum of B squared} is
\begin{align*}
&\ll X(\log X)^\varepsilon \left((\log X)^{3-2\alpha} + (\log X)^{9/4+3\alpha/4} \right) =X(\log X)^{27/11+\varepsilon} \leq X(\log X)^{5/2}
\end{align*}
upon choosing $\alpha = 3/11$.
\end{proof}

To prove Proposition \ref{prop:GRH moments} we establish estimates for how often $\left|L \left( \frac{1}{2}+z_1,\chi_p\right)L \left( \frac{1}{2}+z_2,\chi_p\right) \right|$ can be large. The following is very similar to \cite[Proposition 6.2]{SY10}.

\begin{prop}\label{prop:large values}
Assume the hypotheses of Proposition \ref{prop:GRH moments}. Let $\mathcal{N}(V;z_1,z_2,X)$ denote the number of primes $p \leq X$, $p \equiv 1 \pmod{8}$, such that $\log \left|L \left( \frac{1}{2}+z_1,\chi_p\right)L \left( \frac{1}{2}+z_2,\chi_p\right) \right| \geq V + \mathcal{M}(z_1,z_2,X)$. In the range $3 \leq V\leq 4r\mathcal{V}(z_1,z_2,X)$ we have
\begin{align*}
\mathcal{N}(V;z_1,z_2,X) &\ll \frac{X}{(\log X)^{1-o_r(1)}} \exp \left(-\frac{V^2}{2\mathcal{V}(z_1,z_2,X)} \right),
\end{align*}
and for larger $V$ we have
\begin{align*}
\mathcal{N}(V;z_1,z_2,X) \ll \frac{X}{(\log X)^{1-o_r(1)}}\exp(-4rV).
\end{align*}
\end{prop}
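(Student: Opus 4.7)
The plan is to adapt the method of Soundararajan--Young~\cite{SY10} to the family of $L$-functions $L(s,\chi_p)$ with $p \equiv 1 \pmod{8}$. The starting point is Soundararajan's conditional upper bound: on GRH for $L(s,\chi_p)$, for any $x \geq 2$ and any $z$ with $\textup{Re}(z) \geq 0$,
$$
\log\bigl|L\bigl(\tfrac{1}{2}+z,\chi_p\bigr)\bigr| \leq \textup{Re}\sum_{q \leq x}\frac{\chi_p(q)}{q^{1/2+z+1/\log x}}\frac{\log(x/q)}{\log x} + \frac{\log X}{\log x} + O(\log\log X),
$$
where $q$ ranges over primes (the contribution of higher prime powers is absorbed into the $O(\log\log X)$ term). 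Denote by $P_z(p)$ the Dirichlet polynomial on the right. I would choose $x = X^{A/V}$ with $A$ a sufficiently large absolute constant so that the GRH remainder $\log X / \log x = V/A$ is a small fixed fraction of $V$. Under this choice, the event $\log|L(\tfrac{1}{2}+z_1,\chi_p)L(\tfrac{1}{2}+z_2,\chi_p)| \geq V + \mathcal{M}(z_1,z_2,X)$ implies $P_{z_1}(p) + P_{z_2}(p) \geq (1-2/A)V + \mathcal{M}(z_1,z_2,X)$.

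Next I would apply Markov's inequality to the $2k$-th power, bounding $\mathcal{N}(V;z_1,z_2,X)$ by $V^{-2k}$ times $\sum_{p \equiv 1\,(\textup{mod }8)}(\log p)\Phi(p/X)(P_{z_1}(p)+P_{z_2}(p))^{2k}$ (after re-centering by the mean). Expanding the $2k$-th power reduces the moment to inner sums $\sum_{p \equiv 1\,(\textup{mod }8)}(\log p)\Phi(p/X)\chi_p(q_1\cdots q_{2k})$ over $2k$-tuples of primes $q_i \leq x$. When $q_1\cdots q_{2k}$ is a perfect square, the contribution is $\sim X/4$ per tuple by the prime number theorem in progressions modulo $8$; when $q_1\cdots q_{2k}$ is not a square, Lemma~\ref{lem:gen char sum over primes} bounds the inner sum, with acceptable savings provided $\prod_i q_i \leq x^{2k} \leq X^{1-\delta}$, i.e.\ $2kA \leq (1-\delta)V$.

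The main obstacle will be the accurate evaluation of the diagonal (square product) main term. The variance of $P_{z_1}(p) + P_{z_2}(p)$ decomposes into six prime-power sums of the form $\sum_{q \leq x} q^{-1-w}$ with $w$ ranging over $2z_1$, $2z_2$, $2\textup{Re}(z_1)$, $2\textup{Re}(z_2)$, $z_1+z_2$, and $z_1+\overline{z_2}$; each sum yields $\mathcal{L}(w,X) + O(1)$, and together with the appropriate coefficients they reproduce precisely the definition of $\mathcal{V}(z_1,z_2,X)$. By Isserlis' (Wick's) theorem the $2k$-th central moment then factors as $(2k)!/(2^k k!)\cdot \mathcal{V}^k$ up to lower-order terms, and Stirling converts the resulting Markov bound into Gaussian form. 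Finally, in the range $3 \leq V \leq 4r\mathcal{V}$ the choice $2k \asymp V/\mathcal{V}$ yields the Gaussian bound $\exp(-V^2/(2\mathcal{V}))$, while in the range $V > 4r\mathcal{V}$ the choice $2k \asymp rV/\mathcal{V}$ yields $\exp(-4rV)$; in both regimes the constraint $2kA \leq (1-\delta)V$ imposed by Lemma~\ref{lem:gen char sum over primes} is comfortably satisfied, and the $(\log X)^{-1+o_r(1)}$ prefactor comes from the size of the prime family $p \leq X$ in the progression $1 \pmod 8$.
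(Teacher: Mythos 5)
Your approach is in the right spirit, but there are several substantive gaps that would prevent the argument from closing.

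First, you absorb the prime-power ($\ell = 2$) terms of Soundararajan's bound into an $O(\log\log X)$ error. This is fatal: on RH for $\zeta(s)$, the $\ell = 2$ contribution is not a negligible error but is \emph{precisely} $\mathcal{M}(z_1,z_2,x) + O(\log\log\log X)$, which is of size $\asymp \log\log X$ and must be identified exactly so that it cancels the recentering by $\mathcal{M}(z_1,z_2,X)$ in the definition of the event. If you only keep track of it as $O(\log\log X)$, then the threshold $V + \mathcal{M}(z_1,z_2,X)$ is swamped by the error, and you obtain nothing in the principal range $3 \leq V \lesssim \sqrt{\log\log X}$. Second, your choice $x = X^{A/V}$ with $A$ a fixed large constant, combined with the constraint $x^{2k} \leq X^{1-\delta}$, forces $2kA \leq (1-\delta)V$. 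But the optimal moment in the Gaussian range is $k \asymp V^2/\mathcal{V}$, giving the requirement $V \lesssim \mathcal{V}/A$, which excludes most of the claimed range $V \leq 4r\mathcal{V}$. The paper resolves this by choosing $x = X^{T/V}$ with $T$ shrinking to a constant as $V$ grows, and crucially by splitting the Dirichlet polynomial into a short piece $S_1$ over $q \leq z = x^{1/\log\log X}$ (which carries essentially all the variance and for which high moments are affordable) and a tail $S_2$ over $z < q \leq x$ (which has tiny variance, so a modest moment already gives super-Gaussian decay). Without this split the argument does not cover the stated range of $V$.

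Third, the off-diagonal bound via Lemma~\ref{lem:gen char sum over primes} does not apply as you propose: writing the off-diagonal sum in terms of $q = q_1\cdots q_{2k}$ introduces a multiplicity factor $\geq d_{2k}(q)$, while Lemma~\ref{lem:gen char sum over primes} only tolerates a \emph{fixed} power $\tau(q)^{A_1}$; since $k$ grows with $X$ (up to $\asymp_r \log\log X$), this cannot be accommodated, and the lemma's ineffective constant is also undesirable here. The paper instead uses the elementary, sieve-based Lemma~\ref{lem:mean val Dir poly over primes}, which has the explicit $(2k)!/(2^k k!)$ dependence on $k$ needed for the Chebyshev-type argument and is uniform in $k$ subject to the length condition $y^k \leq X^{1/2-1/\log_2 X}$. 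You would need something of this form; the generic off-diagonal lemma designed for the first moment does not play the same role.
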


\begin{proof}[Proof of Proposition \ref{prop:GRH moments}]
We have
\begin{align*}
\sum_{\substack{p \leq X \\ p \equiv 1 \, (\textup{mod }8)}}&\left|L \left( \tfrac{1}{2}+z_1,\chi_p\right)L \left( \tfrac{1}{2}+z_2,\chi_p\right) \right|^r \\ 
&= r \int_{-\infty}^\infty \exp(rV + r\mathcal{M}(z_1,z_2,X)) \mathcal{N}(V;z_1,z_2,X) dV.
\end{align*}
Then use Proposition \ref{prop:large values}.
\end{proof}

We use the following lemma to determine how frequently a Dirichlet polynomial can be large. We write $\log_2 X$ for $\log \log X$.

\begin{lem}\label{lem:mean val Dir poly over primes}
Let $X$ and $y$ be real numbers and $k$ a natural number with $y^k \leq X^{\frac{1}{2} - \frac{1}{\log_2 X}}$. For any complex numbers $a(q)$ we have
\begin{align*}
\sum_{\substack{p \leq X \\ p \equiv 1 \, (\textup{mod }8)}} \left|\sum_{2 < q \leq y} \frac{a(q)\chi_p(q)}{q^{\frac{1}{2}}} \right|^{2k} \ll \frac{X \log_2 X}{\log X} \frac{(2k!)}{2^k k!} \left( \sum_{q \leq y} \frac{|a(q)|^2}{q}\right)^k,
\end{align*}
where the implied constant is absolute.
\end{lem}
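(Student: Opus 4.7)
The plan is to expand the $2k$-th moment and separate diagonal from off-diagonal contributions in the standard Soundararajan style. Since $\chi_p$ is real, write $A(p)^k=\sum_{m\leq y^k} c_k(m)\chi_p(m)/\sqrt{m}$, where $c_k(m)=\sum_{q_1\cdots q_k=m}a(q_1)\cdots a(q_k)$ (the $q_i$ running over primes in $(2,y]$), and then
\begin{equation*}
|A(p)|^{2k} \;=\; A(p)^k\,\overline{A(p)^k} \;=\; \sum_{m_1,m_2\leq y^k}\frac{c_k(m_1)\overline{c_k(m_2)}}{\sqrt{m_1m_2}}\,\chi_p(m_1m_2).
\end{equation*}
Summing over primes $p\leq X$ with $p\equiv 1\pmod 8$, split according to whether $m_1m_2$ is a perfect square. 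The crucial constraint is that $m_1m_2\leq y^{2k}\leq X^{1-2/\log_2 X}$, so every $p$-sum we meet in the off-diagonal has a modulus comfortably smaller than $X$.

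For the diagonal terms ($m_1m_2=\square$), the character values are $0$ or $1$, and the prime number theorem in arithmetic progressions mod $8$ contributes $(1+o(1))X/(4\log X)$. Viewing $m_1m_2$ as a product of $2k$ primes, the square condition demands that each prime appear an even number of times; the dominant configurations are the $(2k)!/(2^k k!)$ perfect pairings, each giving at most $\big(\sum_q |a(q)|^2/q\big)^k$ after absolute values, while configurations where some prime appears with multiplicity $\geq 4$ contribute strictly less (they involve fewer independent prime sums). This yields the claimed main term, with the leading $X/\log X$ absorbing the $\log_2 X$ slack in the statement.

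For the off-diagonal terms ($m_1m_2\neq\square$), factor $m_1m_2=st^2$ with $s>1$ odd squarefree, so $\chi_p(m_1m_2)=(s/p)$ is a genuine character sum over primes with modulus $s\leq X^{1-2/\log_2 X}$. One then invokes (the proof of) Lemma~\ref{lem:gen char sum over primes} to save an arbitrarily large power of $\log X$ in each such $p$-sum, and summing the resulting bound against $|c_k(m_1)c_k(m_2)|/\sqrt{m_1m_2}$ (crudely controlled by standard divisor-type estimates) gives a total off-diagonal contribution that is negligible relative to the main term.

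The main obstacle is the off-diagonal estimate: Lemma~\ref{lem:gen char sum over primes} is stated for \emph{fixed} $\delta>0$, whereas here the effective $\delta=2/\log_2 X$ slowly tends to zero. To handle this, one re-runs the three-regime decomposition in the proof of that lemma with this sliding parameter. In the small-conductor regime, Page's and Siegel's theorems still apply but the ineffective constant now depends on $\log_2 X$ rather than a fixed power of $\log X$; in the medium-conductor regime, Jutila's zero-density estimate \eqref{eq: Jutila zero density} remains robust; and in the large-conductor regime, Vaughan's identity combined with Heath-Brown's quadratic large sieve (Lemma~\ref{lem: estimates for character sums}) continues to deliver the required savings. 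The single $\log_2 X$ factor in the statement is precisely the loss sustained in the small-conductor regime from this sliding $\delta$.
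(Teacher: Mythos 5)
Your proposed route is genuinely different from the paper's, and it has a real gap in the off-diagonal.

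\textbf{What the paper actually does.} Since $p\equiv 1\pmod 8$, quadratic reciprocity gives $\chi_p(q)=\chi_{q^*}(p)$ with $q^*=(-1)^{(q-1)/2}q$. The paper then introduces an upper-bound sieve $\mathbf{1}_{\{p\text{ prime}\}}\leq \sum_{d\mid n}\lambda_d$ supported on $d\leq D=X^{1/\log_2 X}$, drops the primality condition and the congruence condition altogether, and opens the $2k$-th power so the inner sum is now a character sum over \emph{all integers} $n\leq X$ (weighted by $\sum_{d\mid n}\lambda_d$). The diagonal $q_1\cdots q_{2k}=\square$ then costs $\sum_{n\leq X}\sum_{d\mid n}\lambda_d\ll X\log_2X/\log X$ (the sieve loses a $\log_2 X$ because $\log D=\log X/\log_2 X$), and the off-diagonal is killed by P\'olya--Vinogradov plus Cauchy--Schwarz: the conductor is $\leq 4y^{2k}\leq 4X^{1-2/\log_2X}$, so the character sum over $n\leq X$ is $\ll D\sqrt{y^{2k}}\log(y^{2k})$, and the $\sqrt{y^{2k}}$ cancels against $1/\sqrt{q_1\cdots q_{2k}}$ to give something $O(Xk\log X/D\cdot(\sum|a(q)|^2/q)^k)$, which is negligible. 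This is elementary, unconditional, and produces an absolute implied constant. Thus the $\log_2 X$ in the statement is the price of the sieve level, not of any exceptional-character considerations.

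\textbf{Where your argument breaks.} You keep the prime sum and estimate the off-diagonal $\sum_{m_1m_2\neq\square}\frac{|c_k(m_1)c_k(m_2)|}{\sqrt{m_1m_2}}\left|\sum_p\cdots\right|$ by ``invoking (the proof of) Lemma~\ref{lem:gen char sum over primes}'' per pair $(m_1,m_2)$ and then summing crudely. This cannot work. First, Lemma~\ref{lem:gen char sum over primes} is an average over $q$ with a \emph{specific} built-in weight $\tau(q)^{A_1}(\log q)^{A_2}/q^{1/2}$; it gives no per-$q$ bound you can then multiply by $|c_k(m_1)c_k(m_2)|/\sqrt{m_1m_2}$, and the coefficients $c_k(m)$ are arbitrary (they are $k$-fold convolutions of arbitrary $a(q)$), so they are not dominated by a fixed divisor-type weight. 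Second, even granting an arbitrary $(\log X)^{B}$ saving on each prime sum, the factor you would sum against is
$$
\sum_{m_1,m_2}\frac{|c_k(m_1)c_k(m_2)|}{\sqrt{m_1m_2}}=\left(\sum_q\frac{|a(q)|}{\sqrt q}\right)^{2k}\leq \pi(y)^k\left(\sum_q\frac{|a(q)|^2}{q}\right)^k,
$$
and $\pi(y)^k$ can be of order $y^k/(\log y)^k=X^{1/2-o(1)}$; no power of $\log X$ overcomes that, so the resulting bound is $\gg X^{3/2-o(1)}(\log X)^{-B}(\cdots)^k$, far above the target. Third, the constant in Lemma~\ref{lem:gen char sum over primes} is ineffective and depends on the parameters $A_1,A_2,\delta,B$; any argument routed through it cannot yield the \emph{absolute} implied constant the statement requires (and, if you tried to make the $A_i$ track $c_k$, the constant would also depend on $k$). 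The paper's sieve-plus-P\'olya--Vinogradov maneuver sidesteps all three difficulties at once, which is why it is the right tool here.
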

\begin{proof}
This result is similar to \cite[Lemma 6.3]{SY10}, so we give only a sketch. Since we are assuming GRH we could use Lemma \ref{lem:char sum bound on GRH}, but we get an unconditional result that is almost as good by appealing to sieve theory.

Since $p \equiv 1 \pmod{8}$, we have $\chi_p(q) = \chi_{q^*}(p)$, where for an odd integer $n$ we define $n^*=(-1)^\frac{n-1}{2} n$. Observe that $\chi_{q^*}$ is a primitive character with conductor $\leq 4q$. We then introduce an upper bound sieve supported on $d \leq D = X^{\frac{1}{\log_2 X}}$. With the upper bound sieve in place we drop the congruence condition modulo 8 and the condition that $p$ is a prime. Opening the square and using the P\'olya-Vinogradov inequality, the sum in question is then
\begin{align*}
&\ll \sum_{n \leq X} \left(\sum_{d \mid n} \lambda_d \right)\left|\sum_{2 < q \leq p} \frac{a(q)\chi_{q^*}(n)}{q^{\frac{1}{2}}} \right|^{2k} \\ 
&\ll \sum_{\substack{q_i \leq y \\ q_1\cdots q_{2k} = \square}} \frac{|a(q_1)\cdots a(q_{2k})|}{\sqrt{q_1\cdots q_{2k}}} \sum_{n \leq X} \left(\sum_{d \mid n} \lambda_d \right) + D\log(y^{2k})\sum_{q_1,\ldots,q_{2k} \leq y} |a(q_1)\cdots a(q_{2k})|.
\end{align*}
For the first term we obtain
\begin{align*}
\sum_{\substack{q_i \leq y \\ q_1\cdots q_{2k} = \square}} \frac{|a(q_1)\cdots a(q_{2k})|}{\sqrt{q_1\cdots q_{2k}}} \sum_{n \leq X} \left(\sum_{d \mid n} \lambda_d \right) \ll \frac{X\log_2 X}{\log X}\frac{(2k!)}{2^kk!} \left(\sum_{q \leq y} \frac{|a(q)|^2}{q} \right)^k,
\end{align*}
and for the second term we use Cauchy-Schwarz to obtain
\begin{align*}
D\log(y^{2k})\sum_{q_1,\ldots,q_{2k} \leq y} |a(q_1)\cdots a(q_{2k})| \ll X\frac{k \log X}{D}\left(\sum_{q \leq y} \frac{|a(q)|^2}{q} \right)^k.
\end{align*}
\end{proof}

\begin{proof}[Proof of Proposition \ref{prop:large values}]
Assume GRH for $L(s,\chi_p)$. A modification of the proof of the Proposition in \cite{Sou09} then yields
\begin{align*}
\log \left|L \left( \tfrac{1}{2}+z_1,\chi_p\right)L \left( \tfrac{1}{2}+z_2,\chi_p\right) \right| &\leq \text{Re}\left(\sum_{q^\ell \leq x}\frac{\chi_p(q^\ell)}{\ell q^{\ell(\frac{1}{2} + \frac{1}{\log x})}}(p^{-\ell z_1} + p^{-\ell z_2})\frac{\log(x/q^\ell)}{\log x} \right) \\
&+ 2\frac{\log X}{\log x} + O\left( \frac{1}{\log x}\right).
\end{align*}
The terms with $\ell \geq 3$ contribute $O(1)$. For $\ell = 2$ we use the Riemann hypothesis for $\zeta(s)$ (see \cite[(6.4)]{SY10}) and obtain
\begin{align*}
\frac{1}{2}\sum_{q \leq \sqrt{x}} \frac{1}{q^{1 + \frac{2}{\log x}}} (q^{-2z_1} + q^{-2z_2})\frac{\log(x/q^2)}{\log x} = \mathcal{M}(z_1,z_2,x) + O(\log \log \log X).
\end{align*}
Since $\mathcal{M}(z_1,z_2,x) \leq \mathcal{M}(z_1,z_2,X) + 2 \frac{\log X}{\log x}$, we obtain
\begin{align}\label{eq:GRH upper bound for log L}
\log \left|L \left( \tfrac{1}{2}+z_1,\chi_p\right)L \left( \tfrac{1}{2}+z_2,\chi_p\right) \right| &\leq \text{Re} \ \sum_{2 < q \leq x} \frac{\chi_p(q)}{q^{\frac{1}{2} + \frac{1}{\log x}}} (q^{-z_1} + q^{-z_2}) \\
&+\mathcal{M}(z_1,z_2,X) + 4 \frac{\log X}{\log x} + O(\log \log \log X). \nonumber
\end{align}

We put $\mathcal{V} = \mathcal{V}(z_1,z_2,X)$, and define
\begin{align*}
T = 
\begin{cases}
\frac{1}{2} \log \log \log X, \ \ \ \ \ \ &V \leq \mathcal{V}, \\
\frac{\mathcal{V}}{2V} \log \log \log X, &\mathcal{V} < V \leq \frac{1}{16} \mathcal{V} \log \log \log X, \\
8, &V > \frac{1}{16} \mathcal{V} \log \log \log X.
\end{cases}
\end{align*}
We take $x = X^{T/V}$, and $z = x^{1/\log \log X}$.

Taking $x = \log X$ in \eqref{eq:GRH upper bound for log L} and estimating trivially, we may assume $V \leq \frac{5 \log X}{\log \log X}$. In \eqref{eq:GRH upper bound for log L} we then have
\begin{align*}
\log \left|L \left( \tfrac{1}{2}+z_1,\chi_p\right)L \left( \tfrac{1}{2}+z_2,\chi_p\right) \right| &\leq S_1 + S_2 + \mathcal{M}(z_1,z_2,X) + 5\frac{V}{T},
\end{align*}
where $S_1$ is the sum on $q$ truncated to $q \leq z$, and $S_2$ is the remainder of the sum. Since $\log \left|L \left( \frac{1}{2}+z_1,\chi_p\right)L \left( \frac{1}{2}+z_2,\chi_p\right) \right| \geq V + \mathcal{M}(z_1,z_2,X)$ we have
\begin{align*}
S_2 \geq \frac{V}{T} \ \ \ \ \ \ \ \ \text{ or }  \ \ \ \ \ \ \ \ S_1 \geq V \left(1 - \frac{6}{T}\right) =: V_1.
\end{align*}
We take $k = \lfloor (\frac{1}{2} - \frac{1}{\log_4 X}) \frac{V}{T}\rfloor - 1$ in Lemma \ref{lem:mean val Dir poly over primes} and apply the usual Chebyshev-type maneuver to deduce that the number of $p \leq X$ with $S_2 \geq V/T$ is
\begin{align*}
\ll \frac{X \log_2 X}{\log X} \exp \left(-\frac{V}{4T} \log V \right).
\end{align*}

It remains to bound the number of $p$ for which $S_1$ is large. By Lemma \ref{lem:mean val Dir poly over primes}, for any $k \leq (\frac{1}{2} - \frac{1}{\log_2 X})\frac{V \log \log X }{T}$ the number of $p \leq X$ with $S_1 \geq V_1$ is
\begin{align*}
\ll \frac{X \log_2 X}{\log X} \left(\frac{2k \mathcal{V}(z_1,z_2,X) + O(\log \log\log X)}{eV_1^2} \right)^k.
\end{align*}
For $V \leq (\log \log X)^2$ we take $k= \lfloor V_1^2/2 \mathcal{V}\rfloor$, and for $V > (\log \log X)^2$ we take $k = \lfloor 10 V \rfloor$. It follows that the number of $p$ for which $S_1 \geq V_1$ is
\begin{align*}
\ll \frac{X \log_2 X}{\log X} \exp\left(-\frac{V_1^2}{2\mathcal{V}} \left(1 + O \left(\frac{\log \log \log X}{\log \log X} \right) \right) \right) + \frac{X\log_2 X}{\log X} \exp(-V \log V).
\end{align*}
\end{proof}

\section{Proof of Theorem \ref{thm: third moment}}\label{sec:third moment}

The proof of Theorem \ref{thm: third moment} breaks naturally into two parts: the lower bound, and the upper bound. The argument for the lower bound is very similar to that in \cite{RuS06}, and we therefore give only a sketch. The argument for the upper bound is similar to that in Section \ref{sec:mollified second moment}. In either case, we crucially use the assumption that the central values are non-negative.

\subsection{The lower bound}

Let $d_{1/2}(n)$ be the multiplicative function with $(d_{1/2}\star d_{1/2})(n) = 1$. For a prime $p \equiv 1 \pmod{4}$ and large $X$ define
\begin{align*}
R(p) := \sum_{n \leq X^{1/500}} \frac{d_{1/2}(n) \chi_p(n)}{\sqrt{n}}.
\end{align*}
By H\"older's inequality and the assumption $L(\frac{1}{2},\chi_p) \geq 0$ we have
\begin{align*}
\sum_{p \equiv 1 \, (\text{mod }8)} (\log p) \Phi \left( \frac{p}{X}\right) L \left( \frac{1}{2},\chi_p\right)^3 \geq \frac{T_1^3}{T_2^2},
\end{align*}
where
\begin{align*}
T_1 &:= \sum_{p \equiv 1 \, (\text{mod }8)} (\log p) \Phi \left( \frac{p}{X}\right) L \left( \frac{1}{2},\chi_p\right) R(p)^4, \\
T_2 &:= \sum_{p \equiv 1 \, (\text{mod }8)} (\log p) \Phi \left( \frac{p}{X}\right) R(p)^6.
\end{align*}
In $T_2$ we open up $R(p)^6$, and obtain a sum over $n_1,\ldots,n_6$, and $p$. The terms with $n_1 \cdots n_6 = \square$ yield a main term of size $\ll X(\log X)^6$, and the terms with $n_1 \cdots n_6 \neq \square$ are shown to be an error term by using Lemma \ref{lem:gen char sum over primes}. 

For $T_1$, we write $L(\frac{1}{2},\chi_p)$ using Lemma \ref{lem: approx func eq}. After opening $R(p)^4$, we have a sum over $n_1,\ldots,n_4,m$, and $p$, where $m$ is the variable of summation in the approximate functional equation. The main term $mn_1\cdots n_4 = \square$ is of size $\gg X(\log X)^6$, and the error term $mn_1 \cdots n_4 \neq \square$ is small by Lemma \ref{lem:gen char sum over primes}. This gives the lower bound.

\subsection{The upper bound}

Assuming that $L(\frac{1}{2},\chi_n) \geq 0$ for all square-free $n \equiv 1 \pmod{8}$, we can use an upper bound sieve and positivity to write
\begin{align*}
M_3 := &\sum_{p \equiv 1 \, (\text{mod }8)} (\log p) \Phi \left( \frac{p}{X}\right) L\left( \tfrac{1}{2},\chi_p\right)^3 \\ 
&\leq (\log X) \sum_{n \equiv 1 \, (\text{mod }8)} \mu^2(n)\Bigg(\sum_{\substack{d \mid n \\ d \leq D}} \lambda_d \Bigg) \Phi \left( \frac{n}{X}\right) L \left( \tfrac{1}{2},\chi_n\right)^3
\end{align*}
The coefficients $\lambda_d$ of the sieve are given, as before, by \eqref{lambda}. We take $R$ to be a sufficiently small power of $X$.

We use the approximate functional equation
\begin{align*}
L(\tfrac{1}{2},\chi_n)^3 \ = \ \frac{16}{(\sqrt{2}-1)^6} \sum_{\begin{subarray}{c} \nu=1 \\ \nu \mbox{\scriptsize{ odd}}\end{subarray}}^{\infty} \frac{d_3(\nu) \left( \frac{\nu}{n}\right)}{\sqrt{\nu}} \omega_3\left(\nu\left( \frac{\pi}{n}\right)^{3/2}\right),
\end{align*}
where $\omega_3(\xi)$ is defined by taking $j=3$ in \eqref{eq: defn of omega j}. Our function $\omega_3(\xi)$ is not the same as $\omega_3(\xi)$ in \cite{Sou00}. After using the approximate functional equation to represent $L(\frac{1}{2},\chi_n)^3$, we write $\mu^2(n) = N_Y(n) + R_Y(n)$. The contribution from $R_Y(n)$ is bounded using arguments similar to those in Subsection \ref{subsec:contrib of R_Y}. For $N_Y(n)$ we use Poisson summation as before. Up to negligible error, we therefore have the upper bound
\begin{align*}
M_3 &\leq (\log X) \frac{16}{(\sqrt{2}-1)^6}\sum_{\begin{subarray}{c} d\leq D \\ d|P(z) \\ d \mbox{\scriptsize{ odd}}\end{subarray}} \lambda_d \sum_{\begin{subarray}{c}\nu=1 \\ \nu \mbox{\scriptsize{ odd}}\end{subarray}}^{\infty} \frac{d_3(\nu) }{\sqrt{\nu}}  \sum_{\begin{subarray}{c} \alpha\leq Y \\ \alpha \mbox{\scriptsize{ odd}}\end{subarray}} \mu(\alpha) \\
& \ \ \times  \left( \frac{2[\alpha^2,d]}{\nu}\right)   \frac{X}{[\alpha^2,d]8\nu} \sum_{\begin{subarray}{c} k \in\mathbb{Z} \end{subarray}}e\left(\frac{k\overline{[\alpha^2,d]\nu}}{8}\right)\hat{F}_\nu\left( \frac{kX}{[\alpha^2,d]8\nu}\right) {\tau}_k(\nu),
\end{align*}
where
\begin{align*}
F_{\nu}(t) \ = \ \Phi(t) \omega_3\left( \nu \left(\frac{\pi}{tX}\right)^{3/2}\right).
\end{align*}
We treat separately the contributions from $k = 0$ and $k \neq 0$. The calculations are somewhat easier in that ultimately we seek only upper bounds, not asymptotic formulas.

The contribution from $k = 0$ is treated as in Subsection \ref{subsec:k = 0}, and is
\begin{align*}
\ll X \frac{\log X}{\log R} (\log X)^6 \ll X (\log X)^6.
\end{align*}

For $k \neq 0$ the presence of the additive character necessitates a splitting of $k$ into residue classes modulo $8$. When necessary, we write the additive character as a linear combination of multiplicative characters. We use the identity
\begin{align*}
\tau_k(n) \ = \ \left( \frac{1+i}{2} + \left(\frac{-1}{n}\right)\left(\frac{1-i}{2}\right)\right) G_k(n)
\end{align*}
and treat the two terms separately. We then follow the method of Section \ref{sec:mollified second moment} to obtain that the contribution from $k \neq 0$ is
\begin{align*}
\ll X \frac{\log X}{\log R} (\log X)^6 \ll X (\log X)^6.
\end{align*}

One difference that arises is in proving analogues of Lemma \ref{lem: properties of h(xi,w)}. Here we have $\check{\Phi}(w + \frac{s}{2})$ inside of an integral, instead of just $\check{\Phi}(w)$ outside of an integral. It is helpful to use the bound
\begin{align*}
\check{\Phi}(y) \ll_j \left( \frac{\log X}{|y|}\right)^j.
\end{align*}

Another difference is that we have a factor of $X^{s/2}$ in the integrals, whereas this factor disappeared for the $k \neq 0$ terms in Section \ref{sec:mollified second moment}. We therefore do not need to concern ourselves with any symmetry properties of the integrand (cf. the symmetry argument yielding \eqref{eq: B as a residue}).

\section*{Acknowledgements}

The authors would like to thank Steve Gonek and Matthew Young for helpful comments which have improved the clarity of our exposition.

The second author was supported by NSF grant DMS-1501982, and by the National Science Foundation Graduate Research Program under grant number DGE-1144245.

\end{document}